\numberwithin{equation}{section}
\newtheorem*{prn}{Principle}
\newtheorem*{nex}{Nexus}
\newtheorem{thm}{Theorem}[section]
\newtheorem{lmm}[thm]{Lemma}
\newtheorem{prp}[thm]{Proposition}
\newtheorem{crl}[thm]{Corollary}
\theoremstyle{definition}
\newtheorem{dfn}[thm]{Definition}
\newtheorem{eg}[thm]{Example}
\newtheorem{rmk}[thm]{Remark}
\def\BE#1{\begin{equation}\label{#1}}
\def\EE{\end{equation}}
\def\eref#1{(\ref{#1})}
\def\lra{\longrightarrow}
\def\lhra{\ensuremath{\lhook\joinrel\relbar\joinrel\rightarrow}}
\def\ov#1{\overline{#1}}
\def\wt#1{\widetilde{#1}}
\def\sf#1{\textsf{#1}}
\def\sm#1{\begin{small}{#1}\end{small}}
\def\wh#1{\widehat{#1}}
\def\al{\alpha}
\def\de{\delta}
\def\ep{\epsilon}
\def\io{\iota}
\def\na{\nabla}
\def\om{\omega}
\def\ve{\varepsilon}
\def\vr{\varrho}
\def\vph{\varphi}
\def\vp{\varpi}
\def\ze{\zeta}
\def\La{\Lambda}
\def\Om{\Omega}
\def\Th{\Theta}
\def\bI{\mathbb I}
\def\C{\mathbb C}
\def\fD{\mathfrak D}
\def\bI{\mathbb I}
\def\cK{\mathcal K}
\def\cN{\mathcal N}
\def\cP{\mathcal P}
\def\fR{\mathfrak R}
\def\R{\mathbb R}
\def\cR{\mathcal R}
\def\cW{\mathcal W}
\def\X{\mathbf X}
\def\cX{\mathcal X}
\def\Z{\mathbb Z}
\def\fs{\mathfrak s}
\def\ne{\textnormal{e}}
\def\fI{\mathfrak i}
\def\AK{\textnormal{AK}}
\def\Aux{\textnormal{Aux}}
\def\codim{\textnormal{codim}}
\def\Dom{\textnormal{Dom}}
\def\id{\textnormal{id}}
\def\Im{\textnormal{Im}}
\def\nd{\textnormal{d}}
\def\hor{\textnormal{hor}}
\def\pt{\textnormal{pt}}
\def\supp{\textnormal{supp}}
\def\Symp{\textnormal{Symp}}
\def\std{\textnormal{std}}
\def\ver{\textnormal{ver}}
\def\i{\infty}
\def\w{\wedge}
\def\eset{\emptyset}
\def\prt{\partial}
\def\1{\mathbf 1}
\def\bu{\bullet}
\begin{document}

\title{Normal Crossings Singularities for Symplectic Topology}
\author{Mohammad F.~Tehrani, Mark McLean, and 
Aleksey Zinger\thanks{Partially supported by NSF grants 0846978 and 1500875}}

\date{\today}

\maketitle

\begin{abstract}
\noindent
We introduce topological notions of  normal crossings symplectic divisor
and variety and establish that they are equivalent, in a suitable sense,
to the desired geometric notions.
Our proposed concept of equivalence of associated topological and geometric notions 
fits ideally with important constructions in symplectic topology.
This partially answers Gromov's question on the feasibility of defining 
singular symplectic (sub)varieties
and lays foundation for rich developments in the future.
In subsequent papers, 
we establish a smoothability criterion for symplectic normal crossings varieties,
in the process providing the multifold symplectic sum  envisioned 
by Gromov, and introduce symplectic analogues of logarithmic structures 
in the context of normal crossings symplectic divisors.
\end{abstract}

\tableofcontents

\section{Introduction}
\label{intro_sec}

\noindent
Algebraic and complex analytic (sub)varieties are the central objects of study
in the fields of algebraic geometry and of complex geometry, respectively.
Curves and divisors, i.e.~subvarieties of dimension and codimension~1 over the ground field,
have long been of particular importance in these fields;
they can be viewed as dual to each other.
Gromov's introduction~\cite{Gr} of pseudoholomorphic curve techniques into symplectic topology 
has led to numerous connections with algebraic geometry and 
to the appearance of symplectic divisors in different contexts,
including relations with complex line bundles~\cite{Donaldson},
symplectic sum constructions \cite{Gf,MW}, 
degeneration and decomposition formulas for Gromov-Witten invariants
\cite{Tian,CH,LR,Jun2,Brett}, affine symplectic geometry \cite{MAff,McLean},
and homological mirror symmetry~\cite{Sheridan}.
While most applications of divisors in symplectic topology have so far concerned
{\it smooth} divisors in ({\it smooth}) symplectic manifolds, 
recent developments in symplectic topology and algebraic geometry suggest the need for notions 
of {\it singular} symplectic varieties and subvarieties 
(at least with certain kinds of singularities).
Gromov \cite[p343]{GrBook} in fact asked about the feasibility of 
introducing such notions by the mid~1980s.
They should involve only some soft intrinsic symplectic data, but at the same time
be compatible with rigid auxiliary almost K\"ahler data needed for 
making such notions useful.\\

\noindent
In this paper, we propose a new perspective on the fundamental quandary conceived 
in~\cite{GrBook} and demonstrate that it is suitable for introducing \sf{normal crossings} 
singularities into symplectic topology.
In symplectic topology, it is common to study a ``topological'' object 
(such as a symplectic manifold) by adding some auxiliary ``geometric'' 
data (such as a compatible complex structure) and then constructing 
an invariant which is independent of this auxiliary data. 
One then shows that such an invariant is also an invariant of 
the deformation equivalence class of the corresponding topological object.  
This approach works well when studying symplectic manifolds, but 
is much more difficult to carry out once singularities are introduced
since there is no Darboux theorem in this case; 
see in particular Remark~\ref{SympNeigh_rmk}.
We propose an alternative philosophy involving 
the entire deformation equivalence class of the topological object, 
as opposed to the topological object itself, and 
looking at the subspace of ``nice" objects in this deformation equivalence class.
These objects should have particularly well-behaved auxiliary geometric data 
which can be used to construct invariants in usual ways.
The subspace of ``nice" objects should topologically reflect the space
of all objects.
Our perspective is summarized by the principle below
and a more detailed nexus on page~\pageref{equiv_nex}.

\begin{prn}\label{equiv_prn}
A symplectic variety/subvariety should be viewed as a deformation equivalence class
of objects with the same topology, not as a single object.
\end{prn}

\noindent
The viewpoint on the compatibility between the topological and geometric sides 
we propose in the nexus is symmetric in taking deformation equivalence classes on both sides,
in contrast to the presently standard viewpoint of taking individual objects
on the topological side and deformation equivalence classes on the geometric side.
Our focus on the deformation equivalence classes
to begin with fits ideally with the concern of symplectic topology 
with  the deformation equivalence classes of symplectic manifolds,
instead of individual symplectic manifolds, as illustrated below.

\subsection{Topological vs.~geometric symplectic data}
\label{TopGeom_subs}

\noindent
Every symplectic manifold $(X,\om)$ admits a tame (and compatible) 
almost complex structure~$J$.
Furthermore, the fibers of the~projection
\BE{AK2Symp_e0} \AK(X)\lra\Symp(X), \qquad (\om,J)\lra \om,\EE
from  the space of pairs $(\om,J)$ consisting of a symplectic form~$\om$
on~$X$ and an $\om$-tame almost complex structure~$J$ to 
the space of symplectic forms on~$X$ are contractible.
This fibration is thus a weak homotopy equivalence, i.e.~it induces isomorphisms
\BE{AK2Symp_e0b} \pi_k\big(\AK(X)\big)\lra\pi_k\big(\Symp(X)\big) \EE
between the homotopy groups $\pi_k$ with $k\!\in\!\Z^+$ and 
the sets~$\pi_0$ of deformation equivalence classes;
see \cite[Theorem~6.3]{Dold}.
The bijectivity of~\eref{AK2Symp_e0b} for $k\!=\!0$ is key to the program initiated by Gromov
in the 1980s  to bring algebro-geometric techniques into symplectic topology.\\

\noindent
For a symplectic submanifold~$V$ in a symplectic manifold $(X,\om)$,
the normal bundle 
\BE{cNXVsymp_e}\cN_XV\equiv \frac{TX|_V}{TV}\approx TV^{\om}
\equiv \big\{v\!\in\!T_xX\!:\,x\!\in\!V,\,\om(v,w)\!=\!0~\forall\,w\!\in\!T_xV\big\}\EE
of~$V$ in~$X$ inherits a fiberwise symplectic form $\Om\!\equiv\!\om|_{\cN_XV}$ from~$\om$.
A \sf{smooth divisor} in a symplectic manifold $(X,\om)$ is 
a closed symplectic submanifold~$V$ of real codimension~2.
For every such triple $(X,V,\om)$, there is an $\om$-tame almost complex structure~$J$
such that $J(TV)\!=\!TV$.
It can be chosen to be very regular near~$V$ in the following sense.
An  $\Om$-compatible (fiberwise) complex structure~$\fI$ on~$\cN_XV$ 
and a compatible connection~$\na$ on~$\cN_XV$
determine a closed 2-form~$\wh\om$ on the total space of~$\cN_XV$, 
which is symplectic on a neighborhood of~$V$ in~$\cN_XV$.
By the Symplectic Neighborhood Theorem \cite[Theorem~3.30]{MS1},
there is an identification~$\Psi$ of small neighborhoods of~$V$ in $(\cN_XV,\wh\om)$
and in~$(X,\om)$.
The tuple $(\fI,\na,\Psi)$ is equivalent to an $\om$-regularization $(\rho,\na,\Psi)$ for~$V$
in~$X$ in the terminology of Definition~\ref{SCDregul_dfn}\ref{sympregul_it};
we view it as an \sf{auxiliary} structure for~$(X,V,\om)$.
We call an $\om$-tame almost complex structure~$J$ \sf{compatible} with $(\rho,\na,\Psi)$
if $\Psi^*J$ agrees with the almost complex structure~$\wh{J}$ 
determined by $J|_V$, $\fI$, and~$\na$;
such a~$J$ is integrable in the normal direction to~$V$
(i.e.~the image of its Nijenhuis tensor on~$TX|_V$ is contained in~$TV$). 
The fibers of the~projection
\BE{AK2Aux_e1} \AK(X,V)\lra\Aux(X,V), \qquad (\om,\cR,J)\lra (\om,\cR),\EE
from  the space of triples $(\om,\cR,J)$ consisting of a symplectic form~$\om$ on~$(X,V)$,  
an $\om$-regularization~$\cR$ for~$V$ in~$X$, and an $\cR$-compatible almost complex structure~$J$
to the space of pairs   consisting of a symplectic form~$\om$ on~$(X,V)$ and  
an $\om$-regularization for~$V$ in~$X$ are contractible.
This fibration thus  induces isomorphisms
$$ \pi_k\big(\AK(X,V)\big)\lra\pi_k\big(\Aux(X,V)\big) $$
between the homotopy groups $\pi_k$ with $k\!\in\!\Z^+$ and 
the sets~$\pi_0$ of deformation equivalence classes.\\

\noindent
For a closed codimension~2 submanifold $V$ of~$X$, the fibers of the~projection
\BE{Aux2Symp_e1} \Aux(X,V)\lra\Symp(X,V), \qquad (\om,\cR)\lra\om,\EE
to the space of symplectic forms on~$X$ restricting to symplectic forms on~$V$
are also contractible.
The composition of~\eref{AK2Aux_e1} and~\eref{Aux2Symp_e1},
\BE{AK2Symp_e1} \AK(X,V)\lra\Symp(X,V), \qquad (\om,\cR,J)\lra \om,\EE
thus has contractible fibers and induces isomorphisms
\BE{AK2Symp_e1b} \pi_k\big(\AK(X,V)\big)\lra\pi_k\big(\Symp(X,V)\big) \EE
between the homotopy groups $\pi_k$ with $k\!\in\!\Z^+$ and 
the sets~$\pi_0$ of deformation equivalence classes.
The former in particular implies that the map~\eref{AK2Symp_e1} is surjective
and ensures that paths in the base can be lifted to paths with specified endpoints.
While these two properties of~\eref{AK2Symp_e1} feature prominently 
in the standard perspective on applications of symplectic divisors,
only the bijectivity of the map~\eref{AK2Symp_e1b} with $k\!=\!0$ is material
for applications in symplectic topology.
This is consistent with symplectic topology  being fundamentally about
deformation equivalence classes of symplectic manifolds, 
not about individual manifolds, as illustrated by the well-known applications
recalled in the next two paragraphs.\\

\noindent
The approach to relative Gromov-Witten invariants for~$(X,V,\om)$ in~\cite{LR}
involves choosing an $\om$-regularization $\cR$ for~$V$ in~$X$
and an $\cR$-compatible almost complex structure~$J$. 
The resulting numbers do not change along a path $(\om_t,\cR_t,J_t)$
in $\AK(X,V)$.
Since a path~$\om_t$ in $\Symp(X,V)$ can be lifted to a path $(\om_t,\cR_t,J_t)$ 
with specified endpoints $(\om_0,\cR_0,J_0)$ and $(\om_1,\cR_1,J_1)$,
the relative Gromov-Witten invariants of~$(X,V,\om)$
depend only on the path-component of~$\Symp(X,V)$ containing~$\om$.
It would thus have been sufficient to show~that
\begin{enumerate}[label=$\bu$,leftmargin=*]

\item $(X,V)$ admits $\om$-regularizations $\cR$  for
{\it at least  some} symplectic forms~$\om$ on~$(X,V)$,

\item every path~$\om_t$ in the subspace of such ``good" symplectic forms
lifts to a path~$\cR_t$ of $\om_t$-regularizations 
for~$V$ in~$X$ with given endpoints,

\item the inclusion of the subspace of  ``good" symplectic forms into
the space of all symplectic forms on~$(X,V)$ induces a bijection between
the corresponding sets of path components.  

\end{enumerate}
This change in perspective turns out to be useful when dealing with
NC symplectic divisors.\\

\noindent
The symplectic sum construction of~\cite{Gf} smooths the union of two symplectic manifolds 
$(X,\om_X)$ and $(Y,\om_Y)$ glued along a common smooth symplectic divisor~$V$
such~that
\BE{Gfcond_e}  c_1\big(\cN_XV,\om_X\big)+c_1\big(\cN_YV,\om_Y\big)=0\in H^2(V;\Z)\EE
into a new symplectic manifold $(X_{\#},\om_{\#})$.
In the terminology of Definition~\ref{SCC_dfn}, the~tuples
\BE{Gfconf_e}\big((X_1\!\equiv\!X,X_2\!\equiv\!Y,X_{12}\!\equiv\!V),
(\om_1\!\equiv\!\om_X,\om_2\!=\!\om_Y)\big)
\quad\hbox{and}\quad
\big(X\!\cup_V\!Y,(\om_X,\om_Y)\big)\EE
are a \sf{2-fold simple crossings symplectic configuration} and 
the \sf{associated simple crossings symplectic variety}.
The topological type of~$X_{\#}$ depends only on the choice of the homotopy class 
of isomorphisms
$$\big(\cN_XV,\om_X\big)\otimes_{\C}\big(\cN_YV,\om_Y\big)\approx V\!\times\!\C$$
as complex line bundles.
With such a choice fixed, the construction of~\cite{Gf} involves choosing 
an $\om_X$-regularization for~$V$ in~$X$, an $\om_Y$-regularization for~$V$ in~$Y$,
and a representative for the above homotopy class.
Because of these choices, the resulting symplectic manifold $(X_{\#},\om_{\#})$
is determined by $(X,\om_X)$, $(Y,\om_Y)$, and the choice of the homotopy class
only up to symplectic deformation equivalence.
The symplectic sum construction of~\cite{Gf} can thus be viewed as a~map
\begin{equation*}\begin{split}
\Big\{&\!\big([\om_X],[\om_Y]\big)\!\in\!\pi_0\big(\Symp(X,V)\big)
\!\times\!\pi_0\big(\Symp(Y,V)\big)\!:\\
&\big[\om_X|_V\big]\!=\!\big[\om_Y|_V\big]\!\in\!\pi_0\big(\Symp(V)\big),
c_1\big(\cN_XV,\om_X\big)\!+\!c_1\big(\cN_YV,\om_Y\big)\!=\!0\Big\}
\lra\bigsqcup_{X_{\#}}\pi_0\big(\Symp(X_{\#})\big).
\end{split}\end{equation*}
It would have been sufficient to carry it out only on a path-connected set 
of representatives for 
each deformation equivalence class of the tuples~\eref{Gfconf_e}.
This change in perspective turns out to be useful for smoothing out
more elaborate simple and normal crossings symplectic varieties.\\

\noindent
The above observations concerning~\eref{AK2Symp_e1} and~\eref{AK2Symp_e1b} motivate
the principle introduced in the present paper for adapting algebro-geometric notions 
of singularities  to symplectic topology. 
It can be summarized as follows.

\begin{nex}\label{equiv_nex}
\begin{enumerate}[label=(\arabic*),leftmargin=*]

\item A \sf{symplectic variety} should be a stratified space~$X$ with some additional 
smooth-type structure and symplectic-type structure~$\om$
so that the restriction of~$\om$ to each smooth stratum~$X_i$ of~$X$ is a symplectic form in 
the usual sense.
The set $\Symp(X)$ of the ``symplectic structures" on~$X$ compatible with 
a given ``smooth structure" should have a  natural topology.

\item There should be a notion of a
\sf{regularization}~$\fR$ for a symplectic structure~$\om$ on~$X$
which models neighborhoods of the strata~$X_i$ on subspaces of complex vector bundles~$\cN_i$
over~$X_i$ consisting of fiberwise subvarieties in a compatible fashion.
The set $\Aux(X)$ of such pairs $(\om,\fR)$ should have a natural topology so that
the projection 
\BE{Aux2Symp_e} \Aux(X)\lra\Symp(X), \qquad (\om,\fR)\lra\om,\EE
induces a bijection between the connected components of the two spaces
(or better yet, is a weak homotopy equivalence).
However, this projection need not be surjective.

\item There should be a notion of an \sf{almost complex structure}~$J$ on~$X$ 
compatible with an $\om$-regularization $\fR$ which restricts 
on each~$X_i$  to an almost complex structure in the usual sense and 
is integrable in the normal directions to~$X_i$.
The set $\AK(X)$ of such triples $(\om,\fR,J)$ should have a natural topology so that
the fibers of the projection 
\BE{AK2Aux_e} \AK(X)\lra\Aux(X), \qquad (\om,\fR,J)\lra(\om,\fR),\EE
are contractible.

\end{enumerate}
\noindent
A \sf{symplectic subvariety} $V$ in a symplectic variety~$X$ should be a topological subspace
of~$X$ with associated spaces $\Symp(X,V)$, $\Aux(X,V)$, and~$\AK(X,V)$
which are related as~above.
\end{nex}

\subsection{Normal crossings singularities}
\label{NCsing_subs}

\noindent
A \sf{normal crossings} (or \sf{NC}) \sf{complex analytic variety}~$X$ of (complex) dimension~$n$
is a Hausdorff topological space covered by~charts
$$\vph_x\!:U_x\lra \C^{n-k}\!\times\!\big\{(z_1,\ldots,z_{k+1})\!\in\!\C^{k+1}\!:
\,z_1\!\ldots\!z_{k+1}\!=\!0\big\}
\quad\hbox{with}\quad k\!=\!k(x)\in\!\{0,1,\ldots,n\},\,x\!\in\!X,$$
that overlap analytically.
An \sf{NC divisor}~$V$ in a K\"ahler manifold~$X$ of complex dimension~$n$
 is a subspace of~$X$ locally of the form
$$\C^{n-k}\!\times\!\big\{(z_1,\ldots,z_k)\!\in\!\C^k\!:
\,z_1\!\ldots\!z_k\!=\!0\big\}
\quad\hbox{with}\quad k\!=\!k(x)\in\!\{0,1,\ldots,n\},\,x\!\in\!X,$$
in holomorphic coordinates on~$X$.
Such a divisor is the image of a generically injective proper K\"ahler 
immersion $\io\!:\wt{V}\!\lra\!X$ from a K\"ahler manifold~$\wt{V}$ 
of complex dimension $n\!-\!1$ such that all self-intersections of~$\io$ 
are transverse.
A basic example, which we call a \sf{simple crossings} (or \sf{SC}) \sf{divisor}, 
is provided by the union of transversely intersecting closed K\"ahler hypersurfaces~$V_i$ in~$X$.
NC~singularities are the simplest, non-trivial singularities and 
are also of the most direct relevance to symplectic topology.\\

\noindent
It has long been a mystery what an NC or even SC divisor in the symplectic category should~be.
In~this paper, we introduce soft topological notions of an \sf{SC symplectic divisor}
in a symplectic manifold and of an \sf{SC symplectic variety} and show that they are compatible, 
in a suitable sense, with associated rigid geometric notions.
As all of our arguments are essentially local, they readily apply
to the arbitrary NC case as well.
However, the latter involves a more elaborate setup, with 
the normal bundle of an immersion replacing the normal bundle of a submanifold.
For this reason, we defer the arbitrary NC case to~\cite{SympNC}
in order to highlight the ideas involved.\\

\noindent
For a subspace~$V$ of a symplectic manifold~$(X,\om)$
to be an SC symplectic divisor, 
it should at least be the union of transversely intersecting closed symplectic 
submanifolds $\{V_i\}_{i\in S}$ of $(X,\om)$ of real codimension~2.
However, as \cite[Example~1.9]{Inc} illustrates, 
the intersection number of a pair of symplectic submanifolds~$V_1$ and~$V_2$ 
in a compact symplectic 4-manifold~$X$ can be negative;
in such a case, there is no $\om$-tame almost complex structure~$J$ on~$X$ which restricts
to almost complex structures on~$V_1$ and~$V_2$.
If $J$ is an $\om$-tame almost complex structure on~$X$ which restricts  
to an almost complex structure on each~$V_i$, then the intersection~$V_I$ of 
the smooth divisors in any subcollection of~$\{V_i\}_{i\in S}$ 
is a symplectic submanifold of~$(X,\om)$ and 
the $\om$-orientation of each~$V_I$ agrees with its intersection orientation 
induced by the orientations of~$X$ and $\{V_i\}_{i\in I}$; 
see Section~\ref{SCdfn_subs}.
These two properties, appearing in Definition~\ref{SCD_dfn}, are thus necessary
for the existence of an $\om$-tame $J$ which restricts to an almost complex structure
on each~$V_i$.
It turns out that these two, essentially topological, properties
suffice for a kind of virtual existence of such a~$J$ as well as
of compatible collections of $\om$-regularizations $(\rho_i,\na^{(i)},\Psi_i)$
for $V_i$ in~$X$; see Definition~\ref{SCDregul_dfn}\ref{sympregul_it}
and Theorem~\ref{SCD_thm}.\\
 
\noindent
The compatibility-of-orientations condition of Definition~\ref{SCD_dfn},
which is equivalent to the \sf{positively intersecting} notion 
of \cite[Definition~5.1]{MAff}, is 
preserved under deformations of~$\om$ that keep every intersection~$V_I$ symplectic.
Thus, it is a necessary condition for the existence of 
an $\om'$-tame almost complex structure~$J$ that restricts to an almost complex structure 
on each~$V_I$ for some deformation~$\om'$ of~$\om$ through symplectic structures~$\om_t$ 
on $\{V_I\}_{I\subset S}$
(i.e.~symplectic forms~$\om_t$ on~$X$ such that $\om_t|_{V_I}$ is symplectic for all 
$I\!\subset\!S$). 
By Theorem~\ref{SCD_thm} with $B$ being the point, this condition suffices 
not only for the existence of such an $\om'$-tame~$J$, but also 
for the existence of compatible collections of $\om$-regularizations 
$(\rho_i,\na^{(i)},\Psi_i)$ for $V_i$ in~$X$.
By Theorem~\ref{SCD_thm} with $B\!=\![0,1]$, for every path
$\om_t$ of symplectic structures on~$\{V_I\}_{I\subset S}$
and all $\om_0$- and $\om_1$-regularizations $\cR_0$ and~$\cR_1$ for $V_i$ in~$X$,
there exists a path~$\om_t'$ homotopic to the path~$\om_t$ 
through paths of symplectic structures on~$\{V_I\}_{I\subset S}$
and a path~$\cR_t$ of compatible $\om_t'$-regularizations for~$V_i$
in~$X$.
By the general case of Theorem~\ref{SCD_thm}, the projection
\BE{AuxSympDiv_e}
\Aux\big(X,(V_i)_{i\in S}\big)\lra\Symp^+\big(X,(V_i)_{i\in S}\big), \qquad 
(\om,\cR)\lra\om,\EE
from the space of symplectic forms~$\om$ on~$X$
with compatible  regularizations~$\cR$ for $(V_i)_{i\in S}$ in~$X$ 
to the space  of symplectic structures~$\om$
on $\{V_I\}_{I\subset S}$ such that the $\om$-orientation of each~$V_I$   
agrees with its intersection orientation
is a weak homotopy equivalence.
Theorem~\ref{SCC_thm} is the analogue of Theorem~\ref{SCD_thm} for SC~symplectic varieties 
as in Definition~\ref{SCC_dfn}.
These are collections of symplectic manifolds identified along SC~symplectic divisors.
Some applications of these four theorems are described in the next two paragraphs.\\

\noindent
Two versions of an NC~divisor~$V$ in an almost K\"ahler manifold~$X$
are described in \cite[Definition~1.3]{Inc} and \cite[Section~2]{Brett07};
see also \cite[Definition~14.6]{Brett2}.
The main objective of~\cite{Inc} and one of the two main objectives of \cite{Brett1,Brett}
are to define Gromov-Witten type invariants of~$X$ relative to such~$V$.
The constructions in~\cite{Brett,Inc} automatically imply that the resulting invariants 
do not change under deformations of the {\it almost K\"ahler} data compatible with~$(X,V)$.
In \cite[Section~3]{Brett07}, it is shown that the relevant almost K\"ahler data exists 
for a certain, fairly rigid, class of symplectic forms on~$X$ (for which the branches of~$V$ 
are symplectic and meet orthogonally) and that deformations of the symplectic form within
this class extend to deformations of compatible almost K\"ahler data.
However, it would be desirable to know that the resulting invariants depend only on 
some topological deformation equivalence class of {\it symplectic} structures on~$(X,V)$ and
apply to all classes that satisfy a specific simple condition.
An $\om$-regularization for an NC~symplectic divisor~$V$ in~$(X,\om)$ 
can be used to construct an almost K\"ahler structure on~$X$ so that $V$ 
becomes an~NC almost K\"ahler divisor in the sense of 
 \cite[Definition~1.3]{Inc} and \cite[Definition~14.6]{Brett2}.
By Theorem~\ref{SCD_thm}, every deformation equivalence class
of SC symplectic divisors in the sense of 
Definition~\ref{SCD_dfn} contains a representative~$\om$
admitting a regularization and any two such representatives with compatible regularizations
can be joined by a path. 
Thus,  Theorem~\ref{SCD_thm}  implies that any  invariants arising 
from~\cite{Brett,Inc} depend only on the deformation 
equivalence class of symplectic structures on~$(X,V)$ and specifies
to which classes the constructions of~\cite{Brett,Inc}  can be~applied.\\  

\noindent
Theorem~\ref{SCC_thm} is used in~\cite{SympSumMulti}
to show an NC symplectic variety $(X_{\eset},(\om_i)_{i=1,\ldots,N})$
is the central fiber of a one-parameter family of degenerations with a smooth total space  
if and only if it satisfies a simple topological condition on the Chern class
of a complex line bundle over the \sf{singular locus}~$X_{\prt}$ of~$X_{\eset}$.
In the $N\!=\!2$ case, this condition reduces to~\eref{Gfcond_e} and 
every non-central fiber of the resulting family is 
a representative of the deformation equivalence class of the associated symplectic~sum 
$(X_{\#},\om_{\#})$  of~\cite{Gf}.
In general, a non-central fiber of such a family is 
a representative of the deformation equivalence class of 
the multifold symplectic construction on  $(X_{\eset},(\om_i)_{i=1,\ldots,N})$ envisioned
in~\cite[p343]{GrBook}.
It yields a multitude of new symplectic manifolds;
some of them contain closed non-orientable hypersurfaces. 
Going in the opposite direction, the symplectic cut/degeneration construction 
of~\cite{SympCutMulti} produces one-parameter families as above out
of symplectic manifolds with compatible Hamiltonian torus actions 
on open subsets.
The second main objective of \cite{Brett1,Brett} is to obtain decomposition formulas
for Gromov-Witten invariants under certain almost K\"ahler splittings.
An important consequence of Theorem~\ref{SCC_thm}  is that
the decomposition formulas arising from~\cite{Brett} include 
splitting formulas for the Gromov-Witten invariants of 
the $N$-fold symplectic sums constructed in~\cite{SympSumMulti}.\\

\noindent
While the present paper connects directly with deep questions raised by Gromov~\cite{GrBook}
over 3 decades ago, 
the immediate inspiration for our overall project comes from the Gross-Siebert 
program~\cite{GS0} for a direct proof of mirror symmetry and from
distinct recent developments in symplectic topology.
Theorems~\ref{SCD_thm} and~\ref{SCC_thm}, along with the deformation principle behind them,
lay the foundation for symplectic topology versions of logarithmic structures
of algebraic geometry and of stable logarithmic maps of \cite{GS,QChen,AC}
that are central to the Gross-Siebert program.
The almost K\"ahler and exploded manifold versions of these objects proposed in
\cite{Inc,Brett1} are more rigid than desirable and have so far proved too unwieldy 
for practical applications.
We expect Theorem~\ref{SCD_thm} to be also useful for studying smooth affine varieties
and isolated singularities from a symplectic perspective.
For instance, an affine variety can be embedded into a smooth projective variety 
so that its complement is an  NC~divisor; see \cite[Section~2.1]{MAff}.
Theorem~\ref{SCD_thm} describes what a neighborhood of this divisor looks like and 
hence what the affine variety looks like at infinity;
this is useful for analyzing the symplectic cohomology of such varieties.
In contrast to \cite[Theorem 5.14]{MAff},  Theorem~\ref{SCD_thm} describes
such neighborhoods for families of affine varieties.
Links of isolated singularities or
families of isolated singularities (viewed as contact manifolds)
can also be described explicitly by looking at neighborhoods
of the exceptional curves of some resolution, 
using Theorem~\ref{SCD_thm} to put such neighborhoods in a standard form,
and then applying techniques from~\cite{McLean}.

\subsection{Outline and acknowledgments}
\label{outline_subs}

\noindent
We formally define SC symplectic divisors and varieties 
in Section~\ref{SCdfn_subs},
regularizations for the former in Section~\ref{SCDregul_subs},
and regularizations for the latter in Section~\ref{SCCregul_subs}.
While the precise definitions of regularizations are a bit technical,
their substance is that a neighborhood of each point in the divisor or variety looks
as expected.
In particular, the branches of the divisor symplectically correspond to hyperplane subbundles 
of a split complex vector bundle; this implies that they are symplectically orthogonal.
Sections~\ref{SCDregul_subs} and~\ref{SCCregul_subs} conclude with theorems stating
that the spaces of symplectic forms with regularizations are weakly homotopy equivalent
to the spaces of all admissible symplectic forms.
The necessary deformation arguments on split vector bundles are carried out in
Section~\ref{LocalDeform_sec}, especially in the proof of
Proposition~\ref{SympDef_prp}.
Section~\ref{TubulNeigh_sec} contains stratified versions of 
the usual smooth Tubular Neighborhood Theorem.
We prove Theorems~\ref{SCD_thm} and~\ref{SCC_thm} in Section~\ref{SCCpf_sec}
by applying Theorem~\ref{SympDefVB_thm} via Proposition~\ref{TubulNeigh_prp};
the crucial compatibility-of-orientations condition in Definition~\ref{SCD_dfn}
allows us to apply Proposition~\ref{SympDef_prp}.\\

\noindent
We would like to thank E.~Ionel and B.~Parker for enlightening discussions
related to normal crossings divisors in the symplectic category
and E.~Lerman for pointing out related literature.

\section{Simple crossings divisors and varieties}
\label{SC_sec}

\noindent
We begin by introducing the most commonly used notation.
If $N\!\in\!\Z^{\ge0}$ and $I\!\subset\!\{1,\ldots,N\}$, let 
$$[N]=\{1,\ldots,N\}, \qquad
\C_I^N=\big\{(z_1,\ldots,z_N)\!\in\!\C^N\!:\,z_i\!=\!0~\forall\,i\!\in\!I\big\}.$$
Denote by $\cP(N)$ the collection of subsets of~$[N]$ and
by $\cP^*(N)\!\subset\!\cP(N)$ the collection of nonempty subsets.
If in addition $i\!\in\![N]$, let 
$$\cP_i(N)=\big\{I\!\in\!\cP(N)\!:\,i\!\in\!I\big\}.$$
If $\cN\!\lra\!V$ is a vector bundle, $\cN'\!\subset\!\cN$, and $V'\!\subset\!V$, we define
\BE{cNrestrdfn_e} \cN'|_{V'}=\cN|_{V'}\cap\cN'\,.\EE
Let $\bI\!=\![0,1]$.

\subsection{Definitions and examples}
\label{SCdfn_subs}

\noindent
Let $X$ be a (smooth) manifold. 
For any submanifold $V\!\subset\!X$, let
$$\cN_XV\equiv \frac{TX|_V}{TV}\lra V$$
denote the normal bundle of~$V$ in~$X$.
For a collection $\{V_i\}_{i\in S}$ of submanifolds of~$X$ and $I\!\subset\!S$, let
$$V_I\equiv \bigcap_{i\in I}\!V_i\subset X\,.$$
Such a collection 
is called \sf{transverse} if any subcollection $\{V_i\}_{i\in I}$ of these submanifolds
intersects transversely, i.e.~the homomorphism
\BE{TransVerHom_e}
T_xX\oplus\bigoplus_{i\in I}T_xV_i\lra \bigoplus_{i\in I}T_xX, \qquad
\big(v,(v_i)_{i\in I}\big)\lra (v\!+\!v_i)_{i\in I}\,,\EE
is surjective for all $x\!\in\!V_I$. 
By the Inverse Function Theorem \cite[Theorem~1.30]{Warner},
each subspace $V_I\!\subset\!X$ is then a submanifold of~$X$ 
of codimension
$$\codim_XV_I=\sum_{i\in I}\codim_XV_i$$
and the homomorphisms 
\BE{cNorient_e2}\begin{split}
\cN_XV_I\lra \bigoplus_{i\in I}\cN_XV_i\big|_{V_I}\quad&\forall~I\!\subset\!S,\qquad
\cN_{V_{I-i}}V_I\lra \cN_XV_i\big|_{V_I} \quad\forall~i\!\in\!I\!\subset\!S,\\
&\bigoplus_{i\in I-I'}\!\!\cN_{V_{I-i}}V_I\lra \cN_{V_{I'}}V_I \quad\forall~I'\!\subset\!I\!\subset\!S
\end{split}\EE
induced by inclusions of the tangent bundles are isomorphisms.\\

\noindent
Let $X$ be an oriented manifold.
If $V\!\subset\!X$ is an oriented submanifold of even codimension,
the short exact sequence of vector bundles
\BE{cNorient_e1} 0\lra TV\lra TX|_V\lra \cN_XV\lra 0\EE
over $V$ induces an orientation on~$\cN_XV$
(if the codimension and dimension of~$V$ are odd, 
the induced orientation on~$\cN_XV$ depends also on a sign convention). 
If $\{V_i\}_{i\in S}$ is a transverse collection of oriented submanifolds of~$X$
of even codimensions,
the orientations on~$\cN_XV_i$ induced by the orientations of~$X$ and~$V_i$ induce 
an orientation on~$\cN_XV_I$ via the first isomorphism in~\eref{cNorient_e2}.
The orientations of~$X$ and~$\cN_XV_I$ then induce an orientation on~$V_I$
via the short exact sequence~\eref{cNorient_e1}.
Thus, a transverse collection $\{V_i\}_{i\in S}$ of oriented submanifolds of~$X$
of even codimensions induces an orientation on each submanifold $V_I\!\subset\!X$
with $|I|\!\ge\!2$, which we  call \sf{the intersection orientation of~$V_I$}.
If $V_I$ is zero-dimensional, it is a discrete collection of points in~$X$
and the homomorphism~\eref{TransVerHom_e} is an isomorphism at each point $x\!\in\!V_I$;
the intersection orientation of~$V_I$ at $x\!\in\!V_I$
then corresponds to a plus or minus sign, depending on whether this isomorphism
is orientation-preserving or orientation-reversing.
For convenience, we  call the original orientations of 
$X\!=\!V_{\eset}$ and $V_i\!=\!V_{\{i\}}$ \sf{the intersection orientations}
of these submanifolds~$V_I$ of~$X$ with $|I|\!<\!2$.\\

\noindent
Suppose $(X,\om)$ is a symplectic manifold and $\{V_i\}_{i\in S}$ is a transverse collection 
of submanifolds of~$X$ such that each $V_I$ is a symplectic submanifold of~$(X,\om)$.
Each $V_I$ then carries an orientation induced by $\om|_{V_{I}}$,
which we call the \sf{$\om$-orientation}.
If $V_I$ is zero-dimensional, it is automatically a symplectic submanifold of~$(X,\om)$;
the $\om$-orientation of~$V_I$ at each point $x\!\in\!V_I$ corresponds to the plus sign 
by definition.
By the previous paragraph, the $\om$-orientations of~$X$ and~$V_i$ with $i\!\in\!I$
also induce intersection orientations on all~$V_I$.

\begin{dfn}\label{SCD_dfn}
Let $(X,\om)$ be a symplectic manifold.
A \sf{simple crossings} (or \sf{SC}) \sf{symplectic divisor} 
in~$(X,\om)$ is a finite transverse collection 
$\{V_i\}_{i\in S}$ of closed submanifolds of~$X$ of codimension~2 such that 
$V_I$ is a symplectic submanifold of~$(X,\om)$ for every $I\!\subset\!S$
and the intersection and $\om$-orientations of~$V_I$ agree.
\end{dfn}

\noindent
The intersection and symplectic orientations of~$V_I$ agree if $|I|\!<\!2$.
Thus, an SC symplectic divisor $\{V_i\}_{i\in S}$ with $|S|\!=\!1$ is 
a smooth symplectic divisor in the usual sense. 
If $(X,\om)$ is a 4-dimensional symplectic manifold, 
a finite transverse collection  $\{V_i\}_{i\in S}$ of closed symplectic submanifolds of~$X$ 
of codimension~2 is an SC symplectic divisor 
if and only if all points of the pairwise intersections
$V_{i_1}\!\cap\!V_{i_2}$ with $i_1\!\neq\!i_2$ are positive. 
By \cite[Example~1.9]{Inc}, the latter need not be the case in general.
By Example~\ref{orient_eg2} below, in higher dimensions 
it is not sufficient to consider either the pairwise intersections or 
the deepest (non-empty) intersections .\\

\noindent
As with symplectic manifolds and smooth symplectic divisors,
it is natural to consider the space of all structures compatible 
with an SC symplectic divisor.

\begin{dfn}\label{SCdivstr_dfn}
Let $X$ be a manifold and $\{V_i\}_{i\in S}$ be a finite transverse collection of 
closed submanifolds of~$X$ of codimension~2.
A \sf{symplectic structure on $\{V_i\}_{i\in S}$ in~$X$} is a symplectic form~$\om$ 
on~$X$ such that $V_I$ is a symplectic submanifold of $(X,\om)$ for all $I\!\subset\!S$.
\end{dfn}

\noindent
For $X$ and $\{V_i\}_{i\in S}$ as in Definition~\ref{SCdivstr_dfn}, 
we denote by $\Symp(X,\{V_i\}_{i\in S})$ the space of all symplectic structures 
on $\{V_i\}_{i\in S}$ in~$X$ and by 
$$\Symp^+\big(X,\{V_i\}_{i\in S}\big)\subset \Symp\big(X,\{V_i\}_{i\in S}\big)$$
the subspace of the symplectic forms~$\om$ such that $\{V_i\}_{i\in S}$
is an SC symplectic divisor in~$(X,\om)$.
The latter is a union of topological components of the former.\\

\noindent
We next introduce analogous notions for SC varieties.
A 3-fold SC configuration and the associated SC variety are shown
in Figure~\ref{3conf_fig}.

\begin{dfn}\label{TransConf_dfn1}
Let $N\!\in\!\Z^+$.
An \sf{$N$-fold transverse configuration} is a tuple $\{X_I\}_{I\in\cP^*(N)}$
of manifolds such that $\{X_{ij}\}_{j\in[N]-i}$ is a transverse collection 
of submanifolds of~$X_i$ for each $i\!\in\![N]$ and
$$X_{\{ij_1,\ldots,ij_k\}}\equiv \bigcap_{m=1}^k\!\!X_{ij_m}
=X_{ij_1\ldots j_k}\qquad\forall~j_1,\ldots,j_k\in[N]\!-\!i.$$
\end{dfn}

\begin{dfn}\label{TransConf_dfn2}
Let $N\!\in\!\Z^+$ and $\X\!\equiv\!\{X_I\}_{I\in\cP^*(N)}$ be an $N$-fold transverse configuration
such that $X_{ij}$ is a closed submanifold of~$X_i$ of codimension~2
for all $i,j\!\in\![N]$ distinct.
A \sf{symplectic structure on~$\X$} is a~tuple 
$$(\om_i)_{i\in[N]}\in 
\prod_{i=1}^N\Symp\big(X_i,\{X_{ij}\}_{j\in[N]-i}\big)$$
such that $\om_{i_1}|_{X_{i_1i_2}}\!=\!\om_{i_2}|_{X_{i_1i_2}}$ for all $i_1,i_2\!\in\![N]$.
\end{dfn}

\begin{figure}
\begin{pspicture}(-3,-2)(11,2)
\psset{unit=.3cm}
\psline[linewidth=.1](15,-2)(21,-2)\psline[linewidth=.1](15,-2)(15,4)
\psline[linewidth=.1](15,-2)(11.34,-5.66)\pscircle*(15,-2){.3}
\rput(19.5,2.5){\sm{$X_i$}}\rput(11.5,-1){\sm{$X_j$}}\rput(17,-5){\sm{$X_k$}}
\rput(22.2,-2.1){\sm{$X_{ik}$}}\rput(15.1,4.8){\sm{$X_{ij}$}}
\rput(10.8,-6.1){\sm{$X_{jk}$}}\rput(16.3,-1.2){\sm{$X_{ijk}$}}
\end{pspicture}
\caption{A 3-fold simple crossings configuration and variety.}
\label{3conf_fig}
\end{figure}
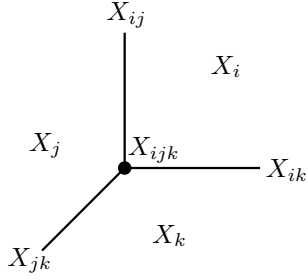

\noindent
For an $N$-fold transverse configuration as in Definition~\ref{TransConf_dfn1}, let
\begin{gather}\label{Xesetdfn_e}
X_{\eset}=\bigg(\bigsqcup_{i=1}^NX_i\bigg)\bigg/\!\!\sim, \quad
X_i\ni x\sim x\in X_j~~\forall~x\in X_{ij}\subset X_i,X_j,~i\neq j\,,\\
\label{Xprtdfn_e}
X_{\prt}\equiv\bigcup_{I\in\cP(N),|I|=2}\hspace{-.25in}\!X_I\subset X_{\eset}\,.
\end{gather}
For $k\!\in\!\Z^{\ge0}$, we call a tuple $(\om_i)_{i\in[N]}$ a \sf{$k$-form on~$X_{\eset}$}
if $\om_i$ is a $k$-form on~$X_i$ for each $i\!\in\![N]$ and 
$$\om_i\big|_{X_{ij}}=\om_j\big|_{X_{ij}} \qquad\forall~i,j\!\in\![N].$$
For $\X$ as in Definition~\ref{TransConf_dfn2}, 
let $\Symp(\X)$ denote the space of all symplectic structures 
on $\X$ and
\BE{Sympdfn_e2}
\Symp^+\big(\X\big)= \Symp\big(\X\big)
\cap \prod_{i=1}^N\Symp^+\big(X_i,\{X_{ij}\}_{j\in[N]-i}\big)\,.\EE
Thus, if $(\om_i)_{i\in[N]}$ is an element of $\Symp^+(\X)$,
then $\{X_{ij}\}_{j\in[N]-i}$ is an SC symplectic divisor in $(X_i,\om_i)$
for each $i\!\in\![N]$.

\begin{dfn}\label{SCC_dfn}
Let $N\!\in\!\Z^+$.
An \sf{$N$-fold simple crossings} (or \sf{SC}) \sf{symplectic configuration} 
is a~tuple 
\BE{SCCdfn_e}\X=\big((X_I)_{I\in\cP^*(N)},(\om_i)_{i\in[N]}\big)\EE
such that $\{X_I\}_{I\in\cP^*(N)}$ is an $N$-fold transverse configuration,
$X_{ij}$ is a closed submanifold of~$X_i$ of codimension~2
for all $i,j\!\in\![N]$ distinct, and
$(\om_i)_{i\in[N]}\in\Symp^+(\X)$.
The \sf{SC symplectic variety associated~to} such a tuple~$\X$ 
is the pair~$(X_{\eset},(\om_i)_{i\in[N]})$.
\end{dfn}

\begin{eg}\label{SCdivvsconf_eg}
An SC symplectic divisor $\{V_i\}_{i\in S}$ in~$(X,\om)$ 
gives rise to an $N$-fold SC symplectic configuration with $N\!=\!|S|\!+\!1$.
For each $i\!\in\![N]$, let
$$\pi_1,\pi_2\!:V_i\!\times\!\C\lra V_i,\C$$
be the component projection maps.
We identify $S$ with $[N\!-\!1]$ and denote by~$\om_{\C}$ the standard symplectic form
on~$\C$.
For $I\!\in\cP^*(N)$ and $i\!\in\![N]$, we define
$$X_I=\begin{cases} V_I\!\times\!\C, &\hbox{if}~N\!\not\in\!I;\\
V_{I-\{N\}},&\hbox{if}~N\!\in\!I;\end{cases} 
\qquad
\om_i=\begin{cases}
\pi_1^*(\om|_{V_i})\!+\!\pi_2^*\om_{\C},&\hbox{if}~i\!\neq\!N;\\
\om,&\hbox{if}~i\!=\!N.
\end{cases} $$
The resulting tuple~$\X$ as in~\eref{SCCdfn_e} is then an $N$-fold 
SC symplectic configuration.
\end{eg}

\noindent
Suppose $\om$ is a symplectic structure on $\{V_i\}_{i\in S}$ in~$X$
in the sense of Definition~\ref{SCdivstr_dfn}.
The symplectic part of the requirements on an SC almost K\"ahler divisor 
$$V\!\equiv\!\bigcup_{i\in S}\!V_i\subset X$$ 
in \cite[Definition~1.3]{Inc}
is equivalent to the existence for each $p\!\in\!V$ of 
an oriented chart~$\psi$ on~$X$ which restricts to oriented charts on
the smooth divisors~$V_i$ after projecting to some coordinate hyperplanes.
The existence of an $\om$-tame almost complex structure~$J$ on~$X$ 
which restricts to an almost complex structure on each~$V_i$
implies the existence of such charts.
However,  the symplectic part of the requirements on an SC almost K\"ahler divisor 
in \cite[Definition~1.3]{Inc} sees the orientations 
only of~$X$, each~$V_i$, and their zero-dimensional intersections,
but not of the intermediate-dimensional intersections of the divisors~$V_i$.
By the $a\!>\!1$ case in the next example, this part does not by itself ensure the existence
of a~$J$ compatible with every~$V_i$.
By the $-1\!<\!a\!<\!-\frac12$ case in this example, the consideration of
the orientations of the pairwise intersections only does not suffice either.

\begin{eg}\label{orient_eg2}
Let $X\!=\!\C^3$ and
\begin{equation*}\begin{split}
\om=&\nd x_1\!\w\!\nd y_1+\nd x_2\!\w\!\nd y_2+\nd x_3\!\w\!\nd y_3\\
&+a\big(\nd x_1\!\w\!\nd y_2\!-\nd y_1\!\w\!\nd x_2\big)
+a\big(\nd x_1\!\w\!\nd y_3\!-\!\nd y_1\!\w\!\nd x_3\big)
+a\big(\nd x_2\!\w\!\nd y_3\!-\!\nd y_2\!\w\!\nd x_3\big)
\end{split}\end{equation*}
for some $a\!\in\!\R$.
We note~that
\begin{gather*}
\om^3=6(1\!-\!a)^2(1\!+\!2a)\,\nd x_1\!\w\!\nd y_1\!\w\!
\nd x_2\!\w\!\nd y_2\!\w\!\nd x_3\!\w\!\nd y_3\,,\\
\om^2|_{\C^3_i}=2\big(1\!-\!a^2\big)\nd x_j\!\w\!\nd y_j\!\w\!\nd x_k\!\w\!\nd y_k
\quad\hbox{if}~\{i,j,k\}\!=\!\{1,2,3\}.
\end{gather*}
Thus, $\om$ is a symplectic structure on $\{\C^3_i\}_{i\in[3]}$
in~$\C^3$ if $a\!\neq\!\pm1,-\frac12$.
The $\om$-orientations on the coordinate lines $\C^3_{ij}$ with $i\!\neq\!j$ and
on the point $\C^3_{123}\!=\!\{0\}$ are the canonical complex orientations.
If $a\!>\!1$,
\begin{enumerate}[label=$\bu$,leftmargin=*]

\item the $\om$-orientation on~$\C^3$ is the canonical complex orientation,

\item the $\om$-orientations on the hyperplanes $\C_i^3$ are 
the opposite of the canonical complex orientations,

\item the intersection and $\om$-orientations on 
 the coordinate lines $\C^3_{ij}$ with $i\!\neq\!j$ are the same,

\item the intersection and $\om$-orientations on the point $\C^3_{123}$ are opposite.

\end{enumerate}
If $-1\!<\!a\!<\!-\frac12$,
\begin{enumerate}[label=$\bu$,leftmargin=*]

\item the $\om$-orientation on~$\C^3$ is the opposite of the canonical complex orientation,

\item the $\om$-orientations on the hyperplanes $\C_i^3$ are 
the canonical complex orientations,

\item the intersection and $\om$-orientations on 
 the coordinate lines $\C^3_{ij}$ with $i\!\neq\!j$ are opposite,

\item the intersection and $\om$-orientations on the point $\C^3_{123}$ are the same.

\end{enumerate}
\end{eg}

\subsection{Regularizations for SC symplectic divisors}
\label{SCDregul_subs}

\noindent
In this section, we formally define the notions of regularizations
for a submanifold $V\!\subset\!X$, 
for a symplectic submanifold with a split normal bundle,
and for a transverse collection $\{V_i\}_{i\in S}$ of submanifolds 
with a symplectic structure~$\om$;
see Definitions~\ref{smreg_dfn}, \ref{sympreg1_dfn}, and~\ref{SCDregul_dfn}\ref{sympregul_it}, 
respectively.
A regularization in the sense of Definition~\ref{SCDregul_dfn}\ref{sympregul_it} 
symplectically models a neighborhood of $x\!\in\!V_I$ in~$X$ on a neighborhood
of the zero section~$V_I$ in the normal bundle~$\cN_XV_I$ split as in~\eref{cNorient_e2}
with a standardized symplectic form.
The existence of such a regularization requires the smooth symplectic divisors~$V_i$
to meet $\om$-orthogonally at~$V_I$, which is rarely the case.
However, by Theorem~\ref{SCD_thm} at the end of this section, 
a virtual kind of existence, 
which suffices for many important applications in symplectic topology, 
is always the case if $\{V_i\}_{i\in S}$ is an SC symplectic divisor in
the sense of Definition~\ref{SCD_dfn}.
This implies that our notion of an SC~symplectic divisor
is natural from the point of view of symplectic topology and its connections with
algebraic geometry simultaneously.\\

\noindent
If $B$ is a manifold, possibly with boundary, and $k\!\in\!\Z^{\ge0}$,
we call a family $(\om_t)_{t\in B}$ of $k$-forms on~$X$ \sf{smooth} 
if the $k$-form~$\wt\om$ on~$B\!\times\!X$ given~by
$$\wt\om_{(t,x)}(v_1,\ldots,v_k)=
\begin{cases}
\om_t|_x(v_1,\ldots,v_k),&\hbox{if}~v_1,\ldots\!v_k\!\in\!T_xX;\\
0,&\hbox{if}~v_1\!\in\!T_tB;
\end{cases}$$
is smooth. 
Smoothness for families of other objects is defined similarly.\\

\noindent
For a vector bundle $\pi\!:\cN\!\lra\!V$, we denote by $\ze_{\cN}$ 
\sf{the radial vector field} on the total space of~$\cN$; it is given~by
$$\ze_{\cN}(v)=(v,v)\in\pi^*\cN =T\cN^{\ver} \lhra T\cN\,.$$
Let $\Om$ be a fiberwise 2-form on~$\cN\!\lra\!V$.
A connection~$\na$ on~$\cN$ induces a projection $T\cN\!\lra\!\pi^*\cN$ and thus
determines an extension~$\Om_{\na}$ of~$\Om$ to a 2-form on 
the total space of~$\cN$.
If $\om$ is a closed 2-form on~$V$, the 2-form
\BE{ombund_e2}
\wh\om \equiv \pi^*\om+\frac12\nd\io_{\ze_{\cN}}\Om_{\na}
\equiv \pi^*\om+\frac12\nd\big(\Om_{\na}(\ze_{\cN},\cdot)\big)\EE
on the total space of $\cN$ is then closed and
restricts to~$\Om$ on $\pi^*\cN\!=\!T\cN^{\ver}$.
If $\om$ is a symplectic form on~$V$ and $\Om$ is a fiberwise symplectic form on~$\cN$,
then~$\wh\om$ is a symplectic form on a neighborhood of~$V$ in~$\cN$.\\

\noindent
We call $\pi\!:(L,\rho,\na)\!\lra\!V$ a \sf{Hermitian line bundle} if
$V$ is a manifold, $L\!\lra\!V$ is a smooth complex line bundle,
$\rho$ is a Hermitian metric on~$L$, 
and $\na$ is a $\rho$-compatible connection on~$L$.
We use the same notation~$\rho$ to denote the square of the norm function on~$L$
and the Hermitian form on~$L$ which is $\C$-antilinear in the second input.
Thus,
$$\rho(v)\equiv\rho(v,v), \quad 
\rho(\fI v,w)=\fI\rho(v,w)=-\rho(v,\fI w) 
\qquad\forall~(v,w)\!\in\!L\!\times_V\!L.$$
Let $\rho^{\R}$ denote the real part of~$\rho$.
A smooth map $h\!:V'\!\lra\!V$ pulls back a Hermitian line bundle $(L,\rho,\na)$
over~$V$ to a Hermitian line bundle
$$h^*(L,\rho,\na)\equiv (h^*L,h^*\rho,h^*\na)\lra V'.$$

\vspace{.1in}

\noindent
A Riemannian metric on an oriented  real vector bundle \hbox{$L\!\lra\!V$} of rank~2
determines a complex structure on the fibers of~$L$.
A \sf{Hermitian structure} on an oriented  real vector bundle \hbox{$L\!\lra\!V$} of rank~2
is a pair $(\rho,\na)$ such that $(L,\rho,\na)$ is a Hermitian line bundle
with the complex structure~$\fI_{\rho}$ determined by the Riemannian metric~$\rho^{\R}$.
If $\Om$ is a fiberwise symplectic form on an oriented vector bundle \hbox{$L\!\lra\!V$} of rank~2,
an \sf{$\Om$-compatible Hermitian structure} on~$L$ is a Hermitian structure $(\rho,\na)$ on~$L$ 
such that $\Om(\cdot,\fI_{\rho}\cdot)=\rho^{\R}(\cdot,\cdot)$.\\

\noindent
Let $(L_i,\rho_i,\na^{(i)})_{i\in I}$ be a finite collection of Hermitian line
bundles over~$V$.
If each $(\rho_i,\na^{(i)})$ is compatible with a fiberwise symplectic form~$\Om_i$ on~$L_i$
and
$$(\cN,\Om,\na)\equiv\bigoplus_{i\in I}\big(L_i,\Om_i,\na^{(i)}\big),$$
then the 2-form~\eref{ombund_e2} is given~by 
\BE{ombund_e}
\wh\om=\wh\om_{(\rho_i,\na^{(i)})_{i\in I}}^{\bu}
\equiv  \pi^*\om+\frac12
\bigoplus_{i\in I} \pi_{I;i}^*\nd\big((\Om_i)_{\na^{(i)}}(\ze_{L_i},\cdot)\big),\EE
where $\pi_{I;i}\!:\cN\!\lra\!L_i$ is the component projection map.\\

\noindent
If in addition $\Psi\!:V'\!\lra\!V$ is an embedding, $I'\!\subset\!I$, and 
$(L_i',\rho_i',\na'^{(i)})_{i\in I'}$ is a finite collection of Hermitian line
bundles over~$V'$, a vector bundle homomorphism 
$$\wt\Psi\!: \bigoplus_{i\in I'}L'_i\lra \bigoplus_{i\in I}L_i$$
covering~$\Psi$ is a \sf{product Hermitian inclusion} if 
$$\wt\Psi\!: (L_i',\rho_i',\na'^{(i)}) \lra 
\Psi^*(L_i,\rho_i,\na^{(i)})$$
is an isomorphism of Hermitian line bundles over~$V'$ for every $i\!\in\!I'$.
We call such a morphism a \sf{product Hermitian isomorphism covering~$\Psi$}
if $|I'|\!=\!|I|$.

\begin{dfn}\label{smreg_dfn}
Let $X$ be a manifold and $V\!\subset\!X$ be a  submanifold
with normal bundle $\cN_XV\!\lra\!V$. 
A \sf{regularization for~$V$ in~$X$} is a diffeomorphism $\Psi\!:\cN'\!\lra\!X$
from a neighborhood of~$V$ in~$\cN_XV$ onto a neighborhood of~$V$ in~$X$ such
that $\Psi(x)\!=\!x$ and the isomorphism
$$ \cN_XV|_x=T_x^{\ver}\cN_XV \lhra T_x\cN_XV
\stackrel{\nd_x\Psi}{\lra} T_xX\lra \frac{T_xX}{T_xV}\equiv\cN_XV|_x$$
is the identity for every $x\!\in\!V$.
\end{dfn}

\noindent
By this definition, a regularization for $V\!=\!X$ in $X$ is the identity map
on $X\!=\!\cN_XX$.\\

\noindent
If $(X,\om)$ is a symplectic manifold and $V$ is a  symplectic submanifold in~$(X,\om)$,
then $\om$ induces a fiberwise symplectic form~$\om|_{\cN_XV}$ on
the normal bundle~$\cN_XV$ of~$V$ in~$X$ via the isomorphism~\eref{cNXVsymp_e}.
We denote the restriction of~$\om|_{\cN_XV}$ to a subbundle $L\!\subset\!\cN_XV$
by~$\om|_L$.

\begin{dfn}\label{sympreg1_dfn}
Let $X$ be a  manifold, $V\!\subset\!X$ be a  submanifold, and
$$\cN_XV=\bigoplus_{i\in I}L_i$$
be a fixed splitting into oriented rank~2 subbundles. 
\begin{enumerate}[label=(\arabic*),leftmargin=*]

\item\label{sympreg1_it} 
If $\om$ is a symplectic form on~$X$ such that $V$ is a symplectic submanifold
and $\om|_{L_i}$ is nondegenerate for every $i\!\in\!I$, then
an \sf{$\om$-regularization for~$V$ in~$X$} is a tuple $((\rho_i,\na^{(i)})_{i\in I},\Psi)$, 
where $(\rho_i,\na^{(i)})$ is an $\om|_{L_i}$-compatible Hermitian structure on~$L_i$
for each $i\!\in\!I$ and $\Psi$ is a regularization for~$V$ in~$X$, such that 
\BE{Psiomcond_e}\Psi^*\om=\wh\om_{(\rho_i,\na^{(i)})_{i\in I}}^{\bu}\big|_{\Dom(\Psi)}.\EE

\item\label{sympreg2_it} If $B$ is a  manifold, possibly with boundary, and
$(\om_t)_{t\in B}$ is a smooth family of symplectic forms on~$X$ 
which restrict to symplectic forms on~$V$, then
an \sf{$(\om_t)_{t\in B}$-family of regularizations} for~$V$ in~$X$ 
is a smooth family of tuples 
\BE{sympreg1dfn_e}(\cR_t)_{t\in B} \equiv 
\big((\rho_{t;i},\na^{(t;i)})_{i\in I},\Psi_t\big)_{t\in B}\EE
such that $\cR_t$ is an $\om_t$-regularization for~$V$ in~$X$ for each $t\!\in\!B$
and 
$$\big\{(t,v)\!\in\!B\!\times\!\cN_XV\!:\,v\!\in\!\Dom(\Psi_t)\big\}\lra X,\qquad
(t,v)\lra \Psi_t(v),$$
is a smooth map from a neighborhood of $B\!\times\!V$ in $B\!\times\!\cN_XV$.
\end{enumerate}
\end{dfn}

\noindent
We next extend these definitions to SC~divisors.
Suppose $\{V_i\}_{i\in S}$ is a transverse collection of codimension~2 submanifolds of~$X$.
For each $I\!\subset\!S$, the last isomorphism in~\eref{cNorient_e2} with $I'\!=\!\eset$
provides 
a natural decomposition 
$$\pi_I\!:\cN_XV_I\!=\!\bigoplus_{i\in I}\cN_{V_{I-i}}V_I  \lra V_I$$
of the normal bundle of~$V_I$ in~$X$ into oriented rank~2 subbundles. 
We take this decomposition as given for the purposes of applying Definition~\ref{sympreg1_dfn}.
If in addition $I'\!\subset\!I$, let
$$\pi_{I;I'}\!:\cN_{I;I'}\equiv 
\bigoplus_{i\in I-I'}\!\!\!\cN_{V_{I-i}}V_I=\cN_{V_{I'}}V_I\lra V_I\,.$$
There are canonical identifications
\BE{cNtot_e}\cN_{I;I-I'}=\cN_XV_{I'}|_{V_I}, \quad
\cN_XV_I=\pi_{I;I'}^*\cN_{I;I-I'}=\pi_{I;I'}^*\cN_XV_{I'}
\qquad\forall~I'\!\subset\!I\!\subset\![N].\EE
The first equality in the second statement above
is used in particular in~\eref{overlap_e}. 

\begin{dfn}\label{TransCollReg_dfn}
Let $X$ be a manifold and $\{V_i\}_{i\in S}$ be a transverse collection 
of submanifolds of~$X$.
A \sf{system of regularizations for}  $\{V_i\}_{i\in S}$ in~$X$ is a~tuple 
$(\Psi_I)_{I\subset S}$, where $\Psi_I$ is a regularization for~$V_I$ in~$X$
in the sense of Definition~\ref{smreg_dfn}, such~that
\BE{Psikk_e}
\Psi_I\big(\cN_{I;I'}\!\cap\!\Dom(\Psi_I)\big)=V_{I'}\!\cap\!\Im(\Psi_I)\EE
for all $I'\!\subset\!I\!\subset\!S$.
\end{dfn}

\noindent
Given a system of regularizations as in Definition~\ref{TransCollReg_dfn}
and $I'\!\subset\!I\!\subset\!S$, let
$$\cN_{I;I'}' = \cN_{I;I'}\!\cap\!\Dom(\Psi_I), \qquad
\Psi_{I;I'}\equiv \Psi_I\big|_{\cN_{I;I'}'}\!: \cN_{I;I'}'\lra V_{I'}\,.$$
The map $\Psi_{I;I'}$ is a regularization for $V_I$ in~$V_{I'}$.
Let 
$$\io\!:\pi_{I;I'}^*\cN_{I;I-I'}\lhra 
\pi_{I;I'}^*\cN_XV_I = \pi_I^*\cN_XV_I\big|_{\cN_{I;I'}} \lhra T\cN_XV_I\big|_{\cN_{I;I'}}$$
denote the canonical inclusion as a subspace of the vertical tangent bundle.
By \eref{Psikk_e},  
$$\nd\Psi_I\!:T\cN_XV_I|_{\cN_{I;I'}'}\lra TX|_{V_{I'}\cap\Im(\Psi_I)}
\quad\hbox{and}\quad
\nd\Psi_I\!: T\cN_{I;I'}|_{\cN_{I;I'}'}\lra TV_{I'}\big|_{V_{I'}\cap\Im(\Psi_I)}$$
are isomorphisms of vector bundles for all $I'\!\subset\!I\!\subset\!S$.
This implies that the composition
\BE{wtPsiIIdfn_e}\begin{split}
\fD\Psi_{I;I'}\!:   \pi_{I;I'}^*\cN_{I;I-I'}|_{\cN_{I;I'}'} \stackrel{\io}{\lhra}  
T\cN_XV_I|_{\cN_{I;I'}'}
&\stackrel{\nd\Psi_I}{\lra} 
TX|_{V_{I'}\cap\Im(\Psi_I)}\\
&\lra \frac{TX|_{V_{I'}}}{TV_{I'}}
\Big|_{V_{I'}\cap\Im(\Psi_I)}=\cN_XV_{I'}|_{V_{I'}\cap\Im(\Psi_I)}
\end{split}\EE
is an isomorphism respecting the natural decompositions of 
$\cN_{I;I-I'}\!=\!\cN_XV_{I'}|_{V_I}$ and $\cN_XV_{I'}$.
For example, 
$$\fD\Psi_{I;\eset}=\Psi_I, \qquad  \fD\Psi_{I;I}=\id_{\cN_XV_I}\,.$$
By the last assumption in Definition~\ref{smreg_dfn}, 
\BE{wtPsiIIprop_e}
\fD\Psi_{I;I'}\big|_{\pi_{I;I'}^*\cN_{I;I-I'}|_{V_I}}\!=\!\id\!:\,
\cN_{I;I-I'}\lra \cN_XV_{I'}|_{V_I}\EE
under the canonical identification of $\cN_{I;I-I'}$ with $\cN_XV_{I'}|_{V_I}$.

\begin{dfn}\label{TransCollregul_dfn}
Let $X$ be a manifold and  $\{V_i\}_{i\in S}$ be a transverse 
collection of submanifolds of~$X$. 
A \sf{regularization for $\{V_i\}_{i\in S}$ in~$X$} 
is a system of regularizations $(\Psi_I)_{I\subset S}$ 
for $\{V_i\}_{i\in S}$ in~$X$ such~that
\BE{overlap_e}
\Dom(\Psi_I)=\fD\Psi_{I;I'}^{\,-1}\big(\Dom(\Psi_{I'})\big), 
\quad
\Psi_I=\Psi_{I'}\circ\fD\Psi_{I;I'}|_{\Dom(\Psi_I)}\EE
for all $I'\!\subset\!I\!\subset\!S$.
\end{dfn}

\begin{dfn}\label{SCDregul_dfn}
Let $X$ be a manifold and  $\{V_i\}_{i\in S}$ be a finite transverse collection 
of closed submanifolds of~$X$ of codimension~2. 

\begin{enumerate}[label=(\arabic*),leftmargin=*]

\item\label{sympregul_it} 
If $\om\in\Symp(X,\{V_i\}_{i\in S})$, then 
an \sf{$\om$-regularization for $\{V_i\}_{i\in S}$ in~$X$} 
is a~tuple
\BE{regdfn_e0}
(\cR_I)_{I\subset S} \equiv  
\big((\rho_{I;i},\na^{(I;i)})_{i\in I},\Psi_I\big)_{I\subset S}\EE
such that $\cR_I$ is an $\om$-regularization for~$V_I$ in~$X$ for each $I\!\subset\!S$,
$(\Psi_I)_{I\subset S}$ is a regularization for $\{V_i\}_{i\in S}$ in~$X$,
and the induced vector bundle isomorphisms
$$\fD\Psi_{I;I'}\!:  \pi_{I;I'}^*\cN_{I;I-I'}\big|_{\cN_{I;I'}'}
\lra \cN_XV_{I'}\big|_{V_{I'}\cap\Im(\Psi_I)}$$
in~\eref{wtPsiIIdfn_e} are product Hermitian isomorphisms
for all $I'\!\subset\!I\!\subset\!S$.

\item\label{sympregul_it2}  If $B$ is a manifold, possibly with boundary, and
$(\om_t)_{t\in B}$ is a smooth family of symplectic forms in $\Symp(X,\{V_i\}_{i\in S})$,
then an \sf{$(\om_t)_{t\in B}$-family of regularizations for $\{V_i\}_{i\in S}$ in~$X$}
is a smooth family of~tuples 
\BE{regdfn_e3}(\cR_{t;I})_{t\in B,I\subset S}
\equiv 
\big((\rho_{t;I;i},\na^{(t;I;i)})_{i\in I},\Psi_{t;I}\big)_{t\in B,I\subset S}\EE
such that $(\cR_{t;I})_{I\subset S}$ is an $\om_t$-regularization for $\{V_i\}_{i\in S}$ in~$X$ 
for each $t\!\in\!B$
and $(\cR_{t;I})_{t\in B}$ is
an $(\om_t)_{t\in B}$-family of regularizations for $V_I$ in~$X$ 
for each $I\!\subset\!S$.
\end{enumerate}
\end{dfn}

\noindent
Let $X$, $\{V_i\}_{i\in S}$, and  $(\om_t)_{t\in B}$ be as in Definition~\ref{SCDregul_dfn}
and
\begin{equation*}\begin{split}
\big(\cR_{t;I}^{(1)}\big)_{t\in B,I\subset S}
&\equiv\big((\rho_{t;I;i},\na^{(t;I;i)})_{i\in I},\Psi_{t;I}^{(1)}\big)_{t\in B,I\subset S},\\
\big(\cR_{t;I}^{(2)}\big)_{t\in B,I\subset S}
&\equiv\big((\rho_{t;I;i},\na^{(t;I;i)})_{i\in I},\Psi_{t;I}^{(2)}\big)_{t\in B,I\subset S}
\end{split}\end{equation*}
be two $(\om_t)_{t\in B}$-families of regularizations for  $(V_i)_{i\in S}$ in~$X$.
We define
\BE{cRequidfn_e}\big(\cR_{t;I}^{(1)}\big)_{t\in B,I\subset S} \cong 
\big(\cR_{t;I}^{(2)}\big)_{t\in B,I\subset S}\EE
if the two families of regularizations agree on the level of germs, i.e.~there exists 
an  $(\om_t)_{t\in B}$-family 
of regularizations as in~\eref{regdfn_e3} such that 
$$\Dom(\Psi_{t;I})\subset\Dom(\Psi_{t;I}^{(1)}),\Dom(\Psi_{t;I}^{(2)})
\qquad\hbox{and}\qquad
\Psi_{t;I}=\Psi_{t;I}^{(1)}\big|_{\Dom(\Psi_{t;I})},\Psi_{t;I}^{(2)}\big|_{\Dom(\Psi_{t;I})}$$
for all $t\!\in\!B$ and $I\!\subset\!S$.\\

\noindent
Definition~\ref{SCDregul_dfn}\ref{sympregul_it2} topologizes the set
$\Aux(X,\{V_i\}_{i\in S})$ of pairs $(\om,(\cR_I)_{I\subset S})$ 
consisting of a symplectic structure~$\om$ on $\{V_i\}_{i\in S}$ in~$X$
and an $\om$-regularization $(\cR_I)_{I\subset S}$ for $\{V_i\}_{i\in S}$ in~$X$.
Families of regularizations satisfying~\eref{cRequidfn_e} are homotopic.\\

\noindent
By Theorem~\ref{SCD_thm} below, a family $(\om_t)_{t\in B}$ of symplectic forms
on~$X$ so that $\{V_i\}_{i\in S}$ is an SC symplectic divisor in $(X,\om_t)$
can be deformed through such symplectic forms to a family $(\om_{t,1})_{t\in B}$
which admits a family $(\wt\cR_{t;I})_{t\in B,I\subset S}$ 
of regularizations for $(V_i)_{i\in S}$ in~$X$.
If $\prt B\!\neq\!\eset$ and the family $(\om_t)_{t\in\prt B}$
admits a family $(\cR_{t;I})_{t\in\prt B,I\subset S}$ 
of regularizations for $(V_i)_{i\in S}$ in~$X$, then
$(\om_t)_{t\in B}$ can be deformed keeping it fixed for $t\!\in\!\prt B$
and  $(\wt\cR_{t;I})_{t\in B,I\subset S}$ can be chosen to extend
$(\cR_{t;I})_{t\in\prt B,I\subset S}$.
This implies that the projection~\eref{AuxSympDiv_e}
is a weak homotopy equivalence.
Furthermore, the family $(\om_t)_{t\in B}$ can be deformed without changing
the cohomology class of each~$\om_t$ or 
the restriction of~$\om_t$ to the complement~$X^*$ of an arbitrarily small neighborhood
of the singular locus of the divisor $\{V_i\}_{i\in S}$.
Since this locus is empty if $|S|\!=\!1$,
the case $|S|\!=\!1$ and $X^*\!=\!X$
of Theorem~\ref{SCD_thm} is a parametrized version of 
the standard Symplectic Neighborhood Theorem \cite[Theorem~3.30]{MS1}.

\begin{thm}\label{SCD_thm}
Let $X$ be a manifold, $\{V_i\}_{i\in S}$ be a finite transverse collection  
of closed submanifolds of~$X$ of codimension~2,
and $X^*\!\subset\!X$ be an open subset, possibly empty, such that 
$\ov{X^*}\!\cap\!V_I\!=\!\eset$ for all $I\!\subset\!S$ with $|I|\!=\!2$.
Suppose
\begin{enumerate}[label=$\bullet$,leftmargin=*]
\item $B$ is a compact manifold, possibly with boundary, 

\item $N(\prt B),N'(\prt B)$ are neighborhoods of $\prt B$ in~$B$
such that $\ov{N'(\prt B)}\!\subset\!N(\prt B)$,

\item $(\om_t)_{t\in B}$ is a smooth family of symplectic forms 
in $\Symp^+(X,\{V_i\}_{i\in S})$, and

\item  $(\cR_{t;I})_{t\in N(\prt B),I\subset S}$ is 
an $(\om_t)_{t\in N(\prt B)}$-family of regularizations for 
$(V_i)_{i\in S}$ in~$X$.
\end{enumerate}
Then there exist a smooth family $(\mu_{t,\tau})_{t\in B,\tau\in\bI}$ of
1-forms on~$X$ such~that 
\begin{gather*}
\om_{t,\tau}\!\equiv\!\om_t\!+\!\nd\mu_{t,\tau}
\in \Symp^+\big(X,\{V_i\}_{i\in S}\big) ~~\forall\,(t,\tau)\!\in\!B\!\times\!\bI,\\
\mu_{t,0}=0~~\forall\,t\!\in\!B, \quad 
\supp\big(\mu_{\cdot,\tau}\big)\subset 
\big(B\!-\!N'(\prt B)\big)\!\times\!(X\!-\!X^*)~~\forall\,\tau\!\in\!\bI,
\end{gather*}
and an $(\om_{t,1})_{t\in B}$-family $(\wt\cR_{t;I})_{t\in B,I\subset S}$ 
of regularizations for $(V_i)_{i\in S}$ in~$X$  such~that
$$\big(\wt\cR_{t;I}\big)_{t\in N'(\prt B),I\subset S}\cong 
\big(\cR_{t;I}\big)_{t\in N'(\prt B),I\subset S}.$$
\end{thm}

\vspace{.1in}

\noindent
This theorem is an immediate consequence of Theorem~\ref{SCC_thm}
applied~to 
\begin{enumerate}[label=$\bu$,leftmargin=*]

\item the $N$-fold transverse configuration $\{X_I\}_{I\in\cP^*(N)}$ and 
the family $(\om_{t;i})_{t\in B,i\in[N]}$ of elements of $\Symp^+(\{X_I\}_{I\in\cP^*(N)})$ 
induced by $(X,\{V_i\}_{i\in S})$ and  $(\om_t)_{t\in B}$
as in Example~\ref{SCdivvsconf_eg},

\item the family $(\fR_t)_{t\in N(\prt B)}$ of regularizations for $\{X_I\}_{I\in\cP^*(N)}$
induced by $(\cR_{t;I})_{t\in N(\prt B),I\subset S}$ as
in Example~\ref{SCdivvsconf_eg2}.

\end{enumerate}
The family of tuples $(\wt\cR_{t;I})_{t\in B}$ with $I\!\in\!\cP_N(N)$
provided by Theorem~\ref{SCC_thm} then satisfies the requirements of
Theorem~\ref{SCD_thm}.\\

\noindent
Theorem~\ref{SCD_thm} can also be obtained without going through Theorem~\ref{SCC_thm}.
The argument would be fundamentally the same, but
Corollary~\ref{posinter_crl0} would no longer be needed
and Lemma~\ref{SympNeigh_lmm} would suffice in place of Proposition~\ref{TubulNeigh_prp}.

\subsection{Regularizations for SC symplectic varieties}
\label{SCCregul_subs}

\noindent
In this section, we define a \sf{regularization} for a transverse configuration~$\X$
of manifolds with a symplectic structure $(\om_i)_{i\in[N]}$
as a tuple of $\om_i$-regularizations for $\{X_{ij}\}_{j\in[N]-i}$ in~$X_i$
that agree on the overlaps; see \eref{SCCregCond_e0} and 
Definition~\ref{SCCregul_dfn}\ref{SCCreg_it}.
We conclude with Theorem~\ref{SCC_thm}: 
the space of SC symplectic varieties in the sense of Definition~\ref{SCC_dfn} 
is weakly homotopy equivalent to the space of those with regularizations.\\

\noindent
Suppose $\{X_I\}_{I\in\cP^*(N)}$ is a transverse configuration in the sense 
of Definition~\ref{TransConf_dfn1}.
For each $I\!\in\!\cP^*(N)$ with $|I|\!\ge\!2$, let
$$\pi_I\!:\cN X_I\equiv \bigoplus_{i\in I}\cN_{X_{I-i}}X_I\lra X_I\,.$$
If in addition $I'\!\subset\!I$, let
$$\pi_{I;I'}\!:\cN_{I;I'}\equiv 
  \bigoplus_{i\in I-I'}\!\!\!\cN_{X_{I-i}}X_I\lra X_I\,.$$
By the last isomorphism in~\eref{cNorient_e2} with $X\!=\!X_i$ for any $i\!\in\!I'$ and 
$\{V_j\}_{j\in S}\!=\!\{X_{ij}\}_{j\in[N]-i}$, 
$$	\cN_{I;I'}=\cN_{X_{I'}}X_I \qquad\forall~I'\!\subset\!I\!\subset\![N],~I'\!\neq\!\eset.$$
Similarly to~\eref{cNtot_e}, there are canonical identifications
\BE{cNtot_e4}\cN_{I;I-I'}=\cN X_{I'}|_{X_I}, \quad
\cN X_I=\pi_{I;I'}^*\cN_{I;I-I'}=\pi_{I;I'}^*\cN X_{I'}
\qquad\forall~I'\!\subset\!I\!\subset\![N];\EE
the first and last identities above hold if $|I'|\!\ge\!2$.

\begin{dfn}\label{TransConfregul_dfn}
Let $N\!\in\!\Z^+$ and $\X\!=\!\{X_I\}_{I\in\cP^*(N)}$ be a transverse configuration.
A \sf{regularization for $\X$} is a tuple 
$(\Psi_{I;i})_{i\in I\subset[N]}$, 
where for each $i\!\in\!I$ the tuple $(\Psi_{I;i})_{I\in\cP_i(N)}$
is a regularization for $\{X_{ij}\}_{j\in[N]-i}$ in~$X_i$ in the sense of
Definition~\ref{TransCollregul_dfn}, such that
\BE{SCCregCond_e0}
\Psi_{I;i_1}\big|_{\cN_{I;i_1i_2}\cap\Dom(\Psi_{I;i_1})}
=\Psi_{I;i_2}\big|_{\cN_{I;i_1i_2}\cap\Dom(\Psi_{I;i_2})}\EE
for all $i_1,i_2\!\in\!I\!\subset\![N]$.
\end{dfn}

\noindent
Given a regularization as in Definition~\ref{TransConfregul_dfn}
and $I'\!\subset\!I\!\subset\![N]$ with $|I|\!\ge\!2$ and $I'\!\neq\!\eset$, let
\BE{PsiIIconf_e}
\cN_{I;I'}'\!=\!\cN_{I;I'}\!\cap\!\Dom(\Psi_{I;i}),~~
\Psi_{I;I'}\!=\!\Psi_{I;i}|_{\cN_{I;I'}'}\!\!:\cN_{I;I'}'\lra X_{I'}
\qquad\hbox{if}~i\!\in\!I';\EE
by~\eref{SCCregCond_e0}, $\Psi_{I;I'}(v)$ does not depend on the choice of $i\!\in\!I'$.
Let
\BE{fDPsiIIconf_e0}\fD\Psi_{I;i;I'}\!: 
\pi_{I;I'}^*\cN_{I;i\cup(I-I')}\big|_{\cN_{I;I'}'}\lra \cN_{I';i}\big|_{\Im(\Psi_{I;I'})}\EE
be the associated vector bundle isomorphism as in~\eref{wtPsiIIdfn_e}.
If $|I'|\!\ge\!2$, we define an isomorphism of split vector bundles
\BE{fDPsiIIconf_e}\begin{split}
\fD\Psi_{I;I'}\!:\pi_{I;I'}^*\cN_{I;I-I'}\big|_{\cN_{I;I'}'}
&\lra \cN X_{I'}\big|_{\Im(\Psi_{I;I'})}\,, \\
\fD\Psi_{I;I'}\big|_{\pi_{I;I'}^*\cN_{I;i\cup(I-I')}\big|_{\cN_{I;I'}'}}
\!&=\fD\Psi_{I;i;I'} \quad\forall~i\!\in\!I';
\end{split}\EE
by~\eref{SCCregCond_e0}, the last maps agree on the overlaps.

\begin{dfn}\label{SCCregul_dfn}
Let $N\!\in\!\Z^+$ and $\X\!\equiv\!\{X_I\}_{I\in\cP^*(N)}$ be a transverse configuration.

\begin{enumerate}[label=(\arabic*),leftmargin=*]

\item\label{SCCreg_it} 
If $(\om_i)_{i\in[N]}$ is a symplectic structure on $\X$ in
the sense of Definition~\ref{TransConf_dfn2}, 
an \sf{$(\om_i)_{i\in[N]}$-regularization for~$\X$} is a~tuple
\BE{SCCregdfn_e0}
\fR\equiv (\cR_I)_{I\in\cP^*(N)} \equiv
\big(\rho_{I;i},\na^{(I;i)},\Psi_{I;i}\big)_{i\in I\subset[N]}\EE
such that $(\Psi_{I;i})_{i\in I\subset[N]}$ is a  regularization 
for $\X$ in the sense of Definition~\ref{TransConfregul_dfn} and
for each $i\!\in\![N]$ the tuple
$$\big((\rho_{I;j},\na^{(I;j)})_{j\in I-i},\Psi_{I;i}\big)_{I\in\cP_i(N)}$$
is an $\om_i$-regularization for $\{X_{ij}\}_{j\in[N]-i}$ in $X_i$
in the sense of Definition~\ref{SCDregul_dfn}\ref{sympregul_it}.

\item\label{SCCreg2_it}  If $B$ is a smooth manifold, possibly with boundary, and 
$(\om_{t;i})_{t\in B,i\in[N]}$ is a smooth family of symplectic structures
on $\X$, then
an \sf{$(\om_{t;i})_{t\in B,i\in[N]}$-family of regularizations for~$\X$} is a family of~tuples
\BE{SCCregul_e2}
(\fR_t)_{t\in B} \equiv
(\cR_{t;I})_{t\in B,I\in\cP^*(N)} \equiv
\big(\rho_{t;I;i},\na^{(t;I;i)},\Psi_{t;I;i}\big)_{t\in B,i\in I\subset [N]}\EE
such that $(\cR_{t;I})_{I\in\cP^*(N)}$ is an $(\om_{t;i})_{i\in[N]}$-regularization for
$\X$ for each $t\!\in\!B$ and
for each $i\!\in\![N]$  the tuple
$$\big((\rho_{t;I;j},\na^{(t;I;j)})_{j\in I-i},\Psi_{t;I;i}\big)_{t\in B,I\in\cP_i(N)}$$
is an $(\om_{t;i})_{t\in B}$-family of regularizations for 
$\{X_{ij}\}_{j\in[N]-i}$ in $X_i$
in the sense of Definition~\ref{SCDregul_dfn}\ref{sympregul_it2}.
\end{enumerate}
\end{dfn}

\noindent
The assumptions in Definition~\ref{SCCregul_dfn}\ref{SCCreg_it} imply
that the corresponding isomorphisms~\eref{fDPsiIIconf_e} are 
product Hermitian isomorphisms covering the maps~\eref{PsiIIconf_e}.

\begin{eg}\label{SCdivvsconf_eg2}
Suppose $X$ is a manifold,
$\{V_i\}_{i\in S}$ is a transverse collection of closed submanifolds of~$X$ of codimension~2,
$(\om_t)_{t\in B}$ is a smooth family of symplectic structures on $\{V_i\}_{i\in S}$ in~$X$,
and $(\cR_{t;I})_{t\in B,I\subset S}$ is an $(\om_t)_{t\in B}$-family of
regularizations for $\{V_i\}_{i\in S}$ in~$X$ as in~\eref{regdfn_e3}.
Let $\X$ and $(\om_{t;i})_{t\in B,i\in[N]}$
be the associated transverse configuration and family of symplectic structures on~it
constructed as in Example~\ref{SCdivvsconf_eg}.
Denote by $(\rho_{\C},\na^{(\C)})$ the standard Hermitian structure on~$\C$.
With notation as in Example~\ref{SCdivvsconf_eg}, 
for $i\!\in\!I\!\subset\![N]$ define
$$\wt\Psi_{t;I;i}=\begin{cases}
(\Psi_{t;I;i},\id_{\C})&\hbox{if}~i\!\neq\!N;\\
\Psi_{t;I},&\hbox{if}~i\!=\!N;
\end{cases} \qquad
(\wt\rho_{t;I;i},\wt\na^{(t;I;i)})=\begin{cases}
\pi_1^*(\rho_{t;I;i},\na^{(t;I;i)})&\hbox{if}~i\!\neq\!N;\\
\pi_2^*(\rho_{\C},\na^{(\C)}),&\hbox{if}~i\!=\!N.
\end{cases}$$
The tuple
$$(\fR_t)_{t\in B}  \equiv
(\wt\cR_{t;I})_{t\in B,I\in\cP^*(N)} \equiv
\big(\wt\rho_{t;I;i},\wt\na^{(t;I;i)},\wt\Psi_{t;I;i}\big)_{t\in B,i\in I\subset[N]}$$
is then an $(\om_{t;i})_{t\in B,i\in[N]}$-family of regularizations for~$\X$.
\end{eg}

\noindent
Let $\X\!\equiv\!\{X_I\}_{I\in\cP^*(N)}$ be an $N$-fold transverse configuration
such that $X_{ij}$ is a closed submanifold of~$X_i$ of codimension~2
for all $i,j\!\in\![N]$ distinct and
$(\om_{t;i})_{t\in B,i\in[N]}$ be a family of symplectic structures on~$\X$.
Suppose the tuples  
\begin{equation*}\begin{split}
\big(\fR_t^{(1)}\big)_{t\in B}& \equiv
\big(\rho_{t;I;i},\na^{(t;I;i)},\Psi_{t;I;i}^{(1)}\big)_{t\in B,i\in I\subset[N]},\\
\big(\fR_t^{(2)}\big)_{t\in B}& \equiv
\big(\rho_{t;I;i},\na^{(t;I;i)},\Psi_{t;I;i}^{(2)}\big)_{t\in B,i\in I\subset[N]}
\end{split}\end{equation*}
are $(\om_{t;i})_{t\in B,i\in[N]}$-families of regularizations for~$\X$.
We define
\BE{fRequidfn_e}\big(\fR_t^{(1)}\big)_{t\in B} \cong \big(\fR_t^{(2)}\big)_{t\in B}\EE
if the two families of regularizations agree on the level of germs, i.e.~there exists 
an $(\om_{t;i})_{t\in B,i\in[N]}$-family of regularizations
as in~\eref{SCCregul_e2} such~that 
\begin{gather*}
\Dom(\Psi_{t;I;i})\subset\Dom(\Psi_{t;I;i}^{(1)}),\Dom(\Psi_{t;I;i}^{(2)}),
\quad
\Psi_{t;I;i}=\Psi_{t;I;i}^{(1)}\big|_{\Dom(\Psi_{t;I;i})},
\Psi_{t;I;i}^{(2)}\big|_{\Dom(\Psi_{t;I;i})}
\end{gather*}
for all $t\!\in\!B$ and $i\!\in\!I\!\subset\![N]$.\\

\noindent
Definition~\ref{SCCregul_dfn}\ref{SCCreg2_it} topologizes the set
$\Aux(\X)$ of pairs $((\om_i)_{i\in[N]},\fR)$ 
consisting of a symplectic structure $(\om_i)_{i\in[N]}$ on~$\X$
and an $(\om_i)_{i\in[N]}$-regularization~$\fR$ for~$\X$.
Families of regularizations satisfying~\eref{fRequidfn_e} are homotopic.
By the following theorem, the projection map
$$\Aux(\X) \lra \Symp^+(\X), \qquad 
\big((\om_i)_{i\in[N]},\fR\big)\lra (\om_i)_{i\in[N]},$$
is a weak homotopy equivalence.

\begin{thm}\label{SCC_thm}
Let $N\!\in\!\Z^+$, $\X\!\equiv\!\{X_I\}_{I\in\cP^*(N)}$ be a transverse configuration
such that $X_{ij}$ is a closed submanifold of~$X_i$ of codimension~2
for all $i,j\!\in\![N]$ distinct,
and $X_i^*\!\subset\!X_i$ for each $i\!\in\![N]$ be an open subset, possibly empty,
such that $\ov{X_i^*}\!\cap\!X_I\!=\!\eset$ for all $i\!\in\!I\!\!\subset\![N]$ with $|I|\!=\!3$.
Suppose
\begin{enumerate}[label=$\bullet$,leftmargin=*]

\item $B$, $N(\prt B)$, and $N'(\prt B)$ are as in Theorem~\ref{SCD_thm},

\item $(\om_{t;i})_{t\in B,i\in[N]}$ is a smooth family of 
elements of $\Symp^+(\X)$, and

\item $(\fR_t)_{t\in N(\prt B)}$ is an 
$(\om_{t;i})_{t\in N(\prt B),i\in[N]}$-family of regularizations for~$\X$.

\end{enumerate}
Then there exist a smooth family $(\mu_{t,\tau;i})_{t\in B,\tau\in\bI,i\in[N]}$ of
1-forms on~$X_{\eset}$ such~that 
\BE{SCCom_e}\begin{split}
&\hspace{1in}\big(\om_{t,\tau;i}\!\equiv\!\om_{t;i}\!+\!\nd\mu_{t,\tau;i}\big)_{i\in[N]}
\in \Symp^+(\X) ~~\forall\,(t,\tau)\!\in\!B\!\times\!\bI,\\
&\mu_{t,0;i}=0~~\forall\,t\!\in\!B,\,i\!\in\![N], \quad 
\supp\big(\mu_{\cdot,\tau;i}\big)\subset 
\big(B\!-\!N'(\prt B)\big)\!\times\!(X_i\!-\!X_i^*)
~~\forall\,\tau\!\in\!\bI,\,i\!\in\![N],
\end{split}\EE
and an $(\om_{t,1;i})_{t\in B,i\in[N]}$-family $(\wt\fR_t)_{t\in B}$
of regularizations for~$\X$ such~that 
\BE{SCCom_e2}\big(\wt\fR_t\big)_{t\in N'(\prt B)} \cong \big(\fR_t\big)_{t\in N'(\prt B)}.\EE
\end{thm}

\vspace{.1in}

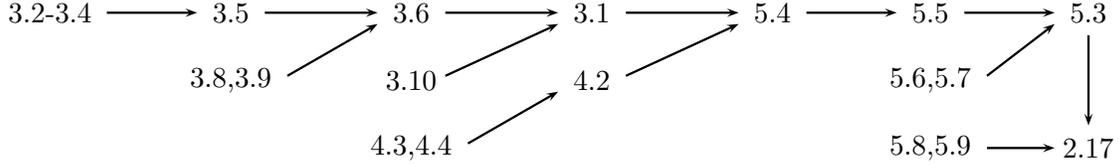
\begin{figure}
\begin{pspicture}(-5.5,-1)(11,1.5)
\psset{unit=.3cm}
\rput(-14,3){\ref{posinter_lmm0}-\ref{posinter_lmm}}\rput(-6,3){\ref{cNbndl_crl}}
\rput(2,3){\ref{SympDef_prp}}\rput(10,3){\ref{SympDefVB_thm}}
\rput(25,3){\ref{extendreg_crl}}\rput(32,3){\ref{SCCweak_prp}}
\rput(18,3){\ref{extendreg0_lmm}}
\rput(-6,0){\ref{cNprtform_lmm},\ref{cutoff_lmm}}\rput(2,0){\ref{SympNeigh_lmm2}}
\rput(10,0){\ref{TubulNeigh_prp}}
\rput(25,0){\ref{regulcomb_lmm},\ref{regulcomb_crl}}
\rput(2,-3){\ref{stratexp_lmm},\ref{SympNeigh_lmm}}
\rput(25,-3){\ref{weakregtoreg_lmm},\ref{weakregtoreg_crl}}\rput(32,-3){\ref{SCC_thm}}
\psline[linewidth=.1]{->}(-11.5,3)(-7.5,3)\psline[linewidth=.1]{->}(-4.5,3)(.5,3) 
\psline[linewidth=.1]{->}(3.5,3)(8.5,3)\psline[linewidth=.1]{->}(11.5,3)(16.5,3)
\psline[linewidth=.1]{->}(19.5,3)(23.5,3)\psline[linewidth=.1]{->}(26.5,3)(30.5,3)
\psline[linewidth=.1]{->}(-3.5,.2)(.5,2.5)\psline[linewidth=.1]{->}(3.5,.2)(8.5,2.5)
\psline[linewidth=.1]{->}(11.5,.2)(16.5,2.5)\psline[linewidth=.1]{->}(27.5,.2)(30.5,2.5)
\psline[linewidth=.1]{->}(4.5,-2.8)(8.5,-.5)\psline[linewidth=.1]{->}(27.5,-3)(30.5,-3)
\psline[linewidth=.1]{->}(32,2)(32,-2)
\end{pspicture}
\caption{The statements used in the proof of Theorem~\ref{SCC_thm}.}
\label{SCCthm_fig}
\end{figure}

\noindent
This theorem is proved in Section~\ref{SCCpf_sec} by induction on the strata of~$\X$
 using the essentially local notion of a weak regularization of 
Definition~\ref{LocalRegul_dfn}.
By Proposition~\ref{SCCweak_prp} and Corollary~\ref{extendreg_crl}, 
a family of elements of $\Symp^+(\X)$
with compatible {\it weak} regularizations over an open subset~$W$ of~$X$ 
which contains all $X_I$ with $I\!\supsetneq\!I^*$ extends over 
a neighborhood of all of~$X_{I^*}$.
Lemma~\ref{extendreg0_lmm} implements the deformations for symplectic forms on split 
vector bundles obtained in Theorem~\ref{SympDefVB_thm} via Proposition~\ref{TubulNeigh_prp};
the latter is a stratified version of the Tubular Neighborhood Theorem that respects
symplectic forms along the base. 
Proposition~\ref{SympDef_prp}, the main step in the proof of Theorem~\ref{SympDefVB_thm},
 makes use of the compatibility-of-orientations
assumptions in Definitions~\ref{SCD_dfn} and~\ref{SCC_dfn}.
By Lemma~\ref{weakregtoreg_lmm} and Corollary~\ref{weakregtoreg_crl},
weak regularizations and equivalences between them 
can be cut down to regularizations and equivalences between regularizations.
The connections between the different parts of the proof of Theorem~\ref{SCC_thm}
are indicated in Figure~\ref{SCCthm_fig}.

\section{Deformations of structures on vector bundles}
\label{LocalDeform_sec}

\noindent 
Let $V$ be an oriented manifold,
$I$ be a finite set, $L_i\!\lra\!V$ be an oriented rank~2 real vector bundle
for each $i\!\in\!I$, and 
\BE{cNsplit_e}\pi\!:\cN\equiv\bigoplus_{i\in I}L_i\lra V\,.\EE
We show that a symplectic structure~$\wt\om$ on a neighborhood~$\cN'$ of~$V$ in~$\cN$
can be deformed, keeping it fixed outside of a smaller neighborhood~$\cN''$
and keeping all natural submanifolds~$\cN_{I'}$ symplectic,
to a very standard symplectic structure~$\wh\om^{\bu}$ near~$V$ as long as~$\wt\om$
 satisfies a simple topological condition.
By Proposition~\ref{SympDef_prp}, this can be done for a symplectic structure~$\wh\om$
on~$\cN'$ induced in a standard way from
a symplectic form~$\om$ on~$V$ and a fiberwise symplectic structure~$\Om$ on~$\cN$.
By Lemma~\ref{SympNeigh_lmm2}, any symplectic structure~$\wt\om$ on a neighborhood~$\cN'$ 
of~$V$ in~$\cN$ can be deformed,  keeping it fixed outside of 
a smaller neighborhood~$\cN''$ and keeping the submanifolds~$\cN_{I'}$ symplectic,
so that it restricts  
to a standard symplectic structure~$\wh\om$ on a smaller neighborhood~$\wt\cN$.
The main statement of this section is Theorem~\ref{SympDefVB_thm};
the remaining statements are used in its proof, but not in the remainder of the paper.\\

\noindent
By Theorem~\ref{SympDefVB_thm},
a finite collection $\{V_i\}_{i\in I}$ of smooth symplectic divisors in~$(X,\om)$
intersecting positively at~$V_I$ can be deformed inside an arbitrarily small neighborhood~$W_I$
of~$V_I$ so that the pairwise intersections $V_i\!\cap\!V_j$ are 
symplectically orthogonal inside~$W_I$.
For $|I|\!=\!2$ and $V_I$ compact, this is \cite[Lemma~2.3]{Gf}.
The compactness assumption is technical and is not fundamental to the three-page proof in~\cite{Gf},
but the condition $|I|\!=\!2$ is.
The latter is clearly illustrated by the one-page proof of \cite[Lemma~3.2.3]{SymingtonThesis}
treating the $V_I\!=\!\{\pt\}$ case of \cite[Lemma~2.3]{Gf} (and thus $\dim_{\R}\!X\!=\!4$).
Our proof of Proposition~\ref{SympDef_prp}, the main ingredient in 
the proof Theorem~\ref{SympDefVB_thm}, follows a completely different approach.
It starts with the linear algebra observation of Lemma~\ref{posinter_lmm0}
and deforms the symplectic forms in three stages as described below~\eref{etaIdfn_e}
and indicated in Figure~\ref{SympDef_fig}.

\subsection{Notation and key statement}
\label{SympDefVB_subs}

\noindent
For a finite set $I$, denote by $\cP^*(I)$  the collection of non-empty subsets of~$I$.
With $\cN$ as in~\eref{cNsplit_e}, let 
\BE{cNsubdfn_e} 
\cN_{I'}=\bigoplus_{i\in I-I'}\!\!\!L_i \quad\forall\,I'\!\subset\!I, 
\qquad \cN_{\prt}=\bigcup_{i\in I}\cN_i\,.\EE
For any $\cN'\!\subset\!\cN$, we define
$$\cN_{I'}'=\cN_{I'}\cap\cN'  \quad\forall\,I'\!\subset\!I, 
\qquad \cN_{\prt}'=\cN_{\prt}\cap\cN'\,.$$
For any neighborhood~$\cN'$ of $V$ in~$\cN$,
$\{\cN_{I'}'\}_{I'\in\cP^*(I)}$ is a transverse configuration
in the sense of Definition~\ref{TransConf_dfn1} such that 
$\cN_{ij}'$ is a closed submanifold of $\cN_i'$ of codimension~2
for all $i,j\!\in\!I$ distinct.\\

\noindent
For $k\!\in\!\Z^{\ge0}$, denote by 
$$\pi\!: \La^k_{\C}\cN^*\lra V \qquad\hbox{and}\qquad
\pi\!:\La^k_{\C}\cN_i^*\lra V,~~i\!\in\!I,$$
the bundles of alternating $k$-tensors on $\cN$ and $\cN_i$, respectively.
For a tensor~$\al$ on~$\cN$ and $j\!\in\!I$, we view 
$\al|_{L_j}$ as a tensor on~$\cN$ via the projection to~$L_j$.
For a tensor~$\al_i$ on~$\cN_i$ and $j\!\in\!I\!-\!\{i\}$, we view 
$\al_i|_{L_j}$ as a tensor on~$\cN_i$ via the projection to~$L_j$.
For such~$\al$ and~$\al_i$, let
\BE{diagpart_e}\al^{\bu}=\sum_{j\in I}\al|_{L_j}\in\La^k_{\C}\cN^*
\qquad\hbox{and}\qquad
\al_i^{\bu}=\sum_{j\in I-\{i\}}\!\!\!\!\!\al_i|_{L_j}\in\La^k_{\C}\cN_i^*\EE
be the \sf{diagonal parts} of~$\al$ and~$\al_i$.
Define
$$\La^k_{\C}\cN_{\prt}^*\equiv
\Big\{(\al_i)_{i\in I}\!\in\!\bigoplus_{i\in I}\!\La^k_{\C}\cN_i^*\!:
\al_{i_1}\big|_{\cN_{i_1i_2}|_{\pi(\al_{i_1})}}
\!=\!\al_{i_2}\big|_{\cN_{i_1i_2}|_{\pi(\al_{i_2})}}~
\forall\,i_1,i_2\!\in\!I\Big\}\lra V\,.$$
This subspace is preserved under taking the diagonal part.
Denote by
$$r_{\cN;\prt}\!: \La^k_{\C}\cN^*\lra \La^k_{\C}\cN_{\prt}^*$$
the natural restriction homomorphism.
It commutes with taking the diagonal part.\\

\noindent
We call a section $(\Om_i)_{i\in I}$ of $\La^k_{\C}\cN_{\prt}^*$ 
a \sf{fiberwise $k$-form on~$\cN_{\prt}$}.
Each $\Om_i$ is then a fiberwise linear $k$-form on~$\cN_i$ and
$$\Om_{i_1}\big|_{\cN_{i_1i_2}}=\Om_{i_2}\big|_{\cN_{i_1i_2}}
\qquad\forall\,i_1,i_2\!\in\!I\,.$$
By Lemma~\ref{cNprtform_lmm}, any such form is the restriction of a fiberwise $k$-form on~$\cN$.
We call a fiberwise $2$-form $(\Om_i)_{i\in I}$ 
on~$\cN_{\prt}$ a \sf{fiberwise symplectic form} 
if each $\Om_i$ is a symplectic form on each fiber of~$\cN_i$.
Let
$$\Symp_V^+\big(\cN_{\prt}\big)\equiv\Symp_V^+\big(\{\cN_{I'}\}_{I'\in\cP^*(I)}\big)$$
be the subspace of fiberwise symplectic forms~$(\Om_i)_{i\in I}$ on $\cN_{\prt}$ such that
for all $i\!\in\!I'\!\subset\!I$ 
the fiberwise 2-form $\Om_i|_{\cN_{I'}}$ is symplectic and
the $\Om_i$-orientation of each fiber of~$\cN_{I'}$ agrees with its canonical
orientation, i.e.~the one induced by the orientations of~$L_i$.\\

\noindent
Let $\cN'$ be a neighborhood of $V$ in~$\cN$.
We call a tuple $(\wt\om_i)_{i\in I}$ a (\sf{closed}) \sf{$k$-form on $\cN_{\prt}'$}
if each $\wt\om_i$ is a (closed) $k$-form on $\cN_i'$ and
$$\wt\om_{i_1}\big|_{T\cN_{i_1i_2}'}
=\wt\om_{i_2}\big|_{T\cN_{i_1i_2}'}  \qquad\forall\,i_1,i_2\in I\,.$$
By a \sf{symplectic structure on $\cN_{\prt}'$}, we  mean an element
$(\wt\om_i)_{i\in I}$ of $\Symp(\{\cN_{I'}'\}_{I'\in\cP^*(I)})$,
i.e.~a closed 2-form on  $\cN_{\prt}'$ which restricts to 
a symplectic form on $\cN_{I'}'$ for each $I'\!\in\!\cP^*(I)$.
Let
$$\Symp^+\big(\cN_{\prt}'\big)\equiv\Symp^+\big(\{\cN_{I'}'\}_{I'\in\cP^*(I)}\big)$$
be the subspace of symplectic structures~$(\wt\om_i)_{i\in I}$ on~$\cN_{\prt}'$ such that 
for all $i\!\in\!I'\!\subset\!I$ the $\wt\om_i$-orientation of~$\cN_{I'}$ 
agrees with its canonical orientation, i.e.~the one induced by the orientations of~$V$
and~$L_i$.\\

\noindent
A symplectic structure $(\wt\om_i)_{i\in I}$ on~$\cN_{\prt}'$
restricts to a symplectic form~$\om$ on~$V$ and determines 
fiberwise symplectic structures $(\Om_i)_{i\in I}$ and $(\Om_i^{\bu})_{i\in I}$
on~$\cN_{\prt}$ via \eref{cNXVsymp_e} and~\eref{diagpart_e}.
If $(\wt\om_i)_{i\in I}$ lies in $\Symp^+(\cN_{\prt}')$, then
$$(\Om_i)_{i\in I},(\Om_i^{\bu})_{i\in I}\in  \Symp_V^+\big(\cN_{\prt}\big)\,.$$
We call $(\Om_i^{\bu})_{i\in I}$ the \sf{diagonalized fiberwise 2-form on~$\cN_{\prt}$ 
determined by $(\wt\om_i)_{i\in I}$}.
A tuple $(\na^{(i)})_{i\in I}$ of connections on~$L_i$ determines
a connection~$\na$ on each~$\cN_i$.
We call the 2-form $(\wh\om_i^{\bu})_{i\in I}$ on~$\cN_{\prt}$ determined by 
$\om$, $(\Om_i^{\bu})_{i\in I}$, and these connections~$\na$ as in~\eref{ombund_e2}
the \sf{diagonalized 2-form on~$\cN_{\prt}$ determined by
$(\wt\om_i)_{i\in I}$ and $(\na^{(i)})_{i\in I}$}.

\begin{thm}\label{SympDefVB_thm}
Let $V$ be a manifold, $I$ be a finite set with $|I|\!\ge\!2$, 
$L_i\!\lra\!V$ be an oriented rank~2 real vector bundle for each $i\!\in\!I$,
and $U\!\subset\!V$  be an open subset, possibly empty. 
Suppose 
\begin{enumerate}[label=$\bullet$,leftmargin=*]

\item $B$ is a compact manifold, possibly with boundary, 
$N(\prt B)$ is a neighborhood of $\prt B$ in~$B$, 
and $\cN'$ is a neighborhood of $V$ in~$\cN$,

\item $(\na^{(t;i)})_{t\in B}$ is a smooth family of connections
on~$L_i$ for each $i\!\in\!I$,

\item $(\wt\om_{t;i})_{t\in B,i\in I}$ is a smooth family in $\Symp^+(\cN_{\prt}')$
such~that 
\BE{SympDefVB_e0}
\big(\wt\om_{t;i}\big)_{i\in I}=\big(\wh\om_{t;i}^{\bu}|_{\cN_i'}\big)_{i\in I}
~~\forall\,t\!\in\!N(\prt B), \quad
\big(\wt\om_{t;i}|_{\cN_i'|_U}\big)_{i\in I}
=\big(\wh\om_{t;i}^{\bu}|_{\cN_i'|_U}\big)_{i\in I}~~\forall\,t\!\in\!B,\EE
where $(\wh\om_{t;i}^{\bu})_{t\in B,i\in I}$ is the smooth family of 
diagonalized 2-forms on~$\cN_{\prt}$ 
determined by $(\wt\om_{t;i})_{t\in B,i\in I}$ and $(\na^{(t;i)})_{t\in B,i\in I}$.
\end{enumerate}
Then there exist neighborhoods $\wh\cN\!\subset\!\cN''\!$ of~$V$ in~$\cN'$ 
such that $\ov{\cN''}\!\subset\!\cN'$
and a smooth family  $(\mu_{t,\tau;i})_{t\in B,\tau\in\bI,i\in I}$ of
1-forms on~$\cN_{\prt}'$ such~that 
\BE{SympDefThm_e0a}
\big(\wt\om_{t,\tau;i}\big)_{i\in I}\equiv
\big(\wt\om_{t;i}\!+\!\nd\mu_{t,\tau;i}\big)_{i\in I}\EE
is a symplectic structure on~$\cN_{\prt}'$ for all $(t,\tau)\!\in\!B\!\times\!\bI$ and
\BE{SympDefThm_e0}\begin{split}
&\mu_{t,0;i}=0, \quad  \wt\om_{t,\tau;i}|_V=\wt\om_{t;i}|_V,\quad
\wt\om_{t,1;i}|_{\wh\cN_i}=\wh\om_{t;i}^{\bu}|_{\wh\cN_i}, \quad
\supp\big(\mu_{\cdot,\tau;i}\big)\subset 
\big(B\!-\!N(\prt B)\big)\!\times\!\cN''|_{V-U}
\end{split}\EE
for all $t\!\in\!B$, $\tau\!\in\!\bI$, and $i\!\in\!I$.
\end{thm}

\subsection{Some linear algebra}
\label{LinAlg_subs}

\noindent
This section collects some basic, but crucial, observations.
Lemmas~\ref{posinter_lmm0} and~\ref{posinter_lmm} 
can be seen as versions of \cite[Lemmas~5.5,5.8]{MAff}.
According to these lemmas and Corollary~\ref{posinter_crl0}, 
every linear 2-form form~$\Om$ on~$\C^n$ such~that
$$\big(\Om|_{\C^n_i}\big)_{i\in[n]}\in 
\Symp_{\{0\}}^+\!\big(\C^n_{\prt}\big)\subset \Symp^+\big(\C^n_{\prt}\big), $$ 
i.e.~$\Om|_{\C^n_I}$ is symplectic and induces the complex orientation of~$\C^n_I$
for every $I\!\in\!\cP^*(n)$, 
can be homotoped in a canonical way to the standard symplectic~form
$$\Om_{\std}\equiv \nd x_1\w\nd y_1+\ldots+ \nd x_n\w\nd y_n$$
while keeping each coordinate subspace~$\C^n_I$ symplectic.
For a 2-form form~$\Om$ on~$\C^n$ and $s\!\in\!\R$, let
$$\Om_{i;s}=\Om+s\,\nd x_i\!\w\!\nd y_i~~\forall\,i\!\in\![n], \qquad
\Om_s=\Om+s\,\Om_{\std}.$$

\begin{lmm}\label{posinter_lmm0}
Let $\Om$ be a linear symplectic form on~$\C^n$ such that $\Om|_{\C^n_I}$ 
is symplectic for every $I\!\in\!\cP(n)$.
If the $\Om$-orientation of~$\C^n_I$ agrees with its complex orientation for every $I\!\in\!\cP(n)$,
then $\Om_{i;s}|_{\C^n_I}$ is symplectic for all $I\!\in\!\cP(n)$,
$s\!\in\!\R^{\ge0}$, and $i\!\in\![n]$.
\end{lmm}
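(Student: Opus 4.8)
The plan is to argue by induction on $|I|$, starting from the largest subsets and working downward. For $|I|\!=\!n$ the claim is that $\Om_{i;s}|_{\C^n}=\Om+s\,\nd x_i\!\w\!\nd y_i$ is symplectic on $\C^n$ for all $s\!\ge\!0$; I would deduce this from the inductive information on the hyperplane $\C^n_{\{i\}}$. The key observation is that $\Om_{i;s}$ differs from $\Om$ only by a multiple of $\nd x_i\!\w\!\nd y_i$, and the $(2n)$-form $\Om_{i;s}^n$ is a polynomial in $s$ whose top coefficient is controlled by the restriction of $\Om$ to the hyperplane $\C^n_{\{i\}}$, while its constant term is $\Om^n$. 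So the first step is to expand $\Om_{i;s}^n$ in powers of $s$ and identify each coefficient, using $(\nd x_i\!\w\!\nd y_i)^2\!=\!0$, in terms of wedge powers of $\Om$ and of $\Om|_{\C^n_{\{i\}}}$.

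The mechanism I would use repeatedly is: if $\al$ and $\be$ are two linear symplectic forms on a vector space $W$ that are both positive with respect to the same orientation (equivalently $\al^k/k!$ and $\be^k/k!$ are positive multiples of the same volume form, $k=\tfrac12\dim W$), then $\al+t\be$ is symplectic for all $t\!\ge\!0$; indeed $(\al+t\be)^k$ is a sum of terms $\binom{k}{j}\al^{k-j}\!\w\!\be^j$, and one checks that each such mixed term $\al^{k-j}\!\w\!\be^j$ is a \emph{nonnegative} multiple of the common volume form — this is the standard fact that a "convex-like" combination of positive forms stays positive. Applying this on $W\!=\!\C^n_I$ with $\al\!=\!\Om|_{\C^n_I}$ and $\be\!=\!(\nd x_i\!\w\!\nd y_i)|_{\C^n_I}$ handles the case $i\!\in\!I$: both are symplectic on $\C^n_I$, and the hypothesis says $\Om|_{\C^n_I}$ induces the complex orientation, which is precisely the orientation induced by $\nd x_i\!\w\!\nd y_i$ together with the complex orientations of the other coordinate lines, so the two forms are co-oriented and $\Om_{i;s}|_{\C^n_I}=\Om|_{\C^n_I}+s(\nd x_i\!\w\!\nd y_i)|_{\C^n_I}$ is symplectic for $s\!\ge\!0$. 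For the case $i\!\notin\!I$, we have $(\nd x_i\!\w\!\nd y_i)|_{\C^n_I}\!=\!0$, so $\Om_{i;s}|_{\C^n_I}=\Om|_{\C^n_I}$ is symplectic by hypothesis; hence only $i\!\in\!I$ requires work.

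So the argument reduces to the single statement: for $i\!\in\!I$, the linear forms $\Om|_{\C^n_I}$ and $(\nd x_i\!\w\!\nd y_i)|_{\C^n_I}$ induce the same orientation of $\C^n_I$. This is where the compatibility-of-orientations hypothesis is used in an essential way, and I expect it to be the main (though not deep) obstacle: one must carefully match up the "complex orientation" of $\C^n_I$, the orientation induced by $\Om|_{\C^n_I}$ (which agree by hypothesis), and the orientation for which $\nd x_i\!\w\!\nd y_i$ restricted to $\C^n_I$ is positive — the last being the product of the standard orientation of the $i$-th coordinate line with the complex orientations of the remaining lines indexed by $I\!\setminus\!i$, i.e.\ again the complex orientation of $\C^n_I$. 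Once this bookkeeping is done, the wedge-power expansion shows $(\Om_{i;s}|_{\C^n_I})^{|I|}$ is a nonnegative-coefficient polynomial in $s$ with strictly positive constant term $(\Om|_{\C^n_I})^{|I|}$, hence nonvanishing for all $s\!\ge\!0$, which is exactly the assertion that $\Om_{i;s}|_{\C^n_I}$ is symplectic. Running this over all $I\!\in\!\cP(n)$ and all $i\!\in\![n]$ gives the lemma; no induction is actually needed once the orientation comparison is set up, since each $I$ is handled directly. I would present it cleanly by first isolating the two-positive-forms lemma, then doing the orientation comparison, then concluding.
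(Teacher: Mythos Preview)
You have the two cases swapped. Recall that $\C^n_I=\{(z_1,\ldots,z_n):z_j=0\ \forall j\in I\}$, so if $i\in I$ then $x_i=y_i=0$ on $\C^n_I$ and hence $(\nd x_i\wedge\nd y_i)|_{\C^n_I}=0$; this is the trivial case, as the paper notes. The case requiring work is $i\notin I$, where $\nd x_i\wedge\nd y_i$ restricts to a nonzero rank-$2$ form on~$\C^n_I$. Your write-up treats the nontrivial case as if $\beta=(\nd x_i\wedge\nd y_i)|_{\C^n_I}$ were symplectic, which it is not once $\dim\C^n_I>2$.

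Your ``two-positive-forms lemma'' is also false as stated: on $\R^4$ take $\al=\nd x_1\wedge\nd y_1+\nd x_2\wedge\nd y_2$ and $\be=-\al$; both satisfy $\al^2=\be^2>0$, yet $\al\wedge\be<0$, and indeed $\al+\be=0$ is not symplectic. The mixed wedge powers need not be nonnegative for co-oriented symplectic forms.

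That said, your computational idea can be salvaged once the cases are fixed. For $i\notin I$, since $(\nd x_i\wedge\nd y_i)^2=0$ one has, with $k=n-|I|$,
\[
\big(\Om_{i;s}|_{\C^n_I}\big)^k=\big(\Om|_{\C^n_I}\big)^k+ks\,\big(\Om|_{\C^n_I}\big)^{k-1}\wedge(\nd x_i\wedge\nd y_i),
\]
and the second term equals $ks\,(\Om|_{\C^n_{Ii}})^{k-1}\wedge\nd x_i\wedge\nd y_i$, which is a nonnegative multiple of the complex volume form precisely by the orientation hypothesis on $\C^n_{Ii}$. This gives a correct proof, different from the paper's: the paper instead takes the $\Om$-orthogonal complement $\C^\Om_{Ii;i}$ of $\C^n_{Ii}$ inside $\C^n_I$, observes that the orientation hypothesis forces $\Om$ and $\nd x_i\wedge\nd y_i$ to be co-oriented on this $2$-plane, and then uses that $\Om_{i;s}$ is block-diagonal for the decomposition $\C^n_I=\C^n_{Ii}\oplus\C^\Om_{Ii;i}$. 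The paper's argument is cleaner and avoids the expansion; yours makes the role of the hypothesis on $\C^n_{Ii}$ equally transparent once the index bookkeeping is corrected.
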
 

\begin{proof}
If $i\!\in\!I$, then $\Om_{i;s}|_{\C^n_I}\!=\!\Om|_{\C^n_I}$ and there is nothing to prove.
Suppose $i\!\not\in\!I$, as in Figure~\ref{PosInter_fig}.
Let $\C^{\Om}_{Ii;i}\!\subset\!\C_I^n$ be the $\Om$-orthogonal complement of~$\C^n_{Ii}$.
Since 
the $\Om$-orientations of~$\C^n_{Ii}$ and $\C^n_{Ii}\!\oplus\!\C^{\Om}_{Ii;i}$ agree with 
the complex orientations of $\C^n_{Ii}$ and~$\C^n_I$, respectively,
the $\Om$-orientation of~$\C^{\Om}_{Ii;i}$ is the same as the orientation induced
by the restriction of $\nd x_i\!\w\!\nd y_i$.
It follows that the restrictions of $\Om_{i;s}$ to $\C^{\Om}_{Ii;i}$
and $\C^n_I$ are symplectic.
\end{proof}

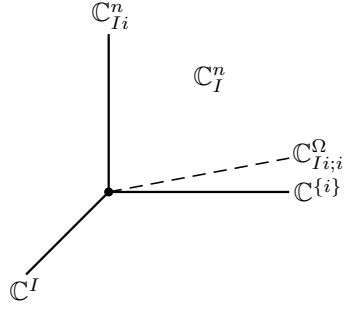
\begin{figure}
\begin{pspicture}(-3,-2)(11,2.2)
\psset{unit=.3cm}
\psline[linewidth=.1](15,-2)(23,-2)\psline[linewidth=.1](15,-2)(15,5)
\psline[linewidth=.1](15,-2)(11.34,-5.66)\pscircle*(15,-2){.2}
\psline[linewidth=.07,linestyle=dashed](15,-2)(23,-.5)
\rput(19.5,3){\sm{$\C^n_I$}}
\rput(24.3,-1.9){\sm{$\C^{\{i\}}$}}\rput(24.3,-.4){\sm{$\C^{\Om}_{Ii;i}$}}
\rput(15.1,5.8){\sm{$\C^n_{Ii}$}}\rput(11.3,-6.3){\sm{$\C^I$}}
\end{pspicture}
\caption{An illustration for the proof of Lemma~\ref{posinter_lmm0}.}
\label{PosInter_fig}
\end{figure}

\begin{crl}\label{posinter_crl0}
Let $\Om$ be a linear 2-form on~$\C^n$ such that $\Om|_{\C^n_I}$ 
is symplectic for every $I\!\in\!\cP^*(n)$.
If the $\Om$-orientation of~$\C^n_I$ agrees with its complex orientation for every $I\!\in\!\cP^*(n)$,
then $\Om_{i;s}|_{\C^n_I}$ is symplectic for all $I\!\in\!\cP^*(n)$,
$s\!\in\!\R^{\ge0}$, and $i\!\in\![n]$.
\end{crl}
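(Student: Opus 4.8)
The plan is to note that Corollary~\ref{posinter_crl0} is an immediate consequence of the \emph{proof} of Lemma~\ref{posinter_lmm0}, not just of its statement. The argument there, run for a fixed $I\!\in\!\cP(n)$ and $i\!\in\![n]$, never uses the nondegeneracy of~$\Om$ on all of~$\C^n$: in the case $i\!\in\!I$ it uses only that $\Om|_{\C^n_I}$ is symplectic, and in the case $i\!\notin\!I$ it uses only that $\Om|_{\C^n_I}$ is symplectic and that the $\Om$-orientations of $\C^n_I$ and $\C^n_{Ii}$, where $Ii\!\equiv\!I\!\cup\!\{i\}$, are the complex ones. Thus dropping the $I\!=\!\eset$ hypothesis of Lemma~\ref{posinter_lmm0} (that $\Om$ be symplectic on~$\C^n$) together with the $I\!=\!\eset$ case of its conclusion leaves the local reasoning for every nonempty~$I$ intact, which is exactly the content of the corollary.

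Concretely, I would fix $i\!\in\![n]$ and $I\!\in\!\cP^*(n)$. If $i\!\in\!I$, then $\nd x_i$ and $\nd y_i$ pull back to zero on~$\C^n_I$, so $\Om_{i;s}|_{\C^n_I}\!=\!\Om|_{\C^n_I}$, which is symplectic by hypothesis. If $i\!\notin\!I$, then $Ii$ is again a nonempty subset of~$[n]$, so the hypotheses of Corollary~\ref{posinter_crl0} apply to both $\C^n_I$ and $\C^n_{Ii}$. I would then repeat the computation in the proof of Lemma~\ref{posinter_lmm0}: let $\C^{\Om}_{Ii;i}\!\subset\!\C^n_I$ be the $\Om|_{\C^n_I}$-orthogonal complement of $\C^n_{Ii}$, which is well defined and of rank~$2$ since $\Om|_{\C^n_I}$ and $\Om|_{\C^n_{Ii}}$ are nondegenerate; from the agreement of the $\Om$-orientations of $\C^n_{Ii}$ and $\C^n_I$ with the complex ones, deduce that the $\Om$-orientation of $\C^{\Om}_{Ii;i}$ coincides with the one induced by $\nd x_i\!\w\!\nd y_i$, and hence that $\Om_{i;s}\!=\!\Om+s\,\nd x_i\!\w\!\nd y_i$ restricts to a symplectic form on $\C^{\Om}_{Ii;i}$ for every $s\!\in\!\R^{\ge0}$. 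Since $\nd x_i$ and $\nd y_i$ vanish on~$\C^n_{Ii}$, the subspaces $\C^n_{Ii}$ and $\C^{\Om}_{Ii;i}$ remain $\Om_{i;s}$-orthogonal inside $\C^n_I$ and $\Om_{i;s}$ restricts to~$\Om$ on~$\C^n_{Ii}$, so $\Om_{i;s}|_{\C^n_I}$ is symplectic.

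I do not expect any genuine obstacle here: the whole point is the bookkeeping observation that the hypothesis of Lemma~\ref{posinter_lmm0} which is missing when $\cP(n)$ is replaced by $\cP^*(n)$ is precisely the one whose conclusion is dropped as well. The only step that ``sees'' the sign of~$s$ is the rank-$2$ one, where $\Om$ and $\nd x_i\!\w\!\nd y_i$ induce the same orientation on $\C^{\Om}_{Ii;i}$, so that their combination with coefficient $s\!\ge\!0$ stays nondegenerate there --- exactly as in Lemma~\ref{posinter_lmm0}.
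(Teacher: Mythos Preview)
Your proposal is correct. The paper, however, takes a slightly different and more modular route: instead of re-running the proof of Lemma~\ref{posinter_lmm0}, it applies the \emph{statement} of that lemma in one lower dimension. Concretely, when $i\!\notin\!I$ and $I\!\neq\!\eset$, the paper picks any $j\!\in\!I$ and restricts everything to the hyperplane $\C^n_j\!\cong\!\C^{n-1}$; on this hyperplane the hypotheses of Lemma~\ref{posinter_lmm0} (including the $I\!=\!\eset$ case) are now satisfied, since every $I'\!\ni\!j$ lies in~$\cP^*(n)$, and the conclusion for the subset $I\!-\!j\!\subset\![n]\!-\!j$ gives exactly that $\Om_{i;s}|_{\C^n_I}$ is symplectic. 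Your approach opens up the proof and observes that the relevant computation for a fixed nonempty~$I$ never invokes the full-rank hypothesis; the paper's approach instead packages that same observation as a reduction to the already-proved lemma. Both are equally easy; the paper's version is marginally cleaner in that it uses Lemma~\ref{posinter_lmm0} as a black box.
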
 

\begin{proof}
If $i\!\in\!I$, $\Om_{i;s}|_{\C^n_I}\!=\!\Om|_{\C^n_I}$ and there is nothing to prove.
If $j\!\in\!I\!-\!i$, the claim follows from Lemma~\ref{posinter_lmm0} 
with~$n$ replaced by $n\!-\!1$ (drop $j$ from~$I$ and~$[n]$). 
\end{proof}

\begin{lmm}\label{posinter_lmm}
If $\Om$ and $\Om^{\circ}$ are 2-forms on~$\C^n$,
then there exists $s_0\!\in\!\R^{\ge0}$ such that 
$(\Om_s\!+\!\tau\Om^{\circ})|_{\C^n_I}$ is symplectic 
for all $I\!\in\!\cP(n)$, $\tau\!\in\!\bI$, and $s\!\ge\!s_0$.
\end{lmm}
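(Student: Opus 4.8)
The plan is to compute the restriction $\Om_{s,\tau}|_{\C^n_I}$ explicitly and to observe that it equals $s$ times the standard symplectic form plus a term that stays in a fixed compact set as $\tau$ ranges over~$\bI$; nondegeneracy for large~$s$ is then immediate, since it is an open condition. Unwinding the definitions of $\Om_s$ and $\Om_{s,\tau}$ gives
$$\Om_{s,\tau}=(1\!-\!\tau)\Om+s\,\Om_{\std}+\tau\sum_{i=1}^n\Om|_{\C^{\{i\}}}\,.$$
Restricting to~$\C^n_I$, which is spanned by the coordinate lines $\C^{\{j\}}$ with $j\!\notin\!I$, the summands $\Om|_{\C^{\{i\}}}$ with $i\!\in\!I$ vanish, because $\C^n_I$ lies in the kernel of the $i$-th coordinate projection through which $\Om|_{\C^{\{i\}}}$ is regarded as a $2$-form on~$\C^n$. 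Hence
$$\Om_{s,\tau}\big|_{\C^n_I}=s\,\Om_{\std}\big|_{\C^n_I}+\be_{I,\tau}\,, \qquad
\be_{I,\tau}\equiv(1\!-\!\tau)\,\Om|_{\C^n_I}+\tau\!\!\!\sum_{i\in[n]-I}\!\!\!\Om|_{\C^{\{i\}}}\big|_{\C^n_I}\,.$$

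First I would note that $\Om_{\std}|_{\C^n_I}$ is the standard symplectic form on $\C^n_I\!\cong\!\C^{n-|I|}$, hence nondegenerate (the case $I\!=\![n]$, where $\C^n_I\!=\!\{0\}$, being vacuous), and that $K_I\!\equiv\!\{\be_{I,\tau}:\tau\!\in\!\bI\}$ is a line segment, hence a compact subset of the finite-dimensional space of $2$-forms on~$\C^n_I$ that is independent of~$s$. For $s\!>\!0$ and $\be$ a $2$-form on~$\C^n_I$, the form $s\,\Om_{\std}|_{\C^n_I}+\be$ is nondegenerate if and only if $\Om_{\std}|_{\C^n_I}+s^{-1}\be$ is, and as $s\!\to\!\i$ the latter converges to $\Om_{\std}|_{\C^n_I}$ uniformly over $\be\!\in\!K_I$. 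Since the set of nondegenerate $2$-forms is open and $K_I$ is compact, there is $s_I\!\in\!\R^{\ge0}$ such that $s\,\Om_{\std}|_{\C^n_I}+\be$ is symplectic for all $s\!\ge\!s_I$ and $\be\!\in\!K_I$; in particular $\Om_{s,\tau}|_{\C^n_I}$ is symplectic for all $s\!\ge\!s_I$ and $\tau\!\in\!\bI$. Taking $s_0\!=\!\max_{I\in\cP(n)}s_I$ over the finitely many subsets $I\!\subset\![n]$ then yields the claim. (Equivalently, one may expand $(s\,\Om_{\std}|_{\C^n_I}+\be_{I,\tau})^{n-|I|}$ as $f_I(s,\tau)$ times a fixed volume form, a polynomial in~$s$ of degree $n\!-\!|I|$ with leading coefficient a positive constant and lower-order coefficients bounded uniformly in $\tau\!\in\!\bI$, so that $f_I(s,\tau)\!>\!0$ for $s$ large.)

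There is no substantial obstacle in this argument; the only points requiring care are the bookkeeping of which summands $\Om|_{\C^{\{i\}}}$ survive the restriction to~$\C^n_I$, and the (elementary) reduction of nondegeneracy of $s\,\Om_{\std}|_{\C^n_I}+\be$ for large~$s$ to a uniform perturbation estimate over the compact $\tau$-segment~$K_I$.
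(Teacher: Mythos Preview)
Your proof is correct and follows essentially the same route as the paper's: rescale by $1/s$ so that the form becomes $\Om_{\std}|_{\C^n_I}$ plus a perturbation that tends to zero uniformly in $\tau\!\in\!\bI$, and invoke the openness of nondegeneracy. Your explicit bookkeeping of which $\Om|_{\C^{\{i\}}}$ terms survive the restriction is a harmless extra detail not needed for the argument, but otherwise the two proofs are the same.
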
 

\begin{proof} 
This statement is equivalent to the restriction of the 2-form
\hbox{$\Om_{\std}\!+\!\frac1s\Om\!+\!\frac\tau{s}\Om^{\circ}$}
to each $\C^n_I$ being symplectic for all $s$ sufficiently large.
This is clear, since being symplectic is an open condition.
\end{proof}

\begin{crl}\label{cNbndl_crl}
Let $V$, $I$, $\cN$, and $B$ be as in Theorem~\ref{SympDefVB_thm}
and $(\Om_t)_{t\in B,i\in I}$ be a smooth family of fiberwise 2-forms on~$\cN$
such that $(\Om_t|_{\cN_i})_{i\in I}\in\Symp_V^+(\cN_{\prt})$ for every $t\!\in\!B$.
\begin{enumerate}[label=(\arabic*),leftmargin=*]

\item\label{posinter0_it} For all $t\!\in\!B$ and $s\!\ge\!0$, 
$((\Om_t\!+\!s\Om_t^{\bu})|_{\cN_i})_{i\in I}\!\in\!\Symp_V^+(\cN_{\prt})$.

\item\label{posinter_it} For every smooth family $(\Om_t^{\circ})_{t\in B}$
of fiberwise 2-forms on~$\cN$ and every compact subset $K\!\subset\!V$,
there exist $s_0\!\in\!\R^{\ge0}$ such that 
$$\big((\Om_t\!+\!s\Om_t^{\bu}\!+\!\tau\Om_t^{\circ})|_{\cN_i}\big)_{i\in I}
\in\Symp_K^+\big(\cN_{\prt}|_K\big)
\qquad\forall\,t\!\in\!B,~\tau\!\in\!\bI,~s\!\in\!\R^{\ge0},\,s\!\ge\!s_0.$$
\end{enumerate}
\end{crl}

\begin{proof}
This follows immediately from Corollary~\ref{posinter_crl0}
and Lemma~\ref{posinter_lmm}.
\end{proof}

\subsection{Deformations of standard structures}
\label{LinDeform_subs}

\noindent
Let $\om$ be a symplectic form on a manifold~$V$ and $\cN_{I'}\!\subset\!\cN$ 
be as in~\eref{cNsubdfn_e}.
For a fiberwise 2-form $(\Om_i)_{i\in I}$ on~$\cN_{\prt}$,
\eref{ombund_e2} induces a closed 2-form $(\wh\om_i)_{i\in I}$ on~$\cN_{\prt}$.
If $(\Om_i)_{i\in I}$ is an element of $\Symp_V^+\big(\cN_{\prt})$, then 
\begin{enumerate}[label=$\bu$,leftmargin=*]

\item $(\Om_i)_{i\in I}$ induces a fiberwise symplectic form on the subbundle~$\cN_{I'}$
compatible with its canonical orientation for every $I'\!\in\!\cP^*(I)$,

\item $(\Om_i^{\bu})_{i\in I}$ is a fiberwise symplectic form on~$\cN_{\prt}$, and

\item $\wh\om_i|_{\cN_{I'}'}$ is a symplectic form for all $i\!\in\!I'\!\subset\!I$
and for some neighborhood~$\cN'$ of $V\!\subset\!\cN$. 

\end{enumerate}
By Proposition~\ref{SympDef_prp} below, the tuple $(\wh\om_i)_{i\in I}$ can then be deformed,
while keeping it fixed outside of some neighborhood $\cN''\!\subsetneq\!\cN'$
and keeping all submanifolds $\cN_{I'}'$ with $I'\!\in\!\cP^*(I)$ symplectic,
to a symplectic form $(\wh\om_{1;i})_{i\in I}$ on~$\cN_{\prt}'$ 
so that $(\wh\om_{1;i})_{i\in I}$ agrees with
the 2-form $(\wh\om_i^{\bu})_{i\in I}$ induced by $(\Om_i^{\bu})_{i\in I}$
on a smaller neighborhood $\wh\cN_{\prt}\!\subset\!\cN_{\prt}''$ of~$V$.

\begin{prp}\label{SympDef_prp}
Let $U\!\subset\!V$, $I$, $\cN$, $N(\prt B)\!\subset\!B$, and
$(\na^{(t;i)})_{t\in B,i\in I}$ be as in Theorem~\ref{SympDefVB_thm}.
Suppose 
\begin{enumerate}[label=$\bullet$,leftmargin=*]

\item $(\om_t)_{t\in B}$ is a smooth family of symplectic forms on~$V$, 

\item $(\Om_{t;i})_{t\in B,i\in I}$ and $(\Om_{t;i}')_{t\in B,i\in I}$
are smooth families in $\Symp_V^+(\cN_{\prt})$ such~that
\BE{Omcond_e}\begin{split}
&\hspace{1.5in}\big(\Om_{t;i}^{\bu}\big)_{i\in I}=\big(\Om_{t;i}'^{\,\bu}\big)_{i\in I}
~~\forall\,t\!\in\!B,\\
&\big(\Om_{t;i}\big)_{i\in I}=\big(\Om_{t;i}'\big)_{i\in I}
~~\forall\,t\!\in\!N(\prt B),\quad
\big(\Om_{t;i}|_U\big)_{i\in I}=\big(\Om_{t;i}'|_U\big)_{i\in I}~~
\forall\,t\!\in\!B,
\end{split}\EE

\item $(\wh\om_{t;i})_{t\in B,i\in I}$ (resp.~$(\wh\om_{t;i}')_{t\in B,i\in I}$)
is the family of closed 2-forms on~$\cN_{\prt}$ 
induced as in~\eref{ombund_e2} 
by the families $(\om_t)_{t\in B}$ of symplectic forms on~$V$, 
$(\Om_{t;i})_{t\in B,i\in I}$ (resp.~$(\Om_{t;i}')_{t\in B,i\in I}$)
of fiberwise symplectic forms on~$\cN_{\prt}$,
and $(\na^{(t;i)})_{t\in B,i\in I}$ of connections on~$L_i$.

\end{enumerate}
If $V\!-\!U$ is compact, then
there exist neighborhoods $\wh\cN\!\subset\!\cN''\!\subset\!\cN'$ of $V\!\subset\!\cN$
such that $\ov{\cN''}\!\subset\!\cN'$ and a smooth family 
$(\mu_{t,\tau;i})_{t\in B,\tau\in\bI,i\in I}$ of
1-forms on~$\cN_{\prt}$ such~that 
\BE{SympDef_e0a}\big(\wh\om_{t,\tau;i}\big)_{i\in I}\equiv
\big((\wh\om_{t;i}\!+\!\nd\mu_{t,\tau;i})|_{\cN_i'}\big)_{i\in I}\EE
is a symplectic structure on~$\cN_{\prt}'$ for all $(t,\tau)\!\in\!B\!\times\!\bI$ and
\BE{SympDef_e0}\begin{split}
&\mu_{t,0;i}=0, \quad \wh\om_{t,\tau;i}|_V=\om_t,\quad
\wh\om_{t,1;i}|_{\wh\cN_i}=\wh\om_{t;i}'|_{\wh\cN_i}, \quad
\supp\big(\mu_{\cdot,\tau;i}\big)\subset 
\big(B\!-\!N(\prt B)\big)\!\times\!\cN''|_{V-U}
\end{split}\EE
for all $t\!\in\!B$, $\tau\!\in\!\bI$, and $i\!\in\!I$.
\end{prp}

\begin{rmk}\label{SympDefPrp_rmk}
If $(\Om_i)_{i\in I}$ and $(\Om_i')_{i\in I}$ are diagonal elements of $\Symp_V^+(\cN_{\prt})$,
then the associated closed 2-forms $(\wh\om_i)_{i\in I}$ and $(\wh\om_i')_{i\in I}$ 
can be directly deformed into each other using the bump function of Lemma~\ref{cutoff_lmm}
so that the restrictions of these deformations stay in $\Symp^+(\cN_{\prt}')$
for some neighborhood~$\cN_{\prt}'$ of~$V$ in~$\cN_{\prt}$.
Thus, the first assumption in~\eref{Omcond_e} is needed for the very last conclusion~only.
\end{rmk}

\begin{lmm}\label{cNprtform_lmm}
Let $V$, $I$, and $\cN$ be as in Theorem~\ref{SympDefVB_thm} and $k\!\in\!\Z^{\ge0}$.
There exists a smooth bundle~map
$$\Phi_{\cN;\prt}\!: \La^k_{\C}\cN_{\prt}^*\lra \La^k_{\C}\cN^*$$
such that $r_{\cN;\prt}\!\circ\!\Phi_{\cN;\prt}\!=\!\id$.
\end{lmm}

\begin{proof}
Let $(\al_i)_{i\in I}$ be an element of $\La^k_{\C}\cN_{\prt}^*$ in the fiber
over a point $x\!\in\!V$.
Thus,
\BE{SympDefPrp_e1b}
\al_{i_1}|_{\cN_{i_1i_2}|_x}=\al_{i_2}|_{\cN_{i_1i_2}|_x}\quad\forall~i_1,i_2\!\in\!I.\EE
Assume that $I\!=\![\ell^*]$ for some $\ell^*\!\ge\!2$.
For $i,\ell\!\in\!I$, let 
$$\pi_i\!:\cN\lra\cN_i \qquad\hbox{and}\qquad  \pi_{i;\ell}\!:\cN_i\lra \cN_{i\ell}$$
denote the projection maps.
Define
$$\al'_1=\pi_1^*\al_1, \quad
\al'_{\ell+1}=\al'_{\ell}+ 
\pi_{\ell+1}^*\big(\al_{\ell+1}\!-\!\al'_{\ell}|_{\cN_{\ell+1}}\big)
~~\forall\,\ell\!\in\![\ell^*\!-\!1], \quad \al=\al'_{\ell^*}.$$
Since $\pi_i|_{\cN_i}\!=\!\id_{\cN_i}$ and $\pi_{\ell}|_{\cN_i}\!=\!\pi_{i;\ell}$,
it follows~that
$$\al'_i|_{\cN_i}=\al_i, \quad
\al'_{\ell}|_{\cN_i}=\al'_{\ell-1}|_{\cN_i}+ 
\pi_{i;\ell}^*\big(\al_{\ell}|_{\cN_{i\ell}}
\!-\!\al'_{\ell-1}|_{\cN_{i\ell}}\big) ~~\forall\,\ell\!\in\![\ell^*]\!-\![i].$$
By~\eref{SympDefPrp_e1b} and induction, these identities imply that 
$$\al'_{\ell}|_{\cN_i}=\al_i \quad 
\forall\, \ell\!\in\![\ell^*]\!-\![i\!-\!1],\,i\!\in\![\ell^*].$$
Thus, the constructed smooth bundle homomorphism 
$$\Phi_{\cN;\prt}\!:\La^k_{\C}\cN_{\prt}^*\lra \La^k_{\C}\cN^*,
\qquad \Phi_{\cN;\prt}\big((\al_i)_{i\in I}\big)=\al,$$
is a right inverse for $r_{\cN;\prt}$.
\end{proof}

\begin{lmm}\label{cutoff_lmm}
There exists a smooth function
$\chi\!:(0,1)\!\times\!(1,\i)\!\times\!\R\lra\R^{\ge0}$
such that 
\BE{cutoffLmm_e1}
\chi(\de,s,r)=\begin{cases}
s,&\hbox{if}~r\!\le\!\de;\\
0,&\hbox{if}~r\!\ge\!\de\ne^{4s/\de};
\end{cases}\quad
\chi(\de,s,r)\le s, \quad
\bigg|\frac{\prt}{\prt r}\chi(\de,s,r)\bigg|r\le\de.\EE
\end{lmm}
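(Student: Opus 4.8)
The plan is to construct $\chi$ explicitly by taking a fixed smooth cutoff profile on the real line and rescaling it in both the radial variable $r$ and the amplitude $s$, with the rescaling chosen so that the constraint on the $r$-derivative holds uniformly. First I would fix a smooth nonincreasing function $\eta\!:\R\!\lra\![0,1]$ with $\eta(u)\!=\!1$ for $u\!\le\!1$ and $\eta(u)\!=\!0$ for $u\!\ge\!2$; set $M\!=\!\sup_{u\in\R}|\eta'(u)|\!<\!\i$. The naive guess $\chi(\de,s,r)\!=\!s\,\eta(r/\de)$ satisfies the first, third, and (after shrinking the support via a further reparametrization) could be made to satisfy the second line of~\eref{cutoffLmm_e1}, but its $r$-derivative has absolute value $(s/\de)|\eta'(r/\de)|$, so $|\prt_r\chi|\,r$ is of order $s$ rather than bounded by~$\de$. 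The remedy is to spread the transition over a much longer interval whose length grows exponentially in $s/\de$: one replaces the linear argument $r/\de$ by a logarithmic argument.

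Concretely, I would define
\BE{chidef_e}
\chi(\de,s,r)=s\,\eta\!\left(\frac{\de}{4s}\log\!\frac{r}{\de}\right)
\quad\text{for } r>0,\qquad \chi(\de,s,r)=s\ \text{for } r\le 0.
\EE
For $r\!\le\!\de$ the argument of $\eta$ is $\le\!0\!\le\!1$, so $\chi\!=\!s$; for $r\!\ge\!\de\ne^{4s/\de}$ the argument is $\ge\!1\!\cdot\!\frac{\de}{4s}\cdot\frac{4s}{\de}\!=\!1$, wait—more precisely $\frac{\de}{4s}\log(\ne^{4s/\de})\!=\!1$, and I would instead put the cutoff of $\eta$ at $2$ versus... let me instead arrange $\eta$ to vanish for $u\ge 1$ and equal $1$ for $u\le 0$; then $\chi$ vanishes exactly when $\frac{\de}{4s}\log\frac r\de\ge 1$, i.e. $r\ge\de\ne^{4s/\de}$, matching the second case of~\eref{cutoffLmm_e1}. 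The bound $\chi\le s$ is immediate from $0\le\eta\le1$. Finally, for $r>\de$ the chain rule gives
$$\frac{\prt}{\prt r}\chi(\de,s,r)=s\cdot\eta'\!\left(\tfrac{\de}{4s}\log\tfrac r\de\right)\cdot\frac{\de}{4s}\cdot\frac1r
=\frac{\de}{4}\,\eta'\!\left(\tfrac{\de}{4s}\log\tfrac r\de\right)\cdot\frac1r,$$
so $\bigl|\prt_r\chi\bigr|\,r=\frac{\de}{4}\bigl|\eta'(\cdots)\bigr|\le\frac{\de M}{4}\le\de$ once $\eta$ is chosen with $M\le 4$ (which costs nothing), and for $r\le\de$ the derivative vanishes. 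Smoothness of $\chi$ on $(0,1)\!\times\!(1,\i)\!\times\!\R$ is clear away from $r\!\le\!0$ from the formula; near $r\!=\!0$ one checks $\chi$ extends smoothly by the constant $s$ because $\eta\equiv1$ on a neighborhood of $(-\i,0]$ and $\frac{\de}{4s}\log\frac r\de\to-\i$ as $r\downarrow 0$, so $\chi$ is locally constant there in $r$.

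The only genuine subtlety—and the step I would treat most carefully—is the joint smoothness in all three variables across the locus where the argument $\frac{\de}{4s}\log\frac r\de$ enters the transition region of $\eta$, together with making sure the piecewise definition at $r\le0$ glues smoothly; both are handled by the observation that $\eta$ is locally constant outside a compact interval, so on each of the two complementary regions $\chi$ is manifestly smooth, and on the overlap $r>0$ it is a composition of smooth functions with the argument depending smoothly on $(\de,s,r)\!\in\!(0,1)\!\times\!(1,\i)\!\times\!(0,\i)$. I would then simply remark that the three displayed properties in~\eref{cutoffLmm_e1} have been verified above, completing the proof.
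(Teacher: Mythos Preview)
Your construction is correct: composing a fixed cutoff $\eta$ with the logarithmic reparametrization $u=\frac{\de}{4s}\log(r/\de)$ makes each required property a one-line check, and the smoothness across $r\le\de$ follows because $\chi$ is identically $s$ on $(-\i,\de]$ once $\eta\equiv1$ on $(-\i,0]$. The choice $M\le4$ is harmless since any smooth step on $[0,1]$ can be taken with $\sup|\eta'|$ arbitrarily close to~$1$.

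The paper shares the same core observation---that a logarithmic scale in $r$ is what makes the bound $\big|\prt_r\chi\big|\,r\le\de$ natural---but implements it differently: it writes $\chi$ as the product of a linear cutoff $\eta(\de\ne^{4s/\de}-r)$ and a factor $s-\eta(r/\de-1)\,\tfrac{r}{4}\ln(r/\de)$, so that the log enters through a subtracted term rather than through a change of variable in the argument of~$\eta$. Your single-cutoff formula $s\,\eta\big(\tfrac{\de}{4s}\log(r/\de)\big)$ is more direct and makes nonnegativity and the bound $\chi\le s$ immediate from $0\le\eta\le1$; the trade-off is purely cosmetic. (You might tidy the exposition by fixing the convention on $\eta$ at the outset rather than revising it mid-proof.)
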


\begin{proof}
If $\de\!\in\!(0,1)$ and $s\!\in\!(1,\i)$, then $2\de\le \de\ne^{4s/\de}\!-\!1$.
Let $\eta\!:\R\!\lra\!\bI$ be a smooth function such~that
$$\eta(r)=\begin{cases}0,&\hbox{if}~r\le0;\\
1,&\hbox{if}~r\ge1;
\end{cases}\qquad \big|\eta'(r)\big|\le2.$$
The smooth function 
$$\chi(\de,s,r)=\eta\big(\de\ne^{4s/\de}\!-\!r\big)
\Big(s-\eta(r/\de\!-\!1)\frac{\de}{4}\ln\big(r/\de)\Big)$$
then satisfies~\eref{cutoffLmm_e1}.
\end{proof}

\begin{proof}[{\bf{\emph{Proof of Proposition \ref{SympDef_prp}}}}]
Let $\Phi_{\cN;\prt}$ be as in Lemma~\ref{cNprtform_lmm}. 
For each $t\!\in\!B$, define 
\begin{gather}
\Om_t=\Phi_{\cN;\prt}\big((\Om_{t;i})_{i\in I}\big), \quad 
\Om_t'=\Phi_{\cN;\prt}\big((\Om_{t;i}')_{i\in I}\big), \quad
\Om_t^{\circ}=\Om_t'\!-\!\Om_t\,,\\
\label{SympDef_e4}
\wh\om_t=\pi^*\om_t+\frac12\nd\io_{\ze_{\cN}}\{\Om_t\}_{\na^{(t)}}\,, \quad
\wh\om_t'=\pi^*\om_t+\frac12\nd\io_{\ze_{\cN}}\{\Om_t'\}_{\na^{(t)}}
=\wh\om_t+\frac12\nd\io_{\ze_{\cN}}\{\Om_t^{\circ}\}_{\na^{(t)}}\,.
\end{gather}
By~\eref{Omcond_e},
\BE{SympDef_e5a}
\Om_t^{\bu}=\wh\Om_t'^{\,\bu}~~\forall\,t\!\in\!B,\qquad
\supp\big(\Om_{\cdot}^{\circ}\big)\subset \big(B\!-\!N(\prt B)\big)
\!\times\!\big(V\!-\!U\big)\,.\EE
Since $r_{\cN;\prt}\!\circ\!\Phi_{\cN;\prt}\!=\!\id$,
\BE{SympDef_e5} 
\Om_t|_{\cN_i}=\Om_{t;i},~~ \Om_t'|_{\cN_i}=\Om_{t;i}',~~
\wh\om_t|_{\cN_i}=\wh\om_{t;i}, ~~ \wh\om_t'|_{\cN_i}=\wh\om_{t;i}'
\quad\forall\,t\!\in\!B,\,i\!\in\!I.\EE

\vspace{.2in}

\noindent
We construct the desired families of 1-forms by pasting together 
three families of 1-forms via smooth functions 
$\eta_{\bI;1},\eta_{\bI;2},\eta_{\bI;3}\!:\R\!\lra\!\bI$ such that 
\BE{etaIdfn_e}\eta_{\bI;1}(\tau)=\begin{cases}
0,&\hbox{if}~\tau\!\le\!0;\\
1,&\hbox{if}~\tau\!\ge\!\frac13;
\end{cases} \quad
\eta_{\bI;2}(\tau)=\begin{cases}
0,&\hbox{if}~\tau\!\le\!\frac13;\\
1,&\hbox{if}~\tau\!\ge\!\frac23;
\end{cases}\quad
\eta_{\bI;3}(\tau)=\begin{cases}
0,&\hbox{if}~\tau\!\le\!\frac23;\\
1,&\hbox{if}~\tau\!\ge\!1.
\end{cases}\EE
We first increase the diagonal part~$\Om_t^{\bu}$ of~$\Om_t$ and~$\Om_t'$ as 
in Corollary~\ref{cNbndl_crl}\ref{posinter0_it}.
We then add the difference~$\Om_t^{\circ}$ with~$\Om_t'$
as in Corollary~\ref{cNbndl_crl}\ref{posinter_it}.
Finally, we reduce the diagonal part back to where it started.
We cut off all three deformations by bump functions supported near~$V$ so that 
the forms do not change too far away from~$V$, i.e.~on $\cN\!-\!\cN''$.
This construction is illustrated in Figure~\ref{SympDef_fig}.\\ 

\noindent
Fix a metric on~$V$ and a norm $\rho(\cdot)\!=\!|\cdot|^2$ on~$\cN$. 
For any $\vr\!\in\!\R^+$, let
$$\cN(\vr) =\big\{v\!\in\!\cN\!:\,|v|\!<\!\vr\big\}.$$
Since $B$ is compact, we can choose the norm on~$\cN$ so that 
the 2-form~$\wh\om_t$ is nondegenerate on  $\cN'\!\equiv\!\cN(1)$ for every $t\!\in\!B$.
Since $B$ and $V\!-\!U$ are compact, 
for every smooth family $\Xi\!\equiv(\Xi_t)_{t\in B}$ of fiberwise \hbox{2-forms}
on~$\cN$ there exists $C_{\Xi}\!\in\!\R^+$ such~that 
\BE{XiBnd_e}\begin{split}
\Big|\io_{\ze_{\cN}}\{\Xi_t\}_{\na^{(t)}}\Big|_v,
\Big|\frac12\nd\,\io_{\ze_{\cN}}\{\Xi_t\}_{\na^{(t)}}-\{\Xi_t\}_{\na^{(t)}}\Big|_v
&\le C_{\Xi}|v|,\\
\Big|\frac{\nd\rho}{\rho}\w\io_{\ze_{\cN}}\{\Xi_t\}_{\na^{(t)}}\Big|_v
\le C_{\Xi}
\end{split}
\qquad \forall~v\!\in\!\cN'|_{V-U}\,.\EE

\vspace{.2in}

\noindent
For $\fs\!\in\!C^{\i}(B\!\times\!V;\R)$ and $\tau\!\in\!\R$, 
let $\Om_{t;\fs}$ and $\Om_{t;\fs,\tau}$ be the fiberwise 2-forms on~$\cN$
given~by
\BE{Omdef_e}\Om_{t;\fs}\big|_x=\Om_t\big|_x+\fs(t,x)\,\Om_t^{\bu}\big|_x, \quad
\Om_{t;\fs,\tau}\big|_x=\Om_{t;\fs}\big|_x +\tau\,\Om_t^{\circ}\big|_x
=\Om_{t;\fs}'\big|_x -(1\!-\!\tau)\Om_t^{\circ}\big|_x\EE
for all $x\!\in\!V$;
the second equality in the second statement above holds by the first property 
in~\eref{SympDef_e5a}.
By the first two equalities in~\eref{SympDef_e5} and 
Corollary~\ref{cNbndl_crl}\ref{posinter0_it}, the restrictions of 
\hbox{$\Om_{t;\fs,0}\!=\!\Om_{t;\fs}$} and of {$\Om_{t;\fs,1}\!=\!\Om_{t;\fs}'$}
to~$\cN_{I'}$ are nondegenerate 
for all $I'\!\in\!\cP^*(I)$, $t\!\in\!B$, and \hbox{$\fs\!\in\!C^{\i}(B\!\times\!V;\R^{\ge0})$}. 
By Corollary~\ref{cNbndl_crl}\ref{posinter_it}, there exists $s_0\!\in\!\R^+$
such that $\Om_{t;\fs,\tau}|_{\cN_{I'}}$ is nondegenerate over $x\!\in\!V\!-\!U$ whenever 
$$I'\in\cP^*(I),\quad t\!\in\!B,\quad \tau\!\in\!\bI,\quad
\fs\!\in\!C^{\i}(B\!\times\!V;\R^{\ge0}), \quad\hbox{and}\quad \fs(t,x)\ge s_0.$$
We assume that $s_0\!\ge\!2$.\\

\noindent
By the second property in~\eref{SympDef_e5a} and the first equality in~\eref{SympDef_e5},
\BE{Omdef_e2} 
\Om_{t;0,\tau}\big|_{\cN_i}=\Om_{t;i} 
~~\forall\,t\!\in\!N(\prt B),\,\tau\!\in\!\bI,\,i\!\in\!I,   \quad
\Om_{t;0,\tau}\big|_{\cN_i|_U}=\Om_{t;i}\big|_{\cN_i|_U} 
~~\forall\,t\!\in\!B,\,\tau\!\in\!\bI,\,i\!\in\!I.\EE
By the first equality in~\eref{Omdef_e2}, the openness of the nondegeneracy condition, and
the compactness of~$\bI$,
there exists a neighborhood $\cW$ of $\ov{N(\prt B)}\!\times\!V$ in $B\!\times\!V$
such that $\Om_{t;0,\tau}|_{\cN_{I'}}$  is nondegenerate over~$x$ for all 
$(t,x)\!\in\!\ov\cW$, $\tau\!\in\!\bI$,  and $I'\!\in\!\cP^*(I)$.
By the second equality in~\eref{Omdef_e2},  the openness of the nondegeneracy condition, and
the compactness of~$B\!\times\!\bI$, 
there exists a neighborhood $U'$ of $\ov{U}\!\subset\!V$ such that $\Om_{t;0,\tau}|_{\cN_{I'}}$ 
is nondegenerate over $x\!\in\!\ov{U'}$ for all
$t\!\in\!B$, $\tau\!\in\!\bI$, and $I'\!\in\!\cP^*(I)$.
By Corollary~\ref{cNbndl_crl}\ref{posinter0_it}, both nondegeneracy statements apply to
$\Om_{t;\fs,\tau}$ for all $\fs\!\in\!C^{\i}(B\!\times\!V;\R^{\ge0})$.\\

\noindent
By the choices made above, the restriction of the 2-tensor 
$\pi^*\om_t\!+\!\{\Om_{t;s,\tau}\}_{\na^{(t)}}$ to 
$T_v\cN_{I'}$, for any \hbox{$v\!\in\!\cN_{I'}|_{V-U}$} and $I'\!\in\!\cP^*(I)$, 
is nondegenerate~if
\begin{alignat*}{1}
s\!\in\!\R^{\ge0},\,\tau\!\in\!\{0,1\}, \quad&\hbox{or}\quad
s\!\ge\!s_0,\,\tau\!\in\!\bI, \quad\hbox{or}\\
s\!\in\!\R^{\ge0},\,\tau\!\in\!\bI,\, \big(t,\pi(v)\big)\!\in\!\ov\cW,\quad&\hbox{or}\quad
s\!\in\!\R^{\ge0},\,\tau\!\in\!\bI,\,\pi(v)\!\in\!\ov{U'}.
\end{alignat*}
By the openness of the nondegeneracy condition and the compactness of~$B$, $\bI$, $[0,s_0]$,
and $V\!-\!U$, 
there thus exists $\ep^*\!\in\!\R^+$ with the property~that $\wh\om_v|_{T_v\cN_{I'}}$ 
is nondegenerate whenever \hbox{$v\!\in\!\cN_{I'}|_{V-U}$}, $I'\!\in\!\cP^*(I)$,
and $\wh\om_v$ is a 2-tensor on $T_v\cN$ such~that 
\BE{nondegeb_e0}  
\big|\wh\om_v-\big(\pi^*\om_t\!+\!\{\Om_{t;s,\tau}\}_{\na^{(t)}}\big)_v\big|<\ep^*\EE
for some $t\!\in\!B$ and $s,\tau\!\in\!\R$ with
\begin{alignat}{1}\label{nondegeb_e1}
s\!\in\![0,s_0],\,\tau\!\in\!\{0,1\}, \quad&\hbox{or}\quad
s\!=\!s_0,\,\tau\!\in\!\bI, \quad\hbox{or}\\
\label{nondegeb_e2}
s\!\in\![0,s_0],\,\tau\!\in\!\bI,\,
\big(t,\pi(v)\big)\!\in\!\ov{\cW},\quad&\hbox{or}\quad
s\!\in\!\big[0,s_0],\,\tau\!\in\!\bI,\,\pi(v)\!\in\!\ov{U'}.
\end{alignat}
We assume that $\ep^*\!\le\!1$.\\

\noindent
Let $\eta_B\!:B\!\times\!V\!\lra\!\bI$ and $\eta_V\!:V\!\lra\!\bI$ be smooth functions 
such~that 
\BE{etaBetaV_e}\eta_B(t,x)=\begin{cases}
0,&\hbox{if}~t\!\in\!N(\prt B);\\
1,&\hbox{if}~(t,x)\!\not\in\!\cW;
\end{cases} \qquad
\eta_V(x)=\begin{cases}
0,&\hbox{if}~x\!\in\!U;\\
1,&\hbox{if}~x\!\not\in\!U'.
\end{cases}\EE
With the notation as in~\eref{XiBnd_e} and~\eref{nondegeb_e0}, define 
$$C^*=C_{\Om}+C_{\Om^{\circ}}+s_0C_{\eta_B\eta_V\Om^{\bu}}\,, ~~
\de=\ep^*/2C^*, ~~ \vr=\de\ne^{4s_0/\de}, ~~
\cN''=\cN(\de), ~~ \wh\cN=\cN\big(\de^4/4\vr^3).$$
We assume that $C^*\!\ge\!1$.
Let $\chi$ be as in Lemma~\ref{cutoff_lmm}.  
For any $\ep\!\in\!\R^+$, let
$$\chi_{\ep}\!:\R\lra\R^{\ge0}, \qquad
\chi_{\ep}(r)=\chi\big(\de,s_0,r/\ep\big)\,.$$
By~\eref{cutoffLmm_e1},
\BE{etadfn_e}
\chi_{\ep}(r)=\begin{cases}
s_0,&\hbox{if}~r\!\le\!\de\ep;\\
0,&\hbox{if}~r\!\ge\!\vr\ep;
\end{cases}\qquad
\begin{aligned}
&0\le \chi_{\ep}(r)\le s_0,\\
&\big|\chi_{\ep}'(r)\big|r\le\de.
\end{aligned}\EE

\vspace{.2in}

\noindent
(1) Let $\ep_1\!=\!\de/\vr$.
For $v\!\in\!\cN$, let 
\BE{newOm_e}\begin{split}
\big\{\Om_{t,\tau}^{(1)}\big\}_{\na^{(t)}}\big|_v
&=\{\Om_t\}_{\na^{(t)}}\big|_v
+\eta_{\bI;1}(\tau)\eta_B\!\big(t,\pi(v)\!\big)\eta_V\!\big(\pi(v)\!\big)
\chi_{\ep_1}\!\big(|v|\big)\big\{\Om_t^{\bu}\big\}_{\na^{(t)}}\big|_v,\\
\mu_{t,\tau}^{(1)}\big|_v
&= \frac12\eta_{\bI;1}(\tau)\eta_B\!\big(t,\pi(v)\!\big)
\eta_V\!\big(\pi(v)\!\big)\chi_{\ep_1}\!\big(|v|\big)
\io_{\ze_{\cN}(v)}\!\big\{\Om_t^{\bu}\big\}_{\!\na^{(t)}}\big|_v\,.
\end{split}\EE
Define a closed 2-form on the total space of~$\cN$~by
\BE{newOm_e1b}\wh\om_{t,\tau}^{(1)}\equiv \pi^*\om_t+
\frac12\nd\io_{\ze_{\cN}}\!\big\{\Om_{t,\tau}^{(1)}\big\}_{\!\na^{(t)}}
=\wh\om_t+\nd\mu_{t,\tau}^{(1)};\EE
the last equality holds by the first definition in~\eref{SympDef_e4}.
By~\eref{etaIdfn_e},  \eref{etaBetaV_e}, and~\eref{etadfn_e},
\begin{gather}\label{SympDef_e1617}
\mu_{t,0}^{(1)}=0~~\forall\,t\!\in\!B, \qquad
\supp\big(\mu_{\cdot,\tau}^{(1)}\big)\subset 
\big(B\!-\!N(\prt B)\big)\!\times\!\cN(\de)|_{V-U}
\quad\forall\,\tau\!\in\!\bI,\\
\label{SympDef_e15}
\mu_{t,\tau}^{(1)}\big|_{\cN(\de\ep_1)}=
\frac{s_0}{2}
\eta_B(t,\cdot)\eta_V\,\io_{\ze_{\cN}}\big\{\Om_t^{\bu}\big\}_{\na^{(t)}}\big|_{\cN(\de\ep_1)}
\qquad\forall~t\!\in\!B,~\tau\!\in\![\frac13,1].
\end{gather}
By~\eref{newOm_e}, \eref{XiBnd_e}, and~\eref{etadfn_e},   
\BE{SympDef_e15b}\begin{aligned}
\Big|\frac12\nd\io_{\ze_{\cN}}\{\Om_{t,\tau}^{(1)}\}_{\na^{(t)}}
-\{\Om_{t,\tau}^{(1)}\}_{\na^{(t)}}\Big|_v 
&\le C^*\big(|v|\!+\!\de\big) &\qquad &\forall\,v\!\in\!\cN'|_{V-U}\,,\\
&<\ep^* &\qquad &\forall\,v\!\in\!\cN(\de)|_{V-U}\,.
\end{aligned}\EE
By~\eref{newOm_e},
$$\Om_{t,\tau}^{(1)}=\Om_{t;s_{t,\tau}(v),0} \qquad\hbox{with}\quad
s_{t,\tau}(v)=\eta_{\bI;1}(\tau)\eta_B\!\big(t,\pi(v)\!\big)\eta_V\!\big(\pi(v)\!\big)
\chi_{\ep_1}\!\big(|v|\big)\in[0,s_0]\,.$$
Along with~\eref{newOm_e1b} and~\eref{SympDef_e15b}, this implies~that
$$\Big|\wh\om_{t,\tau}^{(1)}-\Big(\!\pi^*\om_t
+\{\Om_{t;s_{t,\tau}(v),0}\}_{\na^{(t)}}\!\!\Big)\Big|_v<\ep^*
\quad\forall\,(t,\tau)\!\in\!B\!\times\!\bI,\,v\!\in\!\cN(\de)|_{V-U}\,.$$
By the first case in~\eref{nondegeb_e1}, 
the restriction of~$\wh\om_{t,\tau}^{(1)}$ to $\cN(\de)_{I'}|_{V-U}$ is thus
nondegenerate for all \hbox{$(t,\tau)\!\in\!B\!\times\!\bI$} and $I'\!\in\!\cP^*(I)$.
By the last equality in~\eref{newOm_e1b} and the second statement in~\eref{SympDef_e1617}, 
this is also the case for the restriction of~$\wh\om_{t,\tau}^{(1)}$ to 
$(\cN'_{I'}\!-\!\cN(\de))|_{V-U}$.\\

\noindent
(2) Let $\ep_2\!=\!\de\ep_1/2\vr$; thus, $\ov{\cN(\vr\ep_2)}\!\subset\!\cN(\de\ep_1)$.
For $v\!\in\!\cN$, let 
\BE{newOm_e2}\begin{split}
\big\{\Om_{t,\tau}^{(2)}\big\}_{\na^{(t)}}\big|_v
&=\big\{\Om_{t;s_0\eta_B\eta_V}\big\}_{\na^{(t)}}\big|_v
+\frac{1}{s_0}\eta_{\bI;2}(\tau)
\chi_{\ep_2}\!\big(|v|\big)\big\{\Om_t^{\circ}\big\}_{\na^{(t)}}\big|_v,\\
\mu_{t,\tau}^{(2)}\big|_v&=\frac1{2s_0}  \eta_{\bI;2}(\tau)
\chi_{\ep_2}\!\big(|v|\big)
\io_{\ze_{\cN}(v)}\big\{\Om_t^{\circ}\big\}_{\!\na^{(t)}}\big|_v\,.
\end{split}\EE
Define a closed 2-form on the total space of~$\cN$ by
\BE{newOm_e2b}
\wh\om_{t,\tau}^{(2)} \equiv \pi^*\om_t+
\frac12\nd\io_{\ze_{\cN}}\big\{\Om_{t,\tau}^{(2)}\big\}_{\!\na^{(t)}}
=\wh\om_t+
\frac{s_0}{2}\nd\big(\eta_B\eta_V\,\io_{\ze_{\cN}}\big\{\Om_t^{\bu}\big\}_{\na^{(t)}}\big)
+\nd\mu_{t,\tau}^{(2)}\,;\EE
the last equality holds by~\eref{Omdef_e} and the first equations in~\eref{SympDef_e4}.
By~\eref{etaIdfn_e}, the second equation in~\eref{SympDef_e5a}, and~\eref{etadfn_e},
\begin{gather}\label{SympDef_e2627}
\mu_{t,\tau}^{(2)}=0~~\forall\,t\!\in\!B,\,\tau\!\in\![0,\frac13], \quad
\supp\big(\mu_{\cdot,\tau}^{(2)}\big)\subset 
\big(B\!-\!N(\prt B)\big)\!\times\!\cN(\vr\ep_2)|_{V-U}
~~\forall\,t\!\in\!B,\,\tau\!\in\!\bI,\\
\label{SympDef_e25}
\mu_{t,\tau}^{(2)}\big|_{\cN(\de\ep_2)}
=\frac12 \io_{\ze_{\cN}}\!\big\{\Om_t^{\circ}\big\}_{\!\na^{(t)}}\big|_{\cN(\de\ve_2)}
\quad\forall\,\tau\!\in\![\frac23,1].
\end{gather}
By~\eref{newOm_e2}, \eref{XiBnd_e}, and~\eref{etadfn_e},    
\BE{SympDef_e25b}
\Big|\frac12\nd\io_{\ze_{\cN}}\{\Om_{t,\tau}^{(2)}\}_{\na^{(t)}}
-\{\Om_{t,\tau}^{(2)}\}_{\na^{(t)}}\Big|_v
\le C^*\big(|v|\!+\!\de\big) \quad \forall\,v\!\in\!\cN'|_{V-U}\,.\EE
By~\eref{newOm_e2},
$$\Om_{t,\tau}^{(2)}=\Om_{t;s_t(v),\tau_{t,\tau}'(v)} \quad\hbox{with}~~
s_t(v)=s_0\eta_B\!\big(t,\pi(v)\!\big)\eta_V\!\big(\pi(v)\!\big),~
\tau_{t,\tau}'(v)=\eta_{\bI;2}(\tau)\frac{\chi_{\ep_2}(|v|)}{s_0}\in\bI.$$
Along with~\eref{newOm_e2b} and~\eref{SympDef_e25b}, this implies~that
$$\Big|\wh\om_{t,\tau}^{(2)}-\Big(\pi^*\om_t
+\{\Om_{t;s_t(v),\tau_{t,\tau}'(v)}\}_{\na^{(t)}}\Big)\Big|_v
<\ep^* \quad\forall\,(t,\tau)\!\in\!B\!\times\!\bI,\,v\!\in\!\cN(\de)|_{V-U}\,.$$
By~\eref{etaBetaV_e}, 
$$s_t(v)\in[0,s_0]~\forall\,v\!\in\!\cN, \qquad
s_t(v)=s_0~~\hbox{if}~\big(t,\pi(v)\big)\!\not\in\!\cW
~\hbox{and}~\pi(v)\!\not\in\!U'.$$
By the last two displayed statements, \eref{nondegeb_e2},  and 
the second case in~\eref{nondegeb_e1},
the restriction of~$\wh\om_{t,\tau}^{(2)}$ to $\cN(\de)_{I'}|_{V-U}$
is then 
nondegenerate for all \hbox{$(t,\tau)\!\in\!B\!\times\!\bI$} and $I'\!\in\!\cP^*(I)$.
By the last equality in~\eref{newOm_e2b}, the second statement in~\eref{SympDef_e2627}, 
and~\eref{SympDef_e15},
this is also the case for the restriction of~$\wh\om_{t,\tau}^{(2)}$ to 
$\cN(\de\ep_1)_{I'}|_U$.\\

\noindent
(3) Let $\ve_3\!=\!\de\ve_2/2\vr$; thus, $\ov{\cN(\vr\ep_3)}\!\subset\!\cN(\de\ep_2)$. 
We now reduce the diagonal part~of 
\BE{Omdiag_e}
\Om_{t;s_0\eta_B\eta_V,1}\equiv \Om_t\!+\!s_0\eta_B\eta_V\Om_t^{\bu}\!+\!\Om_t^{\circ}
=\Om_t'\!+\!s_0\eta_B\eta_V\Om_t^{\bu}\EE
back to~$\Om_t^{\bu}\!=\!\Om_t'^{\,\bu}$.
For $v\!\in\!\cN$, let 
\BE{newOm_e3}\begin{split}
\big\{\Om_{t,\tau}^{(3)}\big\}_{\na^{(t)}}\big|_v
&=\big\{\Om_{t;s_0\eta_B\eta_V,1}\big\}_{\na^{(t)}}\big|_v
-\eta_{\bI;3}(\tau)\eta_B\!\big(t,\pi(v)\!\big)\eta_V\!\big(\pi(v)\!\big)
\chi_{\ep_3}\!\big(|v|\big)\big\{\Om_t^{\bu}\big\}_{\!\na^{(t)}}\big|_v,\\
\mu_{t,\tau}^{(3)}\big|_v&=-\frac12
\eta_{\bI;3}(\tau)\eta_B\!\big(t,\pi(v)\!\big)\eta_V\!\big(\pi(v)\!\big)
\chi_{\ep_3}\!\big(|v|\big)
\,\io_{\ze_{\cN}(v)}\!\big\{\Om_t^{\bu}\big\}_{\!\na^{(t)}}\big|_v\,.
\end{split}\EE
Define a closed 2-form on the total space of~$\cN$ by
\BE{newOm_e3b}\begin{split}
\wh\om_{t,\tau}^{(3)} &\equiv \pi^*\om_t+
\frac12\nd\io_{\ze_{\cN}}\{\Om_{t,\tau}^{(3)}\}_{\na^{(t)}}\\
&=\wh\om_t+\frac12\nd
\Big(s_0\eta_B\eta_V\io_{\ze_{\cN}}\big\{\Om_t^{\bu}\big\}_{\na^{(t)}}
+\io_{\ze_{\cN}}\big\{\Om_t^{\circ}\big\}_{\na^{(t)}}\!\!\bigg)
+\nd\mu_{t,\tau}^{(3)}\,;
\end{split}\EE
the last equality holds by~\eref{Omdiag_e} and 
the first definition in~\eref{SympDef_e4}.
By~\eref{etaIdfn_e}, \eref{etaBetaV_e}, and~\eref{etadfn_e}, 
\begin{gather}\label{SympDef_e3637}
\mu_{t,\tau}^{(3)}=0~~\forall\,t\!\in\!B,\,\tau\!\in\![0,\frac23], \quad
\supp\big(\mu_{\cdot,\tau}^{(3)}\big)\subset 
\big(B\!-\!N(\prt B)\big)\!\times\!\cN(\vr\ep_3)|_{V-U}
~~\forall\,\tau\!\in\!\bI,\\
\label{SympDef_e35}
\mu_{t,1}^{(3)}\big|_{\cN(\de\ep_3)}
=-\frac{s_0}2\eta_B(t,\cdot)\eta_V
\io_{\ze_{\cN}}\big\{\Om_t^{\bu}\big\}_{\na^{(t)}}\big|_{\cN(\de\ep_3)}
~~\forall\,t\!\in\!B.
\end{gather}
By~\eref{newOm_e3}, \eref{Omdiag_e}, \eref{XiBnd_e}, and~\eref{etadfn_e},   
\BE{SympDef_e35b}
\Big|\frac12\nd\io_{\ze_{\cN}}\{\Om_{t,\tau}^{(3)}\}_{\na^{(t)}}
-\{\Om_{t,\tau}^{(3)}\}_{\na^{(t)}}\Big|_v 
\le C^*\big(|v|\!+\!\de\big)\quad\forall\,\,v\!\in\!\cN'|_{V-U}\,.\EE
By~\eref{newOm_e3} and~\eref{Omdiag_e},
$$\Om_{t,\tau}^{(3)}=\Om_{t;s_{t,\tau}'(v),1} \qquad\hbox{with}\quad
s_{t,\tau}'(v)=\eta_B\!\big(t,\pi(v)\!\big)\eta_V\!\big(\pi(v)\!\big)
\big(s_0\!-\!\eta_{\bI;3}(\tau)\chi_{\ep_3}\!\big(|v|\big)\big)\in[0,s_0]\,.$$
Along with~\eref{newOm_e3b} and~\eref{SympDef_e35b}, this implies~that
$$\Big|\wh\om_{t,\tau}^{(3)}-\Big(\!\pi^*\om_t
+\{\Om_{t;s_{t,\tau}'(v),1}\}_{\na^{(t)}}\!\!\Big)\Big|_v<\ep^*
\quad\forall\,(t,\tau)\!\in\!B\!\times\!\bI,\,v\!\in\!\cN(\de)|_{V-U}\,.$$
By the first case in~\eref{nondegeb_e1}, 
the restriction of~$\wh\om_{t,\tau}^{(3)}$ to $\cN(\de)_{I'}|_{V-U}$
is then
nondegenerate for all \hbox{$(t,\tau)\!\in\!B\!\times\!\bI$} and $I'\!\in\!\cP^*(I)$.
By the last equality in~\eref{newOm_e3b}, the second statement in~\eref{SympDef_e3637}, 
\eref{SympDef_e15}, and~\eref{SympDef_e25},
this is also the case for the restriction of~$\wh\om_{t,\tau}^{(3)}$ to 
$\cN(\de\ep_2)_{I'}|_U$.\\

\begin{figure}
\begin{pspicture}(-4,-3)(11,3.2)
\psset{unit=.3cm}
\psline[linewidth=.1](27,10)(27,-9)\psline[linewidth=.1](-3,10)(-3,-9)
\psline[linewidth=.1](-3,-9)(27,-9)
\psline[linewidth=.07,linestyle=dotted](-3,6)(2.7,6)
\psline[linewidth=.07,linestyle=dotted](5.3,6)(27,6)
\rput(12,7.5){\sm{$\Om_t$}}\rput(4,6.1){\sm{$\wh\om_{t,\tau}^{(1)}$}}
\psline[linewidth=.1,linestyle=dashed](7,3.5)(27,3.5)
\psline[linewidth=.07,linestyle=dashed](7,-9)(7,3.5)
\psline[linewidth=.07,linestyle=dotted](7,1)(10.7,1)
\psline[linewidth=.07,linestyle=dotted](13.3,1)(27,1)
\rput(18,2.25){\sm{$\Om_{t;s_0\eta_B\eta_V}$}}\rput(12,1.1){\sm{$\wh\om_{t,\tau}^{(2)}$}}
\psline[linewidth=.1,linestyle=dashed](17,-1.5)(27,-1.5)
\psline[linewidth=.07,linestyle=dashed](17,-9)(17,-1.5)
\psline[linewidth=.07,linestyle=dotted](17,-4)(18.2,-4)
\psline[linewidth=.07,linestyle=dotted](20.8,-4)(27,-4)
\rput(19.5,-3.9){\sm{$\wh\om_{t,\tau}^{(3)}$}}
\rput(24,-2.75){\sm{$\Om_{t;s_0\eta_B\eta_V}'$}}
\pscircle*(7,-9){.2}\rput(7,-10.1){\sm{$\frac13$}}
\pscircle*(17,-9){.2}\rput(17,-10.1){\sm{$\frac23$}}
\pscircle*(-3,-9){.2}\rput(-3,-9.8){\sm{$0$}}\rput(27,-9.8){\sm{$1$}}
\pscircle*(27,6){.2}\pscircle*(27,8.5){.2}
\pscircle*(27,3.5){.2}\pscircle*(27,1){.2}\pscircle*(27,-1.5){.2}
\pscircle*(27,-4){.2}\pscircle*(27,-6.5){.2}\pscircle*(27,-9){.2}
\rput(28.5,8.7){\sm{$\prt\cN'$}}
\rput(31.5,6){\sm{$\prt\cN''\!=\!\prt\cN(\vr\ep_1)$}}\rput(29.7,3.5){\sm{$\prt\cN(\de\ep_1)$}}
\rput(29.7,1){\sm{$\prt\cN(\vr\ep_2)$}}\rput(29.7,-1.5){\sm{$\prt\cN(\de\ep_2)$}}
\rput(29.7,-4){\sm{$\prt\cN(\vr\ep_3)$}}
\rput(31.1,-6.3){\sm{$\prt\wh\cN\!=\!\prt\cN(\de\ep_3)$}}
\rput(27.9,-8.8){\sm{$V$}}
\rput(33,-8.5){\sm{$\Om_t'$}}\rput(-4,0){\sm{$\cN$}}
\pnode(27.3,-7.75){B}\pnode(32,-8.5){A}
\nccurve[linewidth=.07,angleA=180,angleB=0,ncurv=.5]{->}{A}{B}
\end{pspicture}
\caption{The patched families $(\wh\om_{t,\tau})_{t\in B,\tau\in\bI}$ of closed 2-forms
on~$\cN$ and $(\Om_{t,\tau})_{t\in B,\tau\in\bI}$ of fiberwise 2-forms on~$\cN$
so that $\wh\om_{t,\tau}\!=\!\pi^*\om_t\!+\!
\frac12\nd\io_{\ze_{\cN}}\{\Om_{t,\tau}\}_{\na^{(t)}}$
in the indicated regions.}
\label{SympDef_fig}
\end{figure}

\noindent
We define smooth families of 1-forms and 2-forms on the total spaces of~$\cN$ and~$\cN_{\prt}$ by
\begin{alignat*}{3}
\mu_{t,\tau}&=\mu_{t,\tau}^{(1)}\!+\!\mu_{t,\tau}^{(2)}
\!+\!\mu_{t,\tau}^{(3)}, &\quad
\big(\mu_{t,\tau;i}\big)_{i\in I} &=\big(\mu_{t,\tau}\big|_{\cN_i}\big)_{i\in I}
&\qquad&\forall\,t\!\in\!B,\,\tau\!\in\!\bI,\\
\wh\om_{t,\tau}&=\wh\om_t\!+\!\nd\mu_{t,\tau},&\quad
\big(\wh\om_{t,\tau;i}\big)_{i\in I} &=\big(\wh\om_{t,\tau}\big|_{\cN_i}\big)_{i\in I}
&\qquad&\forall\,t\!\in\!B,\,\tau\!\in\!\bI\,.
\end{alignat*}
By~\eref{SympDef_e1617}, \eref{SympDef_e2627}, and~\eref{SympDef_e3637}, 
$$\mu_{t,0}=0~~\forall\,t\!\in\!B, \qquad
\supp\big(\mu_{\cdot,\tau}\big)\subset 
\big(B\!-\!N(\prt B)\big)\!\times\!\big(\cN''|_{V-U}\big)
~~\forall\,\tau\!\in\!\bI.$$
By~\eref{SympDef_e2627}, \eref{SympDef_e3637}, \eref{SympDef_e15}, and~\eref{SympDef_e25},  
\BE{SympDef_e39}
\wh\om_{t,\tau}\big|_v=\begin{cases}
\wh\om_{t,\tau}^{(1)}|_v,&\hbox{if}~
(\tau,v)\!\in\!\bI\!\times\!\cN'\!-\![\frac13,1]\!\times\!\cN(\vr\ep_2);\\
\wh\om_{t,\tau}^{(2)}|_v,&\hbox{if}~
(\tau,v)\!\in\![\frac13,1]\!\times\!\cN(\de\ep_1)\!-\![\frac23,1]\!\times\!\cN(\vr\ep_3);\\
\wh\om_{t,\tau}^{(3)}|_v,&\hbox{if}~
(\tau,v)\!\in\![\frac23,1]\!\times\!\cN(\de\ep_2).
\end{cases}\EE
Along with the observations at the end of each step (1)-(3) of the construction,
this implies that $\wh\om_{t,\tau}|_{\cN_{I'}'}$ is nondegenerate for all 
$I'\!\in\!\cP^*(I)$ for all $(t,\tau)\!\in\!B\!\times\!\bI$.
By~\eref{SympDef_e39}, \eref{newOm_e1b}, \eref{newOm_e2b}, and~\eref{newOm_e3b}, 
$$\wh\om_{t,\tau}|_V=\om_t \qquad\forall~(t,\tau)\!\in\!B\!\times\!\bI\,.$$
By the last case in~\eref{SympDef_e39}, \eref{newOm_e3b}, \eref{SympDef_e35}, 
and the last equality in~\eref{SympDef_e4},
$\wh\om_{t,1}|_{\wh\cN}\!=\!\wh\om_t'|_{\wh\cN}$ for all $t\!\in\!B$.
Along with the last equation in~\eref{SympDef_e5},
the conclusions in the paragraph imply that the smooth family 
$(\mu_{t,\tau;i})_{t\in B,\tau\in\bI,i\in I}$ of
1-forms on~$\cN_{\prt}$ satisfies all requirements of the proposition. 
\end{proof}

\subsection{Deformations of arbitrary structures}
\label{NonLinDeform_subs}

\noindent
We continue with the notation introduced in~\eref{cNsplit_e} and~\eref{cNsubdfn_e}.
By Lemma~\ref{SympNeigh_lmm2} below, an arbitrary symplectic structure $(\wt\om_{t;i})_{i\in I}$
on a neighborhood~$\cN_{\prt}'$ of~$V$ in~$\cN_{\prt}$ 
can be deformed to  a standard one, $(\wh\om_{t;i})_{i\in I}$ as in~\eref{ombund_e2},
on a smaller neighborhood of~$V$.
As with Proposition~\ref{SympDef_prp}, the forms are kept fixed outside
of a neighborhood~$\cN_{\prt}''$ of~$V$.
By definition, the original symplectic forms $\wt\om_{t;i}$ on~$\cN_i'$
agree along their overlaps, i.e.~on $\cN_{i_1i_2}'$.

\begin{lmm}\label{SympNeigh_lmm2}
Let $U\!\subset\!V$, $I$, $\cN$, $N(\prt B)\!\subset\!B$, 
and $\cN'$ be as in Theorem~\ref{SympDefVB_thm}.
Suppose 
\begin{enumerate}[label=$\bullet$,leftmargin=*]

\item $(\wt\om_{t;i})_{t\in B,i\in I}$ and $(\wt\om_{t;i}')_{t\in B,i\in I}$ 
are smooth families of symplectic structures and 
of closed 2-forms,  respectively, on $\cN_{\prt}'$ 
such~that 
\BE{SympExtendLmm_e1}\begin{split}
&\hspace{1in}\big(\wt\om_{t;i}|_{T\cN_i|_V}\big)_{i\in I}=
\big(\wt\om_{t;i}'|_{T\cN_i|_V}\big)_{i\in I} ~~\forall\,t\!\in\!B,\\
&\big(\wt\om_{t;i}\big)_{i\in I}=\big(\wt\om_{t;i}'\big)_{i\in I}
~~\forall\,t\!\in\!N(\prt B), \qquad
\big(\wt\om_{t;i}|_{\cN_i'|_U}\big)_{i\in I}
=\big(\wt\om_{t;i}'|_{\cN_i'|_U}\big)_{i\in I}~~\forall\,t\!\in\!B;
\end{split}\EE

\item  $K\!\subset\!V$ is a compact subset and $\cK\!\subset\!V$ 
is an open neighborhood of~$K$.

\end{enumerate}
Then there exist neighborhoods $\wt\cN\!\subset\!\cN''$ of $V\!\subset\!\cN'$ 
such that $\ov{\cN''}\!\subset\!\cN'$
and a smooth family $(\mu_{t,\tau;i})_{t\in B,\tau\in\bI,i\in I}$ of
1-forms on~$\cN_{\prt}$ such~that
\BE{SympNeigh_e0c}\big(\wt\om_{t,\tau;i}\big)_{i\in I}\equiv
\big(\wt\om_{t;i}\!+\!\nd\mu_{t,\tau;i}|_{\cN_i'}\big)_{i\in I}\EE
is a symplectic structure on~$\cN_{\prt}'$ for all $(t,\tau)\!\in\!B\!\times\!\bI$ and
\BE{SympNeighLmm_e0}\begin{split}
\mu_{t,0;i}=0, &\quad  
\wt\om_{t,\tau;i}\big|_{T\cN_i|_V}=\wt\om_{t;i}\big|_{T\cN_i|_V},\quad
\wt\om_{t,1;i}|_{\wt\cN_i|_K}=\wt\om_{t;i}'|_{\wt\cN_i|_K},\\
&\supp\big(\mu_{\cdot,\tau;i}\big)\subset 
\big(B\!-\!N(\prt B)\big)\!\times\!\cN_i''|_{\cK-U}
\end{split}\EE
for all $t\!\in\!B$, $\tau\!\in\!\bI$, and $i\!\in\!I$.
\end{lmm}

\begin{proof}
For each $\tau\!\in\!\R$, let
$$m_{\tau}\!:\cN\lra\cN, \qquad v\lra\tau v,$$
be the scalar multiplication map;
it preserves the subbundles $\cN_i\!\subset\!\cN$.
For each $t\!\in\!B$ and $i\!\in\!I$, define
$$\vp_{t;i}=\wt\om_{t;i}'\!-\!\wt\om_{t;i}, \qquad
\mu_{t;i}\big|_v=\int_0^1\!\!
m_{\tau}^*\big\{\vp_{t;i}(\tau^{-1}\ze_{\cN},\cdot)\big\}\nd\tau\,.$$
By the second half of the proof of \cite[Lemma~3.14]{MS1},
\BE{sitdfn_e}\wt\om_{t;i}'=\wt\om_{t;i}+\nd\mu_{t;i}\,.\EE
By \eref{SympExtendLmm_e1},
\BE{zettau_e}
\mu_{t;i}|_V,d\mu_{t;i}\big|_{T\cN_i|_V}=0~~\forall\,t\!\in\!B,\quad
\mu_{t;i}=0~~\forall\,t\!\in\!N(\prt B), \quad
\mu_{t;i}\big|_{\cN_i|_U}=0~~\forall\,t\!\in\!B.\EE
Since $(\wt\om_{t;i})_{t\in B,i\in I}$ and $(\wt\om_{t;i}')_{t\in B,i\in I}$ are smooth families
of 2-forms on~$\cN_{\prt}'$,
$(\mu_{t;i})_{t\in B,i\in I}$ is a smooth family of 1-forms on~$\cN_{\prt}'$.\\

\noindent
Let $|\cdot|$ be a norm on $\cN$.
For $\de\!\in\!\R^+$, let 
$$\cN(\de)= \big\{v\!\in\!\cN\!:\,|v|\!<\!\de\big\}\,.$$
Since $B$ is compact, we can choose the norm on~$\cN$ so that 
$\cN(4)\!\subset\!\cN'$.
Choose smooth functions
\begin{alignat}{2}
\notag
\eta_{\R}&:\R\lra\bI, &\quad \eta_V&:V\lra\bI \qquad\hbox{s.t.}\\
\eta_{\R}(r)&=\begin{cases} 1,&\hbox{if}~r\!\le\!1;\\
\label{SympNeigh_e15}
0,&\hbox{if}~r\!\ge\!2; \end{cases} &\qquad
\eta_V(x)&=\begin{cases} 1,&\hbox{if}~x\!\in\!K;\\
0,&\hbox{if}~x\!\not\in\!\cK. \end{cases}
\end{alignat}   
For $\de\!\in\!(0,1)$, $t\!\in\!B$, $\tau\!\in\!\bI$, and $i\!\in\!I$,  let 
$$\mu_{t,\tau;i}^{(\de)}(v)
=\begin{cases}\tau\eta_{\R}\big(|v|/\de\big)\eta_V\big(\!\pi(v)\!\big)
\mu_{t;i}(v),&\hbox{if}\,v\!\in\!\cN_i';\\
0,&\hbox{if}\,v\!\in\!\cN_i\!-\!\ov{\cN(2)}.
\end{cases}$$
By \eref{sitdfn_e}-\eref{SympNeigh_e15}, 
\BE{SympNeigh_e17}\begin{split}
&\mu_{t,0;i}^{(\de)}=0, \quad
d\mu_{t,\tau;i}^{(\de)}\big|_{T\cN_i|_V}=0,\quad
\wt\om_{t;i}|_{\cN(\de)_i|_K}\!+\!\nd\mu_{t,1;i}^{(\de)}|_{\cN(\de)_i|_K}
=\wt\om_{t;i}'|_{\cN(\de)_i|_K}, \\
&\hspace{.8in}\supp\big(\mu_{\cdot,\tau;i}^{(\de)}\big)\subset 
\big(B\!-\!N(\prt B)\big)\!\times\!\cN(2\de)_i\big|_{\cK-U}\,.
\end{split}\EE
Thus, the smooth family $(\mu_{t,\tau;i}^{(\de)})_{t\in B,\tau\in\bI,i\in I}$ of
1-forms on~$\cN_{\prt}$ satisfies~\eref{SympNeighLmm_e0}
with $\wt\cN\!=\!\cN(\de)$, $\cN''\!=\!\cN(2\de)$, and~$\mu$ replaced by~$\mu^{(\de)}$.\\

\noindent
It remains to verify that \eref{SympNeigh_e0c} with $\mu$ replaced by~$\mu^{(\de)}$
is a symplectic structure on~$\cN_{\prt}'$ for all $(t,\tau)\!\in\!B\!\times\!\bI$
and $\de\!\in\!(0,1)$ sufficiently small.
We can assume that $\ov\cK\!\subset\!V$ is compact.
Since  $B$ is also compact, there exists $\ep\!\in\!\R^+$ with the property~that 
$\wt\om_v|_{T_v\cN_{I'}}$ is nondegenerate whenever 
\hbox{$v\!\in\!\cN(2)_{I'}|_{\cK}$}, $I'\!\in\!\cP_i(I)$,
and $\wt\om_v$ is a 2-tensor on $T_v\cN_i$ such~that 
$$\big|\wt\om_v-\wt\om_{t;i}|_v\big|<\ep$$
for some $t\!\in\!B$.
Since $\vp_{t;i}|_V\!=\!0$ and~$B\!\times\!\ov{\cN(2)}|_{\ov\cK}$ is compact, 
there exists $C\!\in\!\R^+$ such~that 
$$\big|\nd\mu_{t,\tau;i}^{(\de)}\big|_v\le C\big(\de^{-1}|v|^2\!+\!|v|\big)\le 6C\de
\quad\forall\,v\!\in\!\cN(2\de)|_{\cK},\,\de\!\in\!(0,1).$$
By the last two inequalities, 
\eref{SympNeigh_e0c} with $\mu$ replaced by~$\mu^{(\de)}$
is a symplectic structure on~$\cN_{\prt}'|_{\cK}$ for all $(t,\tau)\!\in\!B\!\times\!\bI$
and $\de\!\in\!(0,1)$ sufficiently small.
It is a symplectic structure on~$\cN_{\prt}'|_{V-\cK}$ for all $\de\!\in\!(0,1)$
because $\wt\om_{t,\tau;i}\!=\!\wt\om_{t;i}$ over~$V\!-\!\cK$.
\end{proof}

\begin{proof}[{\bf{\emph{Proof of Theorem \ref{SympDefVB_thm}}}}]
Let $\{K_{\ell}^{\circ}\}_{\ell\in\Z^+}$ be an open cover of~$V$ such that 
the closure~$K_{\ell}$ of~$K_{\ell}^{\circ}$ is compact and contained in~$K_{\ell+1}^{\circ}$
for every $\ell\!\in\!\Z^+$.
We inductively construct sequences $(\mu^{(\ell)}_{t,\tau,i})_{\ell\in\Z^+}$ of families of 
1-forms on~$\cN_{\prt}$ and $\wh\cN_{(\ell)}\!\subset\!\cN_{(\ell)}''$ of neighborhoods 
of $V\!\subset\!\cN$
so that each $\mu^{(\ell)}_{t,\tau,i}$ is supported in 
$\cN_{(\ell)}''|_{K_{\ell+1}-K_{\ell-1}}$ and for each $\ell^*\!\in\!\Z^+$
the sum of $\mu^{(\ell)}_{t,1,i}$ with $\ell\!\in\![\ell^*]$ satisfies the third condition
in~\eref{SympDefThm_e0} with $\wh\cN_i$ replaced by $(\wh\cN_{(\ell^*)})_i|_{K_{\ell^*}}$.
We then take $\mu_{t,\tau,i}$ and $\wh\cN$ to be the sum of all 1-forms $\mu^{(\ell)}_{t,\tau,i}$ 
and the union of the open sets $\wh\cN_{(\ell)}|_{K_{\ell}^{\circ}}$, respectively.
We use Lemma~\ref{SympNeigh_lmm2} followed by Proposition~\ref{SympDef_prp}
to construct  $\mu^{(\ell)}_{t,\tau,i}$ for each $\ell\!\in\!\Z^+$.\\

\noindent
Define
$$K_0=\eset,\quad \wh\cN_{(0)}=\cN', \quad 
U_{\ell}=K_{\ell}^{\circ}\!\cup\!U~~\forall\,\ell\!\in\!\Z^{\ge0},\quad
\wt\om_{t,1;i}^{(0)}=\wt\om_{t;i}~~\forall\,t\!\in\!B,\,i\!\in\!I.$$
For each $t\!\in\!B$, let $\om_t$ be the symplectic form on $V$ determined
by the symplectic structure $(\wt\om_{t;i})_{i\in I}$ on~$\cN_{\prt}'$ and
$$\big(\Om_{t;i}\big)_{i\in I},\big(\Om_{t;i}^{\bu})_{i\in I}\in  
\Symp_V^+\big(\cN_{\prt}\big)$$
be the fiberwise symplectic structures  on~$\cN_{\prt}$ 
determined by $(\wt\om_{t;i})_{i\in I}$ via \eref{cNXVsymp_e} and~\eref{diagpart_e}.\\

\noindent
Suppose $\ell^*\!\in\!\Z^+$ and for every $\ell\!\in\![\ell^*\!-\!1]$
we have constructed 
\begin{enumerate}[label=($\cN\mu\arabic*$),leftmargin=*]

\item  neighborhoods $\wh\cN_{(\ell)}\!\subset\!\cN_{(\ell)}''$
of~$V\!\subset\!\cN'$ such that $\ov{\cN_{(\ell)}''}\!\subset\!\wh\cN_{(\ell-1)}$, 

\item a smooth family $(\mu_{t,\tau;i}^{(\ell)})_{t\in B,\tau\in\bI,i\in I}$ 
of 1-forms on~$\cN_{\prt}$ such~that 
\BE{SympDef_e45a}
\big(\wt\om_{t,\tau;i}^{(\ell)}\big)_{i\in I}\equiv
\big(\wt\om_{t,1;i}^{(\ell-1)}\!+\!\nd\mu_{t,\tau;i}^{(\ell)}|_{\cN_i'}\big)_{i\in I}\EE
is a symplectic structure on~$\cN_{\prt}'$ for all $(t,\tau)\!\in\!B\!\times\!\bI$,
\BE{SympDef_e45}\begin{split}
&\mu_{t,0;i}^{(\ell)}=0, \quad
\wt\om_{t,\tau;i}^{(\ell)}|_V=\om_t\,,\quad
\wt\om_{t,1;i}^{(\ell)}\big|_{(\wh\cN_{(\ell)})_i|_{K_{\ell}}}
=\wh\om_{t;i}^{\bu}\big|_{(\wh\cN_{(\ell)})_i|_{K_{\ell}}},\\
&\hspace{.3in}\supp\big(\mu_{\cdot,\tau;i}^{(\ell)}\big)\subset 
\big(\!B\!-\!N(\prt B)\!\big)\!\times\!\big(\cN_{(\ell)}''\big)_i\big|_{K_{\ell+1}^{\circ}-U_{\ell-1}}
\end{split}\EE
for all $t\!\in\!B$, $\tau\!\in\!\bI$, and $i\!\in\!I$, and 
the family $(\Om_{t,1;i}^{(\ell)})_{t\in B,i\in I}$ of
the fiberwise symplectic structures 
on~$\cN_{\prt}$ determined by $(\wt\om_{t,1;i}^{(\ell)})_{i\in I}$ 
via \eref{cNXVsymp_e} satisfies
\BE{SympDef_e45b} 
\big(\Om_{t,1;i}^{(\ell)\,\bu}\big)_{t\in B,i\in I}=
\big(\Om_{t;i}^{\bu}\big)_{t\in B,i\in I}\,.\EE
\end{enumerate}

\vspace{.1in}

\noindent
By~\eref{SympDef_e45a} and induction,
\BE{SympDef_e45g} 
\wt\om_{t,1;i}^{(\ell^*-1)}=
\wt\om_{t;i}\!+\nd\!\sum_{\ell=1}^{\ell^*-1}\mu_{t,1;i}^{(\ell)}\big|_{\cN_i'}
\qquad\forall\,t\!\in\!B,\,i\!\in\!I\,.\EE
By \eref{SympDefVB_e0}, the last two properties in~\eref{SympDef_e45},
and induction,
\BE{SympDef_e45f}\begin{aligned} 
\big(\wt\om_{t,1;i}^{(\ell^*-1)}\big)_{i\in I}&=\big(\wh\om_{t;i}^{\bu}\big)_{i\in I}
&\quad&\forall\,t\!\in\!N(\prt B), \\
\wt\om_{t,1;i}^{(\ell^*-1)}|_{(\wh\cN_{(\ell-1)})_i|_{U_{\ell-1}}}
&=\wh\om_{t;i}^{\bu}|_{(\wh\cN_{(\ell-1)})_i|_{U_{\ell-1}}}
&\quad&\forall\,\ell\!\in\![\ell^*],\,t\!\in\!B.
\end{aligned}\EE
Along with the second property in~\eref{SympDef_e45}, this implies that 
\BE{SympDef_e45d} 
\big(\Om_{t,1;i}^{(\ell^*-1)}\big)_{i\in I}
=\big(\Om_{t;i}^{\bu}\big)_{i\in I}~~\forall\,t\!\in\!N(\prt B),\quad
\big(\Om_{t,1;i}^{(\ell^*-1)}|_{U_{\ell^*-1}}\big)_{i\in I}
=\big(\Om_{t;i}^{\bu}|_{U_{\ell^*-1}}\big)_{i\in I}~~\forall\,t\!\in\!B.\EE

\vspace{.2in}

\noindent
Let $K^{\circ}$ be an open neighborhood of $K_{\ell^*}\!\subset\!V$ so that
its closure~$K$ is contained in~$K_{\ell^*+1}^{\circ}$. 
For $t\!\in\!B$ and $i\!\in\!I$, let 
$$\wt\om_{t;i}'=\pi^*\om_t|_{\cN_i}\!+\!
\frac12\nd\io_{\ze_{\cN}}\{\Om_{t,1;i}^{(\ell^*-1)}\}_{\na^{(t)}}\,.$$
By the $\ell\!=\!\ell^*$ case of~\eref{SympDef_e45f},
the three conditions in~\eref{SympExtendLmm_e1} with~$U$, $\cN'$, and $\wt\om_{t;i}$
replaced by~$U_{\ell^*-1}$, $\wh\cN_{(\ell^*-1)}$, and
$\wt\om_{t,1;i}^{(\ell^*-1)}$, respectively, are satisfied.
By Lemma~\ref{SympNeigh_lmm2} applied with $\cK\!=\!K_{\ell^*+1}^{\circ}$, 
there thus exist neighborhoods $\wt\cN\!\subset\!\cN''_{(\ell^*)}$ of $V\!\subset\!\cN'$ 
such that $\ov{\cN''_{(\ell^*)}}\!\subset\!\wh\cN_{(\ell^*-1)}$ and a smooth family 
$(\mu_{t,\tau;i})_{t\in B,\tau\in\bI,i\in I}$ of 1-forms on~$\cN_{\prt}$ 
such~that
\BE{SympNeigh_e48a}
\big(\wt\om_{t,\tau;i}\big)_{i\in I}\equiv
\big(\wt\om_{t,1;i}^{(\ell^*-1)}|_{(\wh\cN_{(\ell^*-1)})_i}
\!+\!\nd\mu_{t,\tau;i}|_{(\wh\cN_{(\ell^*-1)})_i}\big)_{i\in I}\EE
is a symplectic structure on~$(\wh\cN_{(\ell^*-1)})_{\prt}$ for all 
$(t,\tau)\!\in\!B\!\times\!\bI$ and
\BE{SympNeighLmm_e48b}\begin{split}
\mu_{t,0;i}=0, &\quad
\wt\om_{t,\tau;i}\big|_{T\cN_i|_V}=\wt\om_{t,1;i}^{(\ell^*-1)}\big|_{T\cN_i|_V},\quad
\wt\om_{t,1;i}|_{\wt\cN_i|_K}=\wt\om_{t;i}'|_{\wt\cN_i|_K},\\
&\supp\big(\mu_{\cdot,\tau;i}\big)\subset 
\big(B\!-\!N(\prt B)\big)\!\times\!\big(\cN''_{(\ell^*)}\big)_i|_{K_{\ell^*+1}^{\circ}-U_{\ell^*-1}}
\end{split}\EE
for all $t\!\in\!B$, $\tau\!\in\!\bI$, and $i\!\in\!I$.\\

\noindent
Let $K'^{\circ}$ be an open neighborhood of $K_{\ell^*}\!\subset\!V$ so that
its closure~$K'$ is contained in~$K^{\circ}$. 
Choose a smooth function
$$\eta\!:V\lra\bI \qquad\hbox{s.t.}\quad 
\eta|_{K_{\ell^*}}\!=1,~~\eta|_{V-K'}\!=0.$$
For $t\!\in\!B$ and $i\!\in\!I$, define
$$\Om_{t;i}'=(1\!-\!\eta)\Om_{t,1;i}^{(\ell^*-1)}\!+\!\eta\Om_{t;i}^{\bu}\,,\quad
\wh\om_{t;i}'=\pi^*\om_t|_{\cN_i}\!+\!
\frac12\nd\io_{\ze_{\cN}}\{\Om_{t;i}'\}_{\na^{(t)}}\,.$$
In particular,
\BE{SympDef_e41} 
\wh\om_{t;i}'\big|_{\cN_i|_{K_{\ell^*}}}=
\wh\om_{t;i}^{\bu}\big|_{\cN_i|_{K_{\ell^*}}} \,.\EE

\vspace{.2in}

\noindent
By the $\ell\!=\!\ell^*\!-\!1$ case of~\eref{SympDef_e45b} 
and~\eref{SympDef_e45d}, 
\BE{SympDef_e50c}\begin{split}
&\big(\Om_{t;i}'^{\,\bu}\big)_{i\in I}
=\big(\Om_{t,1;i}^{(\ell^*-1)\bu}\big)_{i\in I}
~~\forall\,t\!\in\!B,\quad
\big(\Om_{t;i}'\big)_{i\in I}=\big(\Om_{t,1;i}^{(\ell^*-1)}\big)_{i\in I}
~~\forall\,t\!\in\!N(\prt B),\\
&\hspace{.5in}
\big(\Om_{t;i}'|_{U_{\ell^*-1}\cup(V-K')}\big)_{i\in I}
=\big(\Om_{t,1;i}^{(\ell^*-1)}|_{U_{\ell^*-1}\cup(V-K')}\big)_{i\in I}~~
\forall\,t\!\in\!B.  
\end{split}\EE
Thus, the three conditions in~\eref{Omcond_e} with~$U$ and $\Om_{t;i}$
replaced by $U_{\ell^*-1}\!\cup\!(V\!-\!K')$ and $\Om_{t,1;i}^{(\ell^*-1)}$, 
respectively, are satisfied.
Since $K_{\ell^*+1}$ is a compact subset of~$V$, so~is
$$V-U_{\ell^*-1}\!\cup\!(V\!-\!K')=K'-U_{\ell^*-1}\subset K_{\ell^*+1}\,.$$
By Proposition~\ref{SympDef_prp}, 
there thus exist neighborhoods $\wh\cN_{(\ell^*)}\!\subset\!\cN'''$ of~$V\!\subset\!\cN$
such that $\ov{\cN'''}\!\subset\!\wt\cN$ and a smooth family 
$(\mu_{t,\tau;i}')_{t\in B,\tau\in\bI,i\in I}$ of
1-forms on~$\cN_{\prt}$ such~that 
$$\big(\wh\om_{t,\tau;i}\big)_{i\in I}\equiv
\big((\wt\om_{t;i}'\!+\!\nd\mu_{t,\tau;i}')|_{\cN_i'}\big)_{i\in I}$$
is a symplectic structure on~$\cN_{\prt}'''$ for all $(t,\tau)\!\in\!B\!\times\!\bI$ and
\BE{SympDef_e50}\begin{split}
&\mu_{t,0;i}'=0, \quad \wh\om_{t,\tau;i}|_V=\om_t,\quad
\wh\om_{t,1;i}|_{(\wh\cN_{(\ell^*)})_i}=\wh\om_{t;i}'|_{(\wh\cN_{(\ell^*)})_i}, \\
&\hspace{.3in}\supp\big(\mu_{\cdot,\tau;i}'\big)\subset 
\big(B\!-\!N(\prt B)\big)\!\times\!\cN_i'''|_{K'-U_{\ell^*-1}}
\end{split}\EE
for all $t\!\in\!B$, $\tau\!\in\!\bI$, and $i\!\in\!I$.\\ 

\noindent
By reparametrizing $\mu_{t,\tau;i}$ and $\mu_{t,\tau;i}'$ as functions of~$\tau$,
we can assume~that 
\BE{SympDef_e53}
\supp\big(\mu_{t,\cdot;i}'\big)\subset\big(\frac12,1\big],
~~\supp\big(\mu_{t,\cdot;i}\!-\!\mu_{t,1;i}\big)
\subset\big[0,\frac12\big)
\quad\forall\,t\!\in\!B,\,i\!\in\!I.\EE
The tuple 
$$\big(\mu_{t,\tau;i}^{(\ell^*)}\big)_{t\in B,\tau\in\bI,i\in I}
\equiv \big(\mu_{t,\tau;i}\!+\!\mu_{t,\tau;i}'\big)_{t\in B,\tau\in\bI,i\in I}$$
is then a smooth family  of 1-forms on~$\cN_{\prt}$. 
By the first and last properties in~\eref{SympNeighLmm_e48b} and in~\eref{SympDef_e50}, 
it satisfies the first and last properties in~\eref{SympDef_e45} with $\ell\!=\!\ell^*$.
By~\eref{SympDef_e53}, 
the last properties in~\eref{SympNeighLmm_e48b} and in~\eref{SympDef_e50},
and the third property in~\eref{SympNeighLmm_e48b}, 
the closed 2-form on~$\cN_{\prt}'$ given by~\eref{SympDef_e45a}
with $\ell\!=\!\ell^*$ satisfies
\BE{SympDef_e55}\wt\om_{t,\tau;i}^{(\ell^*)}\big|_v =\begin{cases}
\wt\om_{t,1;i}^{(\ell^*-1)}|_v,&\hbox{if}~
(\tau,v)\!\in\!\bI\!\times\!(\cN_i'\!-\!\cN_{(\ell^*)}''|_{K_{\ell^*+1}^{\circ}-U_{\ell^*-1}});\\
\wt\om_{t,\tau;i}|_v,&\hbox{if}~
(\tau,v)\!\in\!\bI\!\times\!(\cN''_{(\ell^*)})_i\!-\!(\frac12,1]
\!\times\!\cN'''|_{K'-U_{\ell^*-1}};\\
\wh\om_{t,\tau;i}|_v,&\hbox{if}~(\tau,v)\!\in\![\frac12,1]\!\times\!\cN_i'''|_K.
\end{cases}\EE   
Thus, $\wt\om_{t,\tau;i}^{(\ell^*)}$ is a symplectic structure on~$\cN_{\prt}'$.
By~\eref{SympDef_e55} and the second properties in~\eref{SympDef_e45} with 
$\ell\!=\!\ell^*\!-\!1$, in~\eref{SympNeighLmm_e48b}, and in~\eref{SympDef_e50}, 
the second property in~\eref{SympDef_e45} with $\ell\!=\!\ell^*$ is satisfied as~well.
By~\eref{SympDef_e55}, \eref{SympDef_e45b} with $\ell\!=\!\ell^*\!-\!1$,
the second property in~\eref{SympNeighLmm_e48b}, 
 the third property in~\eref{SympDef_e50}, and the first property in~\eref{SympDef_e50c},
\eref{SympDef_e45b} with  $\ell\!=\!\ell^*$ holds.
By the third case in~\eref{SympDef_e55}, the third property in~\eref{SympDef_e50},
and~\eref{SympDef_e41},  
the second property in~\eref{SympDef_e45} with $\ell\!=\!\ell^*$ is satisfied.\\

\noindent
By the above, we can assume that $(\cN\mu1)$ and $(\cN\mu2)$
 hold for all $\ell\!\in\!\Z^+$.
We can also assume~that 
\BE{SympDef_e45c}
\supp\big(\mu_{t,\cdot;i}^{(\ell)}\big)\subset\big(1\!-\!2^{1-\ell},1\big],
~~
\supp\big(\mu_{t,\cdot;i}^{(\ell)}\!-\!\mu_{t,1;i}^{(\ell)}\big)
\subset\big[0,1\!-\!2^{-\ell}\big)
\quad\forall\,\ell\!\in\!\Z^+,\,t\!\in\!B,\,i\!\in\!I,\EE
i.e.~$\mu_{t,\tau;i}^{(\ell)}$ as a function of~$\tau$ changes only 
in the interval $(1\!-\!2^{1-\ell},1\!-\!2^{-\ell})$.
Let
$$\cN''=\cN_{(1)}'', \quad 
\wh\cN=\bigcup_{\ell=1}^{\i}\wh\cN_{(\ell)}\big|_{K_{\ell}^{\circ}}, \quad
\mu_{t,\tau;i}=\sum_{\ell=1}^{\i}\mu_{t,\tau;i}^{(\ell)}\big|_{\cN_i'} 
\quad\forall~t\!\in\!B,\,\tau\!\in\!\bI,\,i\!\in\!I.$$
The sets $\cN''$ and $\wh\cN$ are open neighborhoods of $V\!\subset\!\cN'$ such that
$\ov{\cN''}\!\subset\!\cN'$.
By the last property in~\eref{SympDef_e45}, 
$$\mu_{t,\tau;i}^{(\ell)}\big|_v=0
\qquad\forall\,v\!\in\!\cN_i|_{K_{\ell^*}-K_{\ell^*-1}^{\circ}},\,
\ell\!\in\!\Z^+\!-\!\{\ell^*\!-\!1,\ell^*\},\,\ell^*\!\in\!\Z^+\,.$$
Thus, the sum above is well-defined and determines a smooth family of
1-forms on~$\cN_{\prt}'$.\\

\noindent 
By the first, second, and last properties in~\eref{SympDef_e45}, 
the family $(\mu_{t,\tau;i})_{t\in B,\tau\in\bI,i\in I}$ satisfies
the first, second, and last requirements in~\eref{SympDefThm_e0}.
By the last two properties in~\eref{SympDef_e45} and~\eref{SympDef_e45g}, 
\begin{equation*}\begin{split}
\wt\om_{t,1;i}\big|_{(\wh\cN_{(\ell^*)})_i|_{K_{\ell^*}^{\circ}}}
\equiv\bigg(\wt\om_{t;i}\!+\nd\!\sum_{\ell=1}^{\i}\mu_{t,1;i}^{(\ell)}\bigg)
\Big|_{(\wh\cN_{(\ell^*)})_i|_{K_{\ell^*}^{\circ}}}
&=\bigg(\wt\om_{t;i}\!+\nd\!\sum_{\ell=1}^{\ell^*}\mu_{t,1;i}^{(\ell)}\bigg)
\Big|_{(\wh\cN_{(\ell^*)})_i|_{K_{\ell^*}^{\circ}}}\\
&=\wt\om_{t,1;i}^{(\ell^*)}\big|_{(\wh\cN_{(\ell^*)})_i|_{K_{\ell^*}^{\circ}}}
=\wh\om_{t;i}^{\bu}\big|_{(\wh\cN_{(\ell^*)})_i|_{K_{\ell^*}^{\circ}}}
\end{split}\end{equation*}
for all $\ell^*\!\in\!\Z^+$.
Thus, the third requirement in~\eref{SympDefThm_e0} is also satisfied.
If \hbox{$\tau\!\in\![1\!-\!2^{1-\ell^*},1\!-\!2^{-\ell^*}]$}  for some $\ell^*\!\in\!\Z^+$,
then
\begin{equation*}\begin{split}
\wt\om_{t,\tau;i}\equiv 
\wt\om_{t;i}\!+\nd\!\sum_{\ell=1}^{\i}\mu_{t,\tau;i}^{(\ell)}\big|_{\cN_i'}
&=\wt\om_{t;i}\!+\nd\!\sum_{\ell=1}^{\ell^*-1}\mu_{t,1;i}^{(\ell)}\big|_{\cN_i'}
+\nd\mu_{t,\tau;i}^{(\ell^*)}\big|_{\cN_i'}\\
&=\wt\om_{t,1;i}^{(\ell^*-1)}\!+\!\nd\mu_{t,\tau;i}^{(\ell^*)}\big|_{\cN_i'}
=\wt\om_{t,\tau;i}^{(\ell^*)}\,;
\end{split}\end{equation*}
see~\eref{SympDef_e45c} and~\eref{SympDef_e45g}.
Thus, $\wt\om_{t,\tau;i}$ is a symplectic structure on~$\cN_{\prt}'$
for all $(t,\tau)\!\in\!B\!\times\!\bI$.
\end{proof}

\begin{rmk}\label{SympDefVB_rmk}
By the proof of Theorem~\ref{SympDefVB_thm} above, 
the compactness requirements on~$V\!-\!U$ in Proposition~\ref{SympDef_prp}
and on~$K$ in Lemma~\ref{SympNeigh_lmm2} are not necessary.
\end{rmk}

\section{Tubular neighborhood theorems}
\label{TubulNeigh_sec}

\noindent
We next obtain a stratified version of the usual Tubular Neighborhood Theorem
which respects a symplectic form along a symplectic submanifold.
Proposition~\ref{TubulNeigh_prp} below is used in Section~\ref{SCCpf_sec} to apply 
the essentially local statement of Theorem~\ref{SympDefVB_thm}
in the setting of Theorem~\ref{SCC_thm}.
We continue with the notation of Sections~\ref{SCDregul_subs} and~\ref{SCCregul_subs}.

\begin{dfn}\label{ConfRegulLoc_dfn}
Let $N\!\in\!\Z^+$, $\X\!\equiv\!\{X_I\}_{I\in\cP^*(N)}$ be a transverse configuration,
$I^*\!\in\!\cP^*(N)$, and \hbox{$U\!\subset\!X_{I^*}$} be an open subset.
A \sf{regularization for~$U$ in~$\X$} is a tuple 
$(\Psi_i)_{i\in I^*}$, 
where $\Psi_i$ is a regularization for~$U$ in~$X_i$ 
in the sense of Definition~\ref{smreg_dfn}, such that
\begin{alignat}{2}
\label{ConfRegulLoc_e1}
\Psi_i\big(\cN_{I^*;I}\!\cap\!\Dom(\Psi_i)\big)&=X_I\!\cap\!\Im(\Psi_i)
&\quad &\forall\,i\!\in\!I\!\subset\!I^*,\\
\label{ConfRegulLoc_e2}
\Psi_{i_1}\big|_{\cN_{I^*;i_1i_2}\cap\Dom(\Psi_{i_1})}
&=\Psi_{i_2}\big|_{\cN_{I^*;i_1i_2}\cap\Dom(\Psi_{i_2})}
&\quad &\forall\,i_1,i_2\!\in\!I^*\,.
\end{alignat}
\end{dfn}

\vspace{.1in}

\noindent 
Smooth families of regularizations for~$U$ in~$\X$ 
are defined analogously to Definition~\ref{SCDregul_dfn}\ref{sympregul_it2}.

\begin{prp}\label{TubulNeigh_prp}
Let $N\!\in\!\Z^+$, $\X\!\equiv\!\{X_I\}_{I\in\cP^*(N)}$ be a transverse configuration
such that $X_{ij}$ is a closed submanifold of $X_i$ of codimension~2
for all $i,j\!\in\![N]$ distinct,
$I^*\!\in\!\cP^*(N)$, and $U,U'\!\subset\!X_{I^*}$ be open subsets, possibly empty, 
such that $\ov{U'}\!\subset\!U$.
Suppose
\begin{enumerate}[label=$\bullet$,leftmargin=*]

\item $B$ is a compact manifold, possibly with boundary,

\item  $N(\prt B),N'(\prt B)$ are neighborhoods of $\prt B\!\subset\!B$
such that $\ov{N'(\prt B)}\!\subset\!N(\prt B)$,

\item $(\om_{t;i})_{t\in B}$ is a smooth family of symplectic structures on 
$\X$ in the sense of Definition~\ref{TransConf_dfn2},

\item $(\Psi_{I^*;t;i})_{t\in N(\prt B),i\in I^*}$ and $(\Psi_{U;t;i})_{t\in B,i\in I^*}$
are smooth families of regularizations for~$X_{I^*}$ and~$U$, respectively,
in $\X$ such~that
\begin{gather}
\label{TubulNeigh_e0a}
\nd_x\Psi_{\star;t;i}\big(\cN_{I^*;i}|_x\big)=T_xX_{I^*}^{\,\om_{t;i}}
\quad\forall~\star\!=\!I^*,U,~
(t,x)\in\begin{cases}N(\prt B)\!\times\!X_{I^*}, &\hbox{if}~\star\!=\!I^*;\\
B\!\times\!U, &\hbox{if}~\star\!=\!U;
\end{cases}\\
\label{TubulNeigh_e0b}
\big(\Psi_{I^*;t;i}|_{\Dom(\Psi_{I^*;t;i})|_U}\big)_{t\in N(\prt B),i\in I^*}
 = \big(\Psi_{U;t;i}\big)_{t\in N(\prt B),i\in I^*}\,.
\end{gather}
\end{enumerate}
Then there exists a smooth family $(\Psi_{t;i})_{t\in B,i\in I^*}$ of regularizations for 
$X_{I^*}$ in $\X$ such~that 
\begin{gather}
\label{SympNeighCrl_e1b}
\nd_x\Psi_{t;i}\big(\cN_{I^*;i}|_x\big)=T_xX_{I^*}^{\,\om_{t;i}}
\quad\forall\,t\!\in\!B,\,x\!\in\!X_{I^*},\,i\!\in\!I^*,\\
\label{SympNeighCrl_e1a}
\begin{split}
\big(\Psi_{t;i}\big)_{t\in N'(\prt B),i\in I^*}
&=\big(\Psi_{I^*;t;i}\big)_{t\in N'(\prt B),i\in I^*},\\
\big(\Psi_{t;i}|_{\Dom(\Psi_{t;i})|_{U'}}\big)_{t\in B,i\in I^*}
&=\big(\Psi_{U;t;i}|_{\Dom(\Psi_{U;t;i})|_{U'}}\big)_{t\in B,i\in I^*}.
\end{split}
\end{gather}
\end{prp}

\subsection{Smooth regularizations for transverse collections}
\label{SCDsm_subs}

\noindent
Lemma~\ref{stratexp_lmm} below shows that regularizations 
in the sense of Definition~\ref{smreg_dfn} 
that satisfy the stratification condition~\eref{Psikk_e} always exist, 
if $V_I$ is a closed submanifold.
By Lemma~\ref{SympNeigh_lmm}, they can be chosen to extend given regularizations
over an open subspace, after slightly shrinking the latter, and 
to respect a symplectic form along~$V_I$.

\begin{lmm}\label{stratexp_lmm}
Let $X$ be a manifold and $\{V_i\}_{i\in I}$ be a  transverse collection  
of closed submanifolds of~$X$.
Then there exists a smooth map
\hbox{$\exp_I\!: TX|_{V_I}\!\lra\!X$} such~that 
\BE{stratexp_e12}\begin{split}
\exp_I\!\big|_{V_I}=\id, \qquad&
\nd_x\exp_I\!=\id: T_xX\lra T_xX~~\forall\,x\!\in\!V_I, \\
\exp_I\!\big(TV_{I'}|_{V_I}\big)\,&=V_{I'}\!\cap\!\Im(\exp_I)~\forall\,I'\!\subset\!I.
\end{split}\EE
\end{lmm}

\begin{proof}
Choose a metric~$g$ on~$X$ so that the orthogonal complements 
$L_i$ of $TV_I$ in $TV_{I-i}|_{V_I}$ are orthogonal for 
pairs of different values of $i\!\in\!I$.
For each $I'\!\subset\!I$, let
$$\cN_{I;I'}=\bigoplus_{i\in I-I'}\!\!\!L_i\approx \cN_{V_{I'}}V_I$$
and $\pi_{I'}\!:\cN_{I;\eset}\!\lra\!\cN_{I;I'}$ be the projection map.
There is then a canonical identification 
$$T\cN_{I;I'}|_{V_I} = TV_{I'}|_{V_I}\,.$$
Let $\exp\!:W\!\lra\!X$, where $W$ is a neighborhood of $X\!\subset\!TX$, be
the exponential map with respect to the Levi-Civita connection of the metric~$g$.\\

\noindent
Denote by $\Psi_0\!:\cN_{I;\eset}\!\lra\!X$
the composition of $\exp$ with a diffeomorphism
from $\cN_{I;\eset}$ to a neighborhood of $V_I\!\subset\!\cN_{I;\eset}\!\cap\!W$ which restricts
to the identity on a smaller neighborhood of $V_I\!\subset\!\cN_{I;\eset}$.
Suppose $\ell\!\in\!\{1,\ldots,|I|\}$ and we have constructed a smooth map
$\Psi_{\ell-1}\!:\cN_{I;\eset}\!\lra\!X$ such~that 
\BE{expcond_e12}\begin{split}
\Psi_{\ell-1}|_{V_I}=\id_{V_I}, &\quad
\nd\Psi_{\ell-1}|_{T\cN_{I;\eset}|_{V_I}}\!=\!\id_{T\cN_{I;\eset}|_{V_I}}\!:
T\cN_{I;\eset}|_{V_I}\lra TX|_{V_I}\,, \\
\Psi_{\ell-1}(\cN_{I;I''})&=V_{I''}\!\cap\!\Im(\Psi_{\ell-1})
\quad\forall\,I''\!\subset\!I
~~\hbox{s.t.}~~|I''|\!>\!|I|\!-\!\ell.
\end{split}\EE

\vspace{.1in}

\noindent
By the first two statements in~\eref{expcond_e12} and 
the Inverse Function Theorem \cite[Theorem~1.30]{Warner}, 
there exist a neighborhood~$W$ of $V_I\!\subset\!X$ and a smooth map
$\Phi\!:W\!\lra\!\cN_{I;\eset}$ such that 
\BE{PsiPhi_e}\Psi_{\ell-1}\!\circ\!\Phi=\id_W,\quad 
\Phi\!\circ\!\Psi_{\ell-1}|_{\Phi(W)}=\id_{\Phi(W)}\,.\EE
In particular, $\Phi(V_{I'}\!\cap\!W)\!\subset\!\cN_{I;\eset}$ is a smooth submanifold
for all $I'\!\subset\!I$.
By~\eref{expcond_e12} and the second equation in~\eref{PsiPhi_e}, 
\BE{PhiIprp_e}\begin{split}
\Phi|_{V_I}=\id_{V_I}, &\quad 
T\Phi(V_{I'}\!\cap\!W)\big|_{V_I}=T\cN_{I;I'}|_{V_I}~~\forall\,I'\!\subset\!I,\\
\Phi(V_{I''}\!\cap\!W)=&\,\cN_{I;I''}\!\cap\!\Im(\Phi)
\quad\forall\,I''\!\subset\!I
~~\hbox{s.t.}~~|I''|\!>\!|I|\!-\!\ell.
\end{split}\EE

\vspace{.2in}

\noindent
By the first two statements in~\eref{PhiIprp_e}, for every $I'\!\subset\!I$
we can apply 
the Inverse Function Theorem  to the projection
$$\pi_{I'}\!:\Phi(V_{I'}\!\cap\!W)\lra\cN_{I;I'}$$
forgetting the components in $L_i$ with $i\!\in\!I'$. 
There thus exist a neighborhood~$\cN'$ of~$V_I\!\subset\!\cN_{I;\eset}$, 
a neighborhood~$W'$ of $V_I\!\subset\!W$, and fiber-preserving smooth maps
\begin{gather}
\notag
h_{I';i}\!:  \cN_{I;I'}\!\cap\!\cN' \lra L_i, 
\quad i\!\in\!I'\!\subset\!I, \qquad\hbox{s.t.}\\ 
\label{PsiPhi_e2}
\pi_{I'}(\cN')\!=\!\cN_{I;I'}\!\cap\!\cN',~~~
\big(\id_{\cN_{I;I'}},(h_{I';i})_{i\in I'}\big)
\!\circ\!\pi_{I'}|_{\Phi(V_{I'}\cap W')}=\id_{\Phi(V_{I'}\cap W')}
\quad\forall\, I'\!\subset\!I.
\end{gather}
By~\eref{PhiIprp_e} and~\eref{PsiPhi_e2}, 
\BE{hprop_e}\nd_xh_{I';i}=0~~\forall\,x\!\in\!V_I,\qquad
h_{I';i}(v)=0~~\forall\,
v\!\in\!\cN_{I;I''}\!\cap\!\cN',~I''\!\supset\!I',~|I''|\!>\!|I|\!-\!\ell.\EE

\vspace{.2in}

\noindent
Let $\cP_{\ell}^c(I)$ denote the collection of subsets $I'\!\subset\!I$ with 
$|I'|\!=\!|I|\!-\!\ell$.
We define a smooth fiber-preserving map
$$\Th\!\equiv\!(\Th_i)_{i\in I}:\cN'\lra \cN_{I;\eset} \qquad\hbox{by}\quad
\Th_i(v)=v_i
+\sum_{\begin{subarray}{c}I'\in\cP_{\ell}^c(I)\\ i\in I'\end{subarray}}
\!\!\!\!\! h_{I';i}\big(\pi_{I'}(v)\big).$$
By~\eref{hprop_e} and $\pi_{I''}(\cN_{I;I'})\!=\!\cN_{I;I'\cup I''}$ for every $I''\!\subset\!I$, 
\BE{Thprop_e}
\nd\Th\big|_{T\cN_{I;\eset}|_{V_I}}=\id\big|_{T\cN_{I;\eset}|_{V_I}}, \qquad
\Th\big|_{\cN_{I;I'}\cap\cN'}=\big(\id_{\cN_{I;I'}},(h_{I';i})_{i\in I'}\big)
\big|_{\cN_{I;I'}\cap\cN'} ~~\forall~I'\!\in\!\cP_{\ell}^c(I).\EE
By the Inverse Function Theorem, $\Th$ thus restricts 
to a diffeomorphism on a neighborhood $\cN''$ of $V_I\!\subset\!\cN'$.
By the second statement in~\eref{Thprop_e} and~\eref{PsiPhi_e2}, the diffeomorphism
$$\Psi_{\ell}'\!\equiv\!\Psi_{\ell-1}\!\circ\!\Th\!: \cN''\lra \Psi_{\ell-1}\big(\Th(\cN'')\big)$$
satisfies the last condition in~\eref{expcond_e12} 
with $\ell$ replaced by~$\ell\!+\!1$ and 
$\cN_{I;I'}$ by $\cN_{I;I'}\!\cap\!\cN''$.
As it also satisfies the first two conditions in~\eref{expcond_e12}, we can obtain a smooth map
$\Psi_{\ell}\!:\cN_{I;\eset}\!\lra\!X$
satisfying~\eref{expcond_e12} with $\ell$ replaced by~$\ell\!+\!1$
by composing~$\Psi_{\ell}'$ with a diffeomorphism
from $\cN_{I;\eset}$ to a neighborhood of $V_I\!\subset\!\cN''$ which restricts
to the identity on a smaller neighborhood of $V_I\!\subset\!\cN''$ and preserves lines
inside of each fiber of~$\cN_{I;\eset}$.\\

\noindent
Thus, there exists a smooth map 
$\Psi_{\ell}\!:\cN_{I;\eset}\!\lra\!X$ satisfying~\eref{expcond_e12} 
with $\ell\!=\!|I|\!+\!1$.
By composing $\Psi_{\ell}$ with the orthogonal projection $TX|_{V_I}\!\lra\!\cN_{I;\eset}$,
we obtain a smooth map~$\exp_I$ with the desired properties.
\end{proof}
 
\begin{lmm}\label{SympNeigh_lmm}
Let $X$ be a  manifold, $\{V_i\}_{i\in S}$ be a finite transverse collection  
of closed submanifolds of~$X$ of codimension~2, $I\!\in\!\cP^*(S)$,
and $U,U'\!\subset\!V_I$ be open subsets, possibly empty, such that $\ov{U'}\!\subset\!U$.
Suppose
\begin{enumerate}[label=$\bullet$,leftmargin=*]

\item $N'(\prt B)\!\subset\!N(\prt B)\!\subset\!B$ are as in Proposition~\ref{TubulNeigh_prp},

\item $(\om_t)_{t\in B}$ is a smooth family of symplectic structures on 
$\{V_i\}_{i\in S}$ in $X$ in the sense of Definition~\ref{SCdivstr_dfn},

\item $(\Psi_{I;t})_{t\in N(\prt B)}$ and $(\Psi_{U;t})_{t\in B}$
are smooth families of regularizations for~$V_I$ and~$U$, respectively,
in~$X$ such~that
\BE{SympNeigh_e0b}
\Psi_{\star;t}\big(\cN_{I;I'}\!\cap\!\Dom(\Psi_{\star;t})\big)
=V_{I'}\!\cap\!\Im(\Psi_{\star;t})~~\forall\,I'\!\subset\!I,\quad
\nd_x\Psi_{\star;t}\big(\cN_XV_I|_x\big)=T_xV_I^{\om_t},\EE
for all $\star\!=\!I,U$, $(t,x)\!\in\!N(\prt B)\!\times\!V_I$ if $\star\!=\!I$, and 
$(t,x)\!\in\!B\!\times\!U$ if $\star\!=\!U$, and 
\BE{SympNeigh_e0a}
\big(\Psi_{I;t}|_{\Dom(\Psi_{I;t})|_U}\big)_{t\in N(\prt B)}
 = \big(\Psi_{U;t}\big)_{t\in N(\prt B)}\,.\EE
\end{enumerate}
Then there exists a smooth family $(\Psi_t)_{t\in B}$ of regularizations for 
$V_I$ in~$X$ such~that 
\begin{gather}
\label{SympNeigh_e1b}
\Psi_t\big(\cN_{I;I'}\!\cap\!\Dom(\Psi_t)\big)
=V_{I'}\!\cap\!\Im(\Psi_t)~~\forall\,I'\!\subset\!I,\quad
\nd_x\Psi_t\big(\cN_XV_I|_x\big)=T_xV_I^{\om_t}~~\forall\,x\!\in\!V_I,\\
\label{SympNeigh_e1a}
\big(\Psi_t\big)_{t\in N'(\prt B)}=\big(\Psi_{I;t}\big)_{t\in N'(\prt B)},
\quad
\big(\Psi_t|_{\Dom(\Psi_t)|_{U'}}\big)_{t\in B}
=\big(\Psi_{U;t}|_{\Dom(\Psi_{U;t})|_{U'}}\big)_{t\in B}.
\end{gather}
\end{lmm}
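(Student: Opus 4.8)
The plan is to prove Lemma~\ref{SympNeigh_lmm} in two stages. First I would construct one smooth family $(\Psi_{0;t})_{t\in B}$ of regularizations for~$V_I$ in~$X$ that already satisfies the structural conditions~\eref{SympNeigh_e1b} over all of~$B$ and has prescribed $1$-jet along~$V_I$. Then I would deform it through a radially--contracted isotopy, cut off by bump functions on $B\!\times\!V_I$, so that it becomes equal to~$\Psi_{I;t}$ near~$\prt B$ and to~$\Psi_{U;t}$ near~$U'$ without disturbing~\eref{SympNeigh_e1b}; this yields~\eref{SympNeigh_e1a}. The whole construction parallels, and reuses the domain bookkeeping of, the proof of Lemma~\ref{stratexp_lmm}.

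For the reference family I would apply Lemma~\ref{stratexp_lmm} to $\{V_i\}_{i\in I}$ to obtain a smooth map $\exp_I\!:TX|_{V_I}\!\lra\!X$ with the properties~\eref{stratexp_e1}--\eref{stratexp_e2}. A regularization for~$V_I$ in~$X$ satisfying~\eref{SympNeigh_e1b} is then obtained by composing $\exp_I$ with a bundle embedding $\cN_XV_I\!\lhra\!TX|_{V_I}$ whose image is the $\om_t$-symplectic normal bundle $TV_I^{\om_t}$ and which carries each canonical subbundle $\cN_{I;I'}$ onto $TV_I^{\om_t}\!\cap\!TV_{I'}|_{V_I}$; here one checks that the admissible such embeddings form the fibers of a fibre bundle over $B\!\times\!V_I$ whose fibres are non-empty and connected, so that the $1$-jets along~$V_I$ of $\Psi_{I;t}$ (defined for $t\!\in\!N(\prt B)$) and of $\Psi_{U;t}$ (defined over $B\!\times\!U$) — which agree over $N(\prt B)\!\times\!U$ by~\eref{SympNeigh_e0a} — extend to a smooth family of such embeddings over all of $B\!\times\!V_I$. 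The resulting $(\Psi_{0;t})_{t\in B}$ satisfies~\eref{SympNeigh_e1b} and, along~$V_I$, has the same $1$-jet as $\Psi_{I;t}$ over $N(\prt B)$ and as $\Psi_{U;t}$ over $B\!\times\!U$.

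Over $N(\prt B)$ set $\Phi_t\!:=\!\Psi_{0;t}^{-1}\!\circ\!\Psi_{I;t}$, and over $B\!\times\!U$ set $\Phi_t\!:=\!\Psi_{0;t}^{-1}\!\circ\!\Psi_{U;t}$; by~\eref{SympNeigh_e0a} these patch to a single germ, along the zero section~$V_I$, of a diffeomorphism of~$\cN_XV_I$ over $N(\prt B)\!\times\!V_I\cup B\!\times\!U$ which fixes~$V_I$ pointwise, has $\nd_x\Phi_t\!=\!\id$ for $x\!\in\!V_I$ (the $1$-jets were matched), and maps each $\cN_{I;I'}$ to itself (both $\Psi_{I;t},\Psi_{U;t},\Psi_{0;t}$ obey the stratification condition). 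The radial contraction $\phi_{t,\tau}(v)\!:=\!\tau^{-1}\Phi_t(\tau v)$ for $\tau\!\in\!(0,1]$ extends smoothly across $\tau\!=\!0$ to $\phi_{t,0}\!=\!\id$ (because $\Phi_t(\tau v)\!=\!\tau v\!+\!O(\tau^2)$), gives an isotopy from $\id$ to $\Phi_t$ through germs of stratified diffeomorphisms fixing~$V_I$ to first order, and every $\phi_{t,\tau}$ preserves each linear subbundle $\cN_{I;I'}$ and satisfies $\nd_x\phi_{t,\tau}\!=\!\id$ for $x\!\in\!V_I$. Fixing a cutoff $\tau\!\in\!C^\infty(B\!\times\!V_I;\bI)$ equal to~$1$ on a neighborhood of $\ov{N'(\prt B)}\!\times\!V_I\cup B\!\times\!\ov{U'}$ and supported in a neighborhood of $B\!\times\!V_I$ whose closure lies in $N(\prt B)\!\times\!V_I\cup B\!\times\!U$, I would set $\Psi_t\!:=\!\Psi_{0;t}\!\circ\!\phi_{t,\tau(t,\cdot)}$. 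Since $\phi_{t,\tau}$ preserves the $\cN_{I;I'}$ and is the identity to first order along~$V_I$, each $\Psi_t$ is again a regularization for~$V_I$ in~$X$ satisfying~\eref{SympNeigh_e1b}; where $\tau\!\equiv\!1$ one gets $\Psi_t\!=\!\Psi_{0;t}\!\circ\!\Psi_{0;t}^{-1}\!\circ\!\Psi_{I;t}\!=\!\Psi_{I;t}$ near $N'(\prt B)$ and $\Psi_t\!=\!\Psi_{U;t}$ near~$U'$, which is~\eref{SympNeigh_e1a}; where $\tau\!\equiv\!0$ one gets $\Psi_t\!=\!\Psi_{0;t}$, so the family is smooth in~$t$ and globally defined.

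The main obstacle is keeping the symplectic-normal identity $\nd_x\Psi_t(\cN_XV_I|_x)\!=\!T_xV_I^{\om_t}$ and the stratification identities simultaneously in force throughout the deformation; the device that makes this work is reducing first to matched $1$-jets along~$V_I$, since this is exactly what forces $\nd_x\Phi_t\!=\!\id$ along~$V_I$ and hence makes the radial contraction land on the identity at $\tau\!=\!0$ while preserving both the subbundles $\cN_{I;I'}$ and the vertical subspace $\cN_XV_I|_x$ for every~$\tau$. The delicate point in that reduction is to verify that the family of admissible $1$-jets is non-empty and that the two partial sections coming from $\Psi_{I;t}$ and $\Psi_{U;t}$ lie in matching components of the torsor over the group of filtered automorphisms of $\cN_XV_I\!=\!\bigoplus_{i\in I}L_i$ respecting $\{\cN_{I;I'}\}_{I'\subset I}$, so that the section extends over $B\!\times\!V_I$. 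The remaining verifications — smoothness in $(t,\tau)$, that after shrinking domains the patched map is a diffeomorphism onto a neighborhood of~$V_I$, and that the equalities hold on the appropriate restricted domains — are routine and mirror the arguments in the proofs of Lemma~\ref{stratexp_lmm} and Proposition~\ref{SympNeigh_prp}.
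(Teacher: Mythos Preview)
Your overall strategy matches the paper's: build a reference family $\wh\Psi_t$ (your $\Psi_{0;t}$) by composing $\exp_I$ with the identification $\cN_XV_I\!\approx\!TV_I^{\om_t}$, form the comparison diffeomorphisms $\Th_{\star;t}=\wh\Psi_t^{-1}\!\circ\!\Psi_{\star;t}$ (your $\Phi_t$) which fix~$V_I$ to first order and preserve each~$\cN_{I;I'}$, then interpolate these with the identity using cutoffs on $B\!\times\!V_I$ and post-compose with~$\wh\Psi_t$. Two points deserve comment.

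First, your fiber-bundle/torsor discussion for extending the $1$-jets is unnecessary. The conditions of Definition~\ref{smreg_dfn} (the induced map on $\cN_XV_I|_x$ is the identity) together with the second condition in~\eref{SympNeigh_e0b} force the $1$-jet along~$V_I$ to be the \emph{unique} splitting $\cN_XV_I\!\to\!TV_I^{\om_t}$ of the quotient map; there is no torsor, only a single admissible $1$-jet at each point. The paper simply takes $\wh\Psi_t=\exp_I|_{TV_I^{\om_t}}$ and records directly in~\eref{SympNeigh_e5} that $\nd_x\wh\Psi_t=\nd_x\Psi_{\star;t}$. It also observes that $TV_I^{\om_t}=\bigoplus_{i\in I}L_{t;i}$ with $L_{t;i}\!\subset\!TV_{I-i}|_{V_I}$ the $\om_t$-complement of~$TV_I$, so that the canonical splitting already carries $\cN_{I;I'}$ into $TV_{I'}|_{V_I}$; your worry about ``matching components'' does not arise.

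Second, the interpolation step is implemented differently. The paper chooses an auxiliary bundle isomorphism $\wt\exp_{V_I}$ over~$TV_I$, decomposes $\Th_{\star;t}$ into a horizontal part $\Th_{\star;t}^{\hor}$ (landing in~$TV_I$) and a vertical part $\Th_{\star;t}^{\ver}$, and linearly interpolates only the vertical part against~$v$; the four pieces in~\eref{wtPsidfn_e} are then patched. Your radial contraction $\phi_{t,\tau}(v)=\tau^{-1}\Phi_t(\tau v)$ is a valid alternative: since $\Phi_t$ is the identity to first order along~$V_I$, the family extends smoothly to $\phi_{t,0}=\id$, and since fiberwise scaling and $\Phi_t$ both preserve each~$\cN_{I;I'}$, so does every~$\phi_{t,\tau}$. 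This avoids the auxiliary choice of~$\wt\exp_{V_I}$ and is somewhat cleaner, at the cost of being slightly less explicit about what happens to base points under the interpolation. Either route yields the required~$(\Psi_t)_{t\in B}$ after the routine shrinking for injectivity.
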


\begin{proof} Let $\pi\!:TV_I\!\lra\!V_I$ and
$\pi_I\!:\cN_XV_I\!\lra\!V_I$ be the projection maps and
\hbox{$\exp_I\!: TX|_{V_I}\!\lra\!X$} be as in Lemma~\ref{stratexp_lmm}.
Choose an isomorphism
$$\wt\exp_{V_I}\!:\pi^*\cN_XV_I\lra \big\{\!\exp_I|_{TV_I}\!\big\}^*\cN_XV_I
\subset TV_I\!\times\!\cN_XV_I$$
of split vector bundles over~$TV_I$ restricting to the identity 
over $V_I\!\subset\!TV_I$, i.e.
\BE{wtexpcond_e}\wt\exp_{V_I}(x,v)=(x,v)\qquad\forall~(x,v)\in
\big(\pi^*\cN_XV_I\big)|_{V_I}\subset  TV_I\!\times\!\cN_XV_I.\EE
Denote by $\pi_2\!:\{\exp_I|_{TV_I}\}^*\cN_XV_I\!\lra\!\cN_XV_I$ 
the projection onto the second component.\\

\noindent
Let $t\!\in\!B$.
We identify $\cN_XV_I\!=\!\cN_{I;\eset}$ with 
the $\om_t$-orthogonal complement $TV_I^{\om_t}\!\subset\!TX|_{V_I}$
of $TV_I$ via the quotient projection map;
it is the direct sum of the $\om_t$-orthogonal complements of $TV_I$ in 
$TV_{I-i}|_{TV_I}$ with $i\!\in\!I$.
Define
$$\wh\Psi_t'=\exp_I\big|_{TV_I^{\om_t}}:\cN_XV_I\!=\!TV_I^{\om_t}\lra X.$$
By the first two statements in~\eref{stratexp_e12} and 
the Tubular Neighborhood Theorem \cite[(12.11)]{BJ},
there exists a neighborhood~$\cW$ of $B\!\times\!V_I$ in $B\!\times\!\cN_XV_I$
such~that 
$$\wh\Psi_t\equiv\wh\Psi_t'|_{W_t}\!:W_t\lra X,
\qquad\hbox{where}\quad \{t\}\!\times\!W_t \equiv \big(\{t\}\!\times\!\cN_XV_I\big)\cap \cW,$$
is a regularization for $V_I$ in~$X$ for each $t\!\in\!B$.
By the last two statements in~\eref{stratexp_e12},
\BE{SympNeigh_e2}
\wh\Psi_t\big(\cN_{I;I'}\!\cap\!W_t\big)=V_{I'}\!\cap\!\Im(\wh\Psi_t)
~~\forall\,I'\!\subset\!I, \quad
\nd_x\wh\Psi_t\big(\cN_XV_I|_x\big)=T_xV_I^{\om_t}~~\forall\,x\!\in\!V_I.\EE 

\vspace{.2in}

\noindent
Let $\star\!=\!I,U$, $t\!\in\!N(\prt B)$ if  $\star\!=\!I$, and 
$t\!\in\!B$ if $\star\!=\!U$.
Since $\wh\Psi_t$ and $\Psi_{\star;t}$ are regularizations,  
\BE{SympNeigh_e5}
\nd_x\wh\Psi_t=\nd_x\Psi_{\star;t}\qquad\forall\,
x\in\begin{cases} V_I,&\hbox{if}~\star\!=\!I;\\
U,&\hbox{if}~\star\!=\!U.\end{cases}\EE
By~\eref{SympNeigh_e5} and the Inverse Function Theorem,
there exists a neighborhood~$\cW_{\star}$  
of $N(\prt B)\!\times\!V_I$ in~$\cW$ if $\star\!=\!I$ and 
of $B\!\times\!U$ in $\cW\!\cap\!(B\!\times\!\cN_XV_I|_U)$ if $\star\!=\!U$
independent of~$t$  such~that the~map
\BE{ThIUdfn_e}
\Th_{\star;t}\equiv \wh\Psi_t^{-1}\!\circ\!\Psi_{\star;t}: 
W_{\star;t}\lra\cN_XV_I,
~~\hbox{where}~~~ 
\{t\}\!\times\!W_{\star;t} \equiv \big(\{t\}\!\times\!\cN_XV_I\big)\cap \cW_{\star},\EE
is a well-defined diffeomorphism onto a neighborhood 
of $V_I\!\subset\!\cN_XV_I$ if $\star\!=\!I$ and 
of $U\!\subset\!\cN_XV_I|_U$ for $t\!\in\!B$.
By~\eref{SympNeigh_e5}, the first assumption in~\eref{SympNeigh_e0b}, 
and the second property in~\eref{SympNeigh_e2}, 
\BE{SympNeigh_e25b}
\Th_{\star;t}(x)=x,\quad \nd_x\Th_{\star;t}=\id,\quad
\Th_{\star;t}\big(\cN_{I;I'}\!\cap\!W_{\star;t}\big)
=\cN_{I;I'}\!\cap\!\Im\big(\Th_{\star;t}\big)~~\forall\,I'\!\subset\!I,\EE
for all $x\!\in\!\!V_I$ if $\star\!=\!I$ and
$x\!\in\!U$ if $\star\!=\!U$.
By~\eref{SympNeigh_e0a},
\BE{SympNeigh_e25a}\big(\Th_{I;t}|_{W_{I;t}\cap W_{U;t}}\big)_{t\in N(\prt B)}
=\big(\Th_{U;t}|_{W_{I;t}\cap W_{U;t}}\big)_{t\in N(\prt B)}\,.\EE

\vspace{.2in}

\noindent
With $\star$ and $t$ as above, define
\begin{gather*}
W_{\star;t}'=\big\{v\!\in\!W_{\star;t}\!:\,
\pi_I(\Th_{\star;t}(v))\!\in\!\exp_I(T_{\pi_I(v)}V_I)\big\},\\
\begin{aligned}
\Th_{\star;t}^{\hor}\!:W_{\star;t}'&\lra TV_I &~~&\hbox{by} &
\Th_{\star;t}^{\hor}(v)&\in\!T_{\pi_I(v)}V_I,&
&\exp_I\!\big(\Th_{\star;t}^{\hor}(v)\big)=\pi_I\big(\Th_{\star;t}(v)\big),\\
\Th_{\star;t}^{\ver}\!:W_{\star;t}'&\lra \cN_XV_I &~~&\hbox{by}&
\Th_{\star;t}^{\ver}(v)&\in\!\cN_XV_I|_{\pi_I(v)},&
&\pi_2\big(\wt\exp_{V_I}\!\big(\Th_{\star;t}^{\hor}(v),
  \Th_{\star;t}^{\ver}(v)\big)\big)=\Th_{\star;t}(v).
\end{aligned}\end{gather*}
For a smooth function $\eta\!:V_I\!\lra\!\R$, let
\begin{gather*}
\Th_{\star;t,\eta}\!:W_{\star;t}'\lra \cN_XV_I, \\
\Th_{\star;t,\eta}(v)=
\pi_2\Big(\wt\exp_{V_I}\big(\big(1\!-\!\eta(\pi_I(v))\!\big)\Th_{\star;t}^{\hor}(v),
v\!+\!\big(1\!-\!\eta(\pi_I(v))\big)(\Th_{\star;t}^{\ver}(v)\!-\!v)\big)\!\!\Big).
\end{gather*}
By~\eref{SympNeigh_e25b},
\BE{SympNeigh_e35}
\Th_{\star;t,\eta}(x)=x,\quad \nd_x\Th_{\star;t,\eta}=\id,\quad
\Th_{\star;t,\eta}\big(\cN_{I;I'}\!\cap\!W_{\star;t}'\big)
=\cN_{I;I'}\!\cap\!\Im\big(\Th_{\star;t,\eta}\big)~~\forall\,I'\!\subset\!I,\EE
for all $x\!\in\!V_I$ if $\star\!=\!I$ and $x\!\in\!U$ if $\star\!=\!U$.
By~\eref{SympNeigh_e25a},
\BE{SympNeigh_e34}
\big(\Th_{I;t,\eta}|_{W_{I;t}'\cap W_{U;t}'}\big)_{t\in N(\prt B)}
=\big(\Th_{U;t,\eta}|_{W_{I;t}'\cap W_{U;t}'}\big)_{t\in N(\prt B)}.\EE
The set 
$$\cW_{\star}''\equiv\big\{(t,v)\!\in\!B\!\times\!\cN_XV_I\!:~
v\!\in\!W_{\star;t}',\,
\Th_{\star;t,\tau}(v)\!\in\!W_t~\forall\,\tau\!\in\!\bI\big\}$$
is a neighborhood of 
\begin{alignat*}{2}
N(\prt B)\!\times\!V_I&\subset 
\bigcup\limits_{t\in N(\prt B)}\!\!\!\!\!\{t\}\!\times\!\Dom(\Psi_{I;t})
\subset N(\prt B)\!\times\!\cN_XV_I&\quad\hbox{if}~\star&=\!I,\\
B\!\times\!U &\subset\bigcup\limits_{t\in B}\{t\}\!\times\!\Dom(\Psi_{U;t})
\subset B\!\times\!\cN_XV_I|_U
&\quad\hbox{if}~\star&=\!U.
\end{alignat*}

\vspace{.2in}

\noindent
Let $N''(\prt B)\!\subset\!B$ and 
$U_0'\!\subset\!U''\!\subset\!V_I$ be open subsets such~that
$$\ov{N'(\prt B)}\subset N''(\prt B), ~~ \ov{N''(\prt B)}\subset N(\prt B),
\qquad
\ov{U'}\subset U_0', ~~ \ov{U_0'}\subset U'', ~~ \ov{U''}\subset U.$$
The set 
\begin{equation*}\begin{split}
\wt\cW\equiv& \Big(\cW\!-\!\ov{N''(\prt B)}\!\times\!\cN_XV_I
\!-\!B\!\times\!\cN_XV_I|_{\ov{U''}}\Big)\cup\cW_I'' \cup\cW_U''\\
&\qquad\cup
\bigg(\bigcup_{t\in N'(\prt B)}\!\!\!\!\!\!\{t\}\!\times\!\Dom(\Psi_{I;t})\!\!\bigg)
\cup \bigg(\bigcup_{t\in B}\{t\}\!\times\!\Dom(\Psi_{U;t})|_{U_0'}\!\!\bigg)
\end{split}\end{equation*}
is then a neighborhood of $B\!\times\!V_I$ in $B\!\times\!\cN_XV_I$.
Choose smooth $[0,1]$-valued functions $\eta_I$ on~$B$ and $\eta_U$ on~$V_I$  
such~that 
\BE{etaIetaUdfn_e}\eta_I(t)=\begin{cases}
0,&\hbox{if}~t\!\in\!N'(\prt B);\\
1,&\hbox{if}~t\!\not\in\!\ov{N''(\prt B)};
\end{cases} \qquad
\eta_U(x)=\begin{cases}
0,&\hbox{if}~x\!\in\!U_0';\\
1,&\hbox{if}~x\!\not\in\!\ov{U''}.
\end{cases}\EE
By~\eref{etaIetaUdfn_e}, \eref{wtexpcond_e}, \eref{ThIUdfn_e},
and~\eref{SympNeigh_e0a},  
\BE{ThIUdfn_e2}
\wh\Psi_t\big(\Th_{\star;t,\eta_I(t)\eta_U}(v)\big)=
\begin{cases}
\wh\Psi_t(v),&\hbox{if}~(t,v)\!\in\!\cW_{\star}'',\,
t\!\not\in\!\ov{N''(\prt B)},\,\pi_I(v)\!\not\in\!\ov{U''};\\
\Psi_{I;t}(v),&\hbox{if}~(t,v)\!\in\!\cW_{\star}'',\,t\!\in\!N'(\prt B),
\,v\!\in\!\Dom(\Psi_{I;t});\\
\Psi_{U;t}(v),&\hbox{if}~(t,v)\!\in\!\cW_{\star}'',\,v\!\in\!\Dom(\Psi_{U;t})|_{U_0'}.
\end{cases}\EE

\vspace{.2in}

\noindent
Define
\BE{wtPsidfn_e}
\wt\Psi\!:\wt\cW\!\lra\!X, \quad
\wt\Psi_t(v)=\begin{cases}
\wh\Psi_t(v),&\hbox{if}~t\!\not\in\!\ov{N''(\prt B)},\,\pi_I(v)\!\not\in\!\ov{U''};\\
\wh\Psi_t\big(\Th_{\star;t,\eta_I(t)\eta_U}(v)\big),
&\hbox{if}~(t,v)\!\in\!\cW_{\star}'',\,\star\!=\!I,U;\\
\Psi_{I;t}(v),&\hbox{if}~t\!\in\!N'(\prt B),\,v\!\in\!\Dom(\Psi_{I;t});\\
\Psi_{U;t}(v),&\hbox{if}~v\!\in\!\Dom(\Psi_{U;t})|_{U_0'}.
\end{cases}\EE
The domain in the first case above is disjoint from the domains in the last two cases.
By~\eref{SympNeigh_e34}, the definitions of~$\wt\Psi_t(v)$ agree on the overlap 
of the two domains in the second case.
By~\eref{ThIUdfn_e2}, its definition on either of these domains agrees
with the definitions in the first, second, and fourth cases on the overlaps.
By~\eref{SympNeigh_e0a}, the definitions of~$\wt\Psi_t(v)$ agree on the overlap 
of the last two cases.
Thus, $\wt\Psi$ is well-defined and smooth.
By the last two cases in~\eref{wtPsidfn_e}, 
$\wt\Psi_t$ satisfies \eref{SympNeigh_e1a} with $\Psi_t\!=\!\wt\Psi_t$.
By the first statements in~\eref{SympNeigh_e0b} and~\eref{SympNeigh_e2}
and the last statement in~\eref{SympNeigh_e35},
$\wt\Psi_t$ satisfies the first property in~\eref{SympNeigh_e1b}.
By the first two cases in~\eref{wtPsidfn_e}, \eref{stratexp_e12}, and
the first two statements in~\eref{SympNeigh_e35}, 
\BE{wtPstprop_e}
\wt\Psi_t(x)=x,\quad \nd_x\wt\Psi_t\!=\!\id\!:\,
T_x\cN_XV_I\!=\!T_xV_I\!\oplus\!T_xV_I^{\om_t}\lra T_xX 
\qquad\forall\,(t,x)\!\in\!B\!\times\!V_I.\EE
This implies that $\wt\Psi_t$ satisfies the second property in~\eref{SympNeigh_e1b}.\\

\noindent
By~\eref{wtPstprop_e} and the Tubular Neighborhood Theorem,
there exists a neighborhood 
$$\wt\cW'\equiv\bigcup_{t\in B}\{t\}\!\times\!W'_t$$ 
of $B\!\times\!V_I$ in $\wt\cW$ such that  
$$\wt\Psi_t|_{W_t'}\!: W_t'\lra\wt\Psi_t(W_t')$$
is a diffeomorphism onto an open neighborhood.
Let 
\begin{equation*}\begin{split}
\bigcup_{t\in B}\{t\}\!\times\!W''_t
\equiv& \bigcup_{t\in B}\{t\}\!\times\!\Big(W_t'\!-\!
\wt\Psi_t^{-1}\big(\ov{\wt\Psi_t(\Dom(\wt\Psi_t)|_{U'})}\big)\!\cap\!
\cN_XV_I|_{V_I-U_0'}\Big)\\
&\qquad\cup
\bigg(\bigcup_{t\in N'(\prt B)}\!\!\!\!\!\!\{t\}\!\times\!\Dom(\Psi_{I;t})\!\!\bigg)
\cup \bigg(\bigcup_{t\in B}\{t\}\!\times\!\Dom(\Psi_{U;t})|_{U'}\!\!\bigg).
\end{split}\end{equation*}  
This is a neighborhood of $B\!\times\!V_I$ in $B\!\times\!\cN_XV_I$
and $\Psi_t\!\equiv\!\wt\Psi_t|_{W_t''}$ is injective,
since $\Psi_{I;t}$ and $\Psi_{U;t}$ are.
Thus, $(\Psi_t)_{t\in B}$ is a smooth family of regularizations for~$V_I$ in~$X$
with the desired properties.   
\end{proof}

\begin{rmk}\label{SympNeigh_rmk}
The first requirement in~\eref{SympNeigh_e1b} is non-trivial only for $|I|\!\ge\!2$.
By \cite[Lemma~3.14]{MS1} and its proof, 
the second requirement in~\eref{SympNeigh_e1b} can be strengthened to 
the equality of $\Psi_t^*\om_t$ with a standard 2-form~$\wh\om_t$ 
on~$\cN_XV_I$ as in~\eref{ombund_e2} 
over a neighborhood~$\cN'$ of~$V_I$ in~$\cN_XV_I$ at the cost of 
dropping the first requirement in~\eref{SympNeigh_e1b}.
By Lemma~\ref{SympNeigh_lmm2} and Remark~\ref{SympDefVB_rmk},
this strengthening can also be achieved after deforming~$\om_t$ on
a neighborhood of~$V$ in~$X$ while keeping all submanifolds~$V_{I'}$ symplectic.
It appears that this strengthening can be achieved without weakening some other condition
if either $|I|\!=\!2$ or the submanifolds $V_i\!\subset\!X$ are $\om_t$-orthogonal.
Due to symplectic angle considerations, this strengthening cannot be achieved 
in the general case without weakening some other condition if $|I|\!\ge\!3$.
The approach of this paper, as summarized by the principle on page~\pageref{equiv_prn}
and the nexus on page~\pageref{equiv_nex}, is a way around this fundamental obstacle
in the setting of singular symplectic divisors and varieties.
\end{rmk}

\subsection{Proof of Proposition~\ref{TubulNeigh_prp}}
\label{SCCsm_subs}

\noindent
By Lemma~\ref{SympNeigh_lmm}, for each $i\!\in\!I^*$
there exists a smooth family of regularizations $(\Psi_{t;i})_{t\in B}$
for $X_{I^*}$ in~$X_i$ such~that
\begin{gather}
\label{SympNeighCrl_e3c}
\Psi_{t;i}\big(\cN_{I^*;I}\!\cap\!\Dom(\Psi_{t;i})\big)
=X_{I}\!\cap\!\Im(\Psi_{t;i}) ~~\forall\,i\!\in\!I\!\subset\!I^*, \\
\label{SympNeighCrl_e3b}
\nd_x\Psi_{t;i}\big(\cN_{I^*;i}|_x\big)=T_xX_{I^*}^{\,\om_{t;i}}
~~\forall\,x\!\in\!X_{I^*},\,i\!\in\!I^*,\\
\big(\Psi_{t;i}\big)_{t\in N'(\prt B)}=\big(\Psi_{I^*;t;i}\big)_{t\in N'(\prt B)},\quad
\label{SympNeighCrl_e3a}
\big(\Psi_{t;i}|_{\Dom(\Psi_{t;i})|_{U'}}\big)_{t\in B}
=\big(\Psi_{U;t;i}|_{\Dom(\Psi_{U;t;i})|_{U'}}\big)_{t\in B}.
\end{gather}
Below we modify the maps $\Psi_{t;i}$ on the intersections of their domains,
i.e.~neighborhoods of $X_{I^*}$ in $\cN_{I^*;i_1i_2}$, 
in order to make them agree there.\\

\noindent
We can assume that $I^*\!=\![\ell^*]$ for some $\ell^*\!\in\!\Z^+$.
Let $\pi\!:TX_{I^*}\!\lra\!X_{I^*}$
be the projection map and 
\hbox{$\exp\!:TX_{I^*}\!\lra\!X_{I^*}$} be a smooth map such~that
$$\exp(x)=x, ~~~ \nd_x\exp\!=\!\pi_1\!+\!\pi_2\!:
T_xTX_{I^*}=T_xX_{I^*}\!\oplus\!T_xX_{I^*}\lra T_xX_{I^*}
\quad\forall\,x\!\in\!X_{I^*}\,.$$
For each $\ell\!\in\![\ell^*]$, choose an isomorphism 
$$\wt\exp_{\ell}\!:\pi^*\cN_{X_{I^*-\ell}}X_{I^*}\lra \exp^*\cN_{X_{I^*-\ell}}X_{I^*}
\subset TX_{I^*}\!\times\!\cN_{X_{I^*-\ell}}X_{I^*}$$
of vector bundles over~$TX_{I^*}$ restricting to the identity 
over $X_{I^*}\!\subset\!TX_{I^*}$.
Let 
$$\pi_2\!:\exp^*\cN_{X_{I^*-\ell}}X_{I^*}\lra\cN_{X_{I^*-\ell}}X_{I^*}$$ 
be the projection to the second component.\\

\noindent
Suppose $\ell\!\in\![\ell^*\!-\!1]$, $\ell'\!\in\![\ell^*]\!-\![\ell]$, and
\BE{SympNeighCrl_e4}
\Psi_{t;i_1}\big|_{\cN_{I^*;i_1i_2}\cap\Dom(\Psi_{t;i_1})}
=\Psi_{t;i_2}\big|_{\cN_{I^*;i_1i_2}\cap\Dom(\Psi_{t;i_2})}
\quad\forall\,t\!\in\!B\EE
if either $i_1\!\in\![\ell\!-\!1]$ or 
$(i_1,i_2)\!\in\![\ell]\!\times\![\ell'\!-\!1]$.
For each $t\!\in\!B$, let 
\BE{SympNeighCrl_e6}W_t=\Psi_{t;\ell}^{\,-1}\big(\Im(\Psi_{t;\ell'})\big)
\subset\cN_{I^*;\ell\ell'}\!\cap\!\Dom(\Psi_{t;\ell})\,.\EE
By our assumptions,  
\begin{equation*}\begin{split}
\Psi_{t;\ell}\big|_{\cN_{I^*;\ell\ell'}\cap\Dom(\Psi_{t;\ell})}\!:
\cN_{I^*;\ell\ell'}\!\cap\!\Dom(\Psi_{t;\ell})&\lra X_{\ell\ell'} \qquad\hbox{and}\\
\Psi_{t;\ell'}\big|_{\cN_{I^*;\ell\ell'}\cap\Dom(\Psi_{t;\ell'})}\!:
\cN_{I^*;\ell\ell'}\!\cap\!\Dom(\Psi_{t;\ell'})&\lra X_{\ell\ell'}
\end{split}\end{equation*}
are regularizations for~$X_{I^*}$ in $X_{\ell\ell'}$
in the sense of Definition~\ref{smreg_dfn} satisfying 
the stratification condition~\eref{Psikk_e}
with $I\!=\!I^*$ and $I'\!\supset\!\{\ell,\ell'\}$.
Thus, 
\BE{SympNeighCrl_e5} \Th_t\equiv 
\Psi_{t;\ell'}^{\,-1}\!\circ\!\Psi_{t;\ell}|_{W_t}\!:
W_t\lra \cN_{I^*;\ell\ell'}=\cN_{X_{\ell\ell'}}X_{I^*},\EE
is a diffeomorphism onto a neighborhood of $X_{I^*}\!\subset\!\cN_{I^*;\ell\ell'}$
such~that 
\BE{SympNeighCrl_e7} 
\Th_t(x)=x, ~~\nd_x\Th_t=\id~~~\forall\,x\!\in\!X_{I^*},\quad
\Th_t\big(\cN_{I^*;I}\!\cap\!W_t)=\cN_{I^*;I}\!\cap\!\Im(\Th_t)
~~\hbox{if}~\ell,\ell'\in\!I\!\subset\!I^*\,.\EE
By~\eref{SympNeighCrl_e4},
\BE{SympNeighCrl_e9b}
\big(\Th_t|_{\cN_{I^*;i\ell\ell'}\cap W_t}\big)_{t\in B}
=\big(\id_{\cN_{I^*;i\ell\ell'}\cap\Dom(\Psi_{t;\ell'})}\big)_{t\in B}
\quad\forall~i\!\in\![\ell'\!-\!1]\!-\!\ell.\EE
Since $(\Psi_{I^*;t;i})_{i\in I^*}$ and $(\Psi_{U;t;i})_{i\in I^*}$ are regularizations
in the sense of Definition~\ref{ConfRegulLoc_dfn},  \eref{SympNeighCrl_e3a} implies~that 
\BE{SympNeighCrl_e9a} 
\big(\Th_t\big)_{t\in N'(\prt B)}
=\big(\id_{\cN_{I^*;\ell\ell'}\cap\Dom(\Psi_{t;\ell'})}\big)_{t\in N'(\prt B)},~~
\big(\Th_t|_{W_t|_{U'}}\big)_{t\in B}
=\big(\id_{\cN_{I^*;\ell\ell'}\cap\Dom(\Psi_{t;\ell'})|_{U'}}\big)_{t\in B}.\EE

\vspace{.2in}

\noindent
Let $\pi_{\ell\ell'}\!:\cN_{I^*;\ell\ell'}\!\lra\!X_{I^*}$ be the projection map.
For each $t\!\in\!B$, define
\begin{gather*}
W_t'=\big\{v\!\in\!W_t\!:\,
\pi_{\ell\ell'}(\Th_t(v))\!\in\!\exp(T_{\pi_{\ell\ell'}(v)}X_{I^*})\big\}
\subset\cN_{I^*;\ell\ell'},\\
\Th_t^{\hor}\!:W_t'\lra TX_{I^*} \qquad\hbox{by}\quad
\Th_t^{\hor}(v)\in\!T_{\pi_{\ell\ell'}(v)}X_{I^*},~~
\exp\big(\Th_t^{\hor}(v)\big)=\pi_{\ell\ell'}\big(\Th_t(v)\big),\\
\wt\Th_t\!:\pi_{\ell\ell'}^*\cN_{X_{I^*-\ell}}X_{I^*}\big|_{W_t'}
\lra\cN_{I^*;\ell'}\!=\!\cN_{X_{\ell'}}X_{I^*},\quad
\wt\Th_t(v,v_{\ell})=\big(\Th_t(v),\pi_2\big(\wt\exp_{\ell}(\Th_t^{\hor}(v),v_{\ell})\big)\big).
\end{gather*}
Let $\wt{W}_{\ell';t}\!=\!\wt\Th_t^{-1}(\Dom(\Psi_{t;\ell'}))$ and
$\wt\Th_t'\!=\!\wt\Th_t|_{\wt{W}_{\ell';t}}$.
With identifications as in~\eref{cNtot_e}, 
\hbox{$\wt{W}_{\ell';t}\!\subset\!\cN_{I^*;\ell'}$}.
By~\eref{SympNeighCrl_e7},
\BE{SympNeighCrl_e14} 
\wt\Th_t'(x)=x, ~~\nd_x\wt\Th_t'=\id~~~\forall\,x\!\in\!X_{I^*},\quad
\wt\Th_t'\big(\cN_{I^*;I}\!\cap\!\wt{W}_{\ell';t})=\cN_{I^*;I}\!\cap\!\Im(\wt\Th_t')
~~\hbox{if}~\ell'\!\in\!I\!\subset\!I^*.\EE
By~\eref{SympNeighCrl_e9b} and~\eref{SympNeighCrl_e9a},
\begin{gather}
\label{SympNeighCrl_e15b}
\big(\wt\Th_t'|_{\cN_{I^*;i\ell'}\cap\wt{W}_{\ell';t}}\big)_{t\in B}
=\big(\id_{\cN_{I^*;i\ell'}\cap\Dom(\Psi_{t;\ell'})}\big)_{t\in B}
\quad\forall~i\!\in\![\ell'\!-\!1]\!-\!\ell,\\
\label{SympNeighCrl_e15a} 
\big(\wt\Th_t'\big)_{t\in N'(\prt B)}
=\big(\id_{\Dom(\Psi_{t;\ell'})}\big)_{t\in N'(\prt B)},~~~
\big(\wt\Th_t'|_{\wt{W}_{\ell';t}|_{U'}}\big)_{t\in B}
=\big(\id_{\Dom(\Psi_{t;\ell'})|_{U'}}\big)_{t\in B}.
\end{gather}

\vspace{.2in}

\noindent
By~\eref{SympNeighCrl_e14}, the diffeomorphism
$$\Psi_{t;\ell'}'\equiv \Psi_{t;\ell'}\!\circ\!\wt\Th_t'\!:
\wt{W}_{\ell';t}\lra X_{\ell'}$$
is a regularization for~$X_{I^*}$ in~$X_{\ell'}$ for each $t\!\in\!B$
 satisfying 
the stratification condition~\eref{Psikk_e}
with $I\!=\!I^*$ and $I'\!\ni\!\ell'$.
By~\eref{SympNeighCrl_e15a} and~\eref{SympNeighCrl_e3a}, 
\BE{SympNeighCrl_e25}
\big(\Psi_{t;\ell'}'\big)_{t\in N'(\prt B)}=\big(\Psi_{I^*;t;\ell'}\big)_{t\in N'(\prt B)},
~~
\big(\Psi_{t;\ell'}'|_{\Dom(\Psi_{t;\ell'}')|_{U'}}\big)_{t\in B}
=\big(\Psi_{U;t;\ell'}|_{\Dom(\Psi_{U;t;\ell'})|_{U'}}\big)_{t\in B}.\EE
By~\eref{SympNeighCrl_e15b} and~\eref{SympNeighCrl_e4},
\BE{SympNeighCrl_e27}
\Psi_{t;i}\big|_{\cN_{I^*;i\ell'}\cap\Dom(\Psi_{t;i})}
=\Psi_{t;\ell'}'\big|_{\cN_{I^*;i\ell'}\cap\Dom(\Psi_{t;\ell'}')}
\qquad\forall~i\!\in\![\ell\!-\!1].\EE
By \eref{SympNeighCrl_e6} and~\eref{SympNeighCrl_e5},
\BE{SympNeighCrl_e28}\begin{split}
\cN_{I^*;\ell\ell'}\cap\Dom(\Psi_{t;\ell})
\supset  \cN_{I^*;\ell\ell'}&\cap\!\Dom(\Psi_{t;\ell'}')
=\cN_{I^*;\ell\ell'}\!\cap\!W_t', \\
\Psi_{t;\ell}\big|_{\cN_{I^*;\ell\ell'}\cap\Dom(\Psi_{t;\ell'}')}
=&\Psi_{t;\ell'}'\big|_{\cN_{I^*;\ell\ell'}\cap\Dom(\Psi_{t;\ell'}')}\,.
\end{split}\EE
For $i\!\in\![\ell^*]\!-\!\ell'$, let $\Psi_{t;i}'\!=\!\Psi_{t;i}$.\\

\noindent
Choose a neighborhood~$\wt\cW$ of $B\!\times\!X_{I^*}$ in $B\!\times\!\cN_{I^*}$ such~that 
$$\big(\{t\}\!\times\!\cN_{I^*;i}\big)\cap \wt\cW\subset \{t\}\times \Dom(\Psi'_{t;i})
\qquad\forall~i\!\in\!I^*,\,t\!\in\!B.$$
Define $\wt{W}_t''\subset\cN_{I^*}$ by  
$$\bigcup_{t\in B}\{t\}\!\times\!\wt{W}_t''=\wt\cW\cup
\bigcup_{i\in I^*}\!\!\bigg( \bigcup_{t\in N'(\prt B)}\!\!\!\!\!\!\!\Dom(\Psi_{t;i}')
\cup \bigcup_{t\in B}\!\!\Dom(\Psi_{t;i}')|_{U'}\!\!\bigg)\,.$$
For each $t\!\in\!B$ and each $i\!\in\![\ell^*]$,
$$\Psi_{t;i}''\!\equiv\!\Psi_{t;i}'|_{\cN_{I^*;i}\cap \wt{W}_t''}\!:
\cN_{I^*;i}\!\cap\!\wt{W}_t''\lra X_i$$
is a regularization for~$X_{I^*}$ in~$X_i$  satisfying 
the stratification condition~\eref{Psikk_e} with $I\!=\!I^*$ and $I'\!\ni\!i$.
By~\eref{SympNeighCrl_e3c} and the last statement in~\eref{SympNeighCrl_e14},
\eref{SympNeighCrl_e3c} with $\Psi_{t;i}$ replaced by~$\Psi_{t;i}''$ is satisfied.
By~\eref{SympNeighCrl_e3b} and the middle statement in~\eref{SympNeighCrl_e14},
\eref{SympNeighCrl_e3b} with $\Psi_{t;i}$ replaced by~$\Psi_{t;i}''$ is satisfied.
By~\eref{SympNeighCrl_e3a} and \eref{SympNeighCrl_e25},
\eref{SympNeighCrl_e3a} with $\Psi_{t;i}$ replaced by~$\Psi_{t;i}''$ is satisfied.
By~\eref{SympNeighCrl_e4}, \eref{SympNeighCrl_e27}, and \eref{SympNeighCrl_e28}, 
these new regularizations satisfy~\eref{SympNeighCrl_e4} 
with $\Psi_{t;i}$ replaced by~$\Psi_{t;i}''$
whenever
either $i_1\!\in\![\ell\!-\!1]$ or $(i_1,i_2)\!\in\![\ell]\!\times\![\ell']$.
This establishes the claim of the proposition by induction.

\section{Proof of Theorem~\ref{SCC_thm}}
\label{SCCpf_sec}

\noindent
We prove Theorem~\ref{SCC_thm} by induction on the strata of the transverse configuration~$\X$.
For each $I^*\!\in\!\cP^*(N)$, we view $\{X_I\}_{I\in\cP^*(I^*)}$ as 
an $|I^*|$-fold transverse configuration.
Definition~\ref{LocalRegul_dfn} introduces a notion of a weak symplectic regularization for
any transverse configuration~$\X$ over an open subset~$W$ of~$X_{\eset}$,
with $X_{\eset}$ given by~\eref{Xesetdfn_e}.
If  $W$ contains all $X_I$ with $I\!\supsetneq\!I^*$,
a family of such regularizations associated with a family of elements of $\Symp^+(\X)$
extends to a family of weak regularizations for $\{X_I\}_{I\in\cP^*(I^*)}$
over a neighborhood~$W_{I^*}$ of $X_{I^*}$ in~$\X$ after 
deforming the symplectic forms as in~\eref{SCCom_e}; see Corollary~\ref{extendreg_crl}.
Using the operations on regularizations described in Section~\ref{PastingRegul_subs},
we can combine the original family of weak regularizations for~$\X$ over~$W$ and 
the new family of weak regularizations for  $\{X_I\}_{I\in\cP^*(I^*)}$ over 
$W_{I^*}$ into a family of weak regularizations over an open subset~$\wt{W}$
containing all~$X_I$ with  $I\!\supset\!I^*$; see Lemma~\ref{regulcomb_lmm}.
This accomplishes the inductive step in the proof of Theorem~\ref{SCC_thm};
see Proposition~\ref{SCCweak_prp}.
By Lemma~\ref{weakregtoreg_lmm} and Corollary~\ref{weakregtoreg_crl}, 
the difference between a weak regularization for~$\X$ and 
a regularization is insignificant.\\

\noindent
We continue to use the notation introduced in Section~\ref{SCCregul_subs} and
combine it with the notation introduced in Section~\ref{SympDefVB_subs}.
In particular, for a configuration~$\X$ as in Theorem~\ref{SCC_thm},
$$\cN X_I=\bigoplus_{i\in I}\cN_{X_{I-i}}X_I, \quad
\cN_{I;I'}=\bigoplus_{i\in I-I'}\!\!\!\cN_{X_{I-i}}X_I\subset\cN X_I,
\quad \cN_{\prt}X_I=\bigcup_{i\in I}\cN_{I;i}\subset\cN X_I$$
for all $I'\!\subset\!I\!\subset\![N]$ with $|I|\!\ge\!2$.
If in addition $\cN'\!\subset\!\cN X_I$,
$$\cN'_{I;I'}=\cN_{I;I'}\!\cap\!\cN', \qquad
\cN'_{\prt}=\cN_{\prt}X_I\!\cap\!\cN'\,.$$

\subsection{Local weak regularizations}
\label{MainPf_subs}

\noindent
We begin with notions of a weak $\om$-regularization for~$\X$ over an open subset of~$X_{\eset}$
and of an equivalence of two such regularizations.
We then deduce Theorem~\ref{SCC_thm} from several technical statements proved 
in Sections~\ref{ExtendLoc_subs}-\ref{weakregtoreg_subs}.

\begin{dfn}\label{ConfRegulLoc2_dfn}
Let $\X\!\equiv\!\{X_I\}_{I\in\cP^*(N)}$ be a transverse configuration
such that $X_{ij}$ is a closed submanifold of~$X_i$ of codimension~2
for all $i,j\!\in\![N]$ distinct,
$I^*\!\in\!\cP^*(N)$, \hbox{$U\!\subset\!X_{I^*}$} be an open subset, 
and $(\om_i)_{i\in[N]}$ be a symplectic structure on $\X$ in
the sense of Definition~\ref{TransConf_dfn2}.
An \sf{$(\om_i)_{i\in[N]}$-regularization for~$U$ in~$\X$} is a~tuple
$(\rho_i,\na^{(i)},\Psi_i)_{i\in I^*}$
such that $(\Psi_i)_{i\in I^*}$ is a regularization for~$U$ in~$\X$ 
in the sense of Definition~\ref{ConfRegulLoc_dfn} and
$((\rho_j,\na^{(j)})_{j\in I^*-i},\Psi_i)$
is an $\om_i$-regularization for~$U$ in~$X_i$ 
in the sense of Definition~\ref{sympreg1_dfn}\ref{sympreg1_it}
for each $i\!\in\!I^*$. 
\end{dfn}

\begin{dfn}\label{LocalRegul_dfn}
Let $\X$ and $(\om_i)_{i\in[N]}$ be as in Definition~\ref{ConfRegulLoc2_dfn} and
$W\!\subset\!X_{\eset}$ be an open subset.
A \sf{weak $(\om_i)_{i\in[N]}$-regularization for $\X$ over~$W$} is a~tuple
\BE{LocalRegul_e1}
\fR\equiv (\cR_I)_{I\in\cP^*(N)} \equiv
\big(\rho_{I;i},\na^{(I;i)},\Psi_{I;i}\big)_{i\in I\subset[N]}\EE
such~that 
\begin{enumerate}[label=$\bu$,leftmargin=*]

\item $\cR_I$ is an $(\om_i)_{i\in[N]}$-regularization for~$X_I\!\cap\!W$ in~$\X$
for all $I\!\in\!\cP^*(N)$,

\item the associated isomorphism~\eref{fDPsiIIconf_e} of split vector bundles
is a product Hermitian isomorphism~and
\BE{LocalRegul_e2}
\Psi_{I;i}\big|_{\Dom(\Psi_{I;i})\cap\fD\Psi_{I;I'}^{\,-1}(\Dom(\Psi_{I';i}))}
=\Psi_{I';i}\circ\fD\Psi_{I;I'}|_{\Dom(\Psi_{I;i})\cap\fD\Psi_{I;I'}^{\,-1}(\Dom(\Psi_{I';i}))}\EE
for all $i\!\in\!I'\!\subset\!I\!\subset\![N]$ with $|I'|\!\ge\!2$.

\end{enumerate}
\end{dfn}

\vspace{.1in}

\noindent
An $(\om_i)_{i\in[N]}$-regularization for~$\X$ in the sense of 
Definition~\ref{SCCregul_dfn}\ref{SCCreg_it}
is a weak $(\om_i)_{i\in[N]}$-regularization for~$\X$ over $W\!=\!X_{\eset}$
such that 
$$\Dom(\Psi_{I;i})=\fD\Psi_{I;I'}^{\,-1}(\Dom(\Psi_{I';i}))
\qquad\forall\,i\!\in\!I'\!\subset\!I\!\subset\![N],~|I'|\!\ge\!2,$$
as required by the first condition in~\eref{overlap_e}.
By Lemma~\ref{weakregtoreg_lmm}, a weak $(\om_i)_{i\in[N]}$-regularization for~$\X$ 
over $W\!=\!X_{\eset}$ can be cut down to an $(\om_i)_{i\in[N]}$-regularization for~$\X$.
For a smooth family $(\om_{t;i})_{t\in B,i\in[N]}$ of symplectic structures on~$\X$,
we define $(\om_{t;i})_{t\in B,i\in[N]}$-families of regularization for~$U$ in~$\X$
and of weak regularizations for~$\X$ over~$W$ analogously to 
Definition~\ref{SCDregul_dfn}\ref{sympregul_it2}.\\

\noindent
Let $W,W^{(1)},W^{(2)}\!\subset\!X_{\eset}$ be open subsets and
$(\om_{t;i}^{(1)})_{t\in B,i\in[N]}$ and $(\om_{t;i}^{(2)})_{t\in B,i\in[N]}$
be two smooth families of symplectic structures on~$\X$ such~that 
$$W\subset W^{(1)}\cap W^{(2)} \qquad\hbox{and}\quad
\big(\om_{t;i}^{(1)}|_{X_i\cap W}\big)_{t\in B,i\in[N]} 
=\big(\om_{t;i}^{(2)}|_{X_i\cap W}\big)_{t\in B,i\in[N]}.$$
Suppose the tuples
\BE{R1R2_e}\begin{split}
\big(\fR_t^{(1)}\big)_{t\in B}\equiv
\big(\cR_{t;I}^{(1)}\big)_{t\in B,I\in\cP^*(N)}
&\equiv\big(\rho_{t;I;i}^{(1)},\na^{(1),(t;I;i)},\Psi_{t;I;i}^{(1)}
\big)_{t\in B,i\in I\subset[N]},\\
\big(\fR_t^{(2)}\big)_{t\in B}\equiv
\big(\cR_{t;I}^{(2)}\big)_{t\in B,I\in\cP^*(N)}
&\equiv\big(\rho_{t;I;i}^{(2)},\na^{(2),(t;I;i)},\Psi_{t;I;i}^{(2)}\big)_{t\in B,i\in I\subset[N]}
\end{split}\EE
are an $(\om_{t;i}^{(1)})_{t\in B,i\in[N]}$-family of weak regularizations for~$\X$ over~$W^{(1)}$
and an $(\om_{t;i}^{(2)})_{t\in B,i\in[N]}$-family of weak regularizations for~$\X$ over~$W^{(2)}$,
respectively.
We define
$$\big(\fR_t^{(1)}\big)_{t\in B} \cong_W\big(\fR_t^{(2)}\big)_{t\in B}$$
if there exists an  $(\om_{t;i}^{(1)})_{t\in B,i\in[N]}$-family 
\BE{fRtdfn_e}(\fR_t)_{t\in B} \equiv(\cR_{t;I})_{t\in B,I\in\cP^*(N)}
\equiv\big(\rho_{t;I;i},\na^{(t;I;i)},\Psi_{t;I;i}\big)_{t\in B,i\in I\subset [N]}\EE
of weak regularizations for $\X$ over~$W$ such~that 
\begin{gather*}
(\rho_{t;I;i},\na^{(t;I;i)})_{i\in I}=
(\rho_{t;I;i}^{(1)},\na^{(1),(t;I;i)})_{i\in I}\big|_{X_I\cap W},
(\rho_{t;I;i}^{(2)},\na^{(2),(t;I;i)})_{i\in I}\big|_{X_I\cap W},\\
\Dom(\Psi_{t;I;i})\subset\Dom(\Psi_{t;I;i}^{(1)}),\Dom(\Psi_{t;I;i}^{(2)}),
~~~
\Psi_{t;I;i}=\Psi_{t;I;i}^{(1)}\big|_{\Dom(\Psi_{t;I;i})},
\Psi_{t;I;i}^{(2)}\big|_{\Dom(\Psi_{t;I;i})}
\quad\forall\,i\!\in\!I
\end{gather*}
for all $I\!\in\!\cP^*(N)$ and $t\!\in\!B$.
The relation~$\cong_W$ is transitive.
By Corollary~\ref{weakregtoreg_crl}, two regularizations over $W\!=\!X_{\eset}$
that are equivalent as weak regularizations are also equivalent 
as regularizations.
 
\begin{prp}\label{SCCweak_prp}
Let $\X$,  $N'(\prt B)\!\subset\!N(\prt B)\!\subset\!B$,  and $(\om_{t;i})_{t\in B,i\in[N]}$ 
be as in Theorem~\ref{SCC_thm}.
Suppose 
\begin{enumerate}[label=$\bullet$,leftmargin=*]

\item  $I^*\!\in\!\cP^*(N)$ and
$X_{\eset}^*,W,W'\!\subset\!X_{\eset}$ are open subsets such~that 
\BE{SCCweak_e0}\ov{W'}\subset W,\quad
\ov{X_{\eset}^*}\cap X_{I^*}\!\subset W'~~\hbox{if}~|I^*|\!\ge\!3, \quad 
X_I\subset W'~~\forall\,I\!\in\!\cP^*(N),\,I\!\supsetneq\!I^*\,,\EE

\item $(\fR_t)_{t\in N(\prt B)}$ and $(\fR_t')_{t\in B}$
are an $(\om_{t;i})_{t\in N(\prt B),i\in[N]}$-family of weak regularizations for~$\X$ over~$X_{\eset}$
and an $(\om_{t;i})_{t\in B,i\in[N]}$-family of weak regularizations for~$\X$ over~$W$, 
respectively, such~that 
\BE{WeakExt_e0}\big(\fR_t\big)_{t\in N(\prt B)}\cong_W\big(\fR_t'\big)_{t\in N(\prt B)}\,.\EE
\end{enumerate}
Then there exist a neighborhood~$W_{I^*}$ of $X_{I^*}\!\subset\!X_{\eset}$,
a smooth family $(\mu_{t,\tau;i})_{t\in B,\tau\in\bI,i\in[N]}$ of
1-forms on~$X_{\eset}$ such~that 
\BE{WeakExt_e1}\begin{split} 
&\hspace{1in}
\big(\om_{t,\tau;i}\equiv\om_{t;i}\!+\!\nd\mu_{t,\tau;i}\big)_{i\in[N]}
\in \Symp^+(\X) ~\forall\,t\!\in\!B,\,\tau\!\in\!\bI,\\
&\mu_{t,0;i}=0 ~\forall\,t\!\in\!B,\,i\!\in\![N],\quad
\supp\big(\mu_{\cdot,\tau;i}\big)\subset 
\big(B\!-\!N'(\prt B)\big)\!\times\!\big(X_i\!-\!W'\!\cup\!X_{\eset}^*\big)
~\forall\,\tau\!\in\!\bI,\,i\!\in\![N],
\end{split}\EE
and  an $(\om_{t,1;i})_{t\in B,i\in[N]}$-family $(\wt\fR_t)_{t\in B}$
of weak regularizations for~$\X$ over $W'\!\cup\!W_{I^*}$ such~that
\BE{WeakExt_e}
\big(\wt\fR_t\big)_{t\in N'(\prt B)} \cong_{W'\cup W_{I^*}} \!\!\big(\fR_t\big)_{t\in N'(\prt B)}\,,
\quad 
\big(\wt\fR_t\big)_{t\in B} \cong_{W'} \!\big(\fR_t'\big)_{t\in B}\,.\EE
\end{prp}

\begin{proof} 
Let 
$$(\fR_t)_{t\in N(\prt B)}=(\cR_{t;I})_{t\in N(\prt B),I\in\cP^*(N)}
\quad\hbox{and}\quad 
(\fR_t')_{t\in B}=(\cR_{t;I}')_{t\in B,I\in\cP^*(N)}.$$
Choose  a neighborhood $W''$ of $\ov{W'}\!\subset\!X_{\eset}$   such~that 
$\ov{W''}\!\subset\!W$.
By Corollary~\ref{extendreg_crl} with~$W'$ replaced by~$W''$,
there exist 
\begin{enumerate}[label=$\bullet$,leftmargin=*]

\item a neighborhood~$W_{I^*}$ of $X_{I^*}\!\subset\!X_{\eset}$  such that 
$X_I\!\cap\!W_{I^*}\!\subset\!W''$ for all $I\!\in\!\cP(N)\!-\!\cP(I^*)$,

\item a smooth family $(\mu_{t,\tau;i})_{t\in B,\tau\in\bI,i\in[N]}$ of
1-forms on~$X_{\eset}$ satisfying~\eref{WeakExt_e1} with $W'$ replaced by~$W''$,

\item an $(\om_{t,1;i})_{t\in B,i\in[N]}$-family 
$(\wh\cR_{t;I})_{t\in B,I\in\cP^*(I^*)}$
of weak regularizations  for $\{X_I\}_{I\in\cP^*(I^*)}$ over~$W_{I^*}$ 
satisfying
\eref{extendreg_prp_e2a} and \eref{extendreg_prp_e2b} with $W'$ replaced by~$W''$.
\end{enumerate}
In particular, 
$$\om_{t;i}\big|_{X_i\cap W''}=\om_{t,1;i}\big|_{X_i\cap W''}
\qquad\forall\,t\!\in\!B,\,i\!\in\![N].$$

\vspace{.2in}

\noindent
Let $\!W_{I^*}'$ be a neighborhood of $X_{I^*}\!\subset\!X_{\eset}$ 
such~that $\ov{W_{I^*}'}\!\subset\!W_{I^*}$
and $W'''$ be a neighborhood of $\ov{W'}\!\subset\!X_{\eset}$ such~that 
$\ov{W'''}\!\subset\!W''$.
We next apply Lemma~\ref{regulcomb_lmm} with
\begin{gather*}
W=W'', \quad W'=W''', \quad  
(\om_{t;i})_{t\in B,i\in[N]}=\big(\om_{t,1;i}\big)_{t\in B,i\in[N]},\\
(\cR_{t;I})_{t\in B,I\in\cP^*(N)}=(\cR_{t;I}')_{t\in B,I\in\cP^*(N)};
\end{gather*}
the condition~\eref{cRoverlap_e} holds by \eref{extendreg_prp_e2b} with $W'$ replaced by~$W''$.
Thus, there exists an $(\om_{t,1;i})_{t\in B,i\in[N]}$-family 
$(\wt\cR_{t;I})_{t\in B,I\in\cP^*(N)}$
of weak regularizations  for $\X$ over $W'''\!\cup\!W_{I^*}'$
such~that  
\BE{WeakExt_e5}
\big(\wt\cR_{t;I}\big)_{t\in B,I\in\cP^*(N)}
\cong_{W'''}\!\big(\cR_{t;I}'\big)_{t\in B,I\in\cP^*(N)},\quad
\big(\wt\cR_{t;I}\big)_{t\in B,I\in\cP^*(I^*)} 
\cong_{W_{I^*}'} \!\big(\wh\cR_{t;I}\big)_{t\in B,I\in\cP^*(I^*)}\,.\EE
The first equivalence above implies the second equivalence in~\eref{WeakExt_e}.\\

\noindent
By~\eref{WeakExt_e5}, \eref{WeakExt_e0}, and \eref{extendreg_prp_e2a},   
\begin{gather*}
\big(\wt\cR_{t;I}\big)_{t\in N'(\prt B),I\in\cP^*(N)}
\cong_{W'''}
\!\big(\cR_{t;I}\big)_{t\in N'(\prt B),I\in\cP^*(N)}, \\
\big(\wt\cR_{t;I}\big)_{t\in N'(\prt B),I\in\cP^*(I^*)}
\cong_{W_{I^*}'}\!\!
\big(\cR_{t;I}\big)_{t\in N'(\prt B),I\in\cP^*(I^*)}\,.
\end{gather*}
Let $W_{I^*}''$ be a neighborhood of $X_{I^*}\!\subset\!X_{\eset}$ such~that 
$\ov{W_{I^*}''}\!\subset\!W_{I^*}'$.
Applying Corollary~\ref{regulcomb_crl} with
\begin{gather*}
W_{I^*}=W_{I^*}', \quad W_{I^*}'=W_{I^*}'', \quad
W=W''',\quad  B=N'(\prt B),\\
(\om_{t;i})_{t\in B,i\in[N]}=\big(\om_{t,1;i}\big)_{t\in B,i\in[N]},\quad
\cR_{t;I}^{(1)}=\cR_{t;I}, \quad \cR_{t;I}^{(2)}=\wt\cR_{t;I},
\end{gather*}
we obtain the first equivalence in~\eref{WeakExt_e} with $W_{I^*}$ replaced by~$W_{I^*}''$.
\end{proof}

\begin{proof}[{\bf{\emph{Proof of Theorem \ref{SCC_thm}}}}]
Choose a total order~$>$ on subsets $I\!\subset\![N]$ so that 
$I\!>\!I^*$ whenever $I\!\supsetneq\!I^*$.
Suppose $I^*\!\subset\![N]$ with
 $|I^*|\!\ge\!2$ and we have constructed 
\begin{enumerate}[label=$\bullet$,leftmargin=*]

\item a neighborhood~$W_{I^*}^>$ of 
$$X_{I^*}^>\equiv \bigcup_{I> I^*}\!\!X_I\subset X_{\eset},$$

\item a neighborhood $N_{I^*}^>(\prt B)$ of  $\ov{N'(\prt B)}\!\subset\!N(\prt B)$,

\item a smooth family $(\mu_{t,\tau;i})_{t\in B,\tau\in\bI,i\in[N]}$ of
1-forms on~$X_{\eset}$ such~that 
\BE{NC_e0}\begin{split} 
&\hspace{1in}
\big(\om_{t,\tau;i}\equiv\om_{t;i}\!+\!\nd\mu_{t,\tau;i}\big)_{i\in[N]}
\in \Symp^+(\X) ~~\forall\,t\!\in\!B,\,\tau\!\in\!\bI,\\
&\mu_{t,0;i}=0~~\forall\,t\!\in\!B,\,i\!\in\![N], ~~ 
\supp\big(\mu_{\cdot,\tau;i}\big)\subset 
\big(B\!-\!N_{I^*}^>(\prt B)\big)\!\times\!(X_i\!-\!X_i^*)
~~\forall\,\tau\!\in\!\bI,\,i\!\in\![N],\\
\end{split}\EE

\item an $(\om_{t,1;i})_{t\in B,i\in[N]}$-family $(\fR_t')_{t\in B}$
of weak regularizations for $\X$ over~$W_{I^*}^>$ such~that
\BE{NC_e3}\big(\fR'_t\big)_{t\in N_{I^*}^>(\prt B)}
\cong_{W_{I^*}^>} \!\!\big(\fR_t\big)_{t\in N_{I^*}^>(\prt B)}.\EE
\end{enumerate}

\vspace{.1in}

\noindent
Let $W'$ be a neighborhood of $X_{I^*}^>\!\subset\!X_{\eset}$
and $N_{I^*}^{\ge}(\prt B)$
be a neighborhood of $\ov{N'(\prt B)}\!\subset\!N(\prt B)$ such~that 
$$\ov{W'}\subset W_{I^*}^> \quad\hbox{and}\quad 
\ov{N_{I^*}^{\ge}(\prt B)}\subset N_{I^*}^>(\prt B).$$
We apply Proposition~\ref{SCCweak_prp} with 
\begin{gather*}
X_{\eset}^*=\bigcup_{i\in[N]}\!\!\!X_i^*, \quad
W=W_{I^*}^>, \quad N(\prt B)=N_{I^*}^>(\prt B), \quad
N'(\prt B)=N_{I^*}^{\ge}(\prt B), \\
(\om_{t;i})_{t\in B,i\in[N]}=(\om_{t,1;i})_{t\in B,i\in[N]}\,.
\end{gather*}
Thus, there exist 
\begin{enumerate}[label=$\bullet$,leftmargin=*]

\item a neighborhood~$W_{I^*}$ of~$X_{I^*}\!\subset\!X_{\eset}$,

\item a smooth family $(\mu_{t,\tau;i}')_{t\in B,\tau\in\bI,i\in[N]}$ of
1-forms on~$X_{\eset}$ such~that 
\begin{gather*} 
\big(\om_{t,\tau;i}'\equiv\om_{t,1;i}\!+\!\nd\mu_{t,\tau;i}'\big)_{i\in[N]}
\in \Symp^+(\X) ~~\forall\,t\!\in\!B,\,\tau\!\in\!\bI,\\
\mu_{t,0;i}'=0~~\forall\,t\!\in\!B,\,i\!\in\![N], ~~ 
\supp\big(\mu_{\cdot,\tau;i}'\big)\subset 
\big(B\!-\!N_{I^*}^{\ge}(\prt B)\big)\!\times\!\big(X_i\!-\!W'\!\cup\!X_i^*\big)
~~\forall\,\tau\!\in\!\bI,\,i\!\in\![N],
\end{gather*}

\item an $(\om'_{t,1;i})_{t\in B,i\in[N]}$-family $(\wt\fR_t)_{t\in B}$
of weak regularizations for~$\X$ over $W_{I^*}^{\ge}\!\equiv\!W'\!\cup\!W_{I^*}$
so that~\eref{WeakExt_e} holds with $N'(\prt B)$ replaced by~$N_{I^*}^{\ge}(\prt B)$.\\
\end{enumerate}

\noindent
We concatenate the families $(\mu_{t,\tau;i})_{t\in B,\tau\in\bI,i\in[N]}$ and
$(\mu_{t,1;i}\!+\!\mu_{t,\tau;i}')_{t\in B,\tau\in\bI,i\in[N]}$ of 1-forms on~$\X$ 
into a new smooth family 
$(\mu_{t,\tau;i})_{t\in B,\tau\in\bI,i\in[N]}$ 
such~that \eref{NC_e0} holds with $N_{I^*}^>(\prt B)$ 
replaced by~$N_{I^*}^{\ge}(\prt B)$.
By the first equivalence in~\eref{WeakExt_e} with $N'(\prt B)$ replaced by~$N_{I^*}^{\ge}(\prt B)$, 
\eref{NC_e3} holds with 
$N_{I^*}^>(\prt B)$ and $W_{I^*}^>$ replaced by 
$N_{I^*}^{\ge}(\prt B)$ and~$W_{I^*}^{\ge}$, respectively.\\

\noindent
By the downward induction on $\cP^*(N)$ with respect to~$<$, 
we thus obtain
a family $(\mu_{t,\tau;i})_{t\in B,\tau\in\bI,i\in[N]}$ of 
1-forms on~$\X$ satisfying~\eref{SCCom_e}
and an $(\om_{t,1;i})_{t\in B,i\in[N]}$-family 
$(\fR_t')_{t\in B}$ of weak regularizations for~$\X$ over~$X_{\eset}$ 
such~that
$$\big(\fR_t'\big)_{t\in N'(\prt B)} \cong_{X_{\eset}}\!\!\big(\fR_t\big)_{t\in N'(\prt B)}\,.$$
By Lemma~\ref{weakregtoreg_lmm}, these weak regularizations can be cut down  
to an $(\om_{t,1;i})_{t\in B,i\in[N]}$-family $(\wt\fR_t)_{t\in B}$ 
of regularizations for~$\X$. 
In particular,
$$(\wt\fR_t)_{t\in N'(\prt B)}\cong_{X_{\eset}}\!\!
(\fR_t')_{t\in N'(\prt B)}\cong_{X_{\eset}}\!\!
\big(\fR_t\big)_{t\in N'(\prt B)}\,.$$
By Corollary~\ref{weakregtoreg_crl}, this implies~\eref{SCCom_e2}.
\end{proof}

\subsection{Extending weak regularizations}
\label{ExtendLoc_subs}

\noindent
Lemma~\ref{extendreg0_lmm} below is the main step in the proof of 
Proposition~\ref{SCCweak_prp} which provides for extensions of weak regularizations.
This lemma implements the deformations for symplectic forms on split 
vector bundles obtained in Theorem~\ref{SympDefVB_thm} via Proposition~\ref{TubulNeigh_prp}.

\begin{lmm}\label{extendreg0_lmm}
Let $\X$,  $N'(\prt B)\!\subset\!N(\prt B)\!\subset\!B$, $(\om_{t;i})_{t\in B,i\in[N]}$,
$I^*$,  $X_{\eset}^*$, and $W'\!\subset\!W$ be as in 
Proposition~\ref{SCCweak_prp}. 
Suppose 
$$\big(\rho_{I^*;t;i},\na^{(I^*;t;i)},\Psi_{I^*;t;i}\big)_{t\in N(\prt B),i\in I^*}
\quad\hbox{and}\quad 
\big(\rho_{W;t;i},\na^{(W;t;i)},\Psi_{W;t;i}\big)_{t\in B,i\in I^*}$$
are $(\om_{t;i})_{t\in B}$-families of regularizations for~$X_{I^*}$ and $X_{I^*}\!\cap\!W$, 
respectively, in $\X$ such~that 
\BE{extendreg0_e0b}\begin{split}
&\big(\rho_{I^*;t;i}|_{X_{I^*}\cap W},\na^{(I^*;t;i)}|_{X_{I^*}\cap W},
\Psi_{I^*;t;i}|_{\Dom(\Psi_{I^*;t;i})|_{X_{I^*}\cap W}}\big)_{t\in N(\prt B),i\in I^*}\\
&\hspace{2in}
= \big(\rho_{W;t;i},\na^{(W;t;i)},\Psi_{W;t;i}\big)_{t\in N(\prt B),i\in I^*}\,. 
\end{split}\EE 
Then there exist  a smooth family $(\mu_{t,\tau;i})_{t\in B,\tau\in\bI,i\in[N]}$ of
1-forms on~$X_{\eset}$ satisfying~\eref{WeakExt_e1} and
an $(\om_{t,1;i})_{t\in B,i\in[N]}$-family 
$(\rho_{t;i},\na^{(t;i)},\Psi_{t;i})_{t\in B,i\in I^*}$ of regularizations 
for $X_{I^*}$ in~$\X$ such that 
\BE{extendreg0_e1a}\begin{split}
&\big(\rho_{t;i},\na^{(t;i)},\Psi_{t;i}\big)_{t\in N'(\prt B),i\in I^*}
=\big(\rho_{I^*;t;i},\na^{(I^*;t;i)},\Psi_{I^*;t;i}\big)_{t\in N'(\prt B),i\in I^*},\\
&\big((\rho_{t;i},\na^{(t;i)})|_{X_{I^*}\cap W'},
\Psi_{t;i}|_{\Dom(\Psi_{t;i})|_{X_{I^*}\cap W'}}\big)_{t\in B,i\in I^*}\\
&\hspace{1in}=\big((\rho_{W;t;i},\na^{(W;t;i)})|_{X_{I^*}\cap W'},
\Psi_{W;t;i}|_{\Dom(\Psi_{W;t;i})|_{X_{I^*}\cap W'}}\big)_{t\in B,i\in I^*}.
\end{split}\EE
\end{lmm}

\begin{proof} For each $i\!\in\!I^*$, let 
$$L_i=\cN_{X_{I^*-i}}X_{I^*}\lra X_{I^*}\,,\qquad
\cN_{I^*;i}=\bigoplus_{j\in I^*-i}\!\!\!\!L_j
\subset \bigoplus_{j\in I^*}\!L_j=\cN X_{I^*}\,.$$
If in addition $t\!\in\!B$, define
$$\om_{t;I^*}=\om_{t;i}|_{X_{I^*}}, \qquad
\Om_{t;i}^{\bu}=\bigoplus_{j\in I^*-i}\!\!\!\om_{t;i}|_{L_j}\,.$$
Choose a neighborhood $W''$ of $\ov{W'}\!\subset\!X_{\eset}$
such that $\ov{W''}\!\subset\!W$.\\

\noindent
Since $(\om_{t;i})_{i\in[N]}\!\in\!\Symp(\X)$,
$\om_{t;i}|_{L_i}$ is symplectic for every $i\!\in\!I^*$.
For each $i\!\in\!I^*$, choose 
a smooth family $(\rho_{t;i},\na^{(t;i)})_{t\in B}$ 
of $\om_{t;i}|_{L_i}$-compatible Hermitian structures on~$L_i$ such~that 
\BE{extendreg_e3}\begin{split}
\big(\rho_{t;i},\na^{(t;i)}\big)_{t\in N'(\prt B)}
&=\big(\rho_{I^*;t;i},\na^{(I^*;t;i)}\big)_{t\in N'(\prt B)},\\
\big( (\rho_{t;i},\na^{(t;i)})|_{X_{I^*}\cap W''}\big)_{t\in B} 
&= \big( (\rho_{W;t;i},\na^{(W;t;i)})|_{X_{I^*}\cap W''}\big)_{t\in B}\,;
\end{split}\EE
this is possible to do by~\eref{extendreg0_e0b}.
For each $t\!\in\!B$, denote by $(\wh\om_{t;i}^{\bu})_{i\in I^*}$ 
the closed 2-form on $\cN_{\prt}X_{I^*}$ induced by~$\om_{t;I^*}$,
the diagonal fiberwise 2-form  $(\Om_{t;i}^{\bu})_{i\in I^*}$
on $\cN_{\prt}X_{I^*}$, and $(\na^{(t;i)})_{i\in I^*}$ as in~\eref{ombund_e2}.
By~\eref{Psiomcond_e} and~\eref{extendreg_e3}, 
\BE{extendreg_e8}
\begin{split}
\big(\Psi_{I^*;t;i}^{\,*}\om_{t;i}\big)_{t\in N'(\prt B),i\in I^*}
&=\big(\wh\om_{t;i}^{\bu}|_{\Dom(\Psi_{I^*;t;i})}\big)_{t\in N'(\prt B),i\in I^*},\\
\big(\Psi_{W;t;i}^{\,*}\om_{t;i}|_{\Dom(\Psi_{W;t;i})|_{X_{I^*}\cap W''}}\big)_{t\in B,i\in I^*}
&=\big(\wh\om_{t;i}^{\bu}|_{\Dom(\Psi_{W;t;i})|_{X_{I^*}\cap W''}}\big)_{t\in B,i\in I^*}.
\end{split}\EE

\vspace{.1in}

\noindent
By~\eref{Psiomcond_e} and~\eref{ombund_e2}, 
\BE{extendreg_prp_e4}\begin{aligned}
\nd_x\Psi_{I^*;t;i}\big(\cN_{I^*;i}|_x\big)&=T_xX_{I^*}^{\,\om_{t;i}}
&\quad&\forall\,(t,x)\!\in\!N(\prt B)\!\times\!X_{I^*}, \\
\nd_x\Psi_{W;t;i}\big(\cN_{I^*;i}|_x\big)&=T_xX_{I^*}^{\,\om_{t;i}}
&\quad&\forall\,(t,x)\!\in\!B\!\times\!(X_{I^*}\!\cap\!W).
\end{aligned}\EE
We first apply Proposition~\ref{TubulNeigh_prp} with
$$U=X_{I^*}\!\cap\!W,\qquad U'=X_{I^*}\!\cap\!W'', \quad
\big(\Psi_{U;t;i}\big)_{t\in B,i\in I^*}=
\big(\Psi_{W;t;i}\big)_{t\in B,i\in I^*}\,;$$
the conditions~\eref{TubulNeigh_e0a} and~\eref{TubulNeigh_e0b} are satisfied by
\eref{extendreg_prp_e4} and~\eref{extendreg0_e0b}, respectively.
There thus exists a smooth family $(\Psi_{t;i})_{t\in B,i\in I^*}$ of 
regularizations for~$X_{I^*}$ in~$\X$ in the sense of 
Definition~\ref{ConfRegulLoc_dfn} such~that 
\begin{gather}\label{extendreg_e4}
\nd_x\Psi_{t;i}\big(\cN_{I^*;i}|_x\big)=T_xX_{I^*}^{\,\om_{t;i}}
\quad\forall\,t\!\in\!B,\,x\!\in\!\!X_{I^*},\,i\!\in\!I^*,\\
\label{extendreg_e4a}
\begin{split}
\big(\Psi_{t;i}\big)_{t\in N'(\prt B),i\in I^*}&
=\big(\Psi_{I^*;t;i}\big)_{t\in N'(\prt B),i\in I^*},\\
\big(\Psi_{t;i}|_{\Dom(\Psi_{t;i})|_{X_{I^*}\cap W''}}\big)_{t\in B,i\in I^*}
&=\big(\Psi_{W;t;i}|_{\Dom(\Psi_{W;t;i})|_{X_{I^*}\cap W''}}\big)_{t\in B,i\in I^*}.
\end{split}\end{gather}
The requirement~\eref{extendreg0_e1a} holds by~\eref{extendreg_e3} 
and~\eref{extendreg_e4a}.\\

\noindent
For $t\!\in\!B$ and $i\!\in\!I^*$, let $\wt\om_{t;i}\!=\!\Psi_{t;i}^{\,*}\om_{t;i}$.
By the last condition in Definition~\ref{smreg_dfn} and~\eref{extendreg_e4}, 
$(\wh\om_{t;i}^{\bu})_{t\in B,i\in I^*}$ is the smooth family of 
diagonalized 2-forms on $\cN_{\prt}X_{I^*}$ 
determined by $(\wt\om_{t;i})_{t\in B,i\in I^*}$ and $(\na^{(t;i)})_{t\in B,i\in I^*}$
in the terminology of Theorem~\ref{SympDefVB_thm}.
By~\eref{extendreg_e4a} and~\eref{extendreg_e8},
\BE{extendreg_e5}\begin{split}
\big(\wt\om_{t;i}\big)_{t\in N'(\prt B),i\in I^*}
&=\big(\wh\om_{t;i}^{\bu}|_{\Dom(\Psi_{t;i})}\big)_{t\in N'(\prt B),i\in I^*},\\
\big(\wt\om_{t;i}|_{\Dom(\Psi_{t;i})|_{X_{I^*}\cap W''}}\big)_{t\in B,i\in I^*}
&=\big(\wh\om_{t;i}^{\bu}|_{\Dom(\Psi_{t;i})|_{X_{I^*}\cap W''}}\big)_{t\in B,i\in I^*}.
\end{split}\EE
By the compactness of~$B$, 
there exists a neighborhood $\cN'$ of $X_{I^*}\!\subset\!\cN X_{I^*}$ such~that 
$\cN_i'\!\subset\!\Dom(\Psi_{t;i})$ for all $i\!\in\!I^*$ and $t\!\in\!B$.\\

\noindent
Suppose $|I^*|\!\ge\!3$.
Since
$$\ov{W}'\subset W'',\quad 
\ov{X_{\eset}^*}\cap X_{I^*}\subset W', \quad 
X_{I^*}\!\cap\!X_I=X_{I^*\cup I}\subset W'
~~\forall\,I\!\in\!\cP(N)\!-\!\cP(I^*),$$
we can shrink~$\cN'$ so that
\begin{gather}\label{nonoverlap_e}
\cN'\cap\Psi_{t;i}^{-1}\big(\ov{W}'\big)\subset\cN_i'\big|_{X_{I^*}\cap W''},\quad
\cN'\cap\Psi_{t;i}^{-1}\big(\ov{X_{\eset}^*}\big)\subset\cN_i'|_{X_{I^*}\cap W'},\\
\label{nonoverlap_e2}
\cN'\cap\Psi_{t;i}^{-1}\big(X_I\big)\subset\cN_i'\big|_{X_{I^*}\cap W'}
\quad\forall~I\!\in\!\cP(N)\!-\!\cP(I^*)
\end{gather}
for all $i\!\in\!I^*$ and $t\!\in\!B$.
We next apply Theorem~\ref{SympDefVB_thm} with
$$V=X_{I^*}, ~~ I=I^*,~~
U=X_{I^*}\!\cap\!W'', ~~ N(\prt B)=N'(\prt B),~~
\big(\wt\om_{t;i}\big)_{t\in B,i\in I}
=\big(\wt\om_{t;i}|_{\cN'_i}\big)_{t\in B,i\in I^*};$$
the requirements in~\eref{SympDefVB_e0} are satisfied by~\eref{extendreg_e5}.
Thus,  
there exist neighborhoods $\wh\cN,\cN''\!$ of $X_{I^*}\!\subset\!\cN'$ 
such that $\ov{\cN''}\!\subset\!\cN'$
and a smooth family  $(\mu'_{t,\tau;i})_{t\in B,\tau\in\bI,i\in I^*}$ of
1-forms on~$\cN_{\prt}'$ such~that 
\BE{extendreg_e12a}
\big(\wt\om_{t,\tau;i}\big)_{i\in I^*}\equiv
\big(\wt\om_{t;i}\!+\!\nd\mu'_{t,\tau;i}\big)_{i\in I^*}\EE
is a symplectic structure on~$\cN_{\prt}'$ for all $(t,\tau)\!\in\!B\!\times\!\bI$ and
\BE{extendreg_e12b}
\mu'_{t,0;i}=0,  \quad
\wt\om_{t,1;i}|_{\wh\cN_i}=\wh\om_{t;i}^{\bu}|_{\wh\cN_i},
\quad\supp\big(\mu'_{\cdot,\tau;i}\big)\subset 
\big(B\!-\!N'(\prt B)\big)\!\times\!\cN''|_{X_{I^*}-W''}\EE
for all $t\!\in\!B$, $\tau\!\in\!\bI$, and $i\!\in\!I^*$.\\

\noindent
For each $i\!\in\!I^*$,
define a smooth family $(\mu_{t,\tau;i})_{t\in B,\tau\in\bI}$ 
of 1-forms on~$X_i$ by
$$\mu_{t,\tau;i}\big|_x=\begin{cases}
0,&\hbox{if}~x\!\in\!X_i\!-\!\Psi_{t;i}(\ov\cN_i'');\\
\mu_{t,\tau;i}'|_{\Psi_{t;i}^{-1}(x)}
\!\circ\!\nd_x\Psi_{t;i}^{-1}, &\hbox{if}~x\!\in\!\Psi_{t;i}(\cN_i').
\end{cases}$$
By~\eref{extendreg_e12b}, 
\begin{gather}\label{extendreg_e23}
\mu_{t,0;i}=0, ~~~
\Psi_{t;i}^*\big\{\om_{t;i}\!+\!\nd\mu_{t,1;i}\big\}\big|_{\wh\cN_i}
=\wh\om_{t;i}^{\bu}\big|_{\wh\cN_i},\\
\notag
\supp\big(\mu_{\cdot,\tau;i}\big)\subset 
\big(B\!-\!N'(\prt B)\big)\!\times\!
\big(\Psi_{t;i}(\cN_i'')\!-\!\Psi_{t;i}\big(\cN_i'|_{X_{I^*}\cap W''}\big)\big)
\end{gather} 
for all $t\!\in\!B$, $\tau\!\in\!\bI$, and $i\!\in\!I^*$.
By the last statement, \eref{nonoverlap_e}, and~\eref{nonoverlap_e2}, 
\begin{gather}
\label{extendreg_e23b}
\supp\big(\mu_{\cdot,\tau;i}\big)\subset 
\big(B\!-\!N'(\prt B)\big)\!\times\!\big(X_i\!-\!W'\!\cup\!X_{\eset}^*\big)
\quad\forall~\tau\!\in\!\bI,\,i\!\in\!I^*,\\
\label{extendreg_e23c}
\mu_{t,\tau;i}|_{X_I}=0 \quad\forall~
I\!\in\!\cP(N)\!-\!\cP(I^*),\,t\!\in\!B,\,\tau\!\in\!\bI,\,i\!\in\!I^*.
\end{gather}
For $i\!\not\in\!I^*$, we set $\mu_{t,\tau;i}\!=\!0$ for all $(t,\tau)\!\in\!B\!\times\!\bI$.
Since $(\mu'_{t,\tau;i})_{i\in I^*}$ is a 1-form on~$\cN_{\prt}'$,
\eref{extendreg_e23c} implies~that 
$$\mu_{t,\tau;i_1}|_{X_{i_1i_2}}=\mu_{t,\tau;i_2}|_{X_{i_1i_2}}
\qquad\forall\,i_1,i_2\!\in\![N].$$
Since $(\wt\om_{t,\tau;i})_{i\in I^*}$ is a symplectic structure on~$\cN_{\prt}'$ 
for all $(t,\tau)\!\in\!B\!\times\!\bI$, we conclude that  the tuple
$(\mu_{t,\tau;i})_{t\in B,\tau\in\bI,i\in[N]}$ is a smooth family of
1-forms on~$X_{\eset}$ satisfying~\eref{WeakExt_e1}.
By the second statement in~\eref{extendreg_e23}, 
$((\rho_{t;j},\na^{(t;j)})_{j\in I^*-i},\Psi_{t;i})$
is an $\om_{t,1;i}$-regularization for~$X_{I^*}$ in~$X_i$ 
in the sense of Definition~\ref{sympreg1_dfn}\ref{sympreg1_it}
for all $t\!\in\!B$ and $i\!\in\!I^*$.\\ 

\noindent
Suppose $|I^*|\!=\!2$.
Denote by $\ze_{I^*}\!\equiv\!\ze_{\cN X_{I^*}}$  the radial vector field 
on the total space of $\cN X_{I^*}$  as defined above~\eref{ombund_e2}.
For each $\tau\!\in\!\R$, let
$$m_{\tau}\!:\cN X_{I^*}\lra\cN X_{I^*}, \qquad v\lra\tau v,$$
be the scalar multiplication map;
it preserves the subbundles $\cN_{I^*;i}\!\subset\!\cN X_{I^*}$.
For  $t\!\in\!B$ and $i\!\in\!I^*$, define a 2-form and a 1-form on~$\cN_i'$ by
$$\vp_{t;i}=\wh\om_{t;i}^{\bu}\!-\!\wt\om_{t;i}, \qquad
\mu_{t;i}\big|_v=\int_0^1\!\!
m_{\tau}^*\big\{\vp_{t;i}(\tau^{-1}\ze_{I^*},\cdot)\big\}\nd\tau\,.$$
Since $\vp_{t;i}$ vanishes on $T\cN_{I^*;i}|_{X_I^*}$, the integrand above 
extends smoothly over $\tau\!=\!0$.
We also note~that 
\BE{zettau2_e}
\mu_{t;i}\big|_{X_{I^*}}=0~~\forall\,t\!\in\!B,\quad
\mu_{t;i}=0~~\forall\,t\!\in\!N'(\prt B),\quad
\mu_{t;i}\big|_{\cN_i'|_{X_{I^*}\cap W''}}=0~~\forall\,t\!\in\!B.\EE
The first equality above is immediate from the definition of~$\mu_{t;i}$,
while the other two follow from~\eref{extendreg_e5}.\\

\noindent
Shrinking $\cN'$ if necessary, we can assume that the restrictions of 
the closed 2-forms $\wt\om_{t;i}\!+\!\tau\vp_{t;i}$ to~$\cN_i'$ are nondegenerate for all 
$(t,\tau)\!\in\!B\!\times\!\bI$ and $i\!\in\!I^*$.
Let $\xi_{t,\tau;i}$ be the vector field on~$\cN_i'$ given~by
$$\big\{\wt\om_{t;i}\!+\!\tau\vp_{t;i}\}\big(\xi_{t,\tau;i},\cdot\big)=\mu_{t;i}(\cdot);$$
it corresponds to the negative of the vector field $X_{\tau}$ below \cite[(3.7)]{MS1}.
By~\eref{zettau2_e},
\BE{zettau_e3}\xi_{t,\tau;i}\big|_{X_{I^*}}=0~~\forall\,t\!\in\!B,\quad
\xi_{t,\tau;i}=0~~\forall\,t\!\in\!N'(\prt B),\quad
\xi_{t,\tau;i}|_{\cN_i'|_{X_{I^*}\cap W''}}=0~~\forall\,t\!\in\!B.\EE
By the compactness of~$B$, there exists a neighborhood $\cN''$ of $X_{I^*}\!\subset\!\cN'$
such~that the time~1 flow~$\psi_{t;i}$ of~$\xi_{t,\tau;i}$ 
is defined on~$\cN''$ (and takes values in~$\cN'$).
By~\eref{zettau_e3},
\BE{zettau_e5}\begin{split}
&\hspace{1in}\psi_{t;i}\big|_{X_{I^*}}=\id_{X_{I^*}}~~\forall\,t\!\in\!B,\\
&\psi_{t;i}=\id_{\cN_i''}~~\forall\,t\!\in\!N'(\prt B),\quad
\psi_{t;i}|_{\cN_i''|_{X_{I^*}\cap W''}}=\id_{\cN_i''|_{X_{I^*}\cap W''}}~~\forall\,t\!\in\!B.
\end{split}\EE
By the proof of \cite[Lemma~3.14]{MS1}, 
\BE{pullbackom_e}
\psi_{t;i}^*\wt\om_{t;i}\big|_{\cN_i''}=\wh\om_{t;i}^{\bu}\big|_{\cN_i''}
\qquad\forall\,t\!\in\!B,\,i\!\in\!I\,.\EE

\vspace{.1in}

\noindent
For each $i\!\in\!I^*$, the set 
$$\cW\equiv B\!\times\!\cN_i''\cup
\bigg(\bigcup_{t\in N'(\prt B)}\!\!\!\!\!\!\!\{t\}\!\times\!\Dom(\Psi_{t;i})\!\!\bigg)
\cup
\bigg(\bigcup_{t\in B}\{t\}\!\times\!\Dom(\Psi_{t;i})\big|_{X_{I^*}\cap W''}\bigg)$$
is an open neighborhood of $B\!\times\!X_{I^*}$ in $B\!\times\!\cN_{I^*;i}$.
Let
$$\Th_{\cdot;i}\!:\cW\lra\cN_{I^*;i}, \quad 
\Th_{t;i}(v)=\begin{cases}\psi_{t;i}(v),&\hbox{if}~v\!\in\!\cN_i'',\\
v,&\hbox{if}~t\!\in\!N'(\prt B),\\
v,&\hbox{if}~v\!\in\!\Dom(\Psi_{t;i})\big|_{X_{I^*}\cap W''}.
\end{cases}$$
By~\eref{zettau_e5}, \eref{pullbackom_e}, and~\eref{extendreg_e5}, 
this map is well-defined and
\BE{zettau_e7}\begin{split}
&\hspace{.7in}
\Th_{t;i}\big|_{X_{I^*}}=\id_{X_{I^*}}~~\forall\,t\!\in\!B,\quad
\Th_{t;i}^*\wt\om_{t;i}\big|_{\Dom(\Th_{t;i})}=\wh\om_{t;i}^{\bu}\big|_{\Dom(\Th_{t;i})},\\
&\Th_{t;i}=\id_{\Dom(\Psi_{t;i})}~~\forall\,t\!\in\!N'(\prt B),\quad
\psi_{t;i}|_{\Dom(\Psi_{t;i})|_{X_{I^*}\cap W''}}
=\id_{\Dom(\Psi_{t;i})|_{X_{I^*}\cap W''}}~~\forall\,t\!\in\!B.
\end{split}\EE
For each $t\!\in\!B$, $\Th_{t;i}$ is a diffeomorphism onto an open subset 
of~$\Dom(\Psi_{t;i})$.
Since $|I^*|\!=\!2$, the tuple $(\Psi_{t;i}\!\circ\!\Th_{t;i})_{t\in B,i\in I^*}$ 
is thus a smooth family of regularizations for~$X_{I^*}$ in~$\X$ 
satisfying~\eref{extendreg_e4a}.
By the second statement in~\eref{zettau_e7}, 
$((\rho_{t;j},\na^{(t;j)})_{j\in I^*-i},\Psi_{t;i}\!\circ\!\Th_{t;i})$
is an $\om_{t;i}$-regularization for~$X_{I^*}$ in~$X_i$
for all $t\!\in\!B$ and $i\!\in\!I^*$.
\end{proof}

\begin{crl}\label{extendreg_crl}
Let $\X$,  $N'(\prt B)\!\subset\!N(\prt B)\!\subset\!B$, $(\om_{t;i})_{t\in B,i\in[N]}$,
$I^*$,  $X_{\eset}^*$, and $W'\!\subset\!W$ be as in 
Proposition~\ref{SCCweak_prp}.
Suppose $(\cR_{t;I})_{t\in N(\prt B),I\in\cP^*(N)}$ and $(\cR_{t;I}')_{t\in B,I\in\cP^*(I^*)}$
are an $(\om_{t;i})_{t\in N(\prt B),i\in[N]}$-family of weak regularizations for~$\X$ over~$X_{\eset}$
and an $(\om_{t;i})_{t\in B,i\in I^*}$-family of weak regularizations for 
$\{X_I\}_{I\in\cP^*(I^*)}$ over~$W$, respectively, such~that 
\BE{extendreg_prp_e1}\big(\cR_{t;I}\big)_{t\in N(\prt B),I\in\cP^*(I^*)}
\cong_W\big(\cR_{t;I}'\big)_{t\in N(\prt B),I\in\cP^*(I^*)}\,.\EE
Then there exist a neighborhood~$W_{I^*}$ of $X_{I^*}\!\subset\!X_{\eset}$
such~that  
\BE{extendreg_prp_e2} X_I\cap W_{I^*}\subset W' \qquad\forall~I\in\cP(N)\!-\!\cP(I^*),\EE
a smooth family $(\mu_{t,\tau;i})_{t\in B,\tau\in\bI,i\in[N]}$ of
1-forms on~$X_{\eset}$ satisfying~\eref{WeakExt_e1}, and
an $(\om_{t,1;i})_{t\in B,i\in[N]}$-family $(\wh\cR_{t;I})_{t\in B,I\in\cP^*(I^*)}$
of weak regularizations  for $\{X_I\}_{I\in\cP^*(I^*)}$ over~$W_{I^*}$
such~that 
\begin{alignat}{1}
\label{extendreg_prp_e2a}
\big(\wh\cR_{t;I}\big)_{t\in N'(\prt B),I\in\cP^*(I^*)}
&\cong_{W_{I^*}}\!\!\big(\cR_{t;I}\big)_{t\in N'(\prt B),I\in\cP^*(I^*)},\\
\label{extendreg_prp_e2b}
\big(\wh\cR_{t;I}\big)_{t\in B,I\in\cP^*(I^*)} 
&\cong_{W'\cap W_{I^*}} \!\!\big(\cR_{t;I}'\big)_{t\in B,I\in\cP^*(I^*)}\,.
\end{alignat}
\end{crl}

\begin{proof} 
Let 
\begin{gather*}
(\cR_{t;I})_{t\in N(\prt B),I\in\cP^*(N)} = 
\big(\rho_{t;I;i},\na^{(t;I;i)},\Psi_{t;I;i}\big)_{t\in N(\prt B),i\in I\subset[N]},\\
(\cR'_{t;I})_{t\in B,I\in\cP^*(I^*)} =  
\big(\rho'_{t;I;i},\na'^{(t;I;i)},\Psi'_{t;I;i}\big)_{t\in B,i\in I\subset I^*}.
\end{gather*}
Choose open subsets $W'',W'''\!\subset\!X_{\eset}$ and $N''(\prt B)\!\subset\!B$
such~that 
$$\ov{W'}\subset W'',\quad \ov{W''}\subset W''', \quad \ov{W'''}\subset W, \quad
\ov{N'(\prt B)}\!\subset N''(\prt B), \quad \ov{N''(\prt B)}\subset N(\prt B).$$
By \eref{extendreg_prp_e1} and the compactness of~$B$, 
there exist a neighborhood $\cN^{\circ}$ of~$X_{I^*}\!\subset\!\cN X_{I^*}$
such~that 
\begin{gather}
\notag
\cN_i^{\circ}\subset\Dom(\Psi_{t;I^*;i})~~\forall\,t\!\in\!N''(\prt B),\,i\!\in\!I^*, \quad
\cN_i^{\circ}\subset\Dom(\Psi_{t;I^*;i}')
~~\forall\,t\!\in\!B,\,i\!\in\!I^*, \\
\label{extendreg_prp_e6}
\big(\Psi_{t;I^*;i}|_{\cN_i^{\circ}|_{X_{I^*}\cap W'''}}\big)_{t\in N''(\prt B),i\in I^*}
=\big(\Psi_{t;I^*;i}'|_{\cN_i^{\circ}|_{X_{I^*}\cap W'''}}\big)_{t\in N''(\prt B),i\in I^*}\,.
\end{gather}

\vspace{.1in}

\noindent
We apply Lemma~\ref{extendreg0_lmm} with
\begin{gather*}
N(\prt B)=N''(\prt B), \quad W=W''', \quad W'=W'', \\
\begin{split}
\big(\rho_{I^*;t;i},\na^{(I^*;t;i)},\Psi_{I^*;t;i}\big)_{t\in N(\prt B),i\in I^*}
&=\big(\rho_{t;I^*;i},\na^{(t;I^*;i)},
\Psi_{t;I^*;i}|_{\cN_i^{\circ}}\big)_{t\in N''(\prt B),i\in I^*},\\
\big(\rho_{W;t;i},\na^{(W;t;i)},\Psi_{W;t;i}\big)_{t\in B,i\in I^*}
&=\big(\rho_{t;I^*;i}',\na'^{(t;I^*;i)},\Psi_{t;I^*;i}'|_{\cN_i^{\circ}}\big)_{t\in B,i\in I^*}\,;
\end{split}\end{gather*}
the requirement~\eref{extendreg0_e0b} is satisfied due to~\eref{extendreg_prp_e1}
and~\eref{extendreg_prp_e6}.
There thus exist a smooth family $(\mu_{t,\tau;i})_{t\in B,\tau\in\bI,i\in[N]}$ of
\hbox{1-forms} on~$X_{\eset}$ satisfying~\eref{WeakExt_e1} with $W'$ replaced
by $W''\!\supset\!W'$  and an $(\om_{t,1;i})_{t\in B,i\in[N]}$-family 
$(\rho_{t;i},\na^{(t;i)},\Psi_{t;i})_{t\in B,i\in I^*}$ of regularizations 
for $X_{I^*}$ in~$\X$ such that 
\BE{extendreg0_e21}\begin{split}
&\big(\rho_{t;i},\na^{(t;i)},\Psi_{t;i}\big)_{t\in N'(\prt B),i\in I^*}
=\big(\rho_{t;I^*;i},\na^{(t;I^*;i)},\Psi_{t;I^*;i}|_{\cN_i^{\circ}}\big)_{t\in N'(\prt B),i\in I^*},\\
&\big((\rho_{t;i},\na^{(t;i)})|_{X_{I^*}\cap W''},
\Psi_{t;i}|_{\Dom(\Psi_{t;i})|_{X_{I^*}\cap W''}}\big)_{t\in B,i\in I^*}\\
&\hspace{1in}=\big((\rho'_{t;i},\na'^{(t;i)})|_{X_{I^*}\cap W''},
\Psi_{t;I^*;i}'|_{\cN_i^{\circ}|_{X_{I^*}\cap W''}}\big)_{t\in B,i\in I^*}.
\end{split}\EE

\vspace{.2in}

\noindent
Since $B$ is compact, there exists a neighborhood $W_{I^*}$
of $X_{I^*}\!\subset\!X_{\eset}$ such~that
\BE{extendreg_e14}B\!\times\!W_{I^*}\subset \bigcup_{t\in B}\!\!
\bigg(\!\!\{t\}\!\times\!\bigcup_{i\in I^*}\!\Im\big(\Psi_{t;i}\big)\!\!\bigg) \,.\EE
Since
$$\ov{W}'\subset W''  \qquad\hbox{and}\qquad 
X_{I^*}\!\cap\!X_I=X_{I^*\cup I}\subset W'~~\forall\,I\!\in\!\cP(N)\!-\!\cP(I^*),$$
we can shrink~$W_{I^*}$ so~that
\BE{nonoverlap_e5}
W_{I^*}\!\cap\!W'\subset \bigcup_{i\in I^*}\!\Psi_{t;i}\big(\cN_i^{\circ}|_{X_{I^*}\cap W''}\big)
\quad\forall\,t\!\in\!B\EE
and that \eref{extendreg_prp_e2} holds.\\

\noindent
For $t\!\in\!B$ and $i\!\in\!I\!\subset\!I^*$ with $|I|\!\ge\!2$, let
$$\wh\cN_{t;I^*;I}=\cN_I^{\circ}\!\cap\!\Psi_{t;i}^{-1}\big(W_{I^*}\big)
\subset \cN_{I^*;I}\subset \cN X_{I^*},\quad
\Psi_{t;I}\!=\!\Psi_{t;i}\big|_{\wh\cN_{t;I^*;I}}\!:
\wh\cN_{t;I^*;I}\lra X_I\!\cap\!W_{I^*}\subset X_i\,;$$
the diffeomorphism~$\Psi_{t;I}$ is independent of the choice of $i\!\in\!I$ 
by~\eref{ConfRegulLoc_e2}. 
Let
\BE{extendreg_e16}\fD\Psi_{t;I}\!: \pi_{I^*;I}^*\cN_{I^*;I^*-I}\big|_{\wh\cN_{t;I^*;I}}
\lra \cN X_I\big|_{X_I\cap W_{I^*}}\EE
be the isomorphism of split vector bundles covering~$\Psi_{t;I}$ as in~\eref{fDPsiIIconf_e}
with $I'\!\subset\!I$ replaced by $I\!\subset\!I^*$.
Analogously to~\eref{cNtot_e4}, we identify $\pi_{I^*;I}^*\cN_{I^*;I^*-I}$
with~$\cN X_{I^*}$ so~that 
$$\wh\cN_{t;I^*;i}\!\equiv\!\Psi_{t;i}^{-1}\big(W_{I^*}\big)
\subset \pi_{I^*;I}^*\cN_{I^*;I^*-I}
\qquad\forall\,i\!\in\!I\,.$$
For $i\!\in\!I\!\subset\!I^*$, define 
\begin{gather}\label{extendreg_e27}
\big(\wh\rho_{t;I;i},\wh\na^{(t;I;i)}\big)
=\big\{\fD\Psi_{t;I}^{-1}\big\}^{\!*}\pi_{I^*;I}^*\big(\rho_{t;i},\na^{(t;i)}\big),\\
\label{extendreg_e27b}
\wh\cN_{t;I;i}=\fD\Psi_{t;I}\big(\wh\cN_{t;I^*;i}|_{\wh\cN_{t;I^*;I}}\big), \quad
\wh\Psi_{t;I;i}=\Psi_{t;i}\circ\fD\Psi_{t;I}^{\,-1}\big|_{\wh\cN_{t;I;i}}\!:
\wh\cN_{t;I;i}\lra W_{I^*}\subset X_i\,.
\end{gather}
Since $(\Psi_{t;i})_{i\in I^*}$ is a regularization for~$X_{I^*}$ in~$\X$
and~\eref{extendreg_e16} is an isomorphism of split vector bundles,
the tuple $(\wh\Psi_{t;I;i})_{i\in I}$ is a regularization for $X_I\!\cap\!W_{I^*}$
in~$\X$ in the sense of Definition~\ref{ConfRegulLoc_dfn} 
for all $I\!\subset\!I^*$ and $t\!\in\!B$.
Since $((\rho_{t;j},\na^{(t;j)})_{j\in I^*-i},\Psi_{t;i})$ is an
$(\om_{t,1;i})_{i\in[N]}$-regularization  for $X_{I^*}$ in~$\X$
for all $t\!\in\!B$ and $i\!\in\!I^*$,
$((\wh\rho_{t;I;j},\wh\na^{(t;I;j)})_{j\in I-i},\wh\Psi_{t;I;i})$ is
an $(\om_{t,1;i})_{i\in[N]}$-regularization for $X_I\!\cap\!W_{I^*}$ 
in~$\X$ for all $t\!\in\!B$ and $i\!\in\!I\!\subset\!I^*$.\\

\noindent
By the first equation in~\eref{extendreg_e27b},
$$\fD\Psi_{t;I}^{\,-1}\big(\wh\cN_{t;I;i}\!\cap\!\cN_{I;I'}\big)\subset\wh\cN_{t;I^*;I'},\quad
\fD\Psi_{t;I}^{\,-1}\big(\pi_{I;I'}^*\cN_{I;I-I'}\big|_{\wh\cN_{t;I;i}\cap\cN_{I;I'}}\big)  
\subset \pi_{I^*;I'}^*\cN_{I^*;I^*-I'}\big|_{\wh\cN_{t;I^*;I'}}\,,$$
whenever $i\!\in\!I'\!\subset\!I\!\subset\!I^*$.
By the second equation in~\eref{extendreg_e27b},
\begin{gather*} 
\fD\wh\Psi_{t;I;I'}=\fD\Psi_{t;I'}\!\circ\!\fD\Psi_{t;I}^{\,-1}\!:
\pi_{I;I'}^*\cN_{I;I-I'}\big|_{\wh\cN_{t;I;i}\cap\cN_{I;I'}}
\lra \cN X_{I'}\big|_{X_{I'}\cap W_{I^*}},\\
\fD\wh\Psi_{t;I;I'}^{\,-1}\big(\wh\cN_{t;I';i}\big)\subset\wh\cN_{t;I;i}\,,
\quad
\wh\Psi_{t;I;i}=\Psi_{t;I';i}\!\circ\!\fD\wh\Psi_{t;I;I'}\!:
\fD\wh\Psi_{t;I;I'}^{\,-1}\big(\wh\cN_{t;I';i}\big)\lra X_i\,,
\end{gather*}
whenever $i\!\in\!I'\!\subset\!I\!\subset\!I^*$ and $|I'|\!\ge\!2$.
Along with~\eref{extendreg_e27}, this implies that 
the tuple $(\wh\cR_{t;I})_{I\in\cP^*(I^*)}$ satisfies
the second bullet condition in Definition~\ref{LocalRegul_dfn} with~$[N]$ replaced by~$I^*$
for every $t\!\in\!B$.
Thus, 
$$\big(\wh\cR_{t;I}\big)_{t\in B,I\in\cP^*(I^*)}
\equiv \big(\wh\rho_{t;I;i},\wh\na^{(t;I;i)},\wh\Psi_{t;I;i}\big)_{i\in I\subset I^*,t\in B} $$
is an $(\om_{t,1;i})_{t\in B,i\in I^*}$-family of 
weak regularizations for $\{X_I\}_{I\in\cP^*(I^*)}$ over~$W_{I^*}$.\\

\noindent
By~\eref{extendreg_e14} and~\eref{nonoverlap_e5},  
\BE{extendreg_e24}
X_i\!\cap\!W_{I^*}\subset \Im(\Psi_{t;i})~~\forall\,t\!\in\!N'(\prt B),\quad
X_i\!\cap\!W'\!\cap\!W_{I^*}\subset \Psi_{t;i}\big(\Dom(\Psi_{t;i})|_{X_{I^*}\cap W''}\big)
~~\forall\,t\!\in\!B,\EE
whenever $i\!\in\!I^*$.
Along with the first part of the second bullet condition in Definition~\ref{LocalRegul_dfn}, 
this implies~that
\begin{equation*}\begin{aligned}
\big(\rho_{t;I;i},\na^{(t;I;i)}\big)\big|_{X_I\cap W_{I^*}}
&=\big\{\fD\Psi_{t;I^*;I}^{\,-1}\big\}^{\!*}\pi_{I^*;I}^*\big(\rho_{t;I^*;i},\na^{(t;I^*;i)}\big)
&&\forall\,t\!\in\!N'(\prt B),\\
\big(\rho_{t;I;i}',\na'^{(t;I;i)}\big)\big|_{X_I\cap W'\cap W_{I^*}}
&=\big\{\fD\Psi_{t;I^*;I}^{\,-1}\big\}^{\!*}\pi_{I^*;I}^*\big(\rho'_{t;I^*;i},\na'^{(t;I^*;i)}\big)
\big|_{X_I\cap W'\cap W_{I^*}}
&&\forall\,t\!\in\!B,
\end{aligned}\end{equation*}
whenever $i\!\in\!I\!\subset\!I^*$ and $|I|\!\ge\!2$.
Combining the last four equations with~\eref{extendreg_e27} and~\eref{extendreg0_e21},  we~obtain
\BE{extendreg_e32}\begin{split}
\big(\wh\rho_{t;I;i},\wh\na^{(t;I;i)}\big)_{t\in N'(\prt B),i\in I\subset I^*}
&=\big((\rho_{t;I;i},\na^{(t;I;i)})|_{X_I\cap W_{I^*}}\big)_{t\in N'(\prt B),i\in I\subset I^*},\\
\big((\wh\rho_{t;I;i},\wh\na^{(t;I;i)})|_{X_I\cap W'\cap W_{I^*}}
\big)_{t\in B,i\in I\subset I^*} 
&= \big((\rho'_{t;I;i},\na'^{(t;I;i)})|_{X_I\cap W'\cap W_{I^*}}
\big)_{t\in B,i\in I\subset I^*}\,.
\end{split}\EE

\vspace{.1in}

\noindent
By~\eref{extendreg_e24} and~\eref{extendreg0_e21}, 
\begin{equation*}\begin{aligned}
&\fD\Psi_{t;I^*;I}^{\,-1}\big(\wh\cN_{t;I;i}\!\cap\!\Dom(\Psi_{t;I;i})\big)
=\fD\Psi_{t;I}^{\,-1}\big(\wh\cN_{t;I;i}\!\cap\!\Dom(\Psi_{t;I;i})\big)\subset
\wh\cN_{t;I^*;i}\subset\Dom(\Psi_{t;i}) &&\forall\,t\!\in\!N'(\prt B),\\
&\fD\Psi_{t;I^*;I}'^{\,-1}\big(\wh\cN_{t;I;i}|_{X_I\cap W'\cap W_{I^*}}\!\cap\!\Dom(\Psi_{t;I;i}')\big)
=\fD\Psi_{t;I}^{\,-1}
\big(\wh\cN_{t;I;i}|_{X_I\cap W'\cap W_{I^*}}\!\cap\!\Dom(\Psi_{t;I;i}')\big)\\
&\hspace{2.7in}
\subset \wh\cN_{t;I^*;i}\big|_{X_{I^*}\cap W''}
\subset\Dom(\Psi_{t;i})\big|_{X_{I^*}\cap W''} &&\forall\,t\!\in\!B,
\end{aligned}\end{equation*}
whenever $i\!\in\!I\!\subset\!I^*$ and $|I|\!\ge\!2$.
Along with the second part of the second bullet condition in Definition~\ref{LocalRegul_dfn}, 
this implies~that 
\begin{equation*}\begin{aligned}
\Psi_{t;I;i}\big|_{\wh\cN_{t;I;i}\cap\Dom(\Psi_{t;I;i})}
&=\Psi_{t;I^*;i}\circ\fD\Psi_{t;I^*;I}^{\,-1}\big|_{\wh\cN_{t;I;i}\cap\Dom(\Psi_{t;I;i})} 
&&\forall\,t\!\in\!N'(\prt B),\\
\Psi_{t;I;i}'\big|_{\wh\cN_{t;I;i}|_{X_I\cap W'\cap W_{I^*}}\cap\Dom(\Psi_{t;I;i}')}
&=\Psi_{t;I^*;i}'\circ\fD\Psi_{t;I^*;I}'^{\,-1}
\big|_{\wh\cN_{t;I;i}|_{X_I\cap W'\cap W_{I^*}}\cap\Dom(\Psi_{t;I;i}')} 
&&\forall\,t\!\in\!B.
\end{aligned}\end{equation*}
Combining the last four equations with~\eref{extendreg_e27b}  and~\eref{extendreg0_e21}, 
we~obtain
\BE{extendreg_e42}\begin{split}
\big(\wh\Psi_{t;I;i}|_{\wh\cN_{t;I;i}\cap\Dom(\Psi_{t;I;i})}
\big)_{t\in N'(\prt B),i\in I\subset I^*}
&=\big(\Psi_{t;I;i}|_{\wh\cN_{t;I;i}\cap\Dom(\Psi_{t;I;i})}
\big)_{t\in N'(\prt B),i\in I\subset I^*}\,,\\
\big(\wh\Psi_{t;I;i}|_{\wh\cN_{t;I;i}|_{X_I\cap W'\cap W_{I^*}}\cap\Dom(\Psi_{t;I;i}')}
\big)_{t\in B,i\in I\subset I^*}
&=\big(\Psi_{t;I;i}'|_{\wh\cN_{t;I;i}|_{X_I\cap W'\cap W_{I^*}}\cap\Dom(\Psi_{t;I;i}')}
\big)_{t\in B,i\in I\subset I^*}.
\end{split}\EE
By~\eref{extendreg_e32} and~\eref{extendreg_e42},
$(\wh\cR_{t;I})_{t\in B,I\in\cP^*(I^*)}$ 
satisfies~\eref{extendreg_prp_e2a} and~\eref{extendreg_prp_e2b}.
\end{proof}

\subsection{Merging weak regularizations and equivalences}
\label{PastingRegul_subs}

\noindent
By Lemma~\ref{regulcomb_lmm} below, two weak regularizations for~$\X$
over open subsets of~$X_{\eset}$
that are equivalent over their intersection can be pasted together over 
the union of slightly smaller open subsets.
By Corollary~\ref{regulcomb_crl}, two weak regularizations that are equivalent
over each of two open subsets are also equivalent over 
the union of slightly smaller open subsets.

\begin{lmm}\label{regulcomb_lmm}
Let $\X$, $B$, $(\om_{t;i})_{t\in B,i\in[N]}$, $I^*$, and $W'\!\subset\!W$ be as in 
Proposition~\ref{SCCweak_prp}.
Suppose \hbox{$W_{I^*},W_{I^*}'\!\subset\!X_{\eset}$} are open subsets such~that 
\BE{regulcomb_lmm_e1}
\ov{W_{I^*}'}\subset W_{I^*}, \qquad
X_I\!\cap\!\ov{W_{I^*}'}\subset W~~~\forall~I\in\cP(N)\!-\!\cP(I^*),\EE
and $(\cR_{t;I})_{t\in B,I\in\cP^*(N)}$ and $(\wh\cR_{t;I})_{t\in B,I\in\cP^*(I^*)}$
are an $(\om_{t;i})_{t\in B,i\in[N]}$-family of weak regularizations for~$\X$ over~$W$
and an $(\om_{t;i})_{t\in B,i\in I^*}$-family of weak regularizations for 
$\{X_I\}_{I\in\cP^*(I^*)}$ over~$W_{I^*}$, respectively, such~that 
\BE{cRoverlap_e}\big(\cR_{t;I}\big)_{t\in B,I\in\cP^*(I^*)}
\cong_{W\cap W_{I^*}}\!\!\big(\wh\cR_{t;I}\big)_{t\in B,I\in\cP^*(I^*)}\,.\EE
Then there exists an $(\om_{t;i})_{t\in B,i\in[N]}$-family $(\wt\cR_{t;I})_{t\in B,I\in\cP^*(N)}$
of weak regularizations for $\X$ over $W'\!\cup\!W_{I^*}'$
such~that  
\BE{regulcomb_lmm_e2}\begin{split}
\big(\wt\cR_{t;I}\big)_{t\in B,I\in\cP^*(N)}
&\cong_{W'}\!\big(\cR_{t;I}\big)_{t\in B,I\in\cP^*(N)},~~
\big(\wt\cR_{t;I}\big)_{t\in B,I\in\cP^*(I^*)} 
\cong_{W_{I^*}'}\!\! \big(\wh\cR_{t;I}\big)_{t\in B,I\in\cP^*(I^*)}\,.
\end{split}\EE
\end{lmm}

\begin{proof} 
Let
\begin{alignat}{2}
\label{regulcomb_e2}
(\cR_{t;I})_{t\in B,I\in\cP^*(N)} &\equiv  
\big(\rho_{t;I;i},\na^{(t;I;i)},\Psi_{t;I;i}\big)_{t\in B,i\in I\subset[N]},
&\quad \cN_{t;I;i}&=\Dom(\Psi_{t;I;i})\subset\cN_{I;i},\\
\notag
(\wh\cR_{t;I})_{t\in B,I\in\cP^*(I^*)} &\equiv  
\big(\wh\rho_{t;I;i},\wh\na^{(t;I;i)},\wh\Psi_{t;I;i}\big)_{t\in B,i\in I\subset I^*},
&\quad \wh\cN_{t;I;i}&=\Dom(\wh\Psi_{t;I;i})\subset\cN_{I;i}.
\end{alignat}
By~\eref{cRoverlap_e}, there exists an $(\om_{t;i})_{t\in B,i\in I^*}$-family 
$$(\cR'_{t;I})_{t\in B,I\in\cP^*(I^*)} \equiv  
\big(\rho'_{t;I;i},\na'^{(t;I;i)},\Psi'_{t;I;i}\big)_{t\in B,i\in I\subset I^*}$$
of weak regularizations 
for $\{X_I\}_{I\in\cP^*(I^*)}$ over~$W\!\cap\!W_{I^*}$ such~that 
\begin{gather}
\label{regulcomb_e1a}
(\rho'_{t;I;i},\na'^{(t;I;i)})=
(\rho_{t;I;i},\na^{(t;I;i)})\big|_{X_I\cap W\cap W_{I^*}},
(\wh\rho_{t;I;i},\wh\na^{(t;I;i)})\big|_{X_I\cap W\cap W_{I^*}}
\quad\forall\,i\!\in\!I\!\subset\!I^*,\\
\label{regulcomb_e1b}
\cN_{t;I;i}'\!\equiv\!\Dom(\Psi_{t;I;i}')\subset\cN_{t;I;i},\wh\cN_{t;I;i}, \quad
\Psi_{t;I;i}'=\Psi_{t;I;i}|_{\cN_{t;I;i}'},\wh\Psi_{t;I;i}|_{\cN_{t;I;i}'}
\qquad\forall~i\!\in\!I\!\subset\!I^*.
\end{gather}

\vspace{.1in}

\noindent
Define
\begin{alignat*}{3}
W^{\circ}&=W'\!-\!\ov{W'\!\cap\!W_{I^*}'}, &\quad
W_{I^*}^{\circ}&=W_{I^*}'\!-\!\ov{W'\!\cap\!W_{I^*}'}, &\quad
W_{\cap}&=W\!\cap\!W_{I^*}\cap(W'\!\cup\!W_{I^*}'),\\
\cN_{t;I;i}^{\circ}&=\Psi_{t;I;i}^{\,-1}\big(W^{\circ}\big)\big|_{X_I\cap W^{\circ}}, 
&\quad
\wh\cN_{t;I;i}^{\circ}&=\wh\Psi_{t;I;i}^{\,-1}
\big(W_{I^*}^{\circ}\big)\big|_{X_I\cap W_{I^*}^{\circ}}, 
&\quad
\cN_{t;I;i}^{\,\cap}&=\Psi_{t;I;i}'^{\,-1}\big(W_{\cap}\big)\big|_{X_I\cap W_{\cap}}.
\end{alignat*}
By the first assumption in~\eref{SCCweak_e0} and~\eref{regulcomb_lmm_e1},
\begin{gather}
\label{SCCweak_e2}
W^{\circ}\cap W_{I^*}^{\circ}=\eset, \qquad
W'\!\cup\!W_{I^*}'=W^{\circ}\!\cup\!W_{I^*}^{\circ}\!\cup\!W_{\cap},\\
\label{SCCweak_e2b}
X_I\!\cap\!(W'\!\cup\!W_{I^*}')= 
X_I\!\cap\!\big(W^{\circ}\!\cup\!W_{\cap}\big)
\subset X_I\!\cap\!W
~~\forall\,I\!\in\!\cP(N)\!-\!\cP(I^*)\,.
\end{gather}
For $t\!\in\!B$ and $i\!\in\!I\!\subset\![N]$ with $|I|\!\ge\!2$, let
$$\wt\cN_{t;I;i}=\begin{cases}
\cN_{t;I;i}^{\circ}\!\cup\!\cN_{t;I;i}^{\,\cap}
,&\hbox{if}~I\!\in\!\cP(N)\!-\!\cP(I^*);\\
\cN_{t;I;i}^{\circ}\!\cup\!\wh\cN_{t;I;i}^{\circ}\!\cup\!
\cN_{t;I;i}^{\,\cap},&\hbox{if}~I\!\in\!\cP^*(I^*).
\end{cases}$$
By the second statement in~\eref{SCCweak_e2} and~\eref{SCCweak_e2b}, 
$\wt{\cN}_{t;I;i}$ is a neighborhood of $X_I\!\cap\!(W'\!\cup\!W_{I^*}')$
in $\cN_{I;i}|_{X_I\cap(W'\cup W_{I^*}')}$.\\

\noindent 
With $t\!\in\!B$ and $i\!\in\!I$ as above, define
\BE{regulcomb_e4}\wt\Psi_{t;I;i}\!:\wt\cN_{t;I;i}\lra X_i, \qquad
\wt\Psi_{t;I;i}(v)=\begin{cases}
\Psi_{t;I;i}(v),&\hbox{if}~v\!\in\!\cN_{t;I;i}^{\circ}\!\cup\!\cN_{t;I;i}^{\cap};\\
\wh\Psi_{t;I;i}(v),&\hbox{if}~v\!\in\!\wh\cN_{t;I;i}^{\circ},\,I\!\in\!\cP^*(I^*).
\end{cases}\EE
By the first statement in~\eref{SCCweak_e2} and~\eref{regulcomb_e1b}, 
these definitions agree on the overlap  
$\cN_{t;I;i}^{\cap}\!\cap\!\wh\cN_{t;I;i}^{\circ}$.
Since 
\BE{regulcomb_e6}\wt\Psi_{t;I;i}\big(\cN_{t;I;i}^{\circ}\big)\cap
\wt\Psi_{t;I;i}\big(\wh\cN_{t;I;i}^{\circ}\big)
\subset W^{\circ}\!\cap\!W_{I^*}^{\circ}=\eset\EE
by the first statement in~\eref{SCCweak_e2}
and the maps $\Psi_{t;I;i}$ and~$\wh\Psi_{t;I;i}$ are injective,
\eref{regulcomb_e1b} implies that the map~$\wt\Psi_{t;I;i}$ is injective as well.
Since the tuple $(\Psi_{t;I;i})_{i\in I}$ is a regularization for $X_I\!\cap\!W$ in~$\X$
for every $I\!\in\!\cP^*(N)$ and 
$(\wh\Psi_{t;I;i})_{i\in I}$ is a regularization for $X_I\!\cap\!W_{I^*}$ in~$\X$
for every $I\!\in\!\cP^*(I^*)$, we conclude that 
 $(\wt\Psi_{t;I;i})_{i\in I}$ is a regularization for $X_I\!\cap\!(W'\!\cup\!W_{I^*}')$ 
in~$\X$ for every $I\!\in\!\cP^*(N)$.
By~\eref{regulcomb_e6} and~\eref{regulcomb_e1b}, 
these regularizations satisfy~\eref{LocalRegul_e2}.\\

\noindent
We define a metric~$\wt\rho_{t;I;i}$ and a connection~$\wt\na^{(t;I;i)}$ 
on the vector bundle 
$\cN_{X_{I-i}}X_I|_{X_I\cap(W'\cup W_{I^*}')}$ by 
\BE{regulcomb_e8}\big(\wt\rho_{t;I;i},\wt\na^{(t;I;i)}\big)_x=
\begin{cases}
(\rho_{t;I;i},\na^{(t;I;i)})_x,&\hbox{if}~x\!\in\!X_I\!\cap\!(W^{\circ}\!\cup\!W_{\cap});\\
(\wh\rho_{t;I;i},\wh\na^{(t;I;i)})_x,&\hbox{if}~x\!\in\!X_I\!\cap\!W_{I^*}^{\circ},
\,I\!\in\!\cP^*(I^*).
\end{cases}\EE
By the first statement in~\eref{SCCweak_e2} and~\eref{regulcomb_e1a}, 
these definitions agree on the overlap $X_I\!\cap\!W_{\cap}\!\cap\!W_{I^*}^{\circ}$.
Since the~tuples
$$\big((\rho_{t;I;j},\na^{(t;I;j)})_{j\in I-i},\Psi_{t;I;i}\big)
\qquad\hbox{and}\qquad
\big((\wh\rho_{t;I;j},\wh\na^{(t;I;j)})_{j\in I-i},\wh\Psi_{t;I;i}\big)$$
are an $\om_{t;i}$-regularization for $X_I\!\cap\!W$ in~$X_i$ whenever $i\!\in\!I\!\subset\![N]$
and
an $\om_{t;i}$-regularization for $X_I\!\cap\!W_{I^*}$ in~$X_i$ whenever $i\!\in\!I\!\subset\!I^*$,
respectively, we conclude that the~tuple
$((\wt\rho_{t;I;j},\wt\na^{(t;I;j)})_{j\in I-i},\wt\Psi_{t;I;i})$
is an $\om_{t;i}$-regularization for $X_I\!\cap\!(W'\!\cup\!W_{I^*}')$ 
in~$X_i$ whenever $i\!\in\!I\!\subset\![N]$.
By~\eref{regulcomb_e6}, the maps~\eref{regulcomb_e4} and  
the pairs~\eref{regulcomb_e8} satisfy the first part of the second bullet condition
in Definition~\ref{LocalRegul_dfn}.\\

\noindent
By the last two paragraphs, the tuple
\BE{regulcomb_e10}\big(\wt\cR_{t;I}\big)_{I\in\cP^*(N)}\equiv 
\big(\wt\rho_{t;I;i},\wt\na^{(t;I;i)},\wt\Psi_{t;I;i}\big)_{i\in I\subset[N]}\EE
is an $(\om_{t;i})_{i\in[N]}$-family of weak regularizations for $\X$ over $W'\!\cup\!W_{I^*}'$.
By~\eref{regulcomb_e4} and~\eref{regulcomb_e8}, it satisfies~\eref{regulcomb_lmm_e2}.
\end{proof}

\begin{crl}\label{regulcomb_crl}
Let $\X$, $B$, $(\om_{t;i})_{t\in B,i\in[N]}$, $I^*$, $W'\!\subset\!W$, and
$W_{I^*}'\!\subset\!W_{I^*}$ be  as in Lemma~\ref{regulcomb_lmm}.
If
$(\cR_{t;I}^{(1)})_{t\in B,I\in\cP^*(N)}$ and 
$(\cR_{t;I}^{(2)})_{t\in B,I\in\cP^*(N)}$ are
$(\om_{t;i})_{t\in B,i\in[N]}$-families of weak regularizations for $\X$
over $W\!\cup\!W_{I^*}$ such~that 
\BE{regulcombcrl3_e}
\big(\cR_{t;I}^{(1)}\big)_{t\in B,I\in\cP^*(N)}\cong_W \!
\big(\cR_{t;I}^{(2)}\big)_{t\in B,I\in\cP^*(N)},  ~~
\big(\cR_{t;I}^{(1)}\big)_{t\in B,I\in\cP^*(I^*)}\cong_{W_{I^*}} \!\!
\big(\cR_{t;I}^{(2)}\big)_{t\in B,I\in\cP^*(I^*)}, \EE
then
$$\big(\cR_{t;I}^{(1)}\big)_{t\in B,I\in\cP^*(N)}\cong_{W'\cup W_{I^*}'} \!\!
\big(\cR_{t;I}^{(2)}\big)_{t\in B,I\in\cP^*(N)}\,.$$
\end{crl}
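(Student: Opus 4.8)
The plan is to produce the required common refinement by repeating, at the level of germ‑restrictions, the patching construction already carried out in the proof of Lemma~\ref{regulcomb_lmm}; no new regularizations will need to be built. First I would unpack the two hypotheses in~\eref{regulcombcrl3_e}. The relation $(\cR_{t;I}^{(1)})_{t\in B}\cong_W(\cR_{t;I}^{(2)})_{t\in B}$ provides an $(\om_{t;i})_{t\in B,i\in[N]}$-family $(\cR_{t;I}^{(3)})_{t\in B,I\in\cP^*(N)}$ of weak regularizations for~$\X$ over~$W$ whose Hermitian structures agree with those of $\cR^{(1)}$ and $\cR^{(2)}$ over $X_I\!\cap\!W$ and whose domains are contained in, and on which it restricts to, those of $\cR^{(1)}$ and $\cR^{(2)}$. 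Likewise $(\cR_{t;I}^{(1)})_{t\in B}\cong_{W_{I^*}}(\cR_{t;I}^{(2)})_{t\in B}$ on $\cP^*(I^*)$ provides an $(\om_{t;i})_{t\in B,i\in I^*}$-family $(\cR_{t;I}^{(4)})_{t\in B,I\in\cP^*(I^*)}$ of weak regularizations for $\{X_I\}_{I\in\cP^*(I^*)}$ over~$W_{I^*}$ with the analogous properties.

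Second I would observe that, over $W\!\cap\!W_{I^*}$, both $(\cR_{t;I}^{(3)})_{I\in\cP^*(I^*)}$ and $(\cR_{t;I}^{(4)})_{I\in\cP^*(I^*)}$ are germ‑restrictions of $(\cR_{t;I}^{(1)})_{I\in\cP^*(I^*)}$. Hence the family obtained by intersecting their domains restricts to both and has Hermitian structures agreeing with theirs; since shrinking the domain of a regularization preserves every condition in Definitions~\ref{smreg_dfn}, \ref{sympreg1_dfn}, and~\ref{LocalRegul_dfn}, this intersection is again a weak regularization for $\{X_I\}_{I\in\cP^*(I^*)}$ over $W\!\cap\!W_{I^*}$. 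In particular $(\cR_{t;I}^{(3)})_{I\in\cP^*(I^*)}\cong_{W\cap W_{I^*}}(\cR_{t;I}^{(4)})_{I\in\cP^*(I^*)}$.

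Third I would paste $\cR^{(3)}$ and $\cR^{(4)}$ into a single $(\om_{t;i})_{t\in B,i\in[N]}$-family $(\cR_{t;I}^{(5)})_{t\in B,I\in\cP^*(N)}$ of weak regularizations for~$\X$ over $W'\!\cup\!W_{I^*}'$ exactly as in the proof of Lemma~\ref{regulcomb_lmm}: with $W^{\circ}\!=\!W'\!-\!\ov{W'\!\cap\!W_{I^*}'}$, $W_{I^*}^{\circ}\!=\!W_{I^*}'\!-\!\ov{W'\!\cap\!W_{I^*}'}$, and $W_{\cap}\!=\!W\!\cap\!W_{I^*}\!\cap\!(W'\!\cup\!W_{I^*}')$, one uses $\cR^{(3)}$ over $X_I\!\cap\!(W'\!\cup\!W_{I^*}')$ for $I\!\in\!\cP(N)\!-\!\cP(I^*)$, which lies inside the domain of $\cR^{(3)}$ because $X_I\!\cap\!\ov{W_{I^*}'}\!\subset\!W'\!\subset\!W$ by~\eref{regulcomb_lmm_e1}, and for $I\!\in\!\cP^*(I^*)$ one uses $\cR^{(3)}$ over $X_I\!\cap\!W^{\circ}$, $\cR^{(4)}$ over $X_I\!\cap\!W_{I^*}^{\circ}$, and the common refinement from the second step over $X_I\!\cap\!W_{\cap}$, these pieces agreeing on overlaps by that step. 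After shrinking the patched domains so the glued maps are injective, as in Lemma~\ref{regulcomb_lmm}, the three conditions of Definition~\ref{LocalRegul_dfn} hold for $\cR^{(5)}$ because they hold for each of $\cR^{(3)}$, $\cR^{(4)}$, and the common refinement. By construction each piece of $\cR^{(5)}$ is, on its domain, a restriction of both $\cR^{(1)}$ and $\cR^{(2)}$ with matching Hermitian structures — again invoking~\eref{regulcomb_lmm_e1} to remain inside $W$ for $I\!\in\!\cP(N)\!-\!\cP(I^*)$ — so $\cR^{(5)}$ witnesses $(\cR_{t;I}^{(1)})_{t\in B}\cong_{W'\cup W_{I^*}'}(\cR_{t;I}^{(2)})_{t\in B}$.

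The only delicate point will be the domain bookkeeping and the verification of the last condition in Definition~\ref{LocalRegul_dfn} for the patched family in the third step; but this is identical to the corresponding part of the proof of Lemma~\ref{regulcomb_lmm}, and, unlike there, here nothing is constructed — everything is reassembled from germ‑restrictions of the given regularizations — so there is no analytic content and no genuine obstacle.
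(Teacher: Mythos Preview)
Your proposal is correct and follows essentially the same approach as the paper: unpack the two equivalences in~\eref{regulcombcrl3_e} to obtain witness families over~$W$ and~$W_{I^*}$, observe that both are restrictions of~$\cR^{(1)}$ (hence compatible on $W\!\cap\!W_{I^*}$), feed them into the patching construction from the proof of Lemma~\ref{regulcomb_lmm}, and verify that the output still restricts to both~$\cR^{(1)}$ and~$\cR^{(2)}$. Your labeling and the explicit remark about intersecting domains in the second step differ only cosmetically from the paper's treatment.
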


\begin{proof}
Let $\cR_{t;I}^{(1)}$ and $\cR_{t;I}^{(2)}$ be as in~\eref{R1R2_e}.
By the first assumption in~\eref{regulcombcrl3_e}, there exists 
an  $(\om_{t;i})_{t\in B,i\in[N]}$-family
\BE{regulcomb3_e2}
\big(\cR_{t;I}\big)_{t\in B,I\in\cP^*(N)}
\equiv\big(\rho_{t;I;i},\na^{(t;I;i)},\Psi_{t;I;i}\big)_{t\in B,i\in I\subset[N]}\EE
of weak regularizations for $\X$ over~$W$ such~that 
\BE{regulcomb3_e3}\begin{split}
&\qquad\big(\rho_{t;I;i},\na^{(t;I;i)}\big)=
\big(\rho_{t;I;i}^{(1)},\na^{(1),(t;I;i)}\big)\big|_{X_I\cap W},
\big(\rho_{t;I;i}^{(2)},\na^{(2),(t;I;i)}\big)\big|_{X_I\cap W},\\
&\Dom\big(\Psi_{t;I;i}\big)\subset
\Dom\big(\Psi_{t;I;i}^{(1)}\big),\Dom\big(\Psi_{t;I;i}^{(2)}\big),
\quad
\Psi_{t;I;i}=\Psi_{t;I;i}^{(1)}\big|_{\Dom(\Psi_{t;I;i})},\Psi_{t;I;i}^{(2)}\big|_{\Dom(\Psi_{t;I;i})}
\end{split}\EE
for all $t\!\in\!B$ and $i\!\in\!I\!\subset\![N]$ with $|I|\!\ge\!2$.
By the second assumption in~\eref{regulcombcrl3_e}, there exists 
an $(\om_{t;i})_{t\in B,i\in I^*}$-family
\BE{regulcomb3_e4}
\big(\wh\cR_{t;I}\big)_{t\in B,I\in\cP^*(I^*)}\equiv
\big(\wh\rho_{t;I;i},\wh\na^{(t;I;i)},
\wh\Psi_{t;I;i}\big)_{t\in B,i\in I\subset I^*}\EE
of weak regularizations for $\{X_I\}_{I\in\cP(I^*)}$ over~$W_{I^*}$ such that 
\BE{regulcomb3_e5}\begin{split}
&\qquad\big(\wh\rho_{t;I;i},\wh\na^{(t;I;i)}\big)=
\big(\rho_{t;I;i}^{(1)},\na^{(1),(t;I;i)}\big)\big|_{X_I\cap W_{I^*}},
\big(\rho_{t;I;i}^{(2)},\na^{(2),(t;I;i)}\big)\big|_{X_I\cap W_{I^*}},\\
&\Dom\big(\wh\Psi_{t;I;i}\big)\subset
\Dom\big(\Psi_{t;I;i}^{(1)}\big),\Dom\big(\Psi_{t;I;i}^{(2)}\big),
\quad
\wh\Psi_{t;I;i}=\Psi_{t;I;i}^{(1)}\big|_{\Dom(\wh\Psi_{t;I;i})},
\Psi_{t;I;i}^{(2)}\big|_{\Dom(\wh\Psi_{t;I;i})}
\end{split}\EE
for all $t\!\in\!B$ and $i\!\in\!I\!\subset\!I^*$ with $|I|\!\ge\!2$.\\

\noindent
By~\eref{regulcomb3_e3} and~\eref{regulcomb3_e5},
\begin{gather*}
\big(\rho_{t;I;i},\na^{(t;I;i)}\big)\big|_{X_I\cap W\cap W_{I^*}}=
\big(\wh\rho_{t;I;i},\wh\na^{(t;I;i)}\big)\big|_{X_I\cap W\cap W_{I^*}},\\
\Psi_{t;I;i}|_{\Dom(\Psi_{t;I;i})\cap\Dom(\wh\Psi_{t;I;i}) |_{X_I\cap W\cap W_{I^*}}}
=\wh\Psi_{t;I;i}|_{\Dom(\Psi_{t;I;i})\cap\Dom(\wh\Psi_{t;I;i})|_{X_I\cap W\cap W_{I^*}}}
\end{gather*}
for all $t\!\in\!B$ and $i\!\in\!I\!\subset\!I^*$ with $|I|\!\ge\!2$.
Thus, the families \eref{regulcomb3_e2} and~\eref{regulcomb3_e4}
of weak regularizations  satisfy~\eref{cRoverlap_e}.
The proof of Lemma~\ref{regulcomb_lmm} provides 
an $(\om_{t;i})_{t\in B,i\in[N]}$-family~\eref{regulcomb_e10} 
of weak regularizations  for $\X$ over $W'\!\cup\!W_{I^*}'$ 
such~that 
\begin{equation*}\begin{split}
\big(\wt\rho_{t;I;i},\wt\na^{(t;I;i)}\big)_x
&=\begin{cases}(\rho_{t;I;i},\na^{(t;I;i)})_x,&\hbox{if}~x\!\in\!X_I\!\cap\!W';\\
(\rho_{t;I;i},\na^{(t;I;i)})_x,&\hbox{if}~x\!\in\!X_I\!\cap\!W_{I^*}',
\,I\!\in\!\cP(N)\!-\!\cP(I^*);\\
(\wh\rho_{t;I;i},\wh\na^{(t;I;i)})_x,&\hbox{if}~x\!\in\!X_I\!\cap\!W_{I^*}',
\,I\!\in\!\cP^*(I^*);
\end{cases}\\
\Dom\big(\wt\Psi_{t;I;i}\big)&\subset
\begin{cases}\Dom(\Psi_{t;I;i}),&\hbox{if}~I\!\in\!\cP(N)\!-\!\cP(I^*);\\
\Dom(\Psi_{t;I;i})\big|_{X_I\cap W'}\!\cup\!\Dom(\wh\Psi_{t;I;i})|_{X_I\cap W_{I^*}'},
&\hbox{if}~I\!\in\!\cP^*(I^*);
\end{cases}\\
\wt\Psi_{t;I;i}(v)&=\begin{cases}
\Psi_{t;I;i}(v),&\hbox{if}~v\!\in\!\Dom(\wt\Psi_{t;I;i})\big|_{X_I\cap W'};\\
\Psi_{t;I;i}(v),&\hbox{if}~v\!\in\!\Dom(\wt\Psi_{t;I;i})\big|_{X_I\cap W_{I^*}'},
\,I\!\in\!\cP(N)\!-\!\cP(I^*);\\
\wh\Psi_{t;I;i}(v),&\hbox{if}~v\!\in\!\Dom(\wt\Psi_{t;I;i})|_{X_I\cap W_{I^*}'},
\,I\!\in\!\cP^*(I^*).
\end{cases}
\end{split}\end{equation*}
Along with~\eref{regulcomb3_e3} and~\eref{regulcomb3_e5}, these identities imply that 
\begin{gather*}
\big(\wt\rho_{t;I;i},\wt\na^{(t;I;i)}\big)=
\big(\rho_{t;I;i}^{(1)},\na^{(1),(t;I;i)}\big)\big|_{X_I\cap(W'\cup W_{I^*}')},
\big(\rho_{t;I;i}^{(2)},\na^{(2),(t;I;i)}\big)\big|_{X_I\cap(W'\cup W_{I^*}')},\\
\Dom\big(\wt\Psi_{t;I;i}\big)\subset
\Dom\big(\Psi_{t;I;i}^{(1)}\big),\Dom\big(\Psi_{t;I;i}^{(2)}\big),
\quad
\wt\Psi_{t;I;i}=\Psi_{t;I;i}^{(1)}\big|_{\Dom(\wt\Psi_{t;I;i})},
\Psi_{t;I;i}^{(2)}\big|_{\Dom(\wt\Psi_{t;I;i})}
\end{gather*}
for all $t\!\in\!B$ and $i\!\in\!I\!\subset\![N]$.
This establishes the claim.
\end{proof}

\subsection{From weak regularizations to regularizations}
\label{weakregtoreg_subs}

\noindent
We show below that the first requirement in~\eref{overlap_e} is not material, provided 
the second requirement in~\eref{overlap_e} is appropriately modified.
By Lemma~\ref{weakregtoreg_lmm} below, a weak regularization for~$\X$ over~$X_{\eset}$
can be cut down to a regularization for~$\X$.
By Corollary~\ref{weakregtoreg_crl}, two regularizations for~$\X$ 
that are equivalent as weak regularizations over~$X_{\eset}$
are also equivalent as regularizations.

\begin{lmm}\label{weakregtoreg_lmm}
Let $\X$, $B$, and $(\om_{t;i})_{t\in B,i\in[N]}$ be as in Theorem~\ref{SCC_thm}
and $(\cR_{t;I})_{t\in B,I\in\cP^*(N)}$ be an $(\om_{t;i})_{t\in B,i\in[N]}$-family
of weak regularizations for $\X$ over~$X_{\eset}$ as in~\eref{regulcomb_e2}.
Then there exists a collection of neighborhoods 
$$\bigcup_{t\in B}\{t\}\!\times\!\cN_{t;I}'\subset B\!\times\!\cN X_I$$
of $B\!\times\!X_I$ with $I\!\in\!\cP^*(N)$ and $|I|\!\ge\!2$ 
such that $\cN_{t;I}'\!\cap\!\cN_{I;i}\!\subset\!\cN_{t;I;i}$
for all $i\!\in\!I\!\subset\![N]$ with $|I|\!\ge\!2$ and the~tuple 
\BE{weakregtoreg_e0c}
\big(\cR_{t;I}'\big)_{t\in B,I\in\cP^*(N)} \equiv
\big(\rho_{t;I;i},\na^{(t;I;i)},
\Psi_{t;I;i}|_{\cN_{t;I}'\cap\cN_{I;i}} \big)_{t\in B,i\in I\subset[N]}\EE
is an $(\om_{t;i})_{t\in B,i\in[N]}$-family of  regularizations for $\X$
in the sense of Definition~\ref{SCCregul_dfn}\ref{SCCreg2_it}.
\end{lmm}

\begin{proof} 
For each $I\!\in\!\cP^*(N)$ with $|I|\!\ge\!2$, let
\hbox{$\pi_I\!:\cN X_I\!\lra\!X_I$} be the bundle projection~map.
If in addition $t\!\in\!B$, define
$$\Psi_{t;I}\!:\bigcup_{i\in I}\cN_{t;I;i}\lra X_{\eset}, \qquad
\Psi_{t;I}(v)=\Psi_{t;I;i}(v)~~\forall\,v\!\in\!\cN_{t;I;i},\,i\!\in\!I;$$
by~\eref{ConfRegulLoc_e2}, 
$\Psi_{t;I}(v)$ is well-defined for $v\!\in\!\cN_{t;I;i_1}\!\cap\!\cN_{t;I;i_2}$.
Let
$$\rho_{t;I}\!:\cN X_I\lra\R, \qquad
\rho_{t;I}\big((v_i)_{i\in I}\big)
=\max\big\{\rho_{t;I;i}(v_i)\!:\,i\!\in\!I\big\},$$
to be the square metric on~$\cN X_I$.\\

\noindent
For $I\!\in\!\cP^*(N)$ with $|I|\!\ge\!2$, let
$$\bigcup_{t\in B}\{t\}\!\times\!\cN_{t;I}\subset B\!\times\!\cN X_I$$
be  a neighborhood of $B\!\times\!X_I$ such that
$\cN_{t;I}\!\cap\!\cN_{I;i}\!\subset\!\cN_{t;I;i}$
for all $t\!\in\!B$ and $i\!\in\!I$.
Define
\BE{weakregtoreg_e4}
\cN_{t;I}^{\circ}=
\bigcap_{\begin{subarray}{c}I'\subset I\\ |I'|\ge2\end{subarray}} \!\!\!
\fD\Psi_{t;I;I'}^{\,-1}(\cN_{t;I'}), \quad
\cN_{t;I;\prt}^{\circ}= \cN_{t;I}^{\circ}\cap\cN_{\prt}X_I
\qquad\forall\,t\!\in\!B.\EE
By~\eref{LocalRegul_e2},
\BE{weakregtoreg_e4b}
\Psi_{t;I}\big|_{\cN_{t;I;\prt}^{\circ}}=
\Psi_{t;I'}\circ\fD\Psi_{t;I;I'}\big|_{\cN_{t;I;\prt}^{\circ}}
\quad\forall~I'\!\subset\!I\!\subset\![N],\,|I'|\!\ge\!2.\EE
By~\eref{weakregtoreg_e4} and~\eref{weakregtoreg_e4b}, 
\BE{weakregtoreg_e4c}
\fD\Psi_{t;I;I'}\big(\cN_{t;I}^{\circ}\big)\subset\cN_{t;I'}^{\circ} 
 \quad\forall~I'\!\subset\!I\!\subset\![N],\,|I'|\!\ge\!2.\EE
If in addition $\ve\!\in\!C^{\i}(B\!\times\!X_I;\R^+)$, define
$$\cN_{t;I}(\ve)=\big\{v\!\in\!\cN X_I\!:\,
\rho_{t;I}(v)\!<\!\ve\big(t,\pi_I(v)\big)\big\},\quad
\cN_{t;I;\prt}(\ve)= \cN_{t;I}(\ve)\cap\cN_{\prt}X_I.$$
In particular,
$$\cN^{\circ} X_I\equiv\bigcup_{t\in B}\{t\}\!\times\!\cN_{t;I}^{\circ},\quad
\cN X_I(\ve)\equiv\bigcup_{t\in B}\{t\}\!\times\!\cN_{t;I}(\ve)
\subset B\!\times\!\cN X_I$$
are neighborhoods of $B\!\times\!X_I$ in $B\!\times\!\cN X_I$.\\

\noindent
We show below that there exist  functions $\ve_I\!\in\!C^{\i}(B\!\times\!X_I;\R^+)$
with $I\!\subset\![N]$, $|I|\!\ge\!2$, such~that 
\begin{gather}
\label{weakregtoreg_e5a}
\ov{\cN_{t;I}(2^{|I|}\ve_I)}\subset\cN_{t;I}^{\circ},\\
\label{weakregtoreg_e5c}
\ve_{I'}\big(t,\Psi_{t;I}(v)\big)=\ve_I\big(t,\pi_I(v)\big)
~~\forall\,v\!\in\!\cN_{t;I}\big(2^{|I'|}\ve_I\big)\!\cap\!\cN_{I;I'}
\end{gather}
for all $t\!\in\!B$ and $I'\!\subset\!I\!\subset\![N]$ with $|I'|\!\ge\!2$.
We take $\cN_{t;I}'\!=\!\cN_{t;I}(\ve_I)$.
By~\eref{weakregtoreg_e4b} and~\eref{weakregtoreg_e5a}, 
$$\Psi_{t;I}\big|_{\cN_{t;I}'\cap\cN_{\prt}X_I}=
\Psi_{t;I'}\circ\fD\Psi_{t;I;I'}\big|_{\cN_{t;I}'\cap\cN_{\prt}X_I}\,.$$
Since $\fD\Psi_{t;I;I'}$ is a product Hermitian isomorphism,
\BE{weakregtoreg_e5}\begin{split}
\fD\Psi_{t;I;I'}\big(\cN_{t;I}(\ve_I)\big)
&=\bigcup_{v\in\cN_{t;I}(\ve_I)\cap\cN_{I;I'}} \hspace{-.4in}
\big\{w\!\in\!\cN X_{I'}|_{\Psi_{t;I}(v)}\!:\,\rho_{t;I'}(w)\!<\!\ve_I(\pi_I(v))\big\}\\
&=\cN_{t;I'}(\ve_{I'})\big|_{\Psi_{t;I}(\cN_{t;I}(\ve_I)\cap\cN_{I;I'})};
\end{split}\EE
the last equality holds by~\eref{weakregtoreg_e5c}.
Combining \eref{weakregtoreg_e5} and \eref{ConfRegulLoc_e1}, we conclude that 
$$\fD\Psi_{t;I;I'}(\cN_{t;I}')=\cN_{t;I'}'
\big|_{X_{I'}\cap\Psi_{t;I}(\cN_{t;I}'\cap\cN_{I;I'})}
\quad\forall~I'\!\subset\!I\!\subset\![N],\,|I'|\!\ge\!2,~t\!\in\!B\,.$$
Along with the assumption that  $(\cR_{t;I})_{t\in B,I\in\cP^*(N)}$ is 
an $(\om_{t;i})_{t\in B,i\in[N]}$-family of weak regularizations for~$\X$ over~$X_{\eset}$,
this implies that~\eref{weakregtoreg_e0c} is an
$(\om_{t;i})_{t\in B,i\in[N]}$-family of regularizations for~$\X$.\\

\noindent
In the remainder of this proof, we inductively construct functions 
$\ve_I\!\in\!C^{\i}(B\!\times\!X_I;\R^+)$ satisfying~\eref{weakregtoreg_e5a}
and~\eref{weakregtoreg_e5c}.
By~\eref{weakregtoreg_e4b} and~\eref{weakregtoreg_e5a}, 
\eref{weakregtoreg_e5c} for all $I'\!\subset\!I\!\subset\![N]$ with $|I'|\!\ge\!2$
is equivalent to~\eref{weakregtoreg_e5c} for such  $I',I$ with $|I\!-\!I'|\!=\!1$.
For each $\ell\!\in\!\Z$, let  
$$\cP^{=\ell}(N),\cP^{>\ell}(N)\subset\cP(N)$$
denote the collections of subsets of cardinality~$\ell$ and
of cardinality greater than~$\ell$, respectively.\\

\noindent
Suppose $\ell\!\in\!\{2,\ldots,N\}$ and we have chosen~$\ve_I$ for all
$I\!\in\!\cP^{>\ell}(N)$ so that \eref{weakregtoreg_e5a} and~\eref{weakregtoreg_e5c} 
are satisfied by all elements of~$\cP^{>\ell}(N)$,
\BE{weakregtoreg_e5b}
\Psi_{t;I_1}\big(\cN_{t;I_1;\prt}(2^{\ell+1}\ve_{I_1})\big)\cap
\Psi_{t;I_2}\big(\cN_{t;I_2;\prt}(2^{\ell+1}\ve_{I_2})\big)\\
\subset 
\Psi_{t;I_1\cup I_2}\big(\cN_{t;I_1\cup I_2;\prt}(2^{\ell+1}\ve_{I_1\cup I_2})\big)
\EE
for all $I_1,I_2\!\in\!\cP^{>\ell}(N)$, and 
\BE{weakregtoreg_e6}
\ov{\Psi_{t;I_1}\big(\cN_{t;I_1;\prt}(2^{\ell+1}\ve_{I_1})\big)}\cap X_{I_2}
\subset 
\ov{\Psi_{t;I_1\cup I_2}\big(\cN_{t;I_1\cup I_2;\prt}(2^{\ell+1}\ve_{I_1\cup I_2})\big)}\EE
whenever $I_1\!\in\!\cP^{>\ell}(N)$ and $I_2\!\in\!\cP(N)$.
Furthermore,
\eref{weakregtoreg_e5b} and~\eref{weakregtoreg_e6} hold
 with $2^{\ell+1}$ and the inclusions 
replaced by $C\!\in\![0,2^{\ell+1}]$ and the equalities.\\

\noindent
For $t\!\in\!B$ and $I^*\!\subsetneq\!I\!\subset\![N]$ with $I^*\!\in\!\cP^{=\ell}(N)$, let
\begin{equation*}\begin{split}
W_{t;I^*;I}=
\big\{\Psi_{t;I}(u,v)\!:
(u,v)\!\in\!(\cN_{I;I^*}\!\oplus\!\cN_{I;I-I^*})\!\cap\!\cN_{\prt}X_I,
\,\rho_{t;I}(u)\!<\!2^{\ell+1}\ve_I(\pi_I(u)),&\\
\rho_{t;I}(v)\!<\!2^{\ell}\ve_I(\pi_I(v))&\big\}\subset X_{\eset}.
\end{split}\end{equation*}
By~\eref{weakregtoreg_e5a} and~\eref{Psikk_e},
$$\ov{\Psi_{t;I}\big(\cN_{t;I;\prt}(2^{\ell+1}\ve_I)\big)\!-\!W_{t;I^*;I}}
\cap X_{I^*}=\eset$$
for all $I^*\!\subsetneq\!I\!\subset\![N]$ with $I^*\!\in\!\cP^{=\ell}(N)$.
Along with \eref{weakregtoreg_e6}, this implies that 
\BE{weakregtoreg_e23}
\ov{\Psi_{t;I}\big(\cN_{t;I;\prt}(2^{\ell+1}\ve_I)\big)\!-\!W_{t;I^*;I^*\cup I}}
\cap X_{I^*}=\eset\EE
for all $I^*\!\in\!\cP^{=\ell}(N)$ and $I\!\in\!\cP^{>\ell}(N)$.
By~\eref{weakregtoreg_e5b}, \eref{weakregtoreg_e5a}  with $I$ replaced 
by $I_1\!\cup\!I_2$, and \eref{weakregtoreg_e4b}  with $(I,I')$ 
replaced by $(I_1\!\cup\!I_2,I_1^*)$, $(I_1\!\cup\!I_2,I_2)$, and
$(I_1\!\cup\!I_2,I_1^*\!\cup\!I_2)$, 
\BE{weakregtoreg_e25a1}
W_{t;I_1^*;I_1} \cap 
\Psi_{t;I_2}\big(\cN_{t;I_2;\prt}(2^{\ell}\ve_{I_2})\big) \subset
\Psi_{t;I_1^*\cup I_2}\big(\cN_{t;I_1^*\cup I_2;\prt}(2^{\ell}\ve_{I_1^*\cup I_2})\big)\EE
for all  $I_1^*\!\subsetneq\!I_1\!\subset\![N]$ with $I_1^*\!\in\!\cP^{=\ell}(N)$ and
$I_2\!\in\!\cP^{>\ell}(N)$.
By~\eref{weakregtoreg_e5b}, \eref{weakregtoreg_e5a}  with $I$ replaced 
by $I_1\!\cup\!I_2$, and \eref{weakregtoreg_e4b}  with $(I,I')$ 
replaced by $(I_1\!\cup\!I_2,I_1^*)$, $(I_1\!\cup\!I_2,I_2^*)$, and
$(I_1\!\cup\!I_2,I_1^*\!\cup\!I_2^*)$, 
\BE{weakregtoreg_e25a2}
W_{t;I_1^*;I_1} \cap W_{t;I_2^*;I_2} \subset
\Psi_{t;I_1^*\cup I_2^*}\big(\cN_{t;I_1^*\cup I_2^*;\prt}(2^{\ell}\ve_{I_1^*\cup I_2^*})\big)\EE
for all  $I_1^*\!\subsetneq\!I_1\!\subset\![N]$ and
$I_2^*\!\subsetneq\!I_2\!\subset\![N]$ with 
$I_1^*,I_2^*\!\in\!\cP^{=\ell}(N)$  and $I_1^*\!\neq\!I_2^*$.
By~\eref{weakregtoreg_e6},  \eref{weakregtoreg_e5a}  with $I$ replaced 
by $I_1\!\cup\!I_2$,  
\eref{weakregtoreg_e4b}  with $(I,I')$  replaced by $(I_1\!\cup\!I_2,I_1^*)$
and \hbox{$(I_1\!\cup\!I_2,I_1^*\!\cup\!I_2)$}, and~\eref{ConfRegulLoc_e1}
 with $(I^*,I)$ replaced by $(I_1\!\cup\!I_2,I_2)$, 
\BE{weakregtoreg_e25b}
\ov{W_{t;I_1^*;I_1}} \cap X_{I_2} \subset
\ov{\Psi_{t;I_1^*\cup I_2}\big(\cN_{t;I_1^*\cup I_2;\prt}(2^{\ell}\ve_{I_1^*\cup I_2})\big)}\EE
for all $I_1^*\!\subsetneq\!I_1\!\subset\![N]$ with $I_1^*\!\in\!\cP^{=\ell}(N)$
and $I_2\!\in\!\cP^*(N)\!-\!\cP^*(I_1^*)$.\\

\noindent
For each $I^*\!\in\!\cP^{=\ell}(N)$, let
\begin{gather*}
W_{t;I^*}=X_{\eset}
-\bigcup_{I^*\subsetneq I\subset [N]}\!\!\!\!\!\!\!
\ov{\Psi_{t;I}\big(\cN_{t;I;\prt}(2^{\ell}\ve_I)\big)}
-\bigcup_{I\in\cP^{>\ell}(N)}\!\!\!\!\!\!\!
\ov{\Psi_{t;I}\big(\cN_{t;I;\prt}(2^{\ell+1}\ve_I)\big)\!-\!W_{t;I^*;I^*\cup I}}
-\bigcup_{\begin{subarray}{c}I\subset[N]\\ I\not\subset I^*\end{subarray}}\!\!\!X_I,\\
X_{t;I^*}'=X_{I^*}-\bigcup_{I^*\subsetneq I\subset[N]}\!\!\!\!\!\!\!W_{t;I^*;I},\quad
\cX_{I^*}'=\bigcup_{t\in B}\{t\}\!\times\!X_{t;I^*}', \quad
\cW_{I^*}=\bigcup_{t\in B}\{t\}\!\times\!W_{t;I^*}.
\end{gather*}
Since $\Psi_{t;I_1}$ depends continuously on~$t$, 
$\cX_{I^*}'$ is a closed subset  of~$B\!\times\!X_{\eset}$ 
and $\cW_{I^*}$ is an open subset.
By~\eref{weakregtoreg_e25a1}, \eref{weakregtoreg_e25a2},  and the definition of~$W_{t;I^*}$,
\BE{weakregtoreg_e29a}
W_{t;I_1^*} \cap 
\Psi_{t;I_2}\big(\cN_{t;I_2;\prt}(2^{\ell}\ve_{I_2})\big)=\eset,
\quad W_{t;I_1^*} \cap W_{t;I_2^*;I_2} =\eset,
\quad W_{t;I_1^*}\cap X_{I_2}=\eset\EE
for all $I_1^*\!\in\!\cP^{=\ell}(N)$, 
$I_2\!\in\!\cP^{>\ell}(N)$ in the first case,  
\hbox{$I_2^*\!\subsetneq\!I_2\!\subset\![N]$} with
\hbox{$I_2^*\!\in\!\cP^{=\ell}(N)$} and $I_2^*\!\neq\!I_1^*$ in the second case,
and $I_2\!\in\!\cP(N)\!-\!\cP(I_1^*)$ in the third case.
By~\eref{weakregtoreg_e23}, $\cX_{I^*}'\!\subset\!\cW_{I^*}$.
Since the closed sets $\cX_{I^*}'$ are disjoint,
there exist open subsets 
\begin{gather}
\notag
\cW_{I^*}'\equiv \bigcup_{t\in B}\{t\}\!\times\!W_{t;I^*}' \subset B\!\times\!X_{\eset}
~~~\forall\,I^*\!\in\!\cP^{=\ell}(N)\qquad\hbox{s.t.}\\
\label{weakregtoreg_e8} 
\cW_{I_1^*}'\!\cap\! \cW_{I_2^*}'=\eset~~\forall\,
I_1^*,I_2^*\!\in\!\cP^{=\ell}(N),\,I_1^*\!\neq\!I_2^*, \quad
\cX_{I^*}'\!\subset\cW_{I^*}',~\ov{\cW_{I^*}'}\subset\cW_{I^*}
~~\forall\,I^*\!\in\!\cP^{=\ell}(N)\,.
\end{gather}

\vspace{.2in}

\noindent
For each $I^*\!\in\!\cP^{=\ell}(N)$, define
\begin{gather*}
\wt\cW_{I^*}\equiv \bigcup_{t\in B}\{t\}\!\times\!\wt{W}_{t;I^*}
=\cW_{I^*}'\cup 
\bigcup_{t\in B}\bigcup_{I^*\subsetneq I\subset[N]}\!\!\!\!\!\!\{t\}\!\times\!W_{t;I^*;I}, \\
\cN_{t;I^*;\prt}'=\Psi_{t;I^*}^{-1}\big(\wt{W}_{t;I^*}\big), \qquad
\cN_{\prt}'X_{I^*}\equiv \bigcup_{t\in B}\{t\}\!\times\!\cN_{t;I^*;\prt}'.
\end{gather*}
By~\eref{weakregtoreg_e5a} and~\eref{weakregtoreg_e4b},
\BE{weakregtoreg_e12}
\big\{v\!\in\!\cN_{\prt}X_{I^*}|_{\Psi_{t;I}(u)}\!:
u\!\in\!\cN_{t;I;\prt}(2^{\ell+1}\ve_I)\!\cap\!\cN_{I;I^*},\,
\rho_{t;I^*}(v)\!<\!2^{\ell}\ve_I(t,\pi_I(u))\big\}
\subset \cN_{t;I^*;\prt}'\EE
for all $t\!\in\!B$ and $I^*\!\subsetneq\!I\!\subset\![N]$.
By the last assumption in~\eref{weakregtoreg_e8},
the first statement in \eref{weakregtoreg_e29a}, and~\eref{weakregtoreg_e25a1},
\BE{weakregtoreg_e9b}
\wt{W}_{t;I_1^*}\cap
\Psi_{t;I_2}\big(\cN_{t;I_2;\prt}(2^{\ell}\ve_{I_2})\big)
\subset
\Psi_{t;I_1^*\cup I_2}\big(\cN_{t;I_1^*\cup I_2;\prt}(2^{\ell}\ve_{I_1^*\cup I_2})\big)
\EE
for all $I_1^*\!\in\!\cP^{=\ell}(N)$ and $I_2\!\in\!\cP^{>\ell}(N)$.
By the first and last assumptions in~\eref{weakregtoreg_e8},
the second statements in \eref{weakregtoreg_e29a}, and~\eref{weakregtoreg_e25a2},
\BE{weakregtoreg_e9a}
\wt{W}_{t;I_1^*}\cap \wt{W}_{t;I_2^*}\subset 
\Psi_{t;I_1^*\cup I_2^*}\big(\cN_{t;I_1^*\cup I_2^*;\prt}(2^{\ell}\ve_{I_1^*\cup I_2^*})\big)
\EE
for all $I_1^*,I_2^*\!\in\!\cP^{=\ell}(N)$ with $I_1^*\!\neq\!I_2^*$.
By the last assumption in~\eref{weakregtoreg_e8},
the last statement in~\eref{weakregtoreg_e29a}, and~\eref{weakregtoreg_e25b},
\BE{weakregtoreg_e9c}
\ov{\wt{W}_{t;I_1^*}} \cap X_{I_2} \subset
\ov{\Psi_{t;I_1^*\cup I_2}\big(\cN_{t;I_1^*\cup I_2;\prt}(2^{\ell}\ve_{I_1^*\cup I_2})\big)}\EE
for all $I_1^*\!\in\!\cP^{=\ell}(N)$ and $I_2\!\in\!\cP^*(N)\!-\!\cP^*(I_1^*)$.\\

\noindent
Since $\wt\cW_{I^*}$ is a neighborhood of $B\!\times\!X_{I^*}$ in $B\!\times\!X_{\eset}$, 
$\cN_{\prt}'X_{I^*}$
is a neighborhood of $B\!\times\!X_{I^*}$ in $B\!\times\!\cN_{\prt}X_{I^*}$.
Thus, there exists an open subset 
$$\cN' X_{I^*}\equiv \bigcup_{t\in B}\{t\}\!\times\!\cN_{t;I^*}'\subset B\!\times\!\cN X_{I^*}
\qquad\hbox{s.t.}\quad \cN_{t;I^*;\prt}'=\cN_{\prt}X_{I^*}\cap \cN_{t;I^*}'
~~\forall\,t\!\in\!B.$$
Choose $\ve_{I^*}'\!\in\!C^{\i}(B\!\times\!X_{I^*};\R^+)$ so that 
\BE{weakregtoreg_e15}
\ov{\cN X_{I^*}(2^{\ell}\ve_{I^*}')}\subset\cN^{\circ} X_{I^*}\cap \cN' X_{I^*}\,.\EE
Let
\begin{alignat*}{2}
X_{t;I^*;\prt}&=X_{I^*}\cap
\bigcup_{\begin{subarray}{c}I\in\cP^{=(\ell+1)}(N)\\ I^*\subset I\end{subarray}}
\hspace{-.3in}\Psi_{t;I}\big(\cN_{t;I;\prt}(2^{\ell+1}\ve_I)\big), &\qquad
\cX_{I^*;\prt}&= \bigcup_{t\in B}\{t\}\!\times\!X_{t;I^*;\prt},\\
X_{t;I^*}'&=X_{I^*}-
\bigcup_{\begin{subarray}{c}I\in\cP^{=(\ell+1)}(N)\\ I^*\subset I\end{subarray}}
\hspace{-.3in}\ov{\Psi_{t;I}\big(\cN_{t;I;\prt}(2^{\ell}\ve_I)\big)}, &\qquad
\cX_{I^*}'&= \bigcup_{t\in B}\{t\}\!\times\!X_{t;I^*}'.
\end{alignat*}
Since $\Psi_{t;I}$ depends continuously on~$t$, $\cX_{I^*}'$ is an open subset 
of~$B\!\times\!X_{I^*}$.
Let $\{\eta_{I^*;\prt},\eta_{I^*}'\}$ be a partition of unity on~$B\!\times\!X_{I^*}$
subordinate to the open cover $\{\cX_{I^*;\prt},\cX_{I^*}'\}$ of $B\!\times\!X_{I^*}$.\\

\noindent
Define
\begin{gather*}
\ve_{I^*;\prt}\!:\cX_{I^*;\prt}\lra\R^+ \qquad\hbox{by}\\
\ve_{I^*;\prt}\big(t,\Psi_{t;I}(u)\big)=\ve_I\big(t,\pi_I(u)\big)~~
\forall\,u\!\in\!\cN_{t;I;\prt}(2^{\ell+1}\ve_I)\!\cap\!\cN_{I;I^*},~
I\!\in\!\cP^{=(\ell+1)}(N),\,I^*\!\subset\!I.
\end{gather*}
By~\eref{weakregtoreg_e5b}, \eref{weakregtoreg_e5a}, \eref{weakregtoreg_e4b},
and~\eref{weakregtoreg_e5c}, these definitions agree on the overlaps.
Let 
$$\ve_{I^*}=\eta_{I^*;\prt}\ve_{I^*;\prt}+\eta_{I^*}'\ve_{I^*}'\!:
B\!\times\!X_{I^*}\lra\R^+.$$

\vspace{.1in}

\noindent
We next observe that 
\BE{weakregtoreg_e17}
\ov{\cN_{t;I^*}(2^{|\ell|}\ve_{I^*})}\subset\cN_{t;I^*}^{\circ},~~
\cN_{t;I^*;\prt}(2^{\ell}\ve_{I^*})\subset\cN'_{t;I^*}
\quad\forall\,t\!\in\!B,\,I^*\!\in\!\cP^{=\ell}(N)\,.\EE
By~\eref{weakregtoreg_e15}, this is the case for the fibers over $\cX_{I^*}'\!-\!\cX_{I^*;\prt}$.
The second inclusion in~\eref{weakregtoreg_e17} for the fibers over 
$\cX_{I^*;\prt}\!-\!\cX_{I^*}'$ is a special case of~\eref{weakregtoreg_e12}.
The first inclusion in~\eref{weakregtoreg_e17} for these fibers follows 
from~\eref{weakregtoreg_e5a} and~\eref{weakregtoreg_e4c} with $I'\!=\!I^*$.
If  $(t,x)\!\in\!\cX_{I^*}'\!\cap\!\cX_{I^*;\prt}$, then 
$$\ve_{I^*}(t,x)\le \ve_{I^*}'(t,x) \qquad\hbox{or}\qquad
\ve_{I^*}(t,x)\le \ve_{I^*;\prt}(t,x).$$
Either of these cases implies~\eref{weakregtoreg_e17}.\\

\noindent
By the first inclusion in~\eref{weakregtoreg_e17}, 
$\ve_{I^*}$ satisfies~\eref{weakregtoreg_e5a} with $I\!=\!I^*$.
Since $\ve_{I^*}\!=\!\ve_{I^*;\prt}$ on $\cX_{I^*;\prt}\!-\!\cX_{I^*}'$,
$\ve_{I^*}$ satisfies~\eref{weakregtoreg_e5c} with $I'\!=\!I^*$
and $|I|\!=\!\ell\!+\!1$ and thus for all $I\!\supset\!I^*$.
By the second inclusion in~\eref{weakregtoreg_e17}, 
\BE{weakregtoreg_e30}\Psi_{t;I^*}\big(\cN_{t;I^*;\prt}(2^{\ell}\ve_{I^*})\big)
\subset \wt{W}_{t;I^*}\,.\EE
By~\eref{weakregtoreg_e9c}, $\ve_{I^*}$ thus satisfies \eref{weakregtoreg_e6}
with $I_1\!=\!I^*$ and $2^{\ell+1}$ replaced by~$2^{\ell}$.
By~\eref{weakregtoreg_e30} and~\eref{weakregtoreg_e9b}, 
$\ve_{I^*}$ satisfies \eref{weakregtoreg_e5b} with $I_1\!=\!I^*$, $|I_2|\!>\!\ell$,
and  $2^{\ell+1}$ replaced by~$2^{\ell}$.
By~\eref{weakregtoreg_e30} and~\eref{weakregtoreg_e9a}, 
\eref{weakregtoreg_e5b} with $2^{\ell+1}$ replaced by~$2^{\ell}$
 is satisfied whenever $|I_1|,|I_2|\!=\!\ell$.
By the downward induction on~$|I|$, 
we thus obtain functions 
$\ve_I\!\in\!C^{\i}(B\!\times\!X_I;\R^+)$ satisfying 
\eref{weakregtoreg_e5a} and \eref{weakregtoreg_e5c},
as well as~\eref{weakregtoreg_e5b} and~\eref{weakregtoreg_e6}.
\end{proof}

\begin{crl}\label{weakregtoreg_crl}
Let $\X$, $B$, and $(\om_{t;i})_{t\in B,i\in[N]}$ be as in Theorem~\ref{SCC_thm}
and $(\fR_t^{(1)})_{t\in B}$ and  $(\fR_t^{(2)})_{t\in B}$ be
$(\om_{t;i})_{t\in B,i\in[N]}$-families of regularizations for~$\X$ 
that are equivalent as families of weak regularizations for~$\X$ over~$X_{\eset}$,
i.e.
\BE{weakregtoregcrl_e1}
(\fR_t^{(1)})_{t\in B}\cong_{X_{\eset}} (\fR_t^{(2)})_{t\in B}.\EE
Then they are equivalent as families of regularizations for~$\X$ as in~\eref{fRequidfn_e}.
\end{crl}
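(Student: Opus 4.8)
The plan is to reduce the statement to Lemma~\ref{weakregtoreg_lmm} by a bookkeeping argument. First, note that the two $(\om_{t;i})_{t\in B,i\in[N]}$-families of regularizations $(\fR_t^{(1)})_{t\in B}$ and $(\fR_t^{(2)})_{t\in B}$ are in particular $(\om_{t;i})_{t\in B,i\in[N]}$-families of weak regularizations for~$\X$ over $W\!=\!X_{\eset}$, so the hypothesis~\eref{weakregtoregcrl_e1} is meaningful. Unwinding the definition of $\cong_{X_{\eset}}$, there exists an $(\om_{t;i})_{t\in B,i\in[N]}$-family
$$(\fR_t)_{t\in B}\equiv(\cR_{t;I})_{t\in B,I\in\cP^*(N)}\equiv\big(\rho_{t;I;i},\na^{(t;I;i)},\Psi_{t;I;i}\big)_{t\in B,i\in I\subset[N]}$$
of weak regularizations for~$\X$ over~$X_{\eset}$ whose Hermitian data $(\rho_{t;I;i},\na^{(t;I;i)})$ agree with those of $\fR_t^{(1)}$ and $\fR_t^{(2)}$ and whose maps satisfy $\Dom(\Psi_{t;I;i})\!\subset\!\Dom(\Psi_{t;I;i}^{(1)})\!\cap\!\Dom(\Psi_{t;I;i}^{(2)})$ with $\Psi_{t;I;i}$ restricting to both $\Psi_{t;I;i}^{(1)}$ and $\Psi_{t;I;i}^{(2)}$ on its domain, for all $i\!\in\!I\!\subset\![N]$ and $t\!\in\!B$.

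Next I would apply Lemma~\ref{weakregtoreg_lmm} to $(\fR_t)_{t\in B}$. This yields neighborhoods $\cN_t'X_I$ of $X_I$ in $\cN X_I$ with $\cN_t'X_I\!\cap\!\cN_{I;i}\!\subset\!\Dom(\Psi_{t;I;i})$ such that the cut-down tuple
$$(\fR_t')_{t\in B}\equiv\big(\rho_{t;I;i},\na^{(t;I;i)},\Psi_{t;I;i}|_{\cN_{I;i}\cap\cN_t'X_I}\big)_{t\in B,i\in I\subset[N]}$$
is an $(\om_{t;i})_{t\in B,i\in[N]}$-family of genuine regularizations for~$\X$ in the sense of Definition~\ref{SCCregul_dfn}\ref{SCCreg2_it}. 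Writing $\Psi_{t;I;i}'\!=\!\Psi_{t;I;i}|_{\cN_{I;i}\cap\cN_t'X_I}$, the chain of inclusions $\Dom(\Psi_{t;I;i}')\!\subset\!\Dom(\Psi_{t;I;i})\!\subset\!\Dom(\Psi_{t;I;i}^{(1)})\!\cap\!\Dom(\Psi_{t;I;i}^{(2)})$ together with the fact that $\Psi_{t;I;i}$ already restricts to $\Psi_{t;I;i}^{(1)}$ and $\Psi_{t;I;i}^{(2)}$ shows that $\Psi_{t;I;i}'$ restricts to $\Psi_{t;I;i}^{(1)}$ and $\Psi_{t;I;i}^{(2)}$ as well; and the Hermitian data of $\fR_t'$ are, by construction, the same as those of $\fR_t$, hence of $\fR_t^{(1)}$ and $\fR_t^{(2)}$. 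Thus $(\fR_t')_{t\in B}$ is a family of regularizations witnessing $(\fR_t^{(1)})_{t\in B}\cong(\fR_t^{(2)})_{t\in B}$, which is~\eref{weakregtoregcrl_e3}.

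There is no real obstacle here: the entire content is already packaged in Lemma~\ref{weakregtoreg_lmm}, and the only point to verify is that the cutting-down procedure of that lemma, which only shrinks the domains of the maps $\Psi_{t;I;i}$ and leaves the Hermitian structures untouched, preserves the ``restricts to both'' conditions defining the equivalence relation---which is automatic, since shrinking a domain can only make an agreement-on-restriction easier to satisfy. The sole thing worth a sentence in the write-up is the observation in the first paragraph that genuine regularizations are special cases of weak regularizations over $X_{\eset}$ (by the first condition in~\eref{overlap_e}), so that both the hypothesis and the conclusion are comparable.
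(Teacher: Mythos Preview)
Your proposal is correct and follows essentially the same approach as the paper: unwind the weak equivalence to obtain a weak-regularization family witnessing it, apply Lemma~\ref{weakregtoreg_lmm} to cut this down to a genuine regularization family, and observe that the cutting-down only shrinks domains and so the resulting family still witnesses the (now strong) equivalence. The paper's proof is just a more condensed version of exactly this argument.
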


\begin{proof}
Let $(\fR_t^{(1)})_{t\in B}$ and $(\fR_t^{(2)})_{t\in B}$ be as in~\eref{R1R2_e}.
By~\eref{weakregtoregcrl_e1}, there exists 
an $(\om_{t;i})_{t\in B,i\in[N]}$-family $(\fR_t)_{t\in B}$ 
of weak regularizations for~$\X$ over~$X_{\eset}$ which satisfies 
the conditions below~\eref{fRtdfn_e}. 
By Lemma~\ref{weakregtoreg_lmm}, it can be cut down to 
an $(\om_{t;i})_{t\in B,i\in[N]}$-family $(\fR_t')_{t\in B}$  
of regularizations for~$\X$.
Since the latter still satisfies the conditions below~\eref{fRtdfn_e},
we obtain~\eref{fRequidfn_e}.
\end{proof}

\begin{rmk}\label{weakregtoreg_rmk}
Lemmas~\ref{regulcomb_lmm} and~\ref{weakregtoreg_lmm},
Corollaries~\ref{regulcomb_crl} and~\ref{weakregtoreg_crl},
and their proofs apply in the smooth category as well 
(as opposed to the symplectic category).
For a \sf{smooth regularization}, we need only Riemannian metrics~$\rho_{t;I;i}$
on the real rank~2 vector bundles $\cN_{X_{I-i}}X_I$ which are preserved
by the differentials $\fD\Psi_{t;I;I'}$.
\end{rmk}

\vspace{.2in}

\noindent
{\it Simons Center for Geometry and Physics, Stony Brook University, Stony Brook, NY 11794\\
mtehrani@scgp.stonybrook.edu}\\

\noindent
{\it Department of Mathematics, Stony Brook University, Stony Brook, NY 11794\\
markmclean@math.stonybrook.edu, azinger@math.stonybrook.edu}\\

\end{document}